%% file: main.tex
\input{macros}

\title{Algorithms for adaptive and heteroskedastic linear regression at the computational threshold}
\author[]{Spencer Compton}
\author[]{Tselil Schramm}
\affil[]{Stanford University \authorcr
  \{\tt comptons, tselil\}@stanford.edu}

\begin{document}

\maketitle

\begin{abstract}

We study finite-sample linear regression in the presence of varied and unknown label noise, focusing on the \emph{heteroskedastic} and \emph{adaptive} linear regression models.

\textit{Heteroskedastic} linear regression models regression problems in which the labels or measurements are of varying quality. 
We receive $n$ covariate, label pairs $(X_i,Y_i)$ with labels $Y_i = X_i^\top \beta + \eps_i$, where each $\eps_i \sim N(0,\sigma_i^2)$ for a possibly distinct variance $\sigma_i^2$, and the variances $\sigma_i^2$ are unknown to the estimator. One natural measurement of the difficulty of this problem is the number of samples $m$ for which $\sigma_i^2 \le 1$ (larger $m$ is easier). Building on Compton and Valiant's results for the simpler problem of mean estimation (STOC 2024), we obtain a polynomial-time estimator with rate $\tilde{O}((nd^3/m^4)^{1/6})$ when $m \gg d^{3/4}n^{1/4}$, as well as nearly-matching lower bounds. For $d = O(1)$, our estimator achieves error $o(1)$ so long as $m \gg n^{1/4}$, whereas $L_1$ regression and other traditional approaches require $m \gg n^{1/2}$.

\textit{Adaptive} linear regression models regression problems in which we have limited information about the nature of the label noise. Here, the errors $\eps_i$ are drawn i.i.d. from an unknown distribution $p$, and our goal is to design a generic estimator that performs nearly as well as the best custom estimator that knows $p$. 
To appreciate the challenge of this task, note that different parametric families exhibit dramatically different behavior: for example, when $p$ is Normal the optimal rate is $\Theta(\sqrt{d/n})$, whereas if $p$ is Uniform$[-1,1]$ the optimal rate is $\tilde\Theta(d/n)$, and the maximum likelihood estimator in each case is quite different.
We introduce a (computationally inefficient) adaptive estimator that, so long as $p$ is a mixture of $k$ symmetric log-concave densities, achieves error comparable with the optimal estimator that knows $p$ and has $\tilde\Theta(n/k)$ samples. For the special case $k=1$, we show that $L_q$ regression (with data-dependent $q$) gives a polynomial-time estimator.

Finally, in order to study the computational limits of both problems, we introduce the \textit{planted linear regression} problem, where $X_i \sim N(0,I_d)$, $m$ unknown samples are noiseless, and the rest have error $\eps_i \sim N(0,1)$. 
We conjecture that recovering $\beta$ up to error $\ll \sqrt{d/n}$ (or exactly) may have an information-computation gap between $m=d+1$ and $m \sim d^{3/4}n^{1/4}$, as is suggested by our near-matching polynomial-time estimator and statistical query (SQ) lower bound.

\end{abstract}
\newpage
\setcounter{tocdepth}{2}
\tableofcontents
\newpage

\section{Introduction}
Linear regression is the bread and butter of statistical prediction and modeling. 
Developing linear regression estimators that best adapt to unknown noise distributions is a core challenge. There is a mature, insightful body of work for this task in the asymptotic setting, where typically the noise distribution is fixed, has finite Fisher information, and then $n \rightarrow \infty$.

In our work, we study linear regression with unknown noise in the \emph{finite-sample} setting, which is much less understood.  
We allow the noise distribution to be chosen after $n$, and we do not assume finite Fisher information.
A strikingly different picture emerges when we relax these assumptions: the optimal rates are different, the estimators achieving these rates are different, and the guarantees pertain to more problems of recent interest. Our study focuses on three representative tasks: {heteroskedastic}, {adaptive}, and {planted} linear regression.

\paragraph{Heteroskedastic linear regression.} 
How much signal can you extract from data when its labels have unknown, varying quality? 
This is a question that naturally must be answered when handling data from distinctive sources; for example, when data is scraped from multiple collections, or crowdsourced from users of unequal skill. Formally, in heteroskedastic linear regression we receive $n$ samples of the form $(X_i,Y_i)$, and each $Y_i = X_i^\top \beta + \eps_i$, where $\eps_i \sim N(0,\sigma_i^2)$ for different variances $\sigma_i^2$ with unknown values.

A comprehensive treatment of such problems was initiated by Chierichetti, Dasgupta, Kumar, and Lattanzi \cite{chierichetti2014learning} for heteroskedastic \emph{mean estimation}: where we aim to learn a one-dimensional mean $\mu$ from samples $X_i \sim N(\mu,\sigma_i^2)$ with unknown $\sigma_i^2$. In this setting, estimators such as the empirical mean are quite undesirable; even just one sample with extremely large variance can render the estimator useless. This has motivated the study of alternative estimators which are better-suited for this task, such as the empirical median, iterative trimming, and modal estimators \cite{chierichetti2014learning,xia2019non,yuan2020learning,pensia2022estimating,devroye2023mean,louati2025performance}. In the heteroskedastic setting, the rate achieved by each estimator may be a sophisticated function of $\sigma_1,\ldots,\sigma_n$, making it difficult to compare estimators. As a point of comparison, Liang and Yuan \cite{liang2020learning} suggested an interpretable ``Subset-of-Signals'' benchmark: ``What is the best error guaranteed by an estimator (as a function of $n,m$) if at least $m$ of the samples have $\sigma_i \le 1$, but the estimator is not told which samples?'' This perspective makes comparing the guarantees of different estimators much easier; estimators such as the median and other previously-discussed works require $m \ge \Omega(n^{1/2})$ samples to attain estimation error $\norm{\wh\mu - \mu}_2 \ll 1$, whereas a lower bound of \cite{liang2020learning} only requires $m \ge \Omega(n^{1/4})$. Compton and Valiant \cite{compton2024near} designed a new \textit{balance-finding} estimator for this setting, obtaining near-optimal errors for the Subset-of-Signals benchmark and demonstrating that error $\ll 1$ is possible even with only $m \ge n^{1/4 + o(1)}$ bounded-variance samples. 

Just as the empirical mean fails in the heteroskedastic setting, the OLS estimator degrades dramatically in the presence of even one high-variance label. 
There has been a similar study into alternative estimators for heteroskedastic regression \cite{yuan2020learning,pensia2022estimating,chaudhary2026linear}, and understanding the statistical and computational landscape has been highlighted as an open direction in the recent survey of \cite{maleki2026high}, where they remark that ``it is largely unclear whether polynomial-time algorithms can achieve optimal rates in these heterogeneous high-dimensional settings.'' 
A natural open question is whether there exists a polynomial-time estimator which similarly succeeds in the Subset-of-Signals setting for \emph{regression} when only $m \ge n^{1/4 +o(1)}$ out of the $n$ labels have bounded variance.
In our work, we resolve this positively: obtaining error $\ll 1$ when $m \gg d^{3/4}n^{1/4}$. More generally, we show this is statistically optimal, and obtain near-optimal Subset-of-Signals error in this regime of $m$.

\paragraph{Adaptive linear regression.} In the task of adaptive linear regression, errors are drawn i.i.d. from an unknown distribution $p$. The main challenge is to understand which conditions enable an adaptive estimator, which does not know $p$, to estimate nearly as well as a custom estimator with full knowledge of $p$.

Even if we are willing to restrict to $p$ which are symmetric and log-concave, this problem is quite nuanced. Consider three examples (with simulations in \cref{fig:plots}):
\begin{itemize}
    \item \textit{Gaussian errors: $N(0,1)$.} The optimal estimation error $\norm{\wh\beta - \beta}_2$ is on the order of $\sqrt{d/n}$; this is achieved by OLS.
    \item \textit{Uniform errors: $\operatorname{Unif}[-1,1]$.} Dramatically better estimation error of $\tilde{O}(d/n)$ is possible; this is achieved by minimizing the $L_\infty$ norm of the residuals \cite[Theorem 2.2, Example 2.10]{yi2024non}.
    \item \textit{Smoothed uniform errors: $\operatorname{Unif}[-1,1] * N(0,\sigma^2)$.} When $\sigma = n^{-a}$ for $a \in (0,1]$, then previously-known techniques do not determine the optimal error (to our knowledge); our later results will imply that the optimal error is on the order of $\max\left\{ \sqrt{\frac{d}{n^{1+a}}}, \frac{d}{n} \right\}$ (ignoring logarithmic factors; see \cref{sec:smoothed-uniform}), which is attained by neither OLS nor $L_\infty$ regression. 
\end{itemize}
\begin{figure}
\centering
\begin{subfigure}{.32\linewidth}
    \centering
    \includegraphics[width=\linewidth]{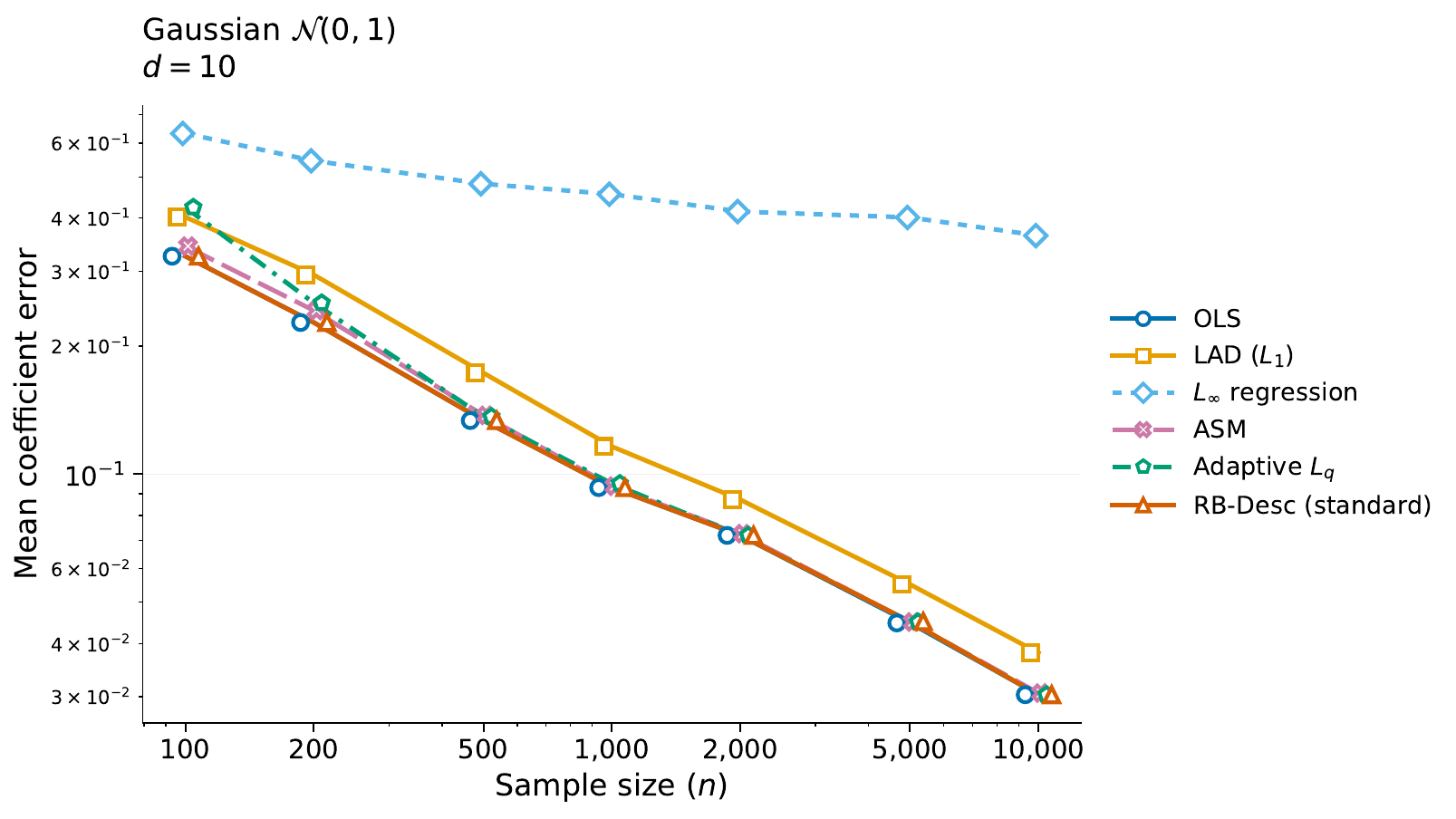}
    \caption{}\label{fig:plot-gaussian}
\end{subfigure}
    \hfill
\begin{subfigure}{.32\linewidth}
    \centering
    \includegraphics[width=\linewidth]{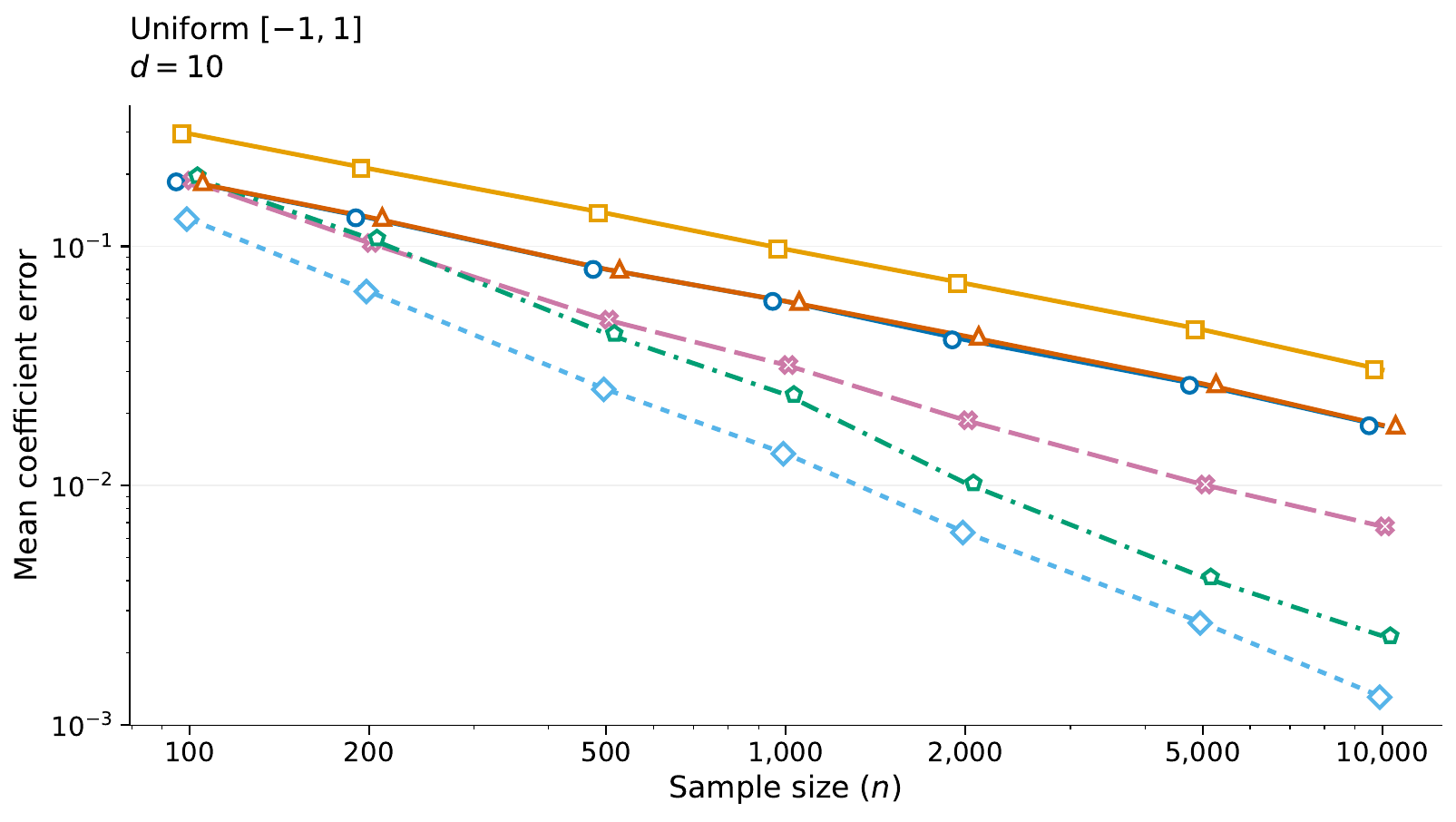}
    \caption{}\label{fig:plot-unif}
\end{subfigure}
   \hfill
\begin{subfigure}{.32\linewidth}
    \centering
    \includegraphics[width=\linewidth]{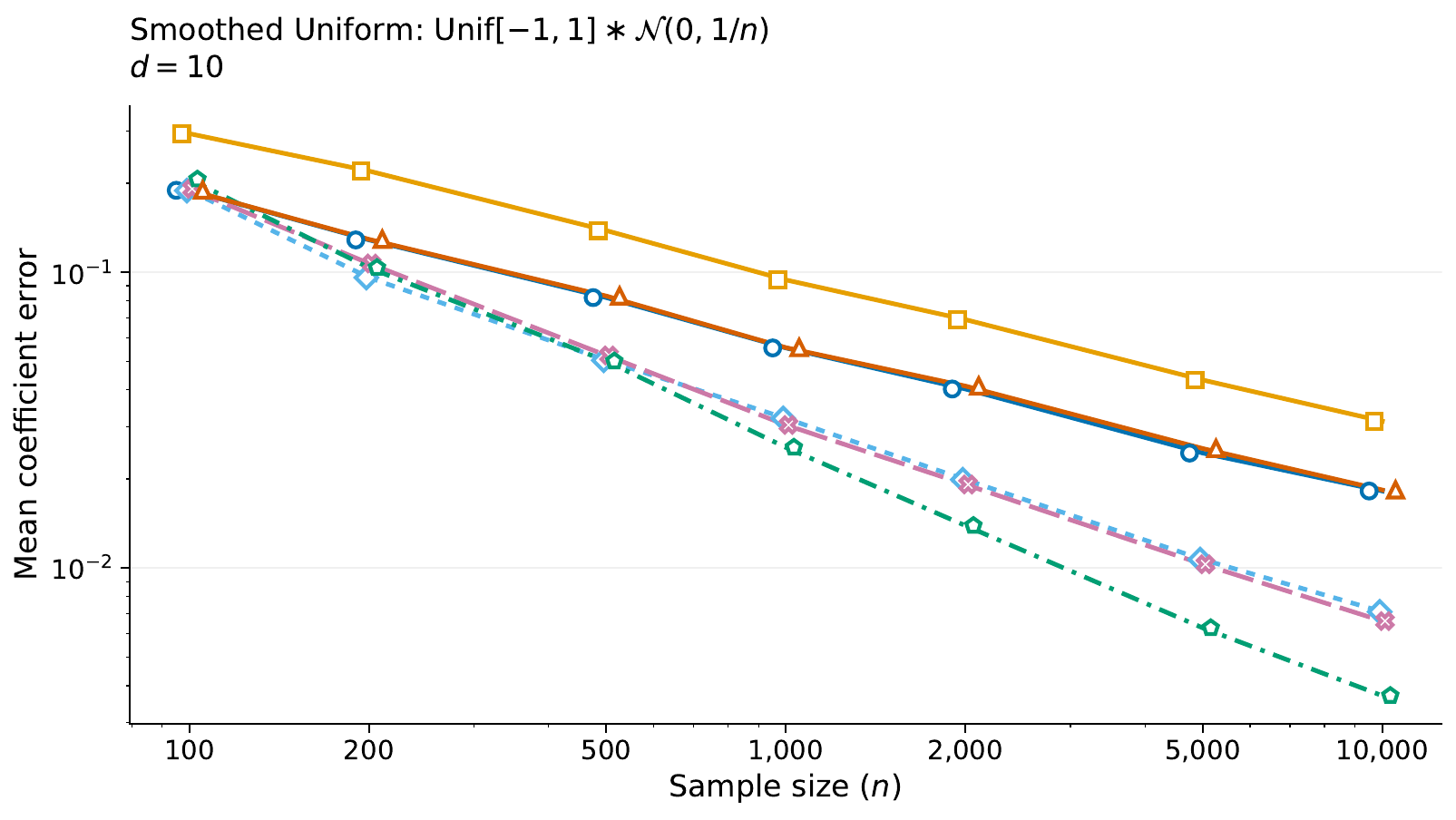}
    \caption{}\label{fig:plot-smoothed-unif}
\end{subfigure}

{\caption{Performance for Gaussian, uniform, and smoothed uniform errors (see \cref{sec:simulations}).}
\label{fig:plots}}
\end{figure}
In each case, the optimal rate is different, and is achieved by a different estimator. A key question is whether there exists a single estimator that may adapt to all such distributions. 

While analogous questions in the asymptotic setting are much better understood (we point to \cite{feng2026optimal} as an illuminating study), the limits of adaptivity with finite samples (and no bound on Fisher information) are comparatively uncharted. This concerns not only the simple example of uniform errors (with infinite Fisher information), but also more interesting errors such as: (i) smoothed uniform errors (where the smoothing may depend on $n$), or (ii) a mixture of two heteroskedastic mean-zero Gaussians (where the mixing weight and variances may depend on $n$). The heteroskedastic mixture is an instructive case for the importance of the finite-sample perspective; if we treated it asymptotically, both components would have a constant-fraction of the mixing weight, and we would not observe the interesting phase transition behaviors that appear in Subset-of-Signals heteroskedastic estimation.

In the case of symmetric, log-concave errors (which includes all examples in \cref{fig:plots}), we show that polynomial-time $L_q$ regression has the desired adaptive behavior when $q$ is chosen in a data-dependent manner. This style of estimator is comprehensively studied by Kao, Xu, and Zhang \cite{kao2024choosing}, but our result does not follow from existing analyses; we discuss the technical obstacles later. Roughly, our result proves that for any symmetric, log-concave $p$, this $L_q$ estimator given $n$ samples performs nearly as well as \textit{the best estimator that knows $p$} when it is given access to $\frac{n}{\polylog(n)}$ samples.

For more general errors, the most relevant prior work is a different paper of Compton and Valiant \cite{compton2026attainability}, where they study the analogous problems in the setting of univariate mean estimation. Their main result considers the setting where $p$ is a mixture of $k$ symmetric, log-concave distributions (which includes all examples mentioned so far); they design an adaptive estimator that, with $n$ samples, performs nearly as well as the best estimator that knows $p$ yet has $\frac{n}{k\polylog(n)}$ samples. They also prove that when $p$ is only symmetric and unimodal, it is impossible to attain the analogous result competing against an estimator with $\frac{n}{\polylog(n)}$ samples that knows $p$. Under the hood, both of these results leverage the \textit{Hellinger modulus of continuity} to characterize the optimal error (we discuss the Hellinger modulus in the preliminaries).

In our work, we design a computationally inefficient estimator that yields the analogous positive result for linear regression with symmetric log-concave mixture errors. Hence, the adaptive $L_q$ estimator runs in polynomial-time for $k=1$, yet we have no efficient estimator for $k \ge 2$. 

\paragraph{Planted linear regression.} For the purpose of studying the information-computation landscape of both heteroskedastic and adaptive linear regression, we suggest the following problem, which we call ``planted linear regression.''

\begin{definition}[Planted linear regression]\label{def:planted}
In \emph{planted linear regression}, the covariates $X_i \sim N(0,I_d)$, and the labels $Y_i = X_i^\top \beta + \eps_i$ for exchangeable $\eps_i$, exactly $m$ of which are zero and the remaining are drawn i.i.d. from $N(0,1)$. A similar i.i.d. version draws each $\eps_i$ from the mixture $\frac{m}{n} \delta_0 + \frac{n-m}{n} N(0,1)$. The goal is to recover $\beta$ exactly (or with much better error than the OLS rate $\sqrt{d/n}$). 
\end{definition}

Once $m \ge d+1$, this task is information-theoretically trivial: any hyperplane containing at least $d+1$ samples must be the true $\beta$ almost surely. However, it is not clear how to do this in polynomial time; a naive implementation would run in $n^{\Omega(d)}$ time. Most well-studied efficient estimators for linear regression are convex $M$-estimators, yet we later show that \textit{no convex $M$-estimator exactly recovers $\beta$} when $m \ll \sqrt{dn}$. In contrast, our estimator for heteroskedastic regression can recover $\beta$ once $m \gg d^{3/4}n^{1/4}$.
The success of our method in the regime $n^{1/4}d^{3/4}\ll m \ll \sqrt{dn}$ highlights the value of going beyond traditional estimators in finite-sample settings. We note that planted linear regression is a special case of \textit{noiseless linear regression} (see \cite{diakonikolas2026information}; we discuss more in \cref{subsec:related-work}).

In the regime $d + 1 \le m \ll d^{3/4}n^{1/4}$, recovering $\beta$ is information-theoretically possible, yet we know no polynomial-time algorithm; we conjecture that this is an information-computation gap, and we prove a statistical query (SQ) lower bound in support of this claim (\cref{thm:SQ}). 

Additionally, we remark that planted linear regression is a special case of heteroskedastic linear regression, and of adaptive linear regression (in the i.i.d. formulation). Our heteroskedastic regression estimator only yields polynomial-time Subset-of-Signals guarantees when $m \gg d^{3/4}n^{1/4}$; obtaining \textit{$\poly(n,d,1/\delta)$ Subset-of-Signals estimation error in $\poly(n,d)$-time} when $m \ll d^{3/4}n^{1/4}$ would imply an estimator for planted linear regression below the suggested computationally-hard regime. Likewise, a polynomial-time version of the adaptive linear regression guarantee (\cref{thm:main-adapt}) for $k=2$ would imply an estimator for planted linear regression when $m \gg d \polylog(n,d)$. This indicates an intertwined computational-statistical landscape for these three problems. We illustrate the planted linear regression landscape in \cref{fig:transition}.

\begin{figure}
\centering
\begin{subfigure}{.4\linewidth}
    \centering
    \includegraphics[width=\linewidth]{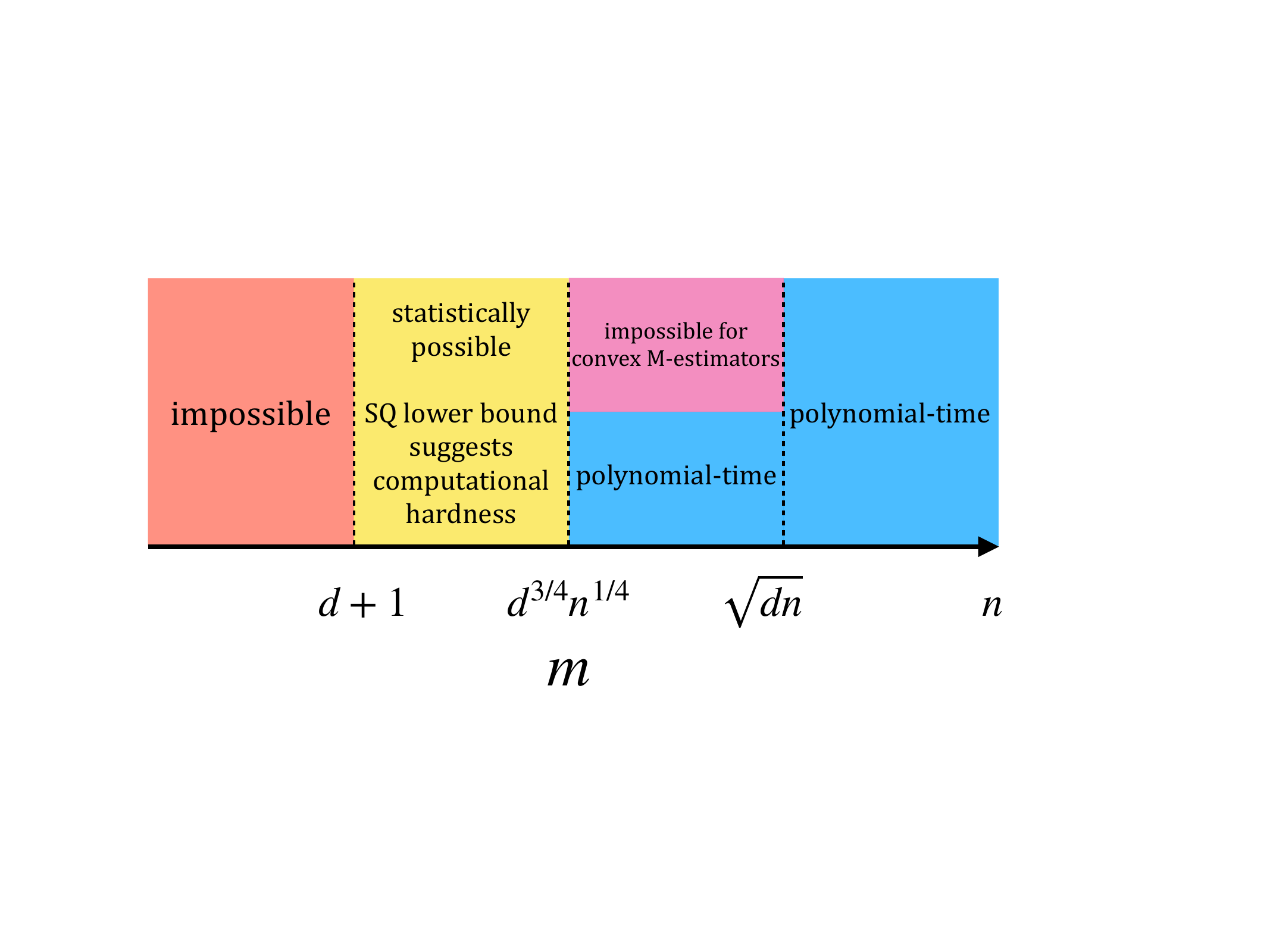}
    \caption{Our understanding of exact recovery for planted linear regression as a function of $m$.}\label{fig:transition}
\end{subfigure}
    \hfill
\begin{subfigure}{.58\linewidth}
    \centering
    \includegraphics[width=\linewidth]{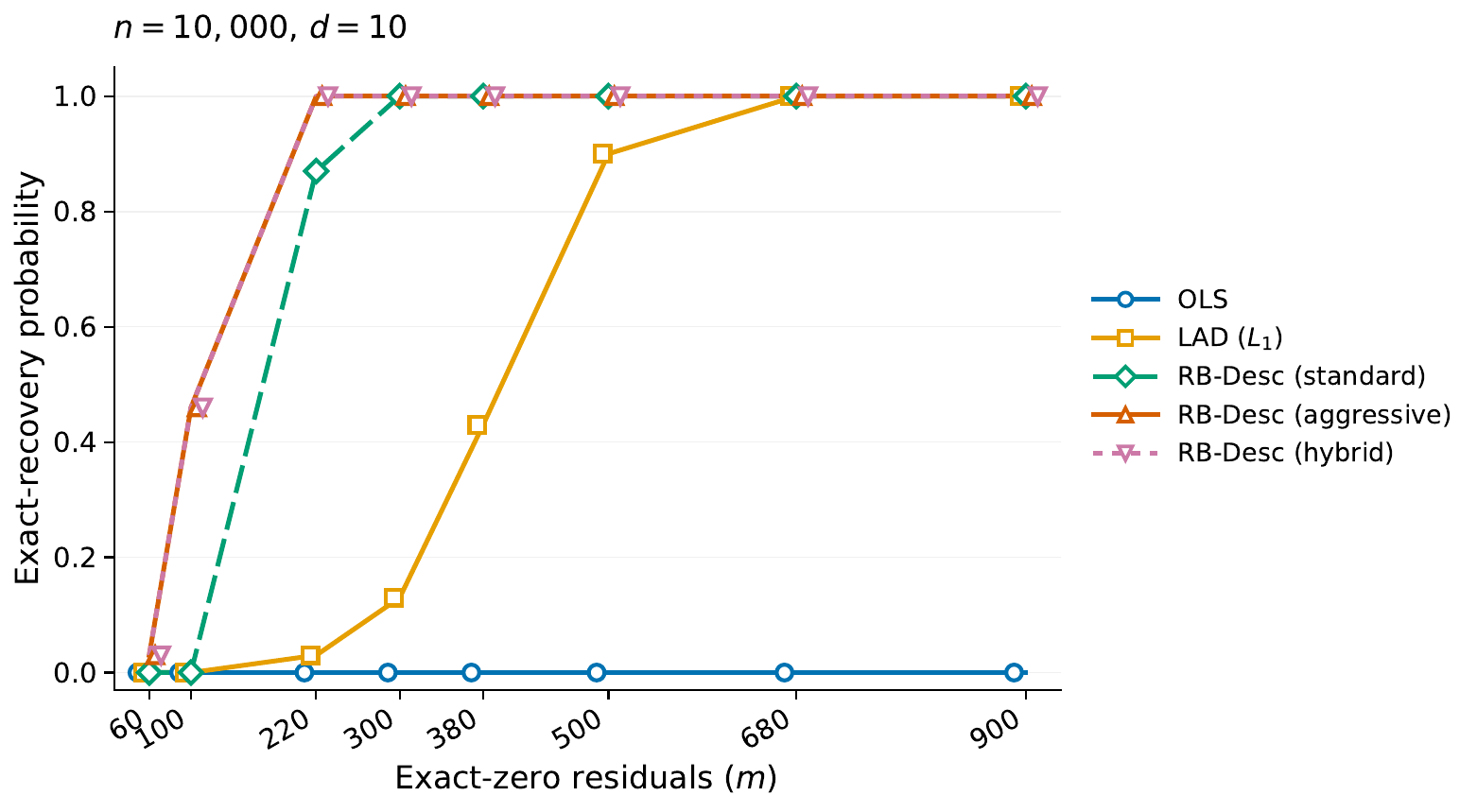}
    \caption{Empirical probability of recovery within $\norm{\wh\beta-\beta}_2 \le 10^{-5}$ (over 100 repetitions) for planted linear regression (see discussion in \cref{sec:simulations}).}\label{fig:planted-plot}
\end{subfigure}
{\caption{Planted linear regression.}}
\end{figure}

\subsection{Preliminaries}
For all linear regression tasks, the samples will take the form $(X_i,Y_i)\in \R^d \times \R$, where $Y_i = X_i^\top \beta + \eps_i$. The errors $\eps_i$ are independent from other samples, and independent of $X_i$. 
We will exclusively consider the setting where $X_i$ are i.i.d., and for constants $K,\kappa \ge 1$, we assume they have a $\kappa$ well-conditioned second-moment matrix 
\begin{equation}\label{eq:well-conditioned}
        \Sig=\E[XX^\top]
        \qquad\qquad \kappa^{-1}I\preceq \Sig\preceq \kappa I
\end{equation}
and are $K$ sub-Gaussian
\begin{equation}\label{eq:subg}
        \norm{\ip{u}{X}}_{\psi_2}
        \le K\sqrt{u^\top\Sig u}
        \qquad\text{for every }u\in\R^d,
\end{equation}
as would be the case when the variables have already been appropriately preprocessed/whitened. 
Constants throughout the proofs may depend on $K$ and $\kappa$ implicitly; we will be explicit in our theorem statements when the constants depend on $K,\kappa$. 

Throughout the paper we will use $c,C>0$ to denote constants, which may vary line-by-line and only depend on $K,\kappa$. We use $a \lesssim b$ to denote the same meaning as $a \le C b$ (meaning $a$ is at most a constant multiplied by $b$); and similarly for $\gtrsim$ notation. Let $a \asymp b$ denote both $a \lesssim b$ and $a \gtrsim b$. $O_{K,\kappa}(\cdot)$ notation suppresses multiplicative factors depending only on $K,\kappa$. We use $\log$ to denote the natural logarithm. We only study and discuss measurable sets.

For a candidate estimate $b\in\R^d$, we denote the $i$th residual as
\begin{equation}\label{eq:residual}
        r_i(b)=y_i-X_i^\top b.
\end{equation}

We often encounter symmetric, unimodal random variables (in 1-dimension), and use the following known decomposition (e.g. see \cite{jones2002khintchine}) of any such random variable:
\begin{definition}[Symmetric unimodal decomposition]\label{def:unimodal}
A real random variable $\eps$ is symmetric unimodal about zero if
\[
        \eps\stackrel{d}=TU,
\]
where $U\sim\Unif[-1,1]$, $T\ge0$, and $T$ is independent of $U$. 
\end{definition}
In this representation, $\Var(\eps)=\E T^2/3$; this means $\Var(\eps)\le1$ is
equivalent to $\E T^2\le3$.

For a 1-dimensional distribution $p$, let $p_\mu(x)$ denote the same distribution translated by $\mu$, meaning $p_\mu(x) = p(x-\mu)$.

\paragraph{Hellinger modulus of continuity.} Hellinger distance is central to our work. The squared Hellinger distance between two distributions $P,Q$ with densities $p,q$ is defined as
\[
\hels(P,Q) = \frac{1}{2} \int_\mathcal{X} \left(\sqrt{p(x)} - \sqrt{q(x)} \right)^2 \, dx.
\]
A crucial property of Hellinger distance is that it \textit{tensorizes}, meaning the distance between product distributions may be written in terms of the distance between the marginals:
\begin{fact}[e.g. page 45 of \cite{le2000asymptotics}]
Let $P^{\otimes n}$ and $Q^{\otimes n}$ denote the distribution of $n$ i.i.d. samples from $P$ and $Q$ respectively. Then,
\[
\hels(P^{\otimes n},Q^{\otimes n}) = 1 - (1 - \hels(P,Q))^n .
\]
\end{fact}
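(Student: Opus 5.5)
The plan is to reduce the statement to the Bhattacharyya (Hellinger affinity) coefficient and use that affinity is multiplicative over independent coordinates. Define the affinity $\rho(P,Q)=\int\sqrt{p(x)q(x)}\,dx$. Expanding the square in the definition and using $\int p=\int q=1$ gives
\[
\hels(P,Q)=\tfrac12\Bigl(\int p+\int q\Bigr)-\int\sqrt{pq}=1-\rho(P,Q).
\]
This is the only identity needed, and it holds for any pair of distributions, in particular for $P^{\otimes n},Q^{\otimes n}$ on $\mathcal{X}^n$.

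Next, compute the affinity of the product measures. The densities are $p^{\otimes n}(x_1,\dots,x_n)=\prod_i p(x_i)$ and similarly for $q$, so $\sqrt{p^{\otimes n}q^{\otimes n}}=\prod_i\sqrt{p(x_i)q(x_i)}$. Since this integrand is nonnegative, Fubini--Tonelli applies and the integral over $\mathcal{X}^n$ factors into a product of $n$ one-dimensional integrals:
\[
\rho(P^{\otimes n},Q^{\otimes n})=\prod_{i=1}^n\int\sqrt{p(x_i)q(x_i)}\,dx_i=\rho(P,Q)^n.
\]

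Combining the two steps gives $\hels(P^{\otimes n},Q^{\otimes n})=1-\rho(P,Q)^n=1-(1-\hels(P,Q))^n$, as claimed.

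The argument has no real obstacle. The only point needing care is that $P$ and $Q$ may lack densities with respect to Lebesgue measure. In that case we take densities with respect to a common dominating measure $\nu$, for example $\nu=(P+Q)/2$, so that $P^{\otimes n},Q^{\otimes n}$ have densities with respect to $\nu^{\otimes n}$. Both $\hels$ and $\rho$ are independent of the choice of dominating measure, so the computation above goes through unchanged with $dx$ replaced by $d\nu$.
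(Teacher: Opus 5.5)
Your proof is correct and is the standard argument. The paper does not prove this fact itself but cites Le Cam and Yang, where tensorization comes from exactly your two steps: writing $\hels = 1-\rho$ (with the paper's $\tfrac12$ normalization), and using Tonelli to show that the affinity $\rho$ is multiplicative over product measures, with your common-dominating-measure remark handling the case without Lebesgue densities.
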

Squared Hellinger distance is also quadratically related to TV distance:
\begin{fact}[e.g. page 44 of \cite{le2000asymptotics}] $\hels(P,Q) \le \tv(P,Q) \le \sqrt{2 \hels(P,Q)}$.
\end{fact}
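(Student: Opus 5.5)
The plan is to prove the two inequalities $\hels(P,Q) \le \tv(P,Q) \le \sqrt{2\hels(P,Q)}$ directly from the densities, using the normalization $\hels(P,Q) = \frac12\int(\sqrt p-\sqrt q)^2$ fixed above together with $\tv(P,Q) = \frac12\int|p-q|$. Both bounds come from the factorization $p-q = (\sqrt p-\sqrt q)(\sqrt p+\sqrt q)$. No result from earlier in the paper is needed beyond these definitions; if $P,Q$ lack densities, work with densities relative to the dominating measure $P+Q$.

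\textbf{Lower bound.} I will use the pointwise inequality $|\sqrt p-\sqrt q| \le \sqrt p+\sqrt q$, which holds because both square roots are nonnegative. Multiplying by $|\sqrt p-\sqrt q|$ gives $(\sqrt p-\sqrt q)^2 \le |p-q|$ pointwise. Integrating and halving then yields $\hels(P,Q)\le\tv(P,Q)$.

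\textbf{Upper bound.} I will write $\tv(P,Q) = \frac12\int|\sqrt p-\sqrt q|(\sqrt p+\sqrt q)$ and apply Cauchy--Schwarz:
\[
\tv(P,Q) \le \tfrac12\Bigl(\int(\sqrt p-\sqrt q)^2\Bigr)^{1/2}\Bigl(\int(\sqrt p+\sqrt q)^2\Bigr)^{1/2}.
\]
The first factor equals $\sqrt{2\hels(P,Q)}$. For the second, expanding gives $\int(\sqrt p+\sqrt q)^2 = 2+2\int\sqrt{pq} = 4-2\hels(P,Q) \le 4$, since $\int(\sqrt p-\sqrt q)^2 = 2-2\int\sqrt{pq}$. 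This gives $\tv(P,Q)\le \frac12\sqrt{2\hels(P,Q)}\cdot 2 = \sqrt{2\hels(P,Q)}$, as claimed.

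There is no real obstacle here. The only care needed is the bookkeeping of constants under the factor-$\frac12$ normalization, and in particular checking that $\int(\sqrt p+\sqrt q)^2\le 4$ gives exactly the constant $\sqrt2$.
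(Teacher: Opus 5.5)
Your proof is correct and complete. The pointwise bound $(\sqrt p-\sqrt q)^2\le|p-q|$ gives the lower inequality, and Cauchy--Schwarz applied to the factorization $p-q=(\sqrt p-\sqrt q)(\sqrt p+\sqrt q)$ gives the upper one with exactly the constant $\sqrt2$ under the paper's $\frac12$-normalization.

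The paper does not prove this fact; it only cites Le Cam and Yang, and your argument is the standard one behind that citation. If you keep the $-2\hels(P,Q)$ term rather than discarding it, you get the slightly sharper bound $\tv(P,Q)\le\sqrt{1-(1-\hels(P,Q))^2}$.
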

Using both facts, the TV distance  $\tv(P^{\otimes n},Q^{\otimes n})$ can be bounded in terms of the squared Hellinger distance between the marginals,
\[
1 - (1 - \hels(P,Q))^n \lesssim \tv(P^{\otimes n},Q^{\otimes n}) \lesssim \sqrt{1 - (1 - \hels(P,Q))^n}.
\]
Thus, the value of $n$ where $\tv(P^{\otimes n},Q^{\otimes n})$ goes from $\approx 0$ to $\approx 1$ is $n = \Theta(1/\hels(p,q))$. In this sense, the squared Hellinger distance between two distributions characterizes how many samples are required to distinguish them (while TV distance does not). This is helpful for cleanly establishing lower bounds; we will focus on lower bounds in the context of mean estimation. If $\hels(p,p_\mu) \le \frac{1}{n}$, then with only $n$ samples, any hypothesis tester must fail with some constant probability, and hence (by Le Cam's two-point testing method) must incur error $\mu/2$ with constant probability. 
Donoho and Liu \cite{donoho1991geometrizing1,donoho1991geometrizing2,donoho1991geometrizing3} established the influential concept of the \textit{Hellinger modulus of continuity}, which captures this style of argument. In the context of mean estimation, the Hellinger modulus is the function
\[
\omega_p(\eps) := \sup\{| \mu_1 - \mu_2 | : \hels(p_{\mu_1}, p_{\mu_2}) \le \eps, \, \mu_1,\mu_2 \in \R \} = \sup\{ t : H_p(t) \le \eps, \, t \ge 0\},
\]
where $H_p(t)$ is shorthand for $\hels(p,p_t)$.
By the same reasoning, any mean estimator for the class of translations of $p$ with access to only $n$ samples must incur error $c \cdot \omega_p(\frac{1}{n})$ with at least constant probability.

This perspective is widely useful both for providing lower bounds, and for obtaining upper bounds in terms of the Hellinger modulus (for just a few notable examples of the influence of this modulus perspective, see \cite{juditsky2009nonparametric,cai2015framework,foster2021statistical,duchi2024right,DualizingPolyanskiyWu,han2026empirical}). In the context of adaptive location estimation, the lower bound is immediate by definition, and the related work of \cite{compton2026attainability} studies conditions where the Hellinger modulus error is nearly achievable.
For mixtures of $k$ symmetric, log-concave distributions, they design an adaptive estimator with error on the order of $\omega_p(\frac{k \polylog(n)}{n})$; combined with the immediate Hellinger modulus lower bound, this implies that their adaptive estimator with $n$ samples performs nearly as well as the best custom estimator that knows $p$ and has $\frac{n}{k\polylog(n)}$ samples.
Our work will prove upper and lower bounds for adaptive \textit{linear regression} in terms of the Hellinger modulus for 1-dimensional \textit{mean estimation} (neither direction is immediate).
\subsection{Our contributions}\label{sec:contribs}
We now outline our results for heteroskedastic, adaptive, and planted linear regression. We additionally run experiments in \cref{sec:simulations}.

\paragraph{Polynomial-time heteroskedastic linear regression.} We design a polynomial-time algorithm that yields near-optimal guarantees for the Subset-of-Signals benchmark (we allow any symmetric, unimodal error, not just $N(0,\sigma_i^2)$):

\begin{theorem}\label{thm:main-het}
Suppose $\norm{\beta}_2 \le B$ and the distribution of $X$ is $\kappa$ well-conditioned and $K$ sub-Gaussian. There exists a constant $C_{K,\kappa} \ge 1$ depending only on $K,\kappa$ such that the following holds. Our residual-balance descent estimator ($\,\operatorname{RB-Desc}$) takes in parameters $n,d,B,\delta,K,\kappa$ where $n \ge d \ge 1$, $B,K,\kappa \ge 1$, and $\delta \in (0,1/2]$. Suppose 
\begin{equation}\label{eq:threshold}
        m\ge C_{K,\kappa} \log^5 \left( \frac{ndB}{\delta}\right) d^{3/4}n^{1/4}.
\end{equation}
Assume that each $\eps_i$ is distributed according to a symmetric unimodal distribution $p_i$, $m$ of which have variance at most $1$. Given access only to the samples (and not to $m$ nor the variances of the $p_i)$, with probability at least $1-\delta$, $\operatorname{RB-Desc}$ returns
$\wh\beta$ satisfying
\begin{equation}\label{eq:main-bound}
        \norm{\wh\beta-\beta}_2
        \le
        C_{K,\kappa} \log^5 \left( \frac{ndB}{\delta}\right) \cdot 
        \sqrt d\left(\frac{n}{m^4}\right)^{1/6},
\end{equation}
and runs in time
\[
        O_{K,\kappa}\left( n^2 d \log^2 \left(\frac{ndB}{\delta}\right)\right).
\]
\end{theorem}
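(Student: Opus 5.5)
Our plan is to lift the balance-finding idea of Compton and Valiant from mean estimation to regression, by running a descent procedure whose search direction at a current estimate $b$ is driven by the \emph{sign balance} of residuals within a shrinking window. Concretely, for a candidate $b$ and a radius $r$, let $S_r(b)=\{i:|r_i(b)|\le r\}$ and consider the balance vector
\[
g_r(b)=\sum_{i\in S_r(b)} \operatorname{sign}(r_i(b))\,X_i .
\]
At $b=\beta$, each summand is a symmetric random vector, because each $\eps_i$ is symmetric and independent of $X_i$; hence $g_r(\beta)$ concentrates at scale $\sqrt{|S_r(\beta)|\,d}$ up to logarithmic factors. At $b=\beta+\Delta$, the residuals are $\eps_i-X_i^\top\Delta$. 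Using the unimodal decomposition of \cref{def:unimodal}, each low-variance sample ($\E T^2\le 3$) has density at least of order $1/\max(r,T)$ near the origin. Consequently each such sample contributes a drift of order $\E[X_iX_i^\top]\Delta \cdot \min(1,1/T_i)$ in expectation, as long as $\|\Delta\|\lesssim r$. The high-variance samples contribute a nonnegative drift in direction $\Sig\Delta$, since their densities are symmetric and unimodal. This monotonicity is the key structural fact, and it means they never hurt. The algorithm $\operatorname{RB-Desc}$ will start from a crude estimate (for example $L_1$ regression, or simply $0$ with $\|\beta\|\le B$, which costs the $\log B$ factor). It then proceeds in $O(\log(ndB/\delta))$ phases, halving a scale $r$ each phase. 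Within a phase it takes steps along $g_r(b)$, or does a line search over $\Sig^{-1}$-preconditioned directions. Each phase costs $O(nd)$ per gradient evaluation over $O(n)$ steps, which gives the stated $n^2d\log^2$ runtime.

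The quantitative heart of the argument compares drift against noise at scale $r$. The $m$ good samples give drift at least about $m\|\Delta\|$, provided $r\gtrsim 1$; for $r<1$ we only keep the good samples with $T_i\lesssim 1$, so the drift is of order $m\|\Delta\|/r$. The noise is of order $\sqrt{d\,N(r)}$, where $N(r)$ counts samples in the window. Here we use a union bound over a net of $(b,r)$ together with a VC/chaining bound for the halfspace-slab indicator class; this uniform concentration over all $b$ is needed because $b$ is data-dependent, and it is the source of the extra $d$ and polylog factors. A bad sample with scale $T_i$ lands in a window of width $r$ with probability about $r/T_i$. The adversarial worst case is many bad samples with $T_i\approx r$: they fill the window, $N(r)\approx n$, but each contributes drift only of order $\|\Delta\|/r$, a weak signal. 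Balancing then requires $\|\Delta\|/r\cdot m\gtrsim \sqrt{dN}$. The subtlety inherited from the mean-estimation case is that the correct stopping radius must be chosen without knowing $m$. As in Compton–Valiant, I would select it by monitoring when the balance statistic stops being significantly nonzero across scales. The resulting optimization, taken over $r$ and an adversarial allocation of $\{T_i\}$, yields error $\sqrt d\,(n/m^4)^{1/6}$, which is $\ll r$ exactly when $m\gg d^{3/4}n^{1/4}$.

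The main obstacle is making the descent provably contract $\|\Delta\|$ by a constant factor per phase, uniformly in the direction of $\Delta$. The one-dimensional drift argument must hold simultaneously for all directions. Moreover, the slab $\{|\eps_i-X_i^\top\Delta|\le r\}$ couples $X_i$ with the selection event. To handle this, I would first condition on $X_i^\top\Delta$ and use sub-Gaussianity to control $\E[X_i\,\mathbf 1\{\cdot\}\operatorname{sign}(\cdot)]$. This should show that its inner product with $\Delta$ is at least of order $\|\Delta\|^2\cdot\min(1/r,1/T_i)$. A lower-tail bound on the number of good samples with small $|X_i^\top\Delta|$ then supplies the restricted-strong-convexity-type inequality, after which a standard potential argument finishes each phase.
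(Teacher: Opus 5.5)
Your statistic $g_r(b)$ is exactly the paper's $S_w(b)$, and the symmetry at $\beta$, the nonnegative drift of every sample (\cref{lem:pointwise}), the VC-plus-net concentration at relative scale $\sqrt{dN}$, and the runtime count all match the paper. The gap is in the drift-versus-noise step: as written it cannot get past the $m\approx\sqrt{dn}$ barrier. Write $h=\beta-b$, $r=\norm{h}_\Sig\le 1$, and $w$ for a constant-width window.

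The good samples give $\ip{h}{\mathbb{E}S_w(b)}\gtrsim mr^2$, against noise of order $r\sqrt{dA_w(b)}$. Suppose the bad noise is $\operatorname{Unif}[-2w,2w]$. Then half the bad samples land in the window, but their windowed sign has mean zero because their density is flat there. So $A_w(b)\approx n/2$, and the window test certifies only $r\gtrsim\sqrt{dn}/m$, which is the LAD/Huber regime. Your balancing inequality (drift $m\norm{\Delta}/w$ against $\sqrt{dN}$, with $N$ up to $n$) says exactly this. Your closing claim that optimizing over the window and $\{T_i\}$ yields $\sqrt d(n/m^4)^{1/6}$ does not follow from it.

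The missing idea is that the samples crowding the window are themselves signal for the \emph{unwindowed} sign statistic $S_\infty(b)=\sum_i X_i\sgn(r_i(b))$. \cref{lem:scalar-drift} shows $\mathbb{E}[Z\sgn(rZ+\eps)]\ge c\min\{r,1\}\Pp(\abs{rZ+\eps}\le w)$ for every symmetric unimodal $\eps$, hence $\ip{h}{\overline S_\infty(b)}\ge c\,r\min\{r,1\}\overline A_w(b)$. This gives a dichotomy:
\begin{itemize}
\item either the window is uncrowded, $m\min\{r,1\}\gtrsim L^3\sqrt{dA_w(b)}$, and $S_w$ is significant;
\item or $A_w(b)\gtrsim m^2\min\{r,1\}^2/(L^6d)$, and then $S_\infty$ beats its $L^2\sqrt{dn}$ fluctuation once $r^3\gtrsim L^9d^{3/2}\sqrt n/m^2$, i.e.\ $r\gtrsim\sqrt d\,(n/m^4)^{1/6}$ up to logarithms (\cref{lem:strong-score}).
\end{itemize}
You instead call the crowding samples a weak signal and discard them. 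In the sign statistic, $n$ bad samples of scale comparable to $w$ carry total drift of order $nr^2/w$, which beats the noise $r\sqrt{dn}$ once $r\gtrsim w\sqrt{d/n}$. The truly extremal configuration (cf.\ \cref{thm:subset-lb}) has bad scale $(n/m)^{2/3}$, far wider than the window, and there only the full-sign statistic sees them.

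A second, related flaw is halving the window each phase until it drops below a constant. For $w<1$, your claimed good-sample drift of order $m\norm{\Delta}/w$ is false. Conditional on $T_i$ and $X_i$, the residual is uniform on an interval of half-width $T_i$. So if $T_i\ge w+\abs{X_i^\top\Delta}$, the windowed sign has mean exactly zero. A good sample with $\eps_i\sim\operatorname{Unif}[-1,1]$ then contributes nothing, and for $N(0,1)$ noise the per-sample drift shrinks like $w^2$. For the same reason, your per-sample drift $\Sig\Delta\min\{1,1/T_i\}$ needs the window to cover the whole support of the residual, not merely $w\gtrsim\norm{\Delta}$.

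The paper never shrinks below a constant. The informative window is $w(r)\asymp\max\{r,1\}\ge w_0$, with $w_0$ chosen so that on an event of constant probability ($a\le\abs{Z}\le a^{-1}$, $T\le M$) the window contains the full support of $rZ+TU$. Since neither $r$ nor $m$ is known, every iterate tests all dyadic scales in $\calW$ together with $w=\infty$, and steps along $\sum_{w\text{ violated}}S_w(b)/\tau_w(b)$. This aggregation is what your ``monitor significance across scales'' has to become. It works only because $\ip{h}{\overline S_u(b)}\ge0$ at every scale, so each spuriously violated scale costs at most $\kappa^{1/2}r$ in the $h$-direction, against the margin $2\kappa^{1/2}\abs{\calW}r$ of the informative one (\cref{lem:aggregate-margin}).
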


\noindent We give a nearly matching minimax lower bound in \cref{sec:subset-lb}.

We briefly offer some intuition for the proof. 
We take inspiration from the balance-finding algorithm of Compton and Valiant \cite{compton2024near} for 1-dimensional heteroskedastic mean estimation. 
The main idea of their algorithm is that any sufficiently bad estimate $\wh\mu$ of the mean will exhibit ``imbalance'' at some scale $w>0$:
if $\wh\mu < \mu$ and $\norm{\wh\mu - \mu}_2$ is sufficiently large, this means that there exists some $w>0$ where the number of samples in $[\wh\mu-w,\wh\mu]$ is noticeably smaller than the number of samples in $[\wh\mu, \wh\mu + w]$. Accordingly, whenever some candidate estimate $\wh\mu$ has large imbalance to the left or right, we may restrict our search to that direction. 
As $w$ increases, the expected difference in sample counts becomes more favorable, yet the empirical fluctuations get worse; the optimal choice of $w$ entails a tradeoff that depends on the collection of variances. For example, if $m$ samples have $\sigma_i \le 1$ and the remaining $n-m$ samples have extremely large $\sigma_i$, then a ``bump'' in the histogram may be noticeable around $\mu$ at scale $w=1$; however, if the $n-m$ samples have moderate values of $\sigma_i$, then they may obscure the bump at scale $1$, yet the signal may be visible with a larger choice of $w$.
\cite{compton2024near} search over many choices of $w$ to obtain near-optimal Subset-of-Signals guarantees for mean estimation. Unfortunately, there is not an obvious analog for this approach in linear regression.

A well-known reduction from linear regression to mean estimation entails using that if $X$ is mean zero with identity covariance, then $\E[X_i y_i] = \beta$, and hence we may simply run a mean estimation procedure on $X_iy_i$ (e.g. see \cite[Section 7.2]{diakonikolas2023algorithmic}). This reduction does not work for our heteroskedastic setting (the distribution of $Xy$ is not even Gaussian), but we will use a similar intuition. When $y_i$ is very positive, it stands to reason that typically $X_i^\top \beta$ is very positive; similarly, when $y_i$ is very negative, typically $ X_i^\top \beta$ is very negative. Accordingly, it makes sense that $\E[X_i \sgn(y_i)]$ may correlate with $\beta$; this has been understood since at least the work of Manski \cite{manski1985semiparametric}.
In our algorithm, we use a related statistic with a scale parameter $w$,
\[
S_w(b) := \sum_{i=1}^n X_i \sgn(r_i(b)) \one\left\{\abs{r_i(b)} \le w\right\}.
\]
This is the (negative) subgradient of a clipped LAD objective $\sum_{i=1}^n \min\{ |r_i(b)|, w\}$, and is studied in the works of \cite{rousseeuw1982most,hampel,lia2023least}.
Our analysis focuses on proving that when some candidate $b$ is sufficiently far from $\beta$, then for some scale $w$, the statistic $S_w(b)$ will strongly correlate with $\beta - b$. This can be viewed as a form of imbalance in this direction. Our algorithm is a \textit{residual-balance descent}, where we repeatedly choose a set of scales, aggregate their statistics $S_w(b)$, and move in this direction.

We remark that in the special case of standard Gaussian covariates ($X_i \sim N(0,I_d)$), designing an algorithm is much simpler; we discuss this in \cref{sec:simple-gaussian}.

\paragraph{Adaptive linear regression for symmetric, log-concave errors.} We design a polynomial-time algorithm (with guarantees in terms of the Hellinger modulus) for adaptive linear regression when the errors are symmetric and log-concave:

\begin{theorem}\label{thm:k1}
Suppose the distribution of $X$ is $\kappa$ well-conditioned, $K$ sub-Gaussian, and the distribution of $\eps_i \stackrel{i.i.d.}\sim p$ is symmetric and log-concave. There exists a constant $C_{K,\kappa} \ge 1$ depending only on $K,\kappa$ such that, with probability at least $1-\delta$ (for $\delta \in (0,1/2]$), our adaptive $L_q$ estimator returns 
$\wh\beta$ satisfying
\begin{equation}\label{eq:k1-main-bound}
        \norm{\wh\beta - \beta }_2 \le C_{K,\kappa} \cdot \omega_p\left(C_{K,\kappa} \frac{d \log^4(64n/\delta)}{n} \right) + \gamma,
\end{equation}
for $\gamma \in (0,1]$, where the estimator runs in $\poly(n,d) \cdot \polylog(1/\delta, \norm{\mathbf{Y}}_\infty, 1/\gamma)$ time.
\end{theorem}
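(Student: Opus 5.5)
We will prove \cref{thm:k1} in three steps. First, for each fixed $q\in[1,\infty]$ we bound the error of the $L_q$ regression estimator $\wh\beta_q=\arg\min_b\sum_i |r_i(b)|^q$ by a one-dimensional quantity depending on $p$ and $q$. Second, we show that for some $q^\star$ this quantity is at most $\omega_p(Cd\log^4(n/\delta)/n)$. Third, we give a data-driven rule that selects a $q$ nearly as good as $q^\star$, in the style of Lepski. Computation is routine: each $L_q$ problem is convex and is solved to accuracy $\gamma$ in $\poly(n,d)\polylog(\norm{\mathbf Y}_\infty,1/\gamma)$ time. We use a geometric grid of $O(\log n)$ values of $q$ together with $q=\infty$. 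The additive $\gamma$ absorbs both the optimization error and the grid discretization.

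\emph{Step 1 (fixed-$q$ analysis).} Write $b=\beta+\Delta$, so that $r_i(b)=\eps_i-X_i^\top\Delta$. The objective is $F_q(\Delta)=\sum_i|\eps_i-X_i^\top\Delta|^q$. Its expectation is governed by the one-dimensional function $g_q(t)=\E_p|\eps-t|^q-\E_p|\eps|^q$, evaluated at $t=X^\top\Delta$. By symmetry and log-concavity of $p$, $g_q$ is even, convex and increasing in $|t|$. We then argue that $F_q(\Delta)-F_q(0)>0$ on the sphere $\norm{\Delta}_2=r$, which by convexity forces $\norm{\wh\beta_q-\beta}_2<r$. For this we need
\[ n\,\E_X g_q(X^\top\Delta) \gtrsim \sup_{\norm{\Delta}=r}\Bigl|\sum_i \bigl(h_i(\Delta)-\E h_i(\Delta)\bigr)\Bigr|,\qquad h_i(\Delta)=|\eps_i-X_i^\top\Delta|^q-|\eps_i|^q. \]
The right-hand side is controlled by a net over the $d$-dimensional sphere combined with Bernstein-type bounds. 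We first truncate $\eps$ at a $\polylog(n/\delta)$ quantile of the scale, which is possible since log-concave tails are exponential. We also use sub-Gaussianity of $X$ (\eqref{eq:subg}) and anti-concentration to show $\E_X g_q(X^\top\Delta)\gtrsim g_q(c r)$. The resulting fixed-$q$ guarantee has the form: the error is at most $r_q$, the smallest $r$ with $g_q(cr)\ge C\sqrt{d\log(\cdot)/n\cdot V_q(r)}+Cd\log(\cdot)/n\cdot M_q(r)$, where $V_q$ and $M_q$ are the variance and sup-norm of $h$.

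\emph{Step 2 (some $q$ matches the Hellinger modulus).} This is the main obstacle. We must show that for every symmetric log-concave $p$ there is a $q$ (typically $q\asymp$ a power-law exponent of $p$ near its support boundary, ranging from $1$ to $\infty$) with $r_q\lesssim\omega_p(Cd\polylog/n)$. The plan is to rescale $p$ to unit variance and use the known geometry of log-concave densities. For $t$ small relative to the scale, $H_p(t)\asymp$ a tail/boundary quantity of the form $\int \min(1,(t\,\psi'(x))^2)p(x)\,dx$, where $p=e^{-\psi}$. This interpolates between the $t^2$ behaviour (Gaussian) and the $t$ behaviour (uniform, where mass near the endpoints dominates). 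We would then show that choosing $q$ so that $|x|^q$ concentrates its weight at the quantile level $x_t$ where $\psi'(x_t)\approx 1/t$ makes the signal-to-noise ratio $g_q(t)^2/V_q(t)$ comparable to $H_p(t)$, up to polylog factors. Concretely we take $q\approx \log n/\log(\text{ratio of quantiles})$; this choice is where the $\log^4$ arises. We would first verify this on the three model families (Gaussian, uniform, smoothed uniform) and then generalize using log-concave quantile comparisons.

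\emph{Step 3 (adaptive choice of $q$).} We cannot compute $r_q$ without knowing $p$. Instead we use a Lepski-type rule. For each $q$ on the grid we form an empirical error proxy $\wh r_q$ from residuals of a pilot fit on a held-out half of the data; the proxy is an upper confidence bound on $r_q$, valid uniformly over the grid with probability $1-\delta$. We then select $\wh\beta_{q'}$ for the $q'$ minimizing $\wh r_{q'}$, or more robustly the $q'$ whose confidence ball intersects all larger ones. This gives error $\lesssim\min_q r_q$, and combined with Step 2 it yields \eqref{eq:k1-main-bound}. The delicate point here is estimating $V_q$ and $g_q$ from residuals rather than from the true errors. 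We would handle it by noting that the residuals of a consistent pilot estimator perturb $|\cdot|^q$ only at the scale already being certified.
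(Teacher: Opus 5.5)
Your three-part outline (fixed-$q$ bound, existence of a good $q$, data-driven selection) matches the paper's architecture. However, the two steps that carry the theorem are not established, and the mechanisms you propose for them would not work as stated.

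\emph{Step 2 is only a plan, and its starting heuristic fails on exactly the distributions that matter.} For $p=\operatorname{Unif}[-1,1]$, $\psi'=0$ on the support, so $\int\min\{1,(t\psi')^2\}p\,dx=0$, whereas $H_p(t)=t/2$. For $\operatorname{Unif}[-1,1]*N(0,\sigma^2)$ with $t\gg\sigma$, your formula gives $O(\sigma\sqrt{\log(t/\sigma)})$ instead of $H_p(t)\asymp t$. The reason is that weighting by $p$ alone misses the mass $p_t$ places where $p$ is negligible. ``Verify on three families and generalize,'' together with an unspecified quantile-ratio rule for $q$, is not an argument.

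The missing idea is the reverse data processing inequality (\cref{thm:rdpi}). Since $p/p_t$ is monotone for log-concave $p$, $\eta=H_p(t)$ is witnessed, up to a $\log(4/\eta)$ factor, by a halfline. This yields a tail point $r$ with $\tau=\Pp(\eps\ge r)\gtrsim\eta/\log(4/\eta)$ and a local length $\ell=\tau/p(r)$ with $\ell^2/\tau\lesssim t^2\log(4/\eta)/\eta$ (\cref{lemma:tail}). Log-concavity of $x\mapsto\Pp(\abs{\eps}\ge x)$ then gives $e^{-y/\ell}$ decay beyond $r$. Choosing $q\approx r/(2\ell)$ (or $q=2$ if $r/\ell\lesssim\log(e/\tau)$) gives $V_p(q)\lesssim t^2\log^3(4/\eta)/\eta$ and $Q_p(q)\lesssim\log^2(4/\eta)/\eta$ (\cref{lemma:moment-control}).

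Rounding to the power-of-two grid uses log-convexity of $a\mapsto M_a$ (\cref{cor:power-q}). It cannot be absorbed by $\gamma$: $\gamma$ is arbitrary and only pays for optimization error, while rounding $q$ changes the error multiplicatively.

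All of this requires the fixed-$q$ bound in moment form, $\norm{\wh\beta_q-\beta}_2\lesssim\sqrt{dV_p(q)/m}$ whenever $m\gtrsim dQ_p(q)\log m$, with constants uniform in $q$ up to about $n/d$. Your net-plus-Bernstein Step 1 would need high-probability control of heavy-tailed summands. For instance, for the smoothed uniform with $q\asymp1/\sigma$, $\abs{\eps}^{q-1}$ has log-normal-type tails near the boundary, so a sup-norm or $\psi_1$ Bernstein term can lose super-polylogarithmic factors. The paper avoids upper-tail control entirely. It bounds the gradient at the truth only in second moment (Markov, constant probability). It then lower-bounds the one-sided curvature $\frac1m\sum_i\abs{\eps_i}^{q-2}(X_i^\top u)^2\one\{\eps_iX_i^\top u\le0\}$ via truncation and VC uniform convergence (\cref{prop:fixed-q}, \cref{lemma:curve}).

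\emph{Step 3 is also missing its key idea.} A valid bound $\wh r_q\ge r_q$ requires certifying $g_q$ and $V_q$, which are moments of order up to $2q$ with $q$ as large as $n/d$. You give no mechanism for this. Pilot residuals make the argument circular, because they distort $\abs{\eps_i}^q$ by factors like $\exp\bigl(q\abs{X_i^\top(\beta-\tilde\beta)}/\abs{\eps_i}\bigr)$. For uniform errors with $q\approx n/d$, an OLS-accurate pilot produces factors $e^{\Theta(\sqrt{n/d})}$; only a pilot already accurate to the target $\approx d/n$ avoids this.

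The paper estimates nothing about $p$. It splits the sample into $B\asymp\log(n/\delta)$ batches and computes $\wh A_{q,i}$ for every batch and grid value. Each estimate is scored by the empirical $3/4$-quantile $r_{q,i}$ of its distances to the other batch estimates, and the minimizer is output. Validity rests on an observation your proposal lacks: for full-rank design, the exact $L_q$ optimizer is centrally symmetric about $\beta$ (replace $\eps$ by $-\eps$). Any ball of probability $>1/2$ under such a law has its center within its radius of $\beta$ (\cref{claim:ball-contains}). With DKW, every $B(\wh A_{q,i},r_{q,i}+\gamma)$ therefore contains $\beta$. Meanwhile, for the good $q^\ast$, at least $3/4$ of the batches land within $t_0$, so $\min_{q,i}r_{q,i}\le2t_0+2\gamma$. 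This is also why each fixed-$q$ fit needs only constant success probability, which is what lets the paper use Markov rather than exponential concentration in Step 1.
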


\noindent We  complement this result with a lower bound which shows it is impossible to obtain a rate better than $c \cdot \omega_p(cd/n)$.

Again, we offer some intuition for the proof. Our algorithm outputs an optimizer for $L_q$ regression
\[
\argmin_{b \in \R^d} \sum_{i=1}^n \abs{r_i(b) }^q,
\]
where the power $q$ is chosen in a data-dependent manner. Kao, Xu, and Zhang \cite{kao2024choosing} initiate a helpful study of this style of estimator, primarily in the context of adaptive location estimation (but also touching on adaptive linear regression). For mean estimation, their main analyses (Theorems 2.1 and 3.1) demonstrate guarantees for their adaptive estimator that relate to the performance of the $L_2$ and $L_\infty$ estimator. Though their guarantees are strong, they fail to cover all symmetric log-concave $p$. For example, both $q=2$ and $q = \infty$ give polynomially-suboptimal guarantees for the previously-discussed ``smoothed uniform errors'' example ($\operatorname{Unif}[-1,1] * N(0,n^{-2a})$ for $0 < a < 1$), which is symmetric and log-concave. With examples like this in mind, it is a priori unclear whether one should expect an adaptive $L_q$ estimator to work for all symmetric, log-concave distributions: is there always a choice of $q$ that recovers the desired guarantees?

We answer this question in the affirmative. As indicated by prior works (e.g. \cite{lai2005overview,el2023optimal,kao2024choosing}), the performance on $L_q$ regression is informed by the moments of the 1-dimensional error distribution $p$; we must prove the appropriate moment bounds. With work, this will follow by demonstrating a certain control over the tails of $p$. At first glance, it is not clear why $p$ must have tail control that relates in a meaningful way to its Hellinger modulus. The key tool is that for any $p$ and $p_\mu$ with large Hellinger distance, they may be distinguished by counting the number of samples inside a particular halfline (this is observed by \cite{compton2026attainability} and follows from the reverse data processing inequality of \cite{pensia2023communication}). In this sense, the distance between $p$ and $p_\mu$ is almost entirely from the tails, and this ultimately yields the necessary tail control (a more substantive proof intuition is given in \cref{sec:k1}).

Proving the existence of a good $q$ is the main difficulty; adaptively choosing the value of $q$ is much simpler. Roughly, we break the samples into batches, perform $L_q$ regression for all batches and a collection of choices for $q$, and then choose the value of $q$ for which the empirical $L_q$ estimates concentrate best (this style of technique has appeared before in e.g. \cite{nissim2007smooth,hsu2014heavy}).

\paragraph{Adaptive linear regression for mixtures of $k$ symmetric, log-concave errors.}
We design a (computationally intractable) estimator for adaptive linear regression when the errors are \textit{mixtures} of symmetric, log-concave distributions:
\begin{theorem}\label{thm:main-adapt}
Suppose the distribution of $X$ is $\kappa$ well-conditioned, $K$ sub-Gaussian, and the distribution of $\eps_i \stackrel{i.i.d.}\sim p$ is a mixture of $k$ symmetric, log-concave densities. There exists a constant $C_{K,\kappa} \ge 1$ depending only on $K,\kappa$ such that, with probability at least $1-\delta$, our (computationally intractable) estimator $\wh\beta$ satisfies
\begin{equation}\label{eq:adapt-main-bound}
        \norm{\wh\beta - \beta }_2 \le C_{K,\kappa} \cdot \omega_p \left( C_{K,\kappa} \frac{dk \log(2n/\delta) \log(2kn) \log(2n)} {n}\right).
\end{equation}
\end{theorem}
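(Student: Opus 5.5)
Our plan is to build a computationally inefficient tournament (Le Cam--Birgé style) over a net of candidate parameters. The tests are one-dimensional halfline counting tests applied to residuals, and the analysis transfers the one-dimensional Hellinger modulus guarantees of \cite{compton2026attainability} to regression.

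\textbf{Step 1: halfline distinguishability.} Fix a candidate $b$ and write $\Delta=\beta-b$. The residual $r_i(b)=\eps_i+X_i^\top\Delta$ is a mixture over the shift $t=X_i^\top\Delta$ of translates $p_t$. We would first show that for a constant fraction of samples, $|X_i^\top\Delta|\gtrsim\norm{\Delta}_2$; this uses sub-Gaussianity and well-conditioning, via a Paley--Zygmund bound. We then invoke the structural fact used in \cite{compton2026attainability}, which rests on the reverse data-processing inequality of \cite{pensia2023communication}. For a mixture of $k$ symmetric log-concave densities, if $H_p(t)\ge\eps$, then some halfline $[a,\infty)$ has
\[
|p_t([a,\infty))-p([a,\infty))|^2/\max(\cdot)\gtrsim \eps/(k\log n),
\]
up to the stated logarithms. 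Symmetry gives the mirrored halfline for free. Moreover, for mixtures of $k$ log-concave components, the relevant halflines can be chosen from a family of size $\poly(kn)$.

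\textbf{Step 2: pairwise tests and tournament.} Take an $\eta$-net $\mathcal N$ of a ball of radius $\poly(n)$, where the radius is set by a crude initial estimate such as LAD, and $\eta\ll\omega_p(\cdot)$, so $\log|\mathcal N|\lesssim d\log n$. For each pair $b,b'\in\mathcal N$ and each threshold $a$ in the $\poly(kn)$ family, compare the counts of $\{r_i(b)\ge a\}$ against $\{r_i(b)\le -a\}$. At the truth these are balanced in expectation by symmetry of $p$. We would use the count statistic $\sum_i \sgn(X_i^\top(b'-b))\bigl(\one\{r_i(b)\ge a\}-\one\{r_i(b)\le-a\}\bigr)$. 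Its imbalance when $b$ is far from $\beta$ is at least $n\cdot\min(\text{tail gain})$, which Step 1 lower bounds.

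Bernstein's inequality plus a union bound over $|\mathcal N|^2\cdot\poly(kn)$ tests costs a factor $d\log n+\log(kn)+\log(1/\delta)$. This matches the product of logarithms in \eqref{eq:adapt-main-bound}. Output any $\wh\beta\in\mathcal N$ that no test rejects. The truth's nearest net point survives, and any point at distance $\gtrsim\omega_p(\cdot)$ is rejected.

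\textbf{Step 3: rejecting far candidates.} We need that when $\norm{b-\beta}_2\ge C\omega_p(\eps)$ with $\eps=C dk\log^3/n$, the expected imbalance exceeds the fluctuation scale $\sqrt{n\cdot\text{mass}\cdot\log}$. Monotonicity of $t\mapsto H_p(t)$ is available for unimodal-type families, or else we pass through the modulus definition. It gives $H_p(X_i^\top\Delta)\ge\eps$ whenever $|X_i^\top\Delta|\ge\omega_p(\eps)$.

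\textbf{Main obstacle.} The hardest step is controlling the variance of the test statistic uniformly in the mixing over $X_i$. The signs $\sgn(X_i^\top\Delta)$ must align so that per-sample tail gains add rather than cancel. Different samples may need different best halflines $a$, because the shift magnitudes $|X_i^\top\Delta|$ vary. We would first try to restrict to samples with $|X_i^\top\Delta|\in[\norm{\Delta},2\norm{\Delta}]$, which are a constant fraction, and use a single $a$ chosen for the shift $\norm{\Delta}$. This requires a mild monotonicity in $t$ of the halfline gain, which holds for log-concave components but must be argued componentwise for mixtures.
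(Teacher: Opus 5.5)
\textbf{Step 1 is false for $k\ge2$, and the whole tournament rests on it.} The reverse data-processing inequality yields a likelihood-ratio set $\{p\ge\lambda p_t\}$. That set is a halfline only when $p/p_t$ is monotone, which is essentially the single log-concave case $k=1$. This is exactly how the paper uses \cref{thm:rdpi} in \cref{lemma:tail}, and for $k=1$ your halfline route is plausible. For mixtures, however, halflines can lose a polynomial factor.

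Take $p=\tfrac12 N(0,1)+\tfrac12\Unif[-1,1]$, a mixture of two symmetric log-concave densities. The jumps at $\pm1$ give $H_p(t)\asymp t$, so $\omega_p(\eps)\asymp\eps$ and the theorem promises error $\tilde O(d/n)$. Now fix any halfline $A=[a,\infty)$. We have $|\Pp_{p_t}(A)-\Pp_p(A)|\le t\sup_{[a-t,a]}p$. Moreover, either $\Pp_p(A)\gtrsim 1$, or $a$ lies in the Gaussian tail, where $p(a)^2/\Pp_p(A)=O(1)$. Hence $\bigl(\sqrt{\Pp_p(A)}-\sqrt{\Pp_{p_t}(A)}\bigr)^2=O(t^2)$ uniformly in $a$.

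Your statistic sees the residuals only through halfline indicators. Its rejection threshold must exceed the null fluctuation $\sqrt{n\,\Pp(|\eps|\ge a)}$, while its drift is $O(nt\,p(a))$. So it cannot reject candidates at distance $\ll n^{-1/2}$ (fixed $d$, up to logarithms), and the claimed rate fails. The missing ingredient is the \emph{interval} witness, \cref{lem:interval-witness} (from Corollary 2.18 of \cite{compton2026attainability}). It gives a bounded interval $I=[-b,-a]$ with square-root separation $\gtrsim h/(k\log(4k/h)\log(4/h))$. This is the source of the factor $k$ and the two extra logarithms in \eqref{eq:adapt-main-bound}.

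\textbf{Intervals also need a different statistic, and that statistic is what removes your ``main obstacle.''} The witness comes with the monotonicity $\Pp_{p_{\Delta'}}(I)\le\Pp_{p_\Delta}(I)\le\Pp_p(I)$ for $\Delta'\ge\Delta$. After Rademacher symmetrization of the covariates, the paper compares two counts:
\begin{itemize}
\item $N^+=\#\{i: X_i^\top c\ge s,\ r_i(b)+X_i^\top c\in I\}$;
\item $N^-=\#\{i: X_i^\top c\le -s,\ r_i(b)-X_i^\top c\in I\}$.
\end{itemize}
Set $c=b-\beta$ and $T_i=X_i^\top c$. Then every plus-group residual becomes exactly $\eps_i$, whatever $T_i$ is. Every minus-group residual is $\eps_i$ shifted by $-2T_i\ge 2s$, so its probability of landing in $I$ is at most $\Pp_{p_{2s}}(I)$. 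One threshold $s=c_1\norm{b-\beta}_\Sigma$ and one interval therefore handle all shift magnitudes at once. There is no need to restrict to $|T_i|\in[\norm{\Delta},2\norm{\Delta}]$ or to argue monotonicity componentwise.

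\textbf{The net cannot be built adaptively.} A mesh $\eta\ll\omega_p(\cdot)$ requires knowing $p$, which the estimator does not. The paper avoids nets entirely. It uses square-root VC uniform convergence (\cref{lem:sqrt-vc}) over all $(b,c,s,I)$, which applies because each detection set is an intersection of three halfspaces in $\R^{d+1}$.
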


Compton and Valiant \cite{compton2026attainability} design an analogous polynomial-time estimator for the task of 1-dimensional adaptive location estimation when $p$ is likewise a mixture of $k$ symmetric, log-concave densities; our estimator will be a natural high-dimensional generalization of their technique.

In their adaptive location estimation algorithm, they output any estimate $\wh\mu$ such that the empirical samples look symmetric around $\wh\mu$ according to interval tests: meaning the number of samples in $[\wh\mu-b,\wh\mu-a]$ is approximately the same as in $[\wh\mu+a,\wh\mu+b]$, for all $0 \le a < b < \infty$. The success of this estimator is proven by a technical result where they show that whenever $p$ and $p_t$ have large Hellinger distance (and $p$ is a symmetric, log-concave mixture with small $k$), then there exists an interval $[l,r]$ where 
\[
\left( \sqrt{\Pp_{p}(x \in [l,r])} - \sqrt{\Pp_{p_t}(x \in [l,r])}\right)^2 
\]
is large. In other words, an interval $[l,r]$ ``witnesses'' the Hellinger distance between translations. The proof of this builds upon the reverse data processing inequality of Pensia, Jog, and Loh \cite{pensia2023communication}.

We design an estimator with an analysis that leverages this technical result. Suppose $b \in \R^d$ is a candidate estimate where $\norm{b - \beta}_2$ is large, and let $h = \beta - b$. The residuals satisfy
\[
r_i(b) = y_i - X_i^\top b = \eps_i + X_i^\top h.
\]
Suppose we defined a threshold $s > 0$, and two sets where $S_+$ contains all the residuals for samples where $X_i^\top h \ge s$, and $S_-$ contains all the residuals for samples where $X_i^\top h \le -s$. Observe how each sample in $S_+$, conditioned on $X_i$, is distributed like $\eps_i$ with a positive translation of at least $s$ (and similarly for $S_-$ with negative translations). If $b$ is a very poor estimate, we may expect that for the right choice of $s$, the means of $S_+$ and $S_-$ will be very different, and we want to detect when this occurs, so we may rule out this candidate estimate.

For any candidate estimate $b$, if there exists a choice of projection direction $u \in \R^d$, threshold $s>0$, and an interval $I=[l,r]$, where the number of samples from $S_+$ in $I$ is substantially different than the number from $S_-$, we will conclude $b$ is a poor estimate. By comparison, if $b = \beta$, then $X_i^\top u$ is independent of $r_i(\beta)$ for any $u$, so the sets $S_+,S_-$ are drawn from the same distribution, and $\beta$ should not fail any such test. The estimator is inefficient since we require a search over all $u \in \R^d$ to test a candidate estimate; a poor estimate $b$ will violate the test for $u=h$. Naturally, our analysis heavily leverages the technical interval witness result in \cite{compton2026attainability}. In reality, we use slightly different sets than $S_+,S_-$, but the intuition remains the same.

\paragraph{A complementary lower bound for linear regression in terms of the Hellinger modulus.} We prove a general lower bound for standard Gaussian covariates with symmetric, unimodal errors:
\begin{theorem}[Hellinger modulus lower bound for regression]\label{thm:main-hel-lb}
    There exists a universal constant $0 < c \le 1$ such that for every symmetric unimodal density $p$, every $n \ge d \ge 1$, and every estimator $\wh\beta$ given $n$ samples where $X_i \sim N(0,I_d)$, $\eps_i \sim p$, $Y_i = X_i^\top \beta + \eps_i$, it holds that
    \[
    \sup_{\beta \in \R^d} \Pp\left( \norm{\beta - \wh\beta}_2 \ge c \cdot \omega_p \left(c \frac dn \right) \right) \ge 3/8.
    \]
\end{theorem}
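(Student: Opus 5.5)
The plan is to run a multiple-hypothesis (Fano-type) argument over a packing of a sphere of radius $s \asymp \omega_p(cd/n)$. All divergences will be measured in Hellinger distance rather than KL, because $p$ may have infinite KL or Fisher information (e.g.\ the uniform case). A coordinatewise Assouad argument will not suffice. It only yields $\sqrt d\,\omega_p(c/n)$, and this can be much smaller than $\omega_p(cd/n)$: for uniform errors $H_p(t)\asymp t$, so $\sqrt d\,\omega_p(c/n)\asymp\sqrt d/n$ while $\omega_p(cd/n)\asymp d/n$. So the perturbations must be ``full-size'' in every direction simultaneously.

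\emph{Step 1 (properties of $H_p$).} I will use two facts.
\begin{itemize}
\item \textbf{Scaling.} Since $\sqrt{H_p(\cdot)}$ comes from a metric and is translation-invariant, the triangle inequality gives $H_p(kt)\le k^2H_p(t)$ for integers $k\ge1$.
\item \textbf{Monotonicity.} For symmetric unimodal $p$, $t\mapsto H_p(t)$ is nondecreasing on $t\ge0$. I would prove this via the decomposition of \cref{def:unimodal}, or by showing that the affinity $\int\sqrt{p(x)p(x-t)}\,dx$ is nonincreasing in $t$. Intuitively, moving two unimodal bumps with a common center of symmetry further apart can only reduce their overlap.
\end{itemize}
Combining the two, for $g\sim N(0,1)$ and any $t\ge0$,
\[
\E\, H_p(|g|t)\le \E\lceil |g|\rceil^2 H_p(t)\le C_0 H_p(t).
\]

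\emph{Step 2 (per-sample divergence).} Fix $s$ with $H_p(2s)\le c d/n$. Since $\omega_p$ is defined as a supremum, I may take $s$ arbitrarily close to $\tfrac12\omega_p(cd/n)$. Choose a $1/2$-separated packing $\{\beta_1,\dots,\beta_M\}$ of the sphere of radius $s$ in $\R^d$, with $M\ge 2^{d}$ (enlarging $M$ by working with $d\ge d_0$ and handling small $d$ by Le Cam's two-point method). Write $P_j$ for the law of a single pair $(X,Y)$ under $\beta_j$. Conditioning on $X$ and using that the $X$-marginal is shared, with $X^\top(\beta_j-\beta_k)\sim N(0,\|\beta_j-\beta_k\|^2)$ and $\|\beta_j-\beta_k\|\le 2s$, I get
\[
\hels(P_j,P_k)=\E_X H_p\bigl(|X^\top(\beta_j-\beta_k)|\bigr)\le C_0 H_p(2s)\le C_0c\,d/n.
\]
Tensorization then gives $\hels(P_j^{\otimes n},P_k^{\otimes n})\le 1-(1-C_0cd/n)^n$, so the Hellinger affinity of the $n$-sample laws is at least $e^{-2C_0cd}$ for every pair.

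\emph{Step 3 (main obstacle: turning pairwise Hellinger control into an $M$-ary testing lower bound).} Fano's inequality needs a bound on mutual information, and this cannot be controlled by Hellinger in general. My first attempt is the $f$-divergence generalization of Fano due to Guntuboyina, specialized to Hellinger, centered at the uniform mixture $\bar Q=\frac1M\sum_k P_k^{\otimes n}$. It yields a testing error bounded away from zero provided the $n$-sample Hellinger affinities to the center are at least $M^{-1/2}$ up to constants, i.e.\ at least $e^{-c'd}$. The affinity to $\bar Q$ should dominate the pairwise affinities of Step 2 (by concavity of $\sqrt{\cdot}$), so the bound $e^{-2C_0cd}$ would suffice for small enough $c$. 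If that route is too lossy, my fallback is a Bayesian argument. Put the prior $\beta\sim$ uniform on the radius-$s$ sphere and compare against a reference law under which $Y$ is independent of $X$. The goal is then to show that, with constant probability, the posterior puts mass at least $1/8$ on any cap of radius $s/4$, using the Hellinger affinity bound together with the rotational symmetry of $N(0,I_d)$.

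\emph{Step 4 (conclusion).} Testing error bounded below by a constant converts, via the $s/2$-separation, into $\Pp(\|\wh\beta-\beta\|_2\ge s/4)\ge 3/8$ for some $\beta_j$. Since $s\asymp\omega_p(cd/n)$, adjusting constants gives the claimed bound $c\,\omega_p(cd/n)$.
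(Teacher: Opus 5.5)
\textbf{Gap in Step 3.} Your Steps 1 and 2 are sound; they parallel the paper's estimate $\Ex H_p(T)\le 4H_p(r)$. The gap is in Step 3, which is exactly where the difficulty of the theorem lies.

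The Hellinger case of Guntuboyina's $f$-divergence Fano does not say what you claim. Apply data processing to the indicator $\{\psi=j\}$, with $\bar A$ the average $n$-sample affinity to the center and $r$ the testing error. This gives
\[
\bar A\le \sqrt{\tfrac{1-r}{M}}+\sqrt{\tfrac{r(M-1)}{M}},
\qquad\text{hence only}\qquad r\ge\bigl(\bar A-M^{-1/2}\bigr)^2 .
\]
This is bounded away from zero only when $\bar A$ is itself a constant close to $1$. That requires per-sample squared Hellinger distances $\lesssim 1/n$, which would give only $\omega_p(c/n)$. With your $\bar A\approx e^{-2C_0cd}$, you get an exponentially small error bound.

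No argument that uses only the per-sample pairwise bound $\hels(P_j,P_k)\lesssim cd/n$ can succeed. Take $M=2^d$ laws in which each sample independently, with probability $\epsilon=cd/n$, outputs the index $j$, and otherwise outputs a common symbol. Then $\hels(P_j,P_k)=\epsilon$, and the $n$-sample affinities are $(1-\epsilon)^n\approx e^{-cd}\gg M^{-1/2}$. Yet $j$ is recovered with probability about $1-e^{-cd}$.

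Your fallback is not yet an argument, and as sketched it hits the same wall. To transfer the $e^{-\Omega(d)}$ small-ball probability from a reference law $Q$ to the true law $P$, you need a KL-type bound, $P(E)\le(\kll{P}{Q}+\log 2)/\log(1/Q(E))$. An affinity of $e^{-\Theta(cd)}$ is compatible with $P(E)$ close to $1$. Moreover, the KL itself can be infinite, for example for uniform $p$.

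\textbf{Missing idea.} You need to use that each observation depends on $\beta$ only through the scalar $X_i^\top\beta$. Then the part of a sample that KL cannot control reveals at most one linear functional of $\beta$, which is precisely what fails in the counterexample above. The paper implements this in the following steps.
\begin{enumerate}
\item It uses the decomposition of \cref{lem:splitting}: $p_t=(1-\alpha)G_t+\alpha R_t$ with $\alpha\lesssim H_p(t)$ and $\kll{G_t}{p}\lesssim H_p(t)$.
\item Under the prior $\Theta\sim N(0,\frac{r^2}{d}I_d)$, it gives the estimator $X_i^\top\Theta$ exactly whenever a sample is drawn from $R$. Since $\Ex|B|\lesssim cd$, there are at most $d/2$ such samples with probability at least $7/8$.
\item It bounds $I(\Theta;B\mid X^n)$ via \cref{lem:label-information}, and bounds the information in the remaining samples via the KL bound on $G$.
\item It runs a KL-based small-ball argument on the unrevealed subspace, which has dimension at least $d/2$ (\cref{lem:rank-info}).
\end{enumerate}
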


We aim to leverage a proof similar in spirit to Fano's method (see e.g. \cite[Section 31.4]{polyanskiy2025information} for related exposition), where we draw the parameter $\beta$ from a prior and show: (i) if the estimate $\wh \beta$ is often sufficiently close to $\beta$, then $I(\beta ; X^n, Y^n)$ must be large, and (ii) an upper bound for $I(\beta ; X^n, Y^n)$. Together, these imply a tradeoff that yields a lower bound. In a typical lower bound, the proof of (ii) can be implied by a suitable upper bound for $\E_{\beta, X_i}[\kll{p_{X_i^\top \beta}}{p}]$; this works well when $p \sim N(0,1)$, but can fail miserably when $p$ is an arbitrary symmetric unimodal distribution (e.g. when $p = \operatorname{Unif}[-1,1]$, the $\kl$ for translations are infinite). The main difficulty in our lower bound is sidestepping this technical obstacle. 

A key observation is that whenever two distributions $P$ and $Q$ have small Hellinger distance, then $P$ can be decomposed into two components $P = (1-\alpha)G + \alpha R$, where $\alpha \lesssim  \hels(P,Q)$ and $\kll{G}{Q} \lesssim \hels(P,Q)$. The decomposition follows intuitively from leveraging how Hellinger and KL are closely related when the densities are within a constant factor of each other, and that the mass where densities are not within a constant factor is bounded in terms of the Hellinger distance. While there are more technical steps, our proof aims to handle samples corresponding to $G$ in the more typical Fano style, and handle samples corresponding to $R$ separately.

\medskip

This gives a user-friendly lower bound for a more general class of errors than we are aware of in existing literature. For example, when the errors are uniform $\operatorname{Unif}[-1,1]$, the Hellinger modulus is $\omega_p(t) = 2t$ for $t \in [0,1)$, and hence any estimator must incur $\Omega(\frac{d}{n})$ error with constant probability. As far as we know, the optimal rate's dependence on $d$ for uniform errors was previously unknown, and in prior work it is suggested that $\sqrt{d}/n$ might be the optimal rate.\footnote{In particular, \cite[Theorem B.6]{yi2024non} claims a $\sqrt{d}/n$ upper bound which contradicts our result, but in a correspondence with the authors we have confirmed that there is a mistake in their proof and their estimator does not achieve the claimed upper bound.}
We expect there is a simpler lower bound which specifically targets the uniform setting, but our lower bound must handle all symmetric, unimodal distributions.

\paragraph{Adaptive estimators that nearly match the optimal non-adaptive estimator.} Let us denote the minimum possible worst-case expected error (risk) for standard Gaussian design, known noise $p$, and $m$ samples as
\[
\mathcal{R}(p,m) := \inf_{\text{estimator } \wh\beta} \, \, \sup_{\beta \in \R^d} \,\, \Ex_{\substack{X_i \sim N(0,I_d),\\ \eps_i \sim p,\\ Y_i = X_i^\top \beta + \eps_i}}\left[ \norm{ \wh\beta(X_1,\dots,X_m,Y_1,\dots,Y_m) - \beta}_2\right].
\]
When combined with our upper bounds, this lets us compare the performance of our adaptive estimator to the best estimator that knows $p$:
\begin{corollary}[Adaptive linear regression for symmetric, log-concave errors]\label{cor:k1}
There exists a constant $C_{K,\kappa} \ge 1$ such that when $n \ge C_{K,\kappa} d \log^4(64n/\delta)$ and the conditions of \cref{thm:k1} hold,  then with probability at least $1-\delta$, the adaptive $L_q$ estimator incurs error 
\[
\norm{\wh\beta - \beta}_2 \le \mathcal{R}\left(p,\left\lfloor\frac{n}{C_{K,\kappa} \log^4(64n/\delta)}\right\rfloor\right) + 3\gamma.
\]
\end{corollary}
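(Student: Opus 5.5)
The plan is to combine the upper bound of \cref{thm:k1} with the lower bound of \cref{thm:main-hel-lb}, applied with a reduced sample size. Set $m := \lfloor n/(C\log^4(64n/\delta))\rfloor$ for a constant $C = C_{K,\kappa}$ to be chosen. The target comparison is
\[
\omega_p\!\left(C_{K,\kappa} \tfrac{d\log^4(64n/\delta)}{n}\right) \lesssim \mathcal{R}(p,m).
\]
Once this holds, \cref{thm:k1} gives, with probability $1-\delta$, $\norm{\wh\beta-\beta}_2 \le C_{K,\kappa}\,\omega_p(\cdot)+\gamma$. The multiplicative constant is then absorbed by enlarging $C$ in the definition of $m$. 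This uses the monotonicity and scaling properties of $\omega_p$ discussed below, and $\mathcal{R}(p,\cdot)$ being nonincreasing in the sample size, since an estimator can ignore samples.

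\textbf{Lower bound on the risk.} Apply \cref{thm:main-hel-lb} with $m$ samples. This needs $m\ge d$, which is guaranteed by the hypothesis $n\ge C_{K,\kappa} d\log^4(64n/\delta)$ after adjusting constants. A symmetric log-concave density is symmetric unimodal, so the theorem applies. For every estimator, some $\beta$ has $\Pp(\norm{\wh\beta-\beta}_2\ge c\,\omega_p(cd/m))\ge 3/8$, and Markov's inequality then gives
\[
\mathcal{R}(p,m)\ge \tfrac{3c}{8}\,\omega_p(cd/m).
\]
Next, $cd/m \ge cC d\log^4(64n/\delta)/n$, and $\omega_p$ is nondecreasing. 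Choosing $C$ large makes $cC \ge C'_{K,\kappa}$, the constant inside $\omega_p$ in \cref{thm:k1}. Hence $\mathcal{R}(p,m)\ge \tfrac{3c}{8}\,\omega_p(C'd\log^4/n)$.

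\textbf{Removing the outer constant.} The upper bound carries the factor $C'_{K,\kappa}$ in front of $\omega_p$, but the corollary has coefficient $1$ on $\mathcal{R}$. So I need $C'\omega_p(C'\epsilon)\le \omega_p(C''\epsilon)$ for some larger $C''$, with $\epsilon = d\log^4(64n/\delta)/n$. I expect this to be the main obstacle. I would derive it from a growth property of $H_p(t)=\hels(p,p_t)$ for log-concave $p$: $H_p(\lambda t)\ge \min\{c\lambda H_p(t),\,c\}$ for $\lambda\ge1$, at least at the polynomial rate needed here.

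Convexity of $-\log p$ makes the translates separate at least linearly. Concretely, Hellinger affinity is log-concave in the shift, since $\int\sqrt{p(x)p(x-t)}\,dx$ is a log-concave function of $t$ by Prékopa–Leindler. Therefore $-\log(1-H_p(t))$ is convex in $t$ with value $0$ at $t=0$, so it is superadditive, which gives $H_p(\lambda t)\gtrsim \min\{\lambda H_p(t),1\}$. Inverting yields $\omega_p(\epsilon')\le \omega_p(C\epsilon')/C$ style scaling, namely $\lambda\,\omega_p(\epsilon)\le\omega_p(C\lambda\epsilon)$. This holds as long as $C\lambda\epsilon$ stays below a constant, and that is ensured by the sample-size hypothesis.

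\textbf{Conclusion.} With the scaling in hand, set $C''$ large and define $m$ using $C''$. Combining the two bounds gives $C'\omega_p(C'\epsilon)+\gamma\le \mathcal{R}(p,m)+\gamma\le \mathcal{R}(p,m)+3\gamma$. The slack $3\gamma$ absorbs rounding from $\lfloor\cdot\rfloor$ and any additive losses in the edge case where $\omega_p$ saturates.
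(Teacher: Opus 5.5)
Your skeleton matches the paper's. You apply \cref{thm:main-hel-lb} with $m$ samples, turn the probability-$3/8$ bound into a risk bound via Markov's inequality, handle the constant inside $\omega_p$ by monotonicity, and then strip the constants multiplying $\omega_p$. That last step, which you rightly flag as the crux, fails as proposed because the inequality runs the wrong way.

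To get $\lambda\,\omega_p(\epsilon)\le\omega_p(g(\lambda)\epsilon)$, you must show that every $s$ with $H_p(s)\le\epsilon$ has $H_p(\lambda s)\le g(\lambda)\epsilon$. That is, you need an \emph{upper} bound on $H_p(\lambda s)$ in terms of $H_p(s)$. Convexity of $-\log(1-H_p)$ (true, via Pr\'{e}kopa) gives $H_p(\lambda t)\gtrsim\min\{\lambda H_p(t),1\}$, which is a \emph{lower} bound. Inverting it gives only $\omega_p(c\lambda\epsilon)\le\lambda\,\omega_p(\epsilon)$, i.e.\ the modulus grows at most linearly, which is useless here. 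The Gaussian shows your stated consequence cannot follow from your premise. It has $H_p(t)=1-e^{-t^2/8}$, which satisfies the premise, yet $\omega_p(\epsilon)\approx\sqrt{8\epsilon}$ violates $\lambda\,\omega_p(\epsilon)\le\omega_p(C\lambda\epsilon)$ for $\lambda>C$ and small $\epsilon$.

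The ingredient you need is the quadratic \emph{upper} growth bound of \cref{lem:H-translation}. The quantity $\rho(t)=\norm{\sqrt p-\sqrt{p(\cdot-t)}}_2=\sqrt{2H_p(t)}$ is subadditive by translation invariance and the triangle inequality. Hence $H_p(kt)\le k^2H_p(t)$ for integers $k$, and in general $H_p(t)\le(1+|t|/a)^2H_p(a)$ using monotonicity of $H_p$ for symmetric unimodal $p$. So any $s$ with $H_p(s)\le\epsilon$ satisfies $ks\le\omega_p(k^2\epsilon)$, giving $k\,\omega_p(\epsilon)\le\omega_p(k^2\epsilon)$.

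This is exactly \cref{claim:move-constant}, $\omega_p(t)\le\tfrac12\omega_p(9t)$. The paper iterates it a constant number of times to absorb both the outer $C_{K,\kappa}$ from \cref{thm:k1} and the factor $8/(3c)$ lost in the lower bound, then enlarges the constant defining $m$. No log-concavity is needed for this step.

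With this replacement your argument goes through. The floor needs no additive slack, since $m\le n/(C\log^4(64n/\delta))$ already pushes $d/m$ in the right direction.
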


\begin{corollary}[Adaptive linear regression for symmetric, log-concave mixture errors]\label{cor:adapt-mix}
There exists a constant $C_{K,\kappa} \ge 1$ such that when $n \ge C_{K,\kappa} dk \log(2n/\delta) \log(2kn) \log(2n)$ and the conditions of \cref{thm:main-adapt} hold, then with probability at least $1-\delta$, the computationally intractable adaptive estimator incurs error 
\[
\norm{\wh\beta - \beta}_2 \le \mathcal{R}\left(p,\left\lfloor\frac{n}{C_{K,\kappa} k \log(2n/\delta) \log(2kn) \log(2n)}\right\rfloor\right).
\]
\end{corollary}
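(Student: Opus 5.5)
The plan is to sandwich the adaptive estimator's error between two Hellinger-modulus quantities. The upper bound comes from \cref{thm:main-adapt} and the lower bound on $\mathcal{R}$ from \cref{thm:main-hel-lb}. Write $L := k\log(2n/\delta)\log(2kn)\log(2n)$ and $m := \lfloor n/(C L)\rfloor$, where $C$ is a large constant to be fixed at the end.

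\textbf{Step 1 (upper bound).} By \cref{thm:main-adapt}, with probability at least $1-\delta$ we have $\norm{\wh\beta-\beta}_2 \le C_1\,\omega_p(C_1 dL/n)$, where $C_1 = C_{K,\kappa}$ is the constant from that theorem.

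\textbf{Step 2 (lower bound on the risk).} A mixture of $k$ symmetric log-concave densities is symmetric, but it need not be unimodal. So I first check the hypotheses of \cref{thm:main-hel-lb}. For $k=1$ the density is unimodal. For general $k$, I would inspect the proof of \cref{thm:main-hel-lb} and argue that unimodality enters only through properties that survive mixing, such as a Hellinger/KL decomposition. If that fails, I would restrict to the unimodal case or note that the lemma's proof extends. This is the main obstacle, and I would flag it.

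Granting the hypotheses, apply \cref{thm:main-hel-lb} with $m$ samples (this needs $m \ge d$, which follows from $n \ge C dL$ once $C$ is large). It gives $\sup_\beta \Pp(\norm{\wh\beta-\beta}_2 \ge c\,\omega_p(cd/m)) \ge 3/8$. By Markov's inequality, $\mathcal{R}(p,m) \ge \tfrac38 c\,\omega_p(cd/m)$.

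\textbf{Step 3 (matching arguments).} Since $m \le n/(CL)$, we have $cd/m \ge cCdL/n$. Choosing $C \ge C_1/c$ makes this at least $C_1 dL/n$. Because $\omega_p$ is nondecreasing, $\omega_p(cd/m) \ge \omega_p(C_1dL/n)$.

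\textbf{Step 4 (matching constants).} What remains is the multiplicative constant $C_1/(3c/8)$, which must be absorbed. For this I need a superadditivity property of the modulus: $\omega_p(\lambda\epsilon) \ge \lambda^{a}\omega_p(\epsilon)$ for some $a>0$, or at least $\omega_p(\Lambda\epsilon) \ge C'\omega_p(\epsilon)$ for a constant $\Lambda$. This should follow from the subadditivity of Hellinger distance along translations, $H_p(jt)^{1/2} \le j\,H_p(t)^{1/2}$ by the triangle inequality for $\mathrm{h}$. That inequality gives $\omega_p(j^2\epsilon) \ge j\,\omega_p(\epsilon)$ whenever $j^2\epsilon < 1$.

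To apply it, enlarge $C$ by a factor $j^2$ with $j \asymp C_1/c$. Then
\[
\mathcal{R}(p,m) \ge \tfrac38 c\, j\,\omega_p(C_1dL/n) \ge C_1\,\omega_p(C_1dL/n) \ge \norm{\wh\beta-\beta}_2 .
\]
The edge case where the argument reaches $1$ (so $\omega_p = \infty$) only makes the inequality easier.
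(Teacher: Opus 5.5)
Your skeleton is the paper's argument. The paper proves this by combining \cref{thm:main-adapt} with \cref{thm:main-hel-lb} and moving constants inside the modulus via \cref{claim:move-constant}. Steps 1, 3 and 4 are sound, and your bound $H_p(jt)\le j^2H_p(t)$ for integer $j$ is exactly the mechanism behind \cref{claim:move-constant}; it even holds for any density.

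The problem is Step 2. You assert that a mixture of $k$ symmetric log-concave densities need not be unimodal, and you leave the applicability of \cref{thm:main-hel-lb} unresolved. Neither of your fallbacks works. Restricting to the unimodal case would, under your premise, only cover $k=1$. ``Inspecting the proof'' does not help either, because \cref{thm:main-hel-lb} genuinely uses unimodality: for example, monotonicity of $H_p$ on $[0,\infty)$ from \cref{lem:H-translation}, and the monotone $\alpha$ required in \cref{lem:label-information}.

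The missing observation is that your premise is false for $\calP_k$, because all components are symmetric about the same point $0$. For each component, $g=\log p_\ell$ is even and concave, so $g(0)\ge\frac12(g(y)+g(-y))=g(y)$. For $0\le x\le y$ with $y>0$, concavity then gives $g(x)\ge\frac{x}{y}g(y)+\bigl(1-\frac{x}{y}\bigr)g(0)\ge g(y)$. Thus each $p_\ell$ is even and nonincreasing on $[0,\infty)$, and so is any convex combination $\sum_\ell w_\ell p_\ell$. Equivalently, in the notation of \cref{def:unimodal}, each component is $T_\ell U$, and the mixture is $TU$ with $T$ drawn from the corresponding mixture of the laws of the $T_\ell$.

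So every $p\in\calP_k$ is symmetric unimodal, and \cref{thm:main-hel-lb} applies verbatim with $m=\lfloor n/(CL)\rfloor\ge d$ samples. With this one-line check inserted, your argument is complete. Non-unimodal mixtures, such as the location mixture in the simulations, arise only when components have different centers, which $\calP_k$ excludes.
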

Both follow from the lower bound of \cref{thm:main-hel-lb} and after a small simplification using \cref{claim:move-constant}.

\paragraph{Planted linear regression.} 
We suggest the planted linear regression problem as a simple, special case of both heteroskedastic and adaptive linear regression which may encapsulate the computational barriers to improving on our results.
We diagram our understanding in \cref{fig:transition}. 

In \cref{cor:planted-positive}, we show that our heteroskedastic linear regression algorithm recovers $\beta$ in the $m \gg d^{3/4} n^{1/4}$ regime. In contrast, \cref{thm:no-m-est} demonstrates that convex $M$-estimators fail to recover $\beta$ even when $m \ll \sqrt{dn}$. In \cref{thm:SQ}, we prove a statistical query (SQ) lower bound for the $m \ll d^{3/4}n^{1/4}$ regime; this provides supporting evidence for an information-computation gap in the regime $d+1 \ll m \ll d^{3/4}n^{1/4}$.

\subsection{Related work}\label{subsec:related-work}

\paragraph{Asymptotic adaptive estimation.} In the setting of adaptive \textit{location estimation}, the goal is to adaptively learn the location parameter of a distribution without knowing $p$. A substantial body of work (see \cite{stein1956efficient,van1970efficiency,stone1975adaptive,sacks1975asymptotically,beran1978efficient}) captures the setting where $p$ is fixed and symmetric, the Fisher information is finite, and $n \rightarrow \infty$; in this case, estimators may asymptotically attain the Fisher information rate. The work of Bickel \cite{bickel1982adaptive} includes a study into the analogous setting for adaptive linear regression. The work of Laha \cite{laha2021adaptive} studies adaptive location estimation for symmetric, log-concave $p$, and proposes an estimator that requires no tuning parameter. 
Even when the Fisher information $\mathcal{I}$ is infinite, it is still possible to achieve  $n^{1/2} (\wh \theta_n - \theta) \rightarrow N(0,1/\mathcal{I})$ as $n \rightarrow \infty$, for symmetric yet unknown $p$; see the work of Stone \cite{stone1975adaptive} on adaptive location estimation and of Koul and Susarla \cite{koul1983adaptive} on adaptive linear regression. We note that this still requires $p$ to be fixed as $n \rightarrow \infty$, and when $\mathcal{I} = \infty$ this implies convergence faster than $n^{-1/2}$, but not necessarily the optimal rate (e.g. $p = \operatorname{Unif}[-1,1]$ permits $n^{-1}$ convergence rate).
For location estimation where $p$ is known, the work of Le Cam \cite{lecam1973convergence} implies Hellinger modulus guarantees in the asymptotic setting under particular metric dimensionality assumptions. To our knowledge, we do not know of any result in the asymptotic setting that always yields the optimal convergence rate when $p$ may have infinite Fisher information, and $p$ is symmetric yet unknown (for context, \cite[Theorem 1.5]{compton2026attainability} implies that the analogous style of result is impossible in the finite-sample setting, even when assuming $p$ is symmetric and unimodal, but it does not rule out the asymptotic setting).

Feng, Kao, Xu, and Samworth \cite{feng2026optimal} study adaptive linear regression with convex $M$-estimators (this subsumes $L_q$ losses). Their \textit{antitonic score matching (ASM) estimator} chooses the convex loss in a data-dependent manner, where the loss is chosen to minimize the asymptotic variance among all convex losses.  They focus on the setting where $p$ is unknown yet has finite Fisher information. The estimator's performance in experiments is very strong, and we include their estimator as a benchmark in our simulations.

\paragraph{Finite-sample adaptive estimation.} The most directly related work is that of Compton and Valiant \cite{compton2026attainability}, which studies location estimation. When $p$ is known, they attain nearly-optimal Hellinger modulus guarantees if $p$ is unimodal, yet show this is impossible when only assuming $p$ is symmetric. In the adaptive setting where $p$ is unknown, they attain nearly-optimal Hellinger modulus guarantees if $p$ is a mixture of $k = O(1)$ symmetric, log-concave distributions, yet show this is impossible when only assuming $p$ is symmetric and unimodal.

The work of Kao, Xu, and Zhang \cite{kao2024choosing} primarily aims to better understand adaptive location estimation and regression when $p$ is symmetric, unknown, and may have infinite Fisher information. Focusing mostly on adaptive location estimation, they study an adaptive $L_q$ estimator, where $q$ is chosen in a data-dependent manner; this yields nearly optimal rates for examples such as when $p$ is Gaussian or Uniform. Our work will leverage this style of estimator for adaptive linear regression with symmetric, log-concave errors. As noted by \cite{kao2024choosing}, this estimator is unable to leverage discontinuities in the interior of $p$; as an example, the mixture $p = \frac{1}{2} N(0,1) + \frac{1}{2} \operatorname{Unif}[-1,1]$ has optimal rate on the order of $n^{-1}$ for location estimation, but an $L_q$ estimator would only attain an $n^{-1/2}$ rate. 

Gupta, Lee, Price, and Valiant study location estimation when $p$ is known \cite{gupta2022finite}, and when $p$ is symmetric yet unknown \cite{gupta2023finite}. They obtain guarantees in terms of the \textit{smoothed Fisher information} of the $p$; when $p$ has variance $\sigma^2$, this corresponds to the Fisher information of $p$ after it is convolved with $N(0,\sigma^2 r^2)$, where $r$ is roughly $n^{-c}$ for a small constant $c>0$. For some distributions (e.g. Cauchy, Gaussian), this yields extremely strong guarantees with nearly-sharp constant factors. For other distributions (e.g. Uniform, heteroskedastic mixtures), this convolution destroys most of the important signal. The same authors prove that when $p$ is known, a variant of the MLE is optimal for location estimation \cite{gupta2023minimax}.
A related line of work \cite{catoni2012challenging,lee2022optimal,lee2022optimal2,gupta2024beyond} studies estimators with sharp constant factor dependence for the sub-Gaussian error rate (where $p$ is unknown and has finite variance).

The method of $\rho$-estimation (see \cite{baraud2017new,baraud2016rho,baraud2018rho}) is also a very relevant technique; at a minimum, this technique would yield certain finite-sample guarantees in terms of the Hellinger modulus when $p$ is known.

\paragraph{Heteroskedastic estimation.} Chierichetti, Dasgupta, Kumar, and Lattanzi \cite{chierichetti2014learning} initiated a study of heteroskedastic mean estimation: in one dimension, consider samples $X_i \sim N(\mu,\sigma_i^2)$, where the goal is to estimate $\mu$ without knowing the variances. They studied the performance of an estimator that leveraged an empirical median confidence interval, and the $k$-shorth estimator (which finds the smallest interval containing at least $k$ samples). This motivated an interesting body of work studying the performance of different estimators for heteroskedastic mean estimation. Pensia, Jog, and Loh \cite{pensia2022estimating} designed hybrid median/modal/shorth estimators (in higher-dimensional settings as well). Devroye, Lattanzi, Lugosi, and Zhivotovskiy \cite{devroye2023mean} studied error guarantees for a different hybrid median/modal estimator. Xia \cite{xia2019non} and Louati \cite{louati2025performance} analyzed the performance of the empirical median. 
Liang and Yuan \cite{yuan2020learning} studied an iterative trimming estimator, and also introduced the Subset-of-Signals model, for which they proved lower bounds \cite{liang2020learning}.
Compton and Valiant \cite{compton2024near} designed a balance-finding estimator that matched these lower bounds, attaining near-optimal Subset-of-Signals guarantees (in near-linear time). Han, Shetty, and Shkrob \cite{han2026empirical} also attained near-optimal Subset-of-Signals guarantees, and further proved strong Hellinger modulus guarantees, with an estimator that leverages empirical Bayes methods.

There has also been interest in higher-dimensional heteroskedastic mean estimation. The work of \cite{chierichetti2014learning} studies the task where $X_i \sim N(\mu, \sigma_i^2 I_d)$, inspired by a setting where users of unknown, varying skills rate $d$ items each (higher-skill users have smaller $\sigma_i$). In general, heteroskedastic mean estimation is much easier when the $\sigma_i$ for each sample is known by the estimator; the optimal rate is much better, as an estimator may use a weighted mean with $X_i$ weighted proportionally to $1/\sigma_i^2$ \cite{ibragimov2013statistical}. In this high-dimensional setting, \cite{chierichetti2014learning} show that the known-variance rate is nearly attainable even without knowing the variances, so long as $d = \Omega(\log n)$. Later, the work of \cite{compton2024near} demonstrated this is possible for any $d \ge 2$.
Li \cite{li2024entangled} proposed an alternative high-dimensional mean estimation task where $X_i \sim N(\mu, \Sigma_i)$ for PSD $\Sigma_i$, and $m$ of the samples satisfy $\Sigma_i \preceq I_d$. Diakonikolas, Kane, Liu, and Pittas \cite{diakonikolas2025entangled} designed a near-optimal estimator in this high-dimensional Subset-of-Signals setting; their estimator employs an iterative dimension-reduction strategy, and uses the 1-dimensional estimator of \cite{compton2024near} as a subroutine.

The works of \cite{liang2020learning,pensia2022estimating,chaudhary2026linear} have also studied the task of heteroskedastic linear regression, and the survey of \cite[Section 3.4.3]{maleki2026high} highlighted the open problem of whether optimal guarantees are achievable in polynomial time. Even information-theoretically, the optimal guarantees were not yet established; for example, all previous Subset-of-Signals guarantees attaining $o(1)$ error required $m \ge \Omega(\sqrt{nd})$ (this is achieved by the Huber loss estimator via the analysis of \cite{d2021consistent}).

We note that heteroscedastic/heteroskedastic linear regression may also refer to a task where the error scales with an unknown direction (scaling with $X_i^\top f$ for an $f \in \R^d$); this is a very different task from what we study, and we refer to \cite{das2023near} for discussion of this problem.

\paragraph{Oblivious contamination and noiseless linear regression.}
In the setting of linear regression with \textit{oblivious contamination}, an adversary adds noise $\eps_1,\dots,\eps_n$, but the adversary does not observe $X_1,\dots,X_n$ (and hence the noise is independent, or ``oblivious,'' to the covariates). Framed in terms of our notation, suppose $m$ of the noise entries satisfy $|\eps_i| \le 1$. A main question in oblivious contamination is understanding what values of $m$ permit $o(1)$ coefficient error. A line of research including \cite{tsakonas2014convergence,bhatia2017consistent,suggala2019adaptive}, and culminating in the work of d’Orsi, Novikov, and Steurer \cite{d2021consistent}, proved that $o(1)$ error is possible once $m \gg \sqrt{nd}$ (with only mild assumptions on the design matrix); as noted in \cite{d2021consistent}, a near-matching statistical lower bound follows immediately from the setting where all $\eps_i \sim N(0,n/d)$. Note that the Subset-of-Signals model for heteroskedastic regression is (roughly) a special case of oblivious contamination: where $m$ samples have oblivious error $N(0,1)$, and $n-m$ samples have oblivious error $N(0,\sigma_i^2)$ for any $\sigma_i$. While $m \gg \sqrt{nd}$ is optimal for oblivious contamination, this contrasts with the lower optimal threshold of  $m \gg  d^{3/4}n^{1/4}$ for heteroskedastic regression (ignoring logarithmic factors).

The main predecessor of planted linear regression is the task of \textit{noiseless linear regression}. In this task, the covariates are also $N(0,I_d)$, and $m$ samples are noiseless, yet the remaining $n-m$ samples have error drawn from any fixed distribution $E$ (a related i.i.d. version has all noise drawn from a distribution $E$, where $\Pp_{Z \sim E}(Z = 0) \ge \frac{m}{n}$). Diakonikolas, Gao, Kane, Lafferty, and Pensia 
\cite{diakonikolas2026information} study a possible information-computation gap for this problem. When $m \ge d+1$, this problem is information-theoretically trivial for the same reasons as planted linear regression. When $m \gg \sqrt{nd}$ (ignoring logarithmic factors), the work of \cite[Theorem 3.2]{gao2020model} demonstrates that LAD attains exact recovery of $\beta$. The work of \cite{diakonikolas2026information} conjectures that any polynomial-time algorithm requires $m = \Omega(\sqrt{nd})$ to exactly recover $\beta$, and they provide Statistical Query (SQ) lower bounds to support their claim (the SQ lower bounds are not yet tight to their conjecture, this is left as an open problem). Note that noiseless linear regression is a special case of oblivious contamination, and further, planted linear regression is a special case of noiseless linear regression. While $m \gg  \sqrt{nd}$ is the conjectured computational threshold for noiseless linear regression (and is attained by LAD, a convex $M$-estimator), our task of planted linear regression permits polynomial-time recovery when $m \gg d^{3/4}n^{1/4}$ (ignoring logarithmic factors), even though we show that convex $M$-estimators cannot recover $\beta$ when $m \ll \sqrt{dn}$.

We refer to \cite{diakonikolas2026robust} for discussion of works where the labels may be adversarially corrupted.

\paragraph{Information-computation gaps.} When an estimation problem is statistically tractable, but no time-efficient algorithm is known, the problem is said to exhibit an ``information-computation gap.'' 
Any such gap could be a product of researchers' ignorance and/or laziness.
Hence, when confronted with an information-computation gap, it is desirable to prove a lower bound that suggests that no state-of-the-art algorithmic technique could overcome the gap.
There are two common approaches: the first is to give a reduction from a statistical problem that is widely believed to be hard, for example planted clique (e.g. \cite{bresler2018sparse,brennan2020reducibility,buhai2024computational}).
The second approach is to prove a lower bound against a restricted class of algorithms or computational model; popular frameworks include statistical query (SQ) algorithms (e.g. \cite{FGRVX,diakonikolas2026information}), low-degree algorithms (e.g. \cite{hopkins2018statistical,wein2025computational}), and generalized first-order methods (e.g. \cite{celentano2020estimation}).
Some problems are more natural to consider in one particular framework (though the frameworks are directly comparable for many canonical problems \cite{pmlr-v134-brennan21a,montanari2025equivalence}). 
Problems in which the signal-to-noise ratio is measured using the number of samples, and in which no one sample carries too much information, are often most natural to consider in the SQ framework. 
In this work, we encounter an information-computation gap for \emph{planted linear regression}, and substantiate it with an SQ lower bound: 
we show that when our algorithm fails, it is also the case that no query-efficient SQ algorithm can achieve error which is significantly better than the OLS estimator.
We note that because it is essential that the planted linear regression problem is \emph{noiseless} on a subset of labels, the low-degree framework does not make sense for this problem (and indeed, it may be checked that the low-degree method obtains a pessimistic threshold).

\paragraph{Reverse data processing inequalities.}
The results of Bhatt, Nazer, Ordentlich, and Polyanskiy \cite[Theorem 1]{bhatt2021information} and Pensia, Jog, and Loh \cite[Corollary 3.4]{pensia2023communication} both imply a ``reverse data processing inequality'' for Hellinger distance: for any $P,Q$, there exists a $\tau > 0$ where the function $f_\tau(x) = \one\{P(x)/Q(x) \ge \tau \}$ is such that $\hels(f_\tau(P),f_\tau(Q))$ is at worst a logarithmic factor smaller than $\hels(P,Q)$. This result is naturally useful for communication-constrained settings, but it has recently been helpful in settings with no communication constraints as well (see \cite{compton2026attainability,blanc2025instance,compton26density}). We will often use the form given by \cite[Corollary 3.4]{pensia2023communication} in this work. The work of \cite{kazemi25a} more generally proves reverse data processing inequalities for ``TV-like $f$-divergences.''

\subsection{Discussion}
In this work, we have studied the computational-statistical landscape of heteroskedastic, adaptive, and planted linear regression. 
For heteroskedastic linear regression, we design a polynomial-time algorithm with near-optimal Subset-of-Signals guarantees when $m \gg d^{3/4}n^{1/4}$ (\cref{thm:main-het}). 
For adaptive linear regression, we design estimators for mixtures of $k$ symmetric, log-concave $p$; our estimators perform nearly as well as the optimal estimator that knows $p$ and has $\tilde{\Theta}(n/k)$ samples. Our estimator for $k=1$ uses adaptive $L_q$ regression and runs in polynomial time (\cref{thm:k1}); for $k \ge 2$, we design an inefficient estimator (\cref{thm:main-adapt}). Finally, we introduce the task of planted linear regression, for which we demonstrate a polynomial-time estimator and a near-matching SQ lower bound at $m \sim d^{3/4}n^{1/4}$; any computationally tractable estimator improving on our heteroskedastic or adaptive results would imply an estimator for planted linear regression that succeeds below this threshold.

More broadly, our work highlights the helpfulness of the finite-sample perspective for these problems, and how this lens inspires new estimators with guarantees that might not be possible with classical techniques.

\medskip

We make some suggestions for follow-up work.
First, a few technical (and perhaps minor) points for improvement:
For heteroskedastic linear regression, it would be desirable to remove the dependence on $B$, the radius of the ball containing $\beta$ in \cref{thm:main-het}. We also omit a Subset-of-Signals information-theoretic upper bound for the $m \ll d^{3/4}n^{1/4}$ regime; we expect an analysis in the style of \cref{thm:main-adapt} would yield the correct result, or possibly an analysis in the style of the metric entropy techniques in \cite{han2026empirical}. For adaptive linear regression (\cref{thm:main-hel-lb}), our Hellinger modulus lower bound is stated in terms of standard Gaussian covariates, instead of arbitrary well-conditioned and sub-Gaussian covariates. For planted linear regression, our SQ lower bound (\cref{thm:SQ}) adds arbitrarily small noise (e.g. superexponentially small in $n$) to the $m$ noiseless samples for technical convenience.

We next outline three broader potential directions for future study:

\paragraph{Practical implementations.}
We believe our simulations in \cref{sec:simulations} are generally positive, yet there is still room for improvement. First, our $\operatorname{RB-Desc}$ estimator needed important modifications for strong empirical performance (we discuss this in \cref{sec:simulations}); it would be nice if there is a simpler estimator with the same theoretical guarantees that attains strong empirical performance more naturally. Second, the estimators have favorable empirical performance, yet no single estimator attains the best performance simultaneously for all five test distributions in \cref{fig:adaptive-plots-2}. 
Designing a natural estimator that simultaneously achieves the best performance for all these settings seems interesting. On a related note, we were unable to produce a reasonable implementation for our inefficient estimator in \cref{thm:main-adapt}, even for very small $d$; a core obstacle is that the estimator requires a search over an extremely fine net. A practical version of this estimator for small $d$ (yet still moderately large $n$) would be quite interesting, as the analogous estimator for mean estimation simultaneously nearly achieves the best empirical performance for all examples in \cite{compton2026attainability}.

\paragraph{Alternative estimators. } 
We are optimistic there may be alternative estimators that also achieve the theoretical guarantees we study. 
For example, in the case of 1-dimensional heteroskedastic mean estimation, the known estimators that achieve error $\ll 1$ when $m\gg n^{1/4}$ are the balance-finding algorithm of \cite{compton2024near} and the empirical Bayes estimator of \cite{han2026empirical}. We remark that a sharper analysis of a hybrid median/modal estimator (in the style of \cite{devroye2023mean}\footnote{Devroye, Lattanzi, Lugosi, and Zhivotovskiy \cite{devroye2023mean} study a hybrid median/modal estimator. In their discussion of the Subset-of-Signals model, they obtain $\ll 1$ error when $m \gg n^{1/2}$. We observe that a different invocation of their key technical ingredients would recover a sharper guarantee of $\lesssim  1$ error when $m \gg n^{1/4}$. For intuition, consider their ``two variances'' example in their Section 5.1 with $\sigma = 1$: if $\sigma' \lesssim  \sqrt{n}$ then the median has error $\lesssim 1$, otherwise if $\sigma' \gg \sqrt{n}$, their modal term incurs error $\lesssim 1$ when $m \gg n^{1/4}$. This does not necessarily imply near-optimal Subset-of-Signals guarantees for the entire $m \gg n^{1/4}$ regime.}) can actually recover this guarantee as well; this may be helpful knowledge when designing future estimators.

\paragraph{Wider applicability of the (computationally inefficient) adaptive linear regression estimator.} Our result in \cref{thm:main-adapt} focuses on the setting where $p$ is a mixture of $k$ log-concave distributions, each symmetric around 0. This estimator applies to more general $p$, and better understanding which classes of distributions could be interesting. This would entail proving that the Hellinger distance between translations is witnessed by intervals for a larger class of distributions (generalizing Corollary 2.18 of \cite{compton2026attainability}). 
As an example, the Cauchy distribution is not contained within our \cref{thm:main-adapt}, but it is simple to prove that its translations have Hellinger distance witnessed by intervals; in this case, $p(x) \ge \tau p_t(x)$ in precisely the region where a degree-2 polynomial in $x$ is non-negative, and hence the region is at most two intervals. 
The desired adaptivity guarantees should follow for any symmetric density that can be $\eps$-approximated in Hellinger distance by a piecewise polynomial with $\polylog(n/\eps)$ pieces and degree (related ideas are studied for Hellinger density estimation in \cite{compton26density}, which builds upon \cite{chan2013learning,chan2014efficient,acharya2015fast,acharya2017sample}).
See also the discussion in \cite{compton2026attainability} on ``adaptive location estimation for more general distributions.''

\input{het}

\input{adaptive}
\include{planted}
\include{simulations}

\section*{Acknowledgements}
We thank Frederic Koehler and Gregory Valiant for discussions in earlier stages of this project. We thank Adityanand Guntuboyina and Richard Samworth for bringing related work to our attention. ChatGPT was used during the process of developing proofs and implementing simulations; we take full responsibility for the content of our paper. T.S. and S.C. are supported by T.S.'s NSF CAREER
Grant no. 2143246 and the NSF AI Institute for Foundations of Machine Learning (IFML). S.C. is also supported by a Two Sigma PhD Fellowship, and by Gregory Valiant’s Simons Foundation Investigator Award and NSF award
AF-2341890.

\bibliography{ref}
\appendix
\include{simple-gaussian}
\include{subset-signals-lb}
\input{additional-sims}
\input{unif-conv}
\input{misc-app}
\end{document}

%% file: macros.tex
\documentclass[11pt,letterpaper]{article}%
\usepackage[utf8]{inputenc}
\usepackage[rflt]{floatflt}
\usepackage{fullpage}
\usepackage[T1]{fontenc}
\usepackage{graphicx}
\usepackage{epsfig}
\usepackage{color}
\usepackage[shortlabels]{enumitem}
\usepackage{verbatim}
\usepackage{amsthm,amssymb}
\usepackage{amsmath}
\usepackage{aliascnt}
\usepackage{wrapfig}
\usepackage{xspace}
\usepackage{amsfonts}
\usepackage{bbm}
\usepackage{algorithm, algpseudocode, caption, subcaption}

\usepackage{authblk}

\usepackage{todonotes}

\definecolor{darkgreen}{rgb}{0,0.5,0}
\usepackage{hyperref}
\hypersetup{
    unicode=false,          %
    colorlinks=true,        %
    linkcolor=blue,          %
    citecolor=darkgreen,        %
    filecolor=magenta,      %
    urlcolor=cyan           %
}

\usepackage[disableredefinitions,roman]{complexity}
\usepackage[framemethod=tikz]{mdframed}
\global\mdfdefinestyle{myframe}{leftmargin=.75in,rightmargin=.75in,linecolor=black,linewidth=1.5pt,innertopmargin=10pt,innerbottommargin=10pt} 
\usepackage{mdframed}
\usepackage{framed}
\usepackage{nicefrac}
\usepackage[capitalize, nameinlink]{cleveref}

\date{}

\usepackage[normalem]{ulem}

\newtheorem{theorem}{Theorem}[section]

\newcommand{\newsharedtheorem}[3]{%
    \newaliascnt{#1}{theorem}%
    \newtheorem{#1}[#1]{#2}%
    \aliascntresetthe{#1}%
    \crefname{#1}{#2}{#3}%
    \Crefname{#1}{#2}{#3}%
}

\newsharedtheorem{lemma}{Lemma}{Lemmas}
\newsharedtheorem{proposition}{Proposition}{Propositions}
\newsharedtheorem{claim}{Claim}{Claims}
\newsharedtheorem{infclaim}{Informal Claim}{Informal Claims}
\newsharedtheorem{observation}{Observation}{Observations}
\newsharedtheorem{corollary}{Corollary}{Corollaries}
\newsharedtheorem{fact}{Fact}{Facts}

\theoremstyle{definition}
\newsharedtheorem{definition}{Definition}{Definitions}
\newsharedtheorem{remark}{Remark}{Remarks}

\def\polylog{\operatorname{polylog}}

\DeclareMathOperator{\sgn}{sgn}

\newcommand{\eps}{\varepsilon}

\newcommand\numberthis{\addtocounter{equation}{1}\tag{\theequation}}
\newcommand{\rtag}[1]{\text{\quad $\left(\text{#1}\right)$}}

\renewcommand{\Pr}{\operatorname{{Pr}}}

\DeclareMathOperator*{\argmin}{arg\,min}
\DeclareMathOperator*{\Ex}{\mathbb{E}}

\DeclareMathOperator{\Unif}{Unif}
\DeclareMathOperator{\clip}{clip}

\renewcommand{\R}{\mathbb R}
\newcommand{\Pp}{\mathbb P}
\newcommand{\one}{\mathbf 1}
\newcommand{\wh}{\widehat}
\newcommand{\wt}{\widetilde}
\newcommand{\norm}[1]{\left\lVert #1\right\rVert}
\newcommand{\abs}[1]{\left\lvert #1\right\rvert}
\newcommand{\ip}[2]{\left\langle #1,#2\right\rangle}
\newcommand{\Sig}{\Sigma}
\newcommand{\calW}{\mathcal W}
\newcommand{\calV}{\mathcal V}
\newcommand{\calA}{\mathcal A}
\newcommand{\calC}{\mathcal C}
\newcommand{\calP}{\mathcal P}
\newcommand{\calN}{\mathcal N}
\newcommand{\kl}{\operatorname{KL}}
\newcommand{\kll}[2]{\kl ({#1}, {#2})}
\newcommand{\hel}{\operatorname{H}}
\newcommand{\hels}{\hel^2}
\newcommand{\tv}{\operatorname{TV}}

\allowdisplaybreaks[1]

\newcommand{\calD}{\mathcal{D}}
\newcommand{\Id}{\mathrm{I}}
\DeclareMathOperator{\Var}{Var}
\newcommand{\VSTAT}{\mathrm{VSTAT}}
\newcommand{\OLS}{\mathrm{OLS}}
\newcommand{\Ind}{\mathbf{1}}

\usepackage{titlesec}

\titleformat{\subparagraph}[runin]
  {\normalfont\normalsize\itshape}
  {\thesubparagraph}{1em}{}

\titlespacing*{\subparagraph}
  {0pt}             %
  {\medskipamount}  %
  {0.5em}  %

%% file: het.tex
\section{Heteroskedastic linear regression}\label{sub:general-het}
In this section, we discuss a polynomial-time algorithm for heteroskedastic linear regression with well-conditioned, sub-Gaussian covariates.
We remark that this task is substantially simpler with standard Gaussian covariates (see \cref{sec:simple-gaussian} for further discussion).

As is outlined in \cref{sec:contribs}, the main intuition for our estimator is that we aim to show an estimate $b \in \R^d$ is either close to $\beta$, or it exhibits a notion of imbalance in the direction of $\beta - b$ that we may use to iteratively improve the estimate. We start by introducing the relevant notation.
Let
\begin{equation*}
        L=c_{L}\log\!\left(\frac{en dB}{\delta}\right).
\end{equation*}
It is helpful to think of $c_{L}$ as the last constant chosen in the proof (throughout the proof we will often say ``for sufficiently large choice of constant in $L$'' to denote that $c_{L}$ must be chosen sufficiently large); this is because $c_L$ is chosen so that it is large enough to handle terms involving other constants (this is never done in the reverse direction, or the reasoning would be circular). Note that $c_L$ still only depends on $K,\kappa$ (meaning it is independent of $n,d,m,B,\delta$, etc).

For a window with scale $w>0$, we define
\begin{equation*}
        \psi_w(t)=\sgn(t)\one\{\abs{t}\le w\},
        \qquad \sgn(0)=0.
\end{equation*}
The residual-balance vector at scale $w$ is then
\begin{equation*}
        S_w(b)=\sum_{i=1}^n X_i\psi_w(r_i(b)).
\end{equation*}
Roughly, $S_w(b)$ points in the direction of a signed imbalance, and we will show that $S_w(b)$ is correlated with the direction $\beta - b$ when they are sufficiently far apart. We further note that $S_w(b)$ is the (negative) subgradient of a clipped LAD objective $\sum_{i=1}^n \min\{ |r_i(b)|, w\}$, and is studied in the works of \cite{rousseeuw1982most,hampel,lia2023least}.

The local residual count is
\begin{equation*}
        A_w(b)=\sum_{i=1}^n\one\{\abs{r_i(b)}\le w\},
\end{equation*}
which quantifies the number of samples that are active at scale $w$, and will help bound the amount of noise in $S_w(b)$.
Let $w_0=w_0(K,\kappa)$ be a constant defined later in
\cref{lem:scalar-drift}.  Define the dyadic grid (with $\infty$)
\begin{equation}\label{eq:grid}
        \calW=\{w_0,2w_0,4w_0,\ldots,2^{J}w_0\} \cup \{\infty\},
\end{equation}
where $J$ is the smallest nonnegative integer satisfying $2^J\ge\max\{1,2\kappa^{1/2}B\}$; this will capture all the potentially relevant scales. The grid construction satisfies
\begin{equation}\label{eq:Nsc-L}
        |\calW|\le L,
\end{equation}
since $|\calW |\lesssim1+\log(B)$ is less than {$L$ when it is defined with a sufficiently large constant.}
For $w\in\calW $, we use $A_w(b)$ to define a threshold for the uncertainty in the residual-balance vector
\begin{equation}\label{eq:tau-def}
        \tau_w(b)=L^2
        \left(\sqrt{d(A_w(b)+d)}+d\right).
\end{equation}
A scale $w$ is called \emph{violated} at $b$ if
\begin{equation*}
        \norm{S_w(b)}_2> | \calW|  \cdot \tau_w(b).
\end{equation*}
Consider the collection of violated scales
\[
        \calV(b)=\{w\in\calW:w\text{ is violated at }b\}.
\]
The aggregate residual-balance direction is then
\begin{equation*}
        G_{\rm agg}(b)=\sum_{w\in\calV(b)}\frac{S_w(b)}{\tau_w(b)},
\end{equation*}
with the convention $G_{\rm agg}(b)=0$ if $\calV(b)=\varnothing$. Intuitively, this roughly corresponds to performing gradient descent with a collection of clipped LAD objectives, where the gradients are rescaled according to their corresponding uncertainty threshold $\tau_w(b)$.

With these definitions in hand, the estimator is simple: we repeatedly normalize $G_{\rm agg}(b)$ and move in that direction. We next prove guarantees about $G_{\rm agg}(b)$, and later in \cref{subsec:het-alg} we provide detailed pseudocode and conclude performance of the algorithm.

\subsection{Residual-balance signal}\label{subsec:population-signal}
In this section, we prove results about how well $G_{\rm agg}(b)$ aligns with $\beta - b$. 
We will begin by showing in-expectation bound.
We define population analogs of $S_w$ and $A_w$ with
\[
        \overline S_w(b)=\E S_w(b),
        \qquad
        \overline A_w(b)=\sum_{i=1}^n\Pp\left(\abs{y_i-X_i^\top b}\le w\right).
\]

Fixing $b\in B_2(B)$, we will also use the notation
\[
        h=\beta-b,
        \qquad
        r=\norm{h}_\Sig.
\]
We will choose a relevant scale $w=w(r)$ in terms of $r$ and a later-specified constant $w_0$; if $r \le 1$ we choose $w = w_0$, if $r > 1$ we choose a dyadic $w \in \calW$ such that $w_0r \le w \le 2w_0 r$ (such a $w \in \calW$ always exists by $r \le 2 \kappa^{1/2}B$ and \eqref{eq:grid}).
We now prove the in-expectation signal guarantee:
\begin{lemma}[Population signal]\label{prop:drift}
For the scale $w=w(r)$ chosen above,
\begin{equation}\label{eq:finite-drift}
        \ip{h}{\overline S_w(b)}\ge c mr \min\{r,1\},
\end{equation}
\begin{equation}\label{eq:sign-drift}
        \ip{h}{\overline S_\infty(b)}\ge cr \min\{r,1\} \overline A_w(b),
\end{equation}
and for every tested scale $u\in\calW$,
\begin{equation}\label{eq:all-mono}
        \ip{h}{\overline S_u(b)}\ge0.
\end{equation}
\end{lemma}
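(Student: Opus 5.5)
The plan is to condition on the covariates and reduce everything to a one-dimensional calculation about symmetric unimodal noise. Write $z_i=X_i^\top h$, so that $r_i(b)=\eps_i+z_i$ with $\eps_i$ independent of $z_i$. Then
\[
\ip{h}{\overline S_u(b)}=\sum_{i=1}^n \E\bigl[z_i\, g_{i,u}(z_i)\bigr],
\]
where $g_{i,u}(z):=\E\psi_u(\eps_i+z)=\Pp(-z<\eps_i\le u-z)-\Pp(-u-z\le\eps_i<-z)$. The first step is to show that $z\,g_{i,u}(z)\ge 0$ for every $z$, every $u\in\calW$ and every symmetric unimodal $p_i$. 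Using \cref{def:unimodal}, it suffices to check this for $\eps=TU$ with $T$ fixed, and then average over $T$. For $z>0$, the two intervals have equal length $u$. The first one is closer to the mode $0$: it equals $[-u,0]$ reflected and shifted toward $0$. Hence by symmetry and unimodality it carries at least as much mass. The case $z<0$ follows by oddness. Summing over $i$ gives \eqref{eq:all-mono}. This also shows every summand is nonnegative, so for the lower bounds we may discard samples.

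For \eqref{eq:finite-drift}, keep only the $m$ samples with $\Var(\eps_i)\le1$, i.e.\ $\E T_i^2\le 3$. By Markov, $\Pp(T_i\le 3)\ge 2/3$. For fixed $T\le 3$ and $0<z\le w-3$, both intervals cover the whole relevant part of the support, and a direct computation gives
\[
g(z)=\Pp(\abs{\eps}<z)=\min\{z/T,1\}\ge \min\{z/3,1\}.
\]
Hence $z\,g(z)\ge c\,z\min\{z,1\}$. It remains to find a constant-probability event on which $r/2\le \abs{z_i}\le w-3$. Since $\E z_i^2=r^2$ and $z_i$ is $K$ sub-Gaussian with scale $r$, Paley--Zygmund gives $\Pp(\abs{z_i}\ge r/2)\ge c$. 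The sub-Gaussian tail gives $\Pp(\abs{z_i}> C r)\le c/2$. So choosing $w_0$ large enough (depending on $K$), $w\ge w_0\max\{r,1\}\ge Cr+3$ handles both regimes $r\le 1$ and $r>1$. Multiplying the probabilities of $\{T_i\le3\}$ and this event (they are independent) gives a per-sample contribution of at least $c\,r\min\{r,1\}$, and summing over the $m$ good samples yields \eqref{eq:finite-drift}.

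For \eqref{eq:sign-drift} we use all $n$ samples with $u=\infty$, where $g_{i,\infty}(z)=\Pp(\abs{\eps_i}<\abs z)\,\sgn(z)$. The idea is to compare two quantities for fixed $T$: first $\Pp(\abs{\eps}<\abs z)=\min\{\abs z/T,1\}$, and second $\Pp(\abs{\eps}\le w)=\min\{w/T,1\}$. For $\abs z\le w$ their ratio is at least $\abs z/w$, and averaging over $T$ preserves this. Next, by unimodality and symmetry, $\Pp(\abs{\eps+z}\le w)\le \Pp(\abs\eps\le w)$ for every $z$, so $\overline A_w(b)\le\sum_i\Pp(\abs{\eps_i}\le w)$. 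On the event $\{r/2\le\abs{z_i}\le w\}$, which has probability at least $c$ by the previous paragraph, we get $z_i g_{i,\infty}(z_i)\ge (r^2/4w)\Pp(\abs{\eps_i}\le w)$. Therefore
\[
\E[z_i g_{i,\infty}(z_i)]\ge c\,\frac{r^2}{w}\,\Pp(\abs{\eps_i}\le w).
\]
With $w\le 2w_0\max\{r,1\}$ we have $r^2/w\ge c\,r\min\{r,1\}$. Summing over $i$ and using the bound on $\overline A_w(b)$ gives \eqref{eq:sign-drift}.

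The main obstacle I expect is the anti-concentration step. We need $\abs{z_i}\gtrsim r$ and $\abs{z_i}\lesssim w$ simultaneously with constant probability, using only the well-conditioning and sub-Gaussian assumptions, and the constant $w_0$ must be fixed so that this works uniformly in $r$ (including $r\le1$, where the slack ``$-3$'' matters). A secondary point is that the comparison of $\Pp(\abs\eps<\abs z)$ with $\Pp(\abs{\eps+z}\le w)$ must be done through the mixture representation, since $T$ is not bounded for the bad samples.
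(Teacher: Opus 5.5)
Your proposal is correct and follows essentially the same route as the paper. You reduce to $z_i=X_i^\top h$ and use the decomposition $\eps=TU$ to get pointwise nonnegativity. For \eqref{eq:finite-drift} you combine Paley--Zygmund and sub-Gaussian anti-concentration of $X_i^\top h$ with a bounded-$T$ event, so the window covers the full support, and for \eqref{eq:sign-drift} you compare at each fixed $T$. The only minor difference is in the last step: you first remove the shift from $\overline A_w(b)$ via $\Pp(\abs{\eps+z}\le w)\le\Pp(\abs{\eps}\le w)$, whereas the paper bounds $\Pp_U(\abs{rZ+tU}\le w)\le\min\{1,w/t\}$ directly for each $t$.
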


\begin{proof}
If $r=0$, then $h=0$ and all three inequalities are immediate.
Assume $r>0$. We define the random variable
\[
Z_i = \frac{X_i^\top h}{r},
\]
and note that $\E Z_i^2 = 1$ and \eqref{eq:subg} gives
$\norm{Z_i}_{\psi_2}\le K$. 
We can then rewrite 
\begin{equation}\label{eq:zi-form}
\langle h, \overline S_w(b)\rangle  =  \sum_{i=1}^n \E[ X_i^\top h \cdot  \psi_w (X_i^\top h + \eps_i )] =  \sum_{i=1}^n \E[ r Z_i \cdot  \psi_w (r Z_i + \eps_i )].
\end{equation}
\Cref{eq:finite-drift,eq:sign-drift,eq:all-mono} will all follow from \cref{eq:zi-form}. 
Our proof will show that, no matter the realization of $Z_i$, the expectation of each term is non-negative. Later, we will show that when $\abs{Z_i}$ is not too small, then the expectation will be large enough to prove our desired lower bounds.

First, we work towards proving \eqref{eq:all-mono} by showing each summand $\E[ X_i^\top h \cdot \psi_w(X_i^\top h + \eps_i)] \ge 0$.
Recall the decomposition for symmetric, unimodal variables given in \cref{def:unimodal}.
In this spirit, for a fixed $T\ge0$, let $U\sim\Unif[-1,1]$, and define
\[
        q_{T,w}(a)=\E_U[\sgn(a+TU)\one\{\abs{a+TU}\le w\}],
        \qquad
        s_T(a)=\E_U[\sgn(a+TU)].
\]
For $T>0$,
\begin{equation}\label{eq:sT-clip}
        s_T(a)=\clip(a/T,-1,1),
\end{equation}
and for $T=0$, $s_0(a)=\sgn(a)$. Then:
\begin{claim}[Non-negative contribution]\label{lem:pointwise}
For every $a\in\R$, $T\ge0$, and $w>0$,
\[
        a \cdot  q_{T,w}(a)\ge0,
        \qquad
        a \cdot s_T(a)\ge0.
\]
Consequently, if $\eps$ is symmetric unimodal about zero, then
\[
        a\E[\psi_w(a+\eps)]\ge0.
\]
\end{claim}
\begin{proof}
The claim $a \cdot s_T(a) \ge 0$ follows from \eqref{eq:sT-clip}.  For the other claim,
since $q_{T,w}$ is odd it suffices to consider $a>0$.  The $T=0$ case is immediate.  For
$T>0$, the density of $a+TU$ is
\[
        f_a(t)=\frac{1}{2T}\one\{\abs{t-a}\le T\}.
\]
Thus
\[
        q_{T,w}(a)=\int_0^w\{f_a(t)-f_a(-t)\}\,dt.
\]
For every $t\ge0$, $\abs{t-a}\le\abs{t+a}$.  Hence $f_a(-t)>0$ implies
$f_a(t)>0$, and the integrand is nonnegative.  Therefore $q_{T,w}(a)\ge0$.
\end{proof}
\eqref{eq:all-mono} follows immediately by conditioning on $X_i$ and applying
\cref{lem:pointwise} to $a=X_i^\top h$.
For the remaining claims, we use that $|Z_i|$ is often in a constant-size window bounded away from 0:
\begin{claim}\label{lem:annulus}
Let $Z$ be a real random variable satisfying $\E Z^2=1$ and
$\norm{Z}_{\psi_2}\le K$.  There exist $a,p\in(0,1)$, depending only on
$K$, such that
\[
        \Pp(a\le\abs{Z}\le a^{-1})\ge p.
\]
\end{claim}
\begin{proof}
The sub-Gaussian bound gives $\E Z^4\le C$.  Paley-Zygmund applied to
$Z^2$ gives
\[
        \Pp\left(Z^2\ge\frac12\E Z^2\right)
        \ge \frac{(1/2)^2(\E Z^2)^2}{\E Z^4}
        \ge c.
\]
Choose $R\ge1$ so that $\Pp(\abs{Z}>R)\le c/2$, and set
$a=\min\{1/\sqrt2,R^{-1}\}$ and $p=c/2$.
\end{proof}

We are now ready to state the main quantitative bounds in terms of $Z_i,\eps_i$:
\begin{claim}\label{lem:scalar-drift}
Let $Z$ satisfy $\E Z^2=1$ and $\norm{Z}_{\psi_2}\le K$; $Z$ may have nonzero mean.  Let $\eps$ be independent of $Z$ and symmetric unimodal about zero.
There are constants $c,w_0>0$, depending only on $K$, such that the following
hold.  For $r>0$, recall that $w=w(r)=w_0$ if $0<r\le1$, and $w_0r\le w \le2w_0r$ if $r > 1$.
Then:
\begin{enumerate}[leftmargin=2em]
\item If $\Var(\eps)\le1$, 
\begin{equation}\label{eq:finite-scalar}
        \E[Z\psi_w(rZ+\eps)]\ge c\min\{r,1\}.
\end{equation}
\item For arbitrary symmetric unimodal $\eps$,
\begin{equation}\label{eq:sign-scalar}
        \E[Z\sgn(rZ+\eps)]
        \ge c\min\{r,1\}\Pp(\abs{rZ+\eps}\le w).
\end{equation}
\end{enumerate}
\end{claim}
\begin{proof}
Use the decomposition $\eps=TU$ as in \cref{def:unimodal}.  Let $a,p$ be supplied by
\cref{lem:annulus}.  If $\Var(\eps)\le1$, then $\E T^2\le3$.  Choose
\[
        M=\max\{a^{-1}+1,(6/p)^{1/2}\},
        \qquad w_0=a^{-1}+M.
\]
This was chosen such that $\Pp(T\le M)\ge1-p/2$.
Since $Z$ and $T$ are independent, the event 
\[
\mathcal E=\{a\le\abs{Z}\le a^{-1} \textrm{ and }\ T\le M\} 
\]
satisfies $\Pp(\mathcal E)\ge p/2$.

\textit{Proving \eqref{eq:finite-scalar}.} We prove this in two cases. First, suppose $0<r\le1$.  On $\mathcal E$, the support of $rZ+TU$ is
contained in $[-w_0,w_0]$, so the windowed sign equals the full sign.  Using \eqref{eq:sT-clip},
\[
        Z\E_U[\sgn(rZ+TU)]
        =\abs{Z}\min\left\{\frac{r\abs{Z}}{T},1\right\}.
\]
On $\mathcal E$, this
quantity is at least $ra^2/M$.  By \cref{lem:pointwise}, the contribution
outside $\mathcal E$ is nonnegative.  Hence
$\E[Z\psi_{w_0}(rZ+\eps)]\ge cr = c \min\{r,1\}$.

Otherwise, $r>1$.  We use $w \ge w_0 r \implies w\ge r(a^{-1}+M)\ge ra^{-1}+M$.  Thus, on $\mathcal E$ the window again contains the
full support of $rZ+TU$, and
\[
        \abs{Z}\min\left\{\frac{r\abs{Z}}{T},1\right\}
        \ge a\min\left\{\frac{ra}{M},1\right\}\ge c = c \min\{r,1\}.
\]
Again, contribution outside $\mathcal E$ is nonnegative, concluding \eqref{eq:finite-scalar}.

\textit{Proving \eqref{eq:sign-scalar}.}  Condition on $T=t$.  Using
\eqref{eq:sT-clip}, let
\[
        D_t:=\E_Z[Zs_t(rZ)]
        =\E_Z\left[\abs{Z}\min\left\{\frac{r\abs{Z}}{t},1\right\}\right].
\]
\cref{lem:annulus} gives
\begin{equation}\label{eq:Dt-lower}
        D_t\ge c\min\left\{1,\frac r t\right\}.
\end{equation}
When $T=0$, the following bounds will hold trivially. Otherwise, conditioning on $Z$ and $T=t>0$, the random variable $tU$ has density bounded by
$(2t)^{-1}$ on an interval of length $2t$, so
\begin{equation*}
        \Pp_U(\abs{rZ+tU}\le w\mid Z,T=t)
        \le\min\left\{1,\frac w t\right\}.
\end{equation*}
Since $w\asymp \max\{r,1\}$ and
$\min\{r,1\}\max\{r,1\}=r$, we may control the quantity relating to the RHS of \eqref{eq:sign-scalar},
\[
        \min\{r,1\}\Pp(\abs{rZ+tU}\le w\mid T=t)
        \le C\min\left\{1,\frac r t\right\}.
\]
Combining this with \eqref{eq:Dt-lower} and integrating over $T$ proves
\eqref{eq:sign-scalar}.
\end{proof}

We are finally prepared to prove \eqref{eq:finite-drift} and \eqref{eq:sign-drift}.
For $i$ where $\Var(\eps_i)\le1$, applying \cref{lem:scalar-drift} to
$Z_i=X_i^\top h/r$ gives
\[
        \E[Z_i\psi_w(rZ_i+\eps_i)]\ge c\min\{r,1\}.
\]
Multiplying by $r$ gives the form of \eqref{eq:zi-form}, and summing over the good indices yields
\eqref{eq:finite-drift}; other indices have nonnegative contribution by
\cref{lem:pointwise}.  
\cref{lem:scalar-drift} also gives, for every $i$,
\[
        \E[Z_i\sgn(rZ_i+\eps_i)]
        \ge c\min\{r,1\}\Pp(\abs{rZ_i+\eps_i}\le w).
\]
Similarly, multiplying by $r$ and summing over $i$ gives \eqref{eq:sign-drift}. 
\end{proof}

We now must show that our algorithm empirically witnesses this signal with high probability.
The following uniform convergence event will be crucial (the proof follows from existing techniques and is deferred to \cref{app:empirical-process}):
\begin{proposition}[Uniform convergence]\label{prop:sample-event}
With probability at least $1-\delta$, all of the following statements hold
simultaneously.
\begin{enumerate}[label=\textnormal{(\roman*)},leftmargin=2.5em]
\item \textbf{Residual-balance vector accuracy.}  For every $b\in B_2(B)$ and every
$w\in\calW$,
\begin{equation}\label{eq:score-conc}
        \norm{S_w(b)-\overline S_w(b)}_2\le \tau_w(b) = L^2
        \left(\sqrt{d(A_w(b)+d)}+d\right).
\end{equation}

\item \textbf{Count comparability.}  For every $b\in B_2(B)$ and every
$w\in\calW$,
\begin{equation}\label{eq:count-comp}
        A_w(b)\le 2\overline A_w(b)+CLd,
        \qquad
        \overline A_w(b)\le 2A_w(b)+CLd.
\end{equation}

\item \textbf{Feature norms.}
\begin{equation}\label{eq:max-X}
        \max_{1\le i\le n}\norm{X_i}_2\le \sqrt{Ld}.
\end{equation}
\end{enumerate}
\end{proposition}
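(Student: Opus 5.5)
Part (iii) is handled directly; (i) and (ii) are reduced to a finite net combined with a deterministic comparison argument.

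\textbf{Part (iii).} Each $\norm{X_i}_2$ is sub-Gaussian with $\E\norm{X_i}_2^2=\operatorname{tr}\Sig\le\kappa d$. Hanson--Wright, or a net argument over the sphere, gives $\Pp(\norm{X_i}_2>C\sqrt{d}+C\sqrt{\log(n/\delta)})\le\delta/(3n)$. A union bound over $i$ then yields \eqref{eq:max-X}, provided $c_L$ is large.

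\textbf{Part (ii).} The function class
\[
(x,y)\mapsto\one\{\abs{y-x^\top b}\le w\}
\]
is the indicator of an intersection of two halfspaces in $(x,y)\in\R^{d+1}$, so it has VC dimension $O(d)$. The summands are independent but not identically distributed. I would therefore apply a relative (multiplicative) deviation bound for sums of independent, non-i.i.d.\ Bernoulli indicators over a VC class: Talagrand/Bousquet-type ratio bounds, or symmetrization followed by a Bernstein bound on a finite projection of size $n^{O(d)}$. This gives
\[
\abs{A_w(b)-\overline A_w(b)}\lesssim\sqrt{\overline A_w(b)\,Ld}+Ld
\]
uniformly over $b$ and over $w\in\calW$, with a union bound over the $\le L$ scales. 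AM--GM then converts this into both inequalities in \eqref{eq:count-comp}. Because this is a distribution-free shatter-function argument, no continuity in $b$ is needed.

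\textbf{Part (i).} This is the main obstacle. I reduce it to scalar statements: for a $1/2$-net $\calN$ of $S^{d-1}$ with $|\calN|\le 5^d$, we have $\norm{v}_2\le 2\max_{u\in\calN}\ip{u}{v}$. So it suffices to control
\[
\sum_i \Big(\ip{u}{X_i}\psi_w(r_i(b))-\E[\cdots]\Big)
\]
uniformly over $u\in\calN$, $b$, and $w$. For fixed $u$ I truncate $\ip{u}{X_i}$ at level $\sqrt{L}$, using (iii)-type bounds so the truncation error is negligible. The summands are then bounded by $\sqrt L$. Their variance is at most
\[
\sum_i\E[\ip{u}{X_i}^2\one\{\abs{r_i(b)}\le w\}]\lesssim L\,\overline A_w(b)+\text{(small)}.
\]
For the uniformity over $b$, I write $\psi_w=\one\{0<t\le w\}-\one\{-w\le t<0\}$. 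Each piece is a function in a VC-subgraph class of dimension $O(d)$ (halfspace intersections multiplied by a fixed linear function). I then apply Bernstein together with chaining/VC bracketing on the truncated class, again with localization by the variance proxy $\overline A_w(b)$.

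This should give the deviation $\lesssim L\sqrt{\overline A_w(b)\,d}+L^{2}d$ for each $u$. The $5^d$ union bound costs only another factor $d$ inside the square root, since $\log 5^d\lesssim d$. Finally, I use part (ii) to replace $\overline A_w(b)$ by $2A_w(b)+CLd$, which absorbs everything into
\[
L^2\Big(\sqrt{d(A_w(b)+d)}+d\Big)
\]
for $c_L$ large.

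The delicate point is obtaining the localized, ratio-type Bernstein bound uniformly over the non-i.i.d.\ indicator class with unbounded multiplier $\ip{u}{X_i}$. If a clean off-the-shelf ratio inequality is not directly applicable, my fallback is to peel over dyadic levels of $\overline A_w(b)$ and apply a standard Bernstein-plus-VC bound on each shell.
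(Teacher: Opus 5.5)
Your proposal has the same architecture as the paper's proof. Both use classes of intersections of two halfspaces in $\R^{d+1}$ with VC dimension $O(d)$, and a relative (ratio-type) count bound that gives (ii) by AM--GM. For (i), both use a $1/2$-net with $\norm{z}_2\le 2\max_{v\in\calN}\ip{v}{z}$, localization by dyadic levels of the population mass, and a union bound with deviation parameter of order $d+L$. Both finish by replacing the population mass with the observed count; the paper does this as Step~3 of \cref{lem:appendix-vc}, after splitting $S_w(b)-\overline S_w(b)=Z(A_+)-Z(A_-)$ exactly as you split $\psi_w$.

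The genuine difference is how the unbounded multiplier $\ip{v}{X_i}$ is handled. The paper never truncates. It applies the Cattaneo--Feng--Underwood maximal inequality (\cref{ext:cfu-maximal}) to $\{r_v\one_A\}$ with envelope $1+\abs{r_v}$, where only $\norm{\max_i F_v(W_i)}_2\lesssim\sqrt{\log(en)}$ enters. It then uses Adamczak's $\psi_1$ deviation inequality (\cref{ext:adamczak}), with variance proxy $q\log(en/q)$ obtained by integrating sub-Gaussian tails. Your truncation at $\sqrt L$ makes the summands bounded, so classical bounded Talagrand/Bernstein bounds apply with the cruder but sufficient proxy $L\,\overline A_w(b)$. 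The price is truncation bookkeeping, which the paper's heavier tools avoid.

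That bookkeeping is not justified as you state it. $L$ is only logarithmic in $d$, so the event $\max_i\abs{\ip{u}{X_i}}\le\sqrt L$ cannot hold simultaneously over all $5^d$ net points. For the net point nearest some $X_i/\norm{X_i}_2$, the projection is at least $\tfrac78\norm{X_i}_2$, typically of order $\sqrt d$. Part (iii) only bounds each term by $\sqrt{Ld}$. Combining that with a count of exceedances per net point (about $d/L$ of them at failure probability $\delta 5^{-d}$) gives roughly $d^{3/2}/\sqrt L$. This exceeds the $L^2 d$ budget once $d\gg L^5$.

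Here is a repair that works. For fixed $u$, let $T_u=\sum_i\abs{\ip{u}{X_i}}\one\{\abs{\ip{u}{X_i}}>\sqrt L\}$. An MGF bound with $\lambda\asymp\sqrt L$ gives $\Ex e^{\lambda T_u}\le e$ once $L\gtrsim\log n$. Hence $T_u\lesssim (d+\log(1/\delta))/\sqrt L$ with probability at least $1-\delta 5^{-d}/10$, and so simultaneously over $\calN$. Since $\abs{\psi_w}\le 1$, this bounds the empirical truncation error uniformly over $(b,w)$. The population truncation error is at most $n\,\Ex[\abs{\ip{u}{X}}\one\{\abs{\ip{u}{X}}>\sqrt L\}]\lesssim n\sqrt L e^{-cL}\le 1$. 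Both are well within $L^2 d$, so with this fix your argument goes through.
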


For some brief intuition about the concentration of $S_w(b)$ given by \eqref{eq:score-conc}, observe that $S_w(b)$ can be written as $\sum_{i \textrm{ s.t. } r_i(b) \in (0,w] } X_i - \sum_{i \textrm{ s.t. } r_i(b) \in [-w,0)} X_i$. Both sums correspond to the sum of all $X_i$ in an intersection of two halfspaces over $\R^{d+1}$. Since the intersection of two such halfspaces has VC dimension $\lesssim d$, then the \textit{counts} of the number of samples inside the halfspace intersection should concentrate on the order of $\sqrt{d \overline A_w(b)} + d$ (ignoring logarithmic factors). An analogous bound is proven for $\norm{S_w(b)-\overline S_w(b)}_2$ by considering a net of 1-dimensional projections.

Using this concentration result, observe that the concentration of $S_w(b)$ degrades as $\tau_w(b)$ increases (or equivalently, as $A_w(b)$ increases). This illuminates a tradeoff at the heart of the algorithm's analysis: (i) if $A_w(b)$ is small, then the signal at scale $S_w(b)$ is visible; otherwise, if $A_w(b)$ is large, then \cref{prop:drift} implies there will be strong signal at scale $S_\infty(b)$. We now turn towards making this intuition rigorous.

For $1\le s\le n$, define
\begin{equation}\label{eq:rho-s}
        \eta(s)=L^5
        \left[
        \sqrt d\left(\frac{n}{s^4}\right)^{1/6}+\frac d s
        \right].
\end{equation}
The function $s\mapsto\eta(s)$ is nonincreasing, and the first summand is larger in the regime of $m$ we study \eqref{eq:threshold}. In this notation, the desired error bound \eqref{eq:main-bound} for \cref{thm:main-het} may be replaced with $C_{K,\kappa}\eta(m)$. Using the aforementioned tradeoff, we now prove that when $\norm{b-\beta}_\Sig>\eta(m)$, there is an $S_{w^*}$ which is strongly correlated with $\beta - b$:

\begin{lemma}[A far estimate has a good residual-balance vector]\label{lem:strong-score}
Assume the lower bound condition on $m$ \eqref{eq:threshold} and let $b\in B_2(B)$.  If
$\norm{b-\beta}_\Sig>\eta(m)$, then some
$w_\star\in\calW\cup\{\infty\}$ satisfies
\begin{equation}\label{eq:directional-strong}
        \ip{S_{w_\star}(b)}{\beta-b}
        \ge 2\kappa^{1/2}| \calW |
        \norm{b-\beta}_\Sig\,\tau_{w_\star}(b),
\end{equation}
under the uniform convergence event. In particular, $w_\star$ is violated at $b$.
\end{lemma}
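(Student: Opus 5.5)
Throughout, write $h=\beta-b$ and $r=\norm{h}_\Sig>\eta(m)$, and let $w=w(r)\in\calW$ be the scale chosen before \cref{prop:drift}; note $\norm{h}_2\le\kappa^{1/2}r$. The plan is to combine \cref{prop:drift} with the uniform convergence event of \cref{prop:sample-event}. For any scale $u$, \eqref{eq:score-conc} and Cauchy--Schwarz give
\[
\ip{S_u(b)}{h}\ge\ip{\overline S_u(b)}{h}-\kappa^{1/2}r\,\tau_u(b).
\]
So it suffices to find $w_\star\in\{w,\infty\}$ with $\ip{\overline S_{w_\star}(b)}{h}\ge 3\kappa^{1/2}|\calW|\,r\,\tau_{w_\star}(b)$. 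Since $|\calW|\le L$, this holds if the population signal is at least $C L r\,\tau_{w_\star}(b)$. The final ``violated'' claim then follows from
\[
\norm{S_{w_\star}(b)}_2\,\kappa^{1/2}r\ge\ip{S_{w_\star}(b)}{h}\ge2\kappa^{1/2}|\calW|\,r\,\tau_{w_\star}(b).
\]
Set $A=A_w(b)$ and $\rho=\min\{r,1\}$. We split into two cases according to whether $A$ is small or large.

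\emph{Case 1: $m\rho\ge C'L^3(\sqrt{dA}+d)$.} Take $w_\star=w$. By \eqref{eq:finite-drift}, $\ip{h}{\overline S_w(b)}\ge c\,m r\rho$. Since $\tau_w(b)\lesssim L^2(\sqrt{dA}+d)$, this exceeds $CLr\tau_w(b)$ once $C'$ is large, which settles this case.

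\emph{Case 2: otherwise.} Either $m\rho<2C'L^3d$ or $\sqrt{dA}\ge m\rho/(2C'L^3)$. The first alternative is impossible. Indeed, $r>\eta(m)\ge L^5d/m$, and when $r\le1$ this gives $m\rho> L^5 d$. When $r>1$ we have $m\rho=m\ge d L^5$ from \eqref{eq:threshold}. Both contradict $m\rho<2C'L^3d$ once $c_L$ is large. So $A\gtrsim m^2\rho^2/(dL^6)$, and we take $w_\star=\infty$, where $A_\infty=n$ and $\tau_\infty\lesssim L^2\sqrt{dn}$ (using $n\ge d$).

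By \eqref{eq:sign-drift} and \eqref{eq:count-comp},
\[
\ip{h}{\overline S_\infty(b)}\ge c\,r\rho\,\overline A_w(b)\ge c\,r\rho\,(A/2-CLd).
\]
The $CLd$ term is absorbed because $A\gg Ld$: we have $m^2\rho^2/(dL^6)\ge L^4 d$, again from $m\rho\ge L^5d$. It therefore suffices that $\rho A\gtrsim L^3\sqrt{dn}$, i.e. $\rho^3m^2\gtrsim L^9d^{3/2}n^{1/2}$.
\begin{itemize}
\item If $r\le1$, this reads $r\gtrsim L^3\sqrt d\,(n/m^4)^{1/6}$, which holds since $r>\eta(m)\ge L^5\sqrt d(n/m^4)^{1/6}$.
\item If $r>1$, it reads $m^2\gtrsim L^9d^{3/2}n^{1/2}$, which is exactly \eqref{eq:threshold} with its $\log^5$ factor. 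This requires $C_{K,\kappa}$ to be large relative to the constants, and the logs are handled via $c_L$.
\end{itemize}

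The main obstacle is bookkeeping: checking that the powers of $L$ close consistently, and that the two pieces of $\eta$ cover the two places where $d$ terms appear. The $(n/m^4)^{1/6}$ piece is driven by the case-2 tradeoff, and the $d/m$ piece is needed to rule out the degenerate sub-case and absorb the $CLd$ count slack. Care is also needed with the order of constants: $C'$ and $C$ come first, then $c_L$, and the threshold constant $C_{K,\kappa}$ is chosen last.
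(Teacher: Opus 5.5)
Your proof is correct and follows essentially the same route as the paper. You split on whether the finite-scale signal $m\min\{r,1\}$ dominates $|\calW|\tau_w(b)$; otherwise you rule out the $L^3d$ term via $r>\eta(m)\ge L^5d/m$ or \eqref{eq:threshold}, lower bound $A_w(b)$, transfer it to $\overline A_w(b)$ via \eqref{eq:count-comp}, and conclude at $w_\star=\infty$ through \eqref{eq:sign-drift}, closing the $r\le1$ and $r>1$ cases with $\eta(m)$ and \eqref{eq:threshold} respectively. The only cosmetic differences are that you reduce upfront to a population-signal target of $3\kappa^{1/2}|\calW|r\tau_{w_\star}(b)$, and that in the $r>1$ case \eqref{eq:threshold} gives $m\gtrsim L^5d^{3/4}n^{1/4}$, which implies with slack (rather than exactly matches) the needed $m^2\gtrsim L^9d^{3/2}n^{1/2}$.
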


\begin{proof}
Let $w=w(r)$ as before. \cref{prop:drift,prop:sample-event} imply
\begin{equation}\label{eq:finite-emp-drift}
        \ip{S_w(b)}{h}
        \ge c m\min\{r,1\} r-\kappa^{1/2}r
        L^2\left(\sqrt{d(A_w(b)+d)}+d\right).
\end{equation}

On the other hand, at the $\infty$ scale, \cref{prop:drift,prop:sample-event} imply
\begin{equation*}
        \ip{S_\infty(b)}{h}
        \ge c r \min\{r,1\} \overline A_w(b) -\kappa^{1/2}r L^2 \left(\sqrt{2d^2}+d\right).
\end{equation*}

Since $A_w(b)$ and $\overline A_w(b)$ concentrate near each other, this motivates the proof strategy: prove that the signal at scale $w$ is visible when $A_w(b),\overline A_w(b)$ are small enough, and that otherwise the signal at scale $\infty$ is visible.

First, we focus on scale $w$. If $m\min\{r,1\}\ge C | \calW |\tau_w(b)$ for a large enough constant $C \ge 1$, then
\eqref{eq:finite-emp-drift} proves \eqref{eq:directional-strong} with
$w_\star=w$.

Otherwise, using \eqref{eq:Nsc-L} and
\eqref{eq:tau-def} yields the following, which will later help us lower bound $A_w(b)$ in a helpful way,
\begin{equation}\label{eq:failed-finite}
        m\min\{r,1\}\lesssim L^3\left(\sqrt{dA_w(b)}+d\right).
\end{equation}
Our plan is to convert this into a lower bound for $\overline A_w(b)$, which will yield strong signal at the scale $w_* = \infty$.
We claim the $L^3d$ term cannot account for more than a fixed fraction (say, half) of the LHS.  For
$r\le1$, that would imply $r\lesssim L^3d/m$, whereas
$r>\eta(m)\ge L^5d/m$.  For $r>1$, it would imply
$m\lesssim L^3d$,  yet $n \ge d$ and \eqref{eq:threshold} imply $m \ge L^5d$. Both cases incur a contradiction when {L is defined with a sufficiently large constant.}  Thus the square-root term in
\eqref{eq:failed-finite} must account for a fixed fraction, and
\begin{equation}\label{eq:A-lower}
        A_w(b)\gtrsim\frac{m^2\min\{r,1\}^2}{L^6d}.
\end{equation}

The count comparison \eqref{eq:count-comp} gives
$\overline A_w(b)\ge A_w(b)/2-CLd$.  To absorb this additive term, we claim that
\[
        \frac{m^2\min\{r,1\}^2}{L^6d}\ge L^4d.
\]
If $r\le1$, this follows from
$r>\eta(m)\ge L^5d/m$; otherwise, if $r>1$, this follows from \eqref{eq:threshold} and $n \ge d$.
Together with \eqref{eq:A-lower}, this shows $A_w(b)\gtrsim L^4d$, so {when $L$ is defined with a sufficiently large constant},
\begin{equation*}
        \overline A_w(b)\gtrsim\frac{m^2\min\{r,1\}^2}{L^6d}.
\end{equation*}
We now aim to use this lower bound on $\overline A_w(b)$ to imply signal at scale $S_\infty(b)$. 
Since $n\ge d$, 
\[
        \tau_\infty(b)
        =L^2\left(\sqrt{d(n+d)}+d\right)
        \le3L^2\sqrt{dn}.
\]
Using \eqref{eq:sign-drift} and \eqref{eq:score-conc} yields
\begin{equation}\label{eq:sign-emp-drift}
        \ip{S_\infty(b)}{h}
        \ge c\frac{m^2\min\{r,1\}^3r}{L^6d}-CL^2r\sqrt{dn}.
\end{equation}
It remains only to compare the leading term with the concentration error.  If
$r\le1$, then the first term in \eqref{eq:rho-s} and $r>\eta(m)$ give
\[
        \frac{m^2\min\{r,1\}^3}{L^6d}
        =\frac{m^2r^3}{L^6d}
        \ge L^9\sqrt{dn}.
\]
If $r>1$, then \eqref{eq:threshold} implies
$m^2/(L^6d)\ge L^4\sqrt{dn}$.  Thus in both cases the leading term in
\eqref{eq:sign-emp-drift} is at least $cL^4r\sqrt{dn}$.  {When $L$ is defined with a sufficiently large constant}, the leading term dominates the concentration
error and the desired margin
\[
        | \calW |r\tau_\infty(b)\le3L^3r\sqrt{dn}. \rtag{via \eqref{eq:Nsc-L}}
\]
Thus
\eqref{eq:directional-strong} holds with $w_\star=\infty$.

Finally, $\norm{h}_2\le\kappa^{1/2}r$, so \eqref{eq:directional-strong} gives
\[
        \norm{S_{w_\star}(b)}_2
        \ge2| \calW |\tau_{w_\star}(b)
        >| \calW |\tau_{w_\star}(b).
\]
Hence, we conclude that $w_\star$ is violated.
\end{proof}
We now show the signal is still preserved once aggregated into $G_{\rm agg}(b)$:
\begin{lemma}[Normalized aggregate direction]\label{lem:aggregate-margin}
Define $D=\sqrt{\kappa n} \ge 1$.
Assume the $m$ lower bound \eqref{eq:threshold} and $b\in B_2(B)$.  Under the uniform convergence event, if
$\norm{b-\beta}_\Sig>\eta(m)$, then $\calV(b)\ne\varnothing$,
$G_{\rm agg}(b)\ne0$, and
\begin{equation}\label{eq:aggregate-normalized-margin}
        \left\langle
        \frac{G_{\rm agg}(b)}{\norm{G_{\rm agg}(b)}_2},\beta-b
        \right\rangle
        \ge\frac{\norm{b-\beta}_2}{D}.
\end{equation}
\end{lemma}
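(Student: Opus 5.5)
Write $h=\beta-b$ and $r=\norm{h}_\Sig$. The plan is to lower bound $\ip{G_{\rm agg}(b)}{h}$ using the single strong scale from \cref{lem:strong-score}, and to upper bound $\norm{G_{\rm agg}(b)}_2$ crudely using the feature norm bound \eqref{eq:max-X}.

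Nonemptiness is immediate. \cref{lem:strong-score} supplies a violated $w_\star$, so $\calV(b)\neq\varnothing$. Once we show $\ip{G_{\rm agg}(b)}{h}>0$, it also follows that $G_{\rm agg}(b)\neq 0$.

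\textbf{Lower bound on the inner product.} Split $\ip{G_{\rm agg}(b)}{h}=\sum_{w\in\calV(b)}\ip{S_w(b)}{h}/\tau_w(b)$ into the $w_\star$ term and the rest.
\begin{itemize}
\item By \eqref{eq:directional-strong}, the $w_\star$ term is at least $2\kappa^{1/2}|\calW|\,r$.
\item For every other violated $w$, write $S_w=\overline S_w+(S_w-\overline S_w)$. By \eqref{eq:all-mono}, $\ip{\overline S_w(b)}{h}\ge 0$. By \eqref{eq:score-conc} and Cauchy--Schwarz, $\ip{S_w-\overline S_w}{h}\ge -\tau_w(b)\norm{h}_2\ge -\tau_w(b)\kappa^{1/2}r$.
\end{itemize}
So each other term is at least $-\kappa^{1/2}r$. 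There are fewer than $|\calW|$ such terms (the $\infty$ scale is included in $\calW$), so
\[
\ip{G_{\rm agg}(b)}{h}\ge 2\kappa^{1/2}|\calW|r-(|\calW|-1)\kappa^{1/2}r\ge \kappa^{1/2}|\calW|\,r\ge |\calW|\,\norm{h}_2 .
\]
The last step uses $\norm{h}_2\le\kappa^{1/2}r$.

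\textbf{Upper bound on the norm.} For each $w$, the sum $S_w(b)$ has at most $A_w(b)$ nonzero terms, and each has norm at most $\sqrt{Ld}$ by \eqref{eq:max-X}. Hence $\norm{S_w(b)}_2\le A_w(b)\sqrt{Ld}$. Since $\tau_w(b)\ge L^2\sqrt{dA_w(b)}$, this gives
\[
\norm{S_w(b)}_2/\tau_w(b)\le \sqrt{A_w(b)}/L^{3/2}\le\sqrt n .
\]
Summing over at most $|\calW|$ scales yields $\norm{G_{\rm agg}(b)}_2\le|\calW|\sqrt n\le|\calW|\sqrt{\kappa n}=|\calW|D$, using $\kappa\ge1$.

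\textbf{Conclusion.} Dividing the two bounds gives $\ip{G_{\rm agg}/\norm{G_{\rm agg}}_2}{h}\ge \norm{h}_2/D$, which is \eqref{eq:aggregate-normalized-margin}.

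The main obstacle is making the constants line up in the inner-product bound. The margin factor $2\kappa^{1/2}|\calW|$ in \cref{lem:strong-score} is exactly what is needed to absorb up to $|\calW|-1$ adverse scales, each contributing at most $-\kappa^{1/2}r$. The norm bound is routine, but it must produce exactly $\sqrt{\kappa n}$ with no stray constant; a slack factor of $L^{-3/2}$ is available there if needed.
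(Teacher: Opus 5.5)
Your proposal is correct and follows essentially the same route as the paper: the single strongly aligned violated scale from \cref{lem:strong-score} absorbs at most $|\calW|-1$ adverse violated scales, each bounded below by $-\kappa^{1/2}r$ via \eqref{eq:all-mono} and \eqref{eq:score-conc}, and the feature-norm bound \eqref{eq:max-X} gives $\norm{G_{\rm agg}(b)}_2\le|\calW|\sqrt n$. The only difference is cosmetic. You keep the factor $\kappa^{1/2}$ in the inner-product bound and use $\sqrt n\le\sqrt{\kappa n}$, whereas the paper drops it there and applies $r\ge\norm{h}_2/\sqrt\kappa$ at the end.
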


\begin{proof}
\cref{prop:drift,prop:sample-event} imply that for every tested scale $u$,
\[
        \ip{S_u(b)/\tau_u(b)}{h}\ge-\kappa^{1/2}r.
\]
\cref{lem:strong-score} supplies one violated scale $w_\star$ for which
\[
        \ip{S_{w_\star}(b)/\tau_{w_\star}(b)}{h}
        \ge2\kappa^{1/2}|\calW|r.
\]
There are at most $|\calW|-1$ other violated scales.  Hence,
\begin{equation}\label{eq:aggregate-inner}
\begin{aligned}
        \ip{G_{\rm agg}(b)}{h}
        &\ge 2\kappa^{1/2}|\calW|r
             -(|\calW|-1)\kappa^{1/2}r \ge |\calW| r.
\end{aligned}
\end{equation}

The feature norms guarantee of \cref{prop:sample-event},
$\norm{S_w(b)}_2\le \sqrt{Ld} A_w(b)$, and $A_w(b) \le n$ yield
\[
        \frac{\norm{S_w(b)}_2}{\tau_w(b)}
        \le \frac{\sqrt{Ld} \cdot A_w(b)}{L^2 \sqrt{d A_w(b)}}
        \le \sqrt{ n}.
\]
Therefore, 
\begin{equation}\label{eq:aggregate-norm}
        \norm{G_{\rm agg}(b)}_2
        \le |\calW| \cdot   \sqrt{ n}.
\end{equation}
Equations \eqref{eq:aggregate-inner} and \eqref{eq:aggregate-norm}, together
with $r\ge\norm{h}_2/\sqrt\kappa$, prove
\eqref{eq:aggregate-normalized-margin}.  They also show $G_{\rm agg}(b) \ne 0$. 
\end{proof}

\subsection{Concluding algorithmic guarantees}\label{subsec:het-alg}
\textbf{Algorithm description.} We now more formally introduce our algorithm that uses the residual-balance vectors to descend towards an estimate close to $\beta$. Our algorithm will repeatedly move in the direction of $G_{\rm agg}(b)$, with step sizes changing in different stages. 
Define the smallest stage radius as
\begin{equation*}
        \eta_{\min}=\eta(n)=L^5\left(\sqrt{d/n}+d/n\right).
\end{equation*}
Since $m\le n$ and $\eta$ is nonincreasing,
$\eta_{\min}\le\eta(m)$.  If $B\le\eta_{\min}$, the zero vector already has
the desired accuracy, so the algorithm
returns it.  Otherwise set
\begin{equation*}
        q=\frac34,
        \qquad
        T_{\rm stage}=
        \left\lceil\frac{\log(B/\eta_{\min})}{\log(4/3)}\right\rceil.
\end{equation*}
For $\ell=0,\ldots,T_{\rm stage}$, let $R_\ell=q^\ell B$, which is chosen such that $R_{T_{\rm stage}}\le\eta_{\min}$. 
Finally, for each stage we choose the step size and number of iterations as
\begin{equation}\label{eq:stage-step}
        \alpha_\ell=\frac{R_\ell}{8D},
        \qquad
        N_{\rm it}=\lceil64D^2\rceil.
\end{equation}

\paragraph{Algorithm $\mathsf{RB\mbox{-}Desc}$.}
If $B\le\eta_{\min}$, return $0$.  Otherwise initialize $b=0$.  For stages $\ell=0,1,\ldots,T_{\rm stage}-1$:
\begin{enumerate}[leftmargin=2em]
\item Set $\alpha=\alpha_\ell$.
\item For $j=1,\ldots,N_{\rm it}$:
    \begin{enumerate}[leftmargin=2em]
    \item Compute all scores $S_w(b)$, counts $A_w(b)$, and thresholds
    $\tau_w(b)$.
    \item If $\calV(b)=\varnothing$, return $b$.
    \item Compute $G_{\rm agg}(b) = \sum_{w \in \calV(b)} \frac{S_w(b)}{\tau_w(b)}$.  If
    $G_{\rm agg}(b)=0$, return $b$.
    \item Set $u=G_{\rm agg}(b)/\norm{G_{\rm agg}(b)}_2$ and update
    \begin{equation*}
        b\leftarrow \Pi_{B_2(B)}(b+\alpha u),
    \end{equation*}
    where $\Pi_{B_2(B)}$ is Euclidean projection onto $B_2(B)$.
    \end{enumerate}
\end{enumerate}
Return the final iterate.

\textbf{Algorithm analysis.} The following will let us show $b$ gets close to $\beta$:
\begin{lemma}[One-stage contraction]\label{lem:stage-contract}
Assume \eqref{eq:threshold} and work on the uniform convergence event.  Consider a
stage with radius $R$ and the step size and iteration count as in
\eqref{eq:stage-step}.  If the stage begins at $b$ with
$\norm{b-\beta}_2\le R$ and
\begin{equation}\label{eq:pre-target-stage}
        R\ge4\sqrt\kappa\,\eta(m),
\end{equation}
then, unless the algorithm returns during the stage, its output $b^+$ satisfies
\[
        \norm{b^+-\beta}_2\le\frac34R.
\]
\end{lemma}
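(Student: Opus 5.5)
We track the squared distance $\Delta_j=\norm{b_j-\beta}_2^2$ over the $N_{\rm it}$ iterations of the stage and run the standard projected-descent potential argument, using \cref{lem:aggregate-margin} as a ``gradient-like'' margin condition. Since $\beta\in B_2(B)$ and $\Pi_{B_2(B)}$ is nonexpansive toward points of the ball, each update gives
\[
        \norm{b^{\rm new}-\beta}_2^2\le \norm{b+\alpha u-\beta}_2^2
        =\Delta-2\alpha\ip{u}{\beta-b}+\alpha^2,
\]
because $\norm{u}_2=1$. We will distinguish iterates by whether $\norm{b-\beta}_\Sig>\eta(m)$, which is the hypothesis of \cref{lem:aggregate-margin}.

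\emph{Far iterates} are those with $\norm{b-\beta}_2> R/2$. Since $\norm{b-\beta}_\Sig\ge\kappa^{-1/2}\norm{b-\beta}_2> R/(2\sqrt\kappa)\ge 2\eta(m)$ by \eqref{eq:pre-target-stage}, \cref{lem:aggregate-margin} applies. It says the algorithm does not return at this $b$, since $\calV(b)\neq\varnothing$ and $G_{\rm agg}\neq 0$, and that $\ip{u}{\beta-b}\ge \norm{b-\beta}_2/D> R/(2D)$. With $\alpha=R/(8D)$ this gives
\[
        \Delta_{j+1}\le \Delta_j-\frac{R^2}{8D^2}+\frac{R^2}{64D^2}\le \Delta_j-\frac{R^2}{16D^2}.
\]

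\emph{Near iterates} are those with $\norm{b-\beta}_2\le R/2$. We make no use of the direction here and use only the step size: $\norm{b^{\rm new}-\beta}_2\le R/2+\alpha\le R/2+R/8$, because projection onto the ball containing $\beta$ cannot increase the distance.

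We now combine the two cases. First, no iterate in the stage ever has distance more than $R$: far steps strictly decrease $\Delta$, and near steps land at distance at most $5R/8<R$. Starting from $\Delta_0\le R^2$, a run of consecutive far iterates can last at most $16D^2$ steps before $\Delta$ would fall below $R^2/4$, and $N_{\rm it}=\lceil 64D^2\rceil$ exceeds this. Hence some iterate within the stage becomes near. After that, the trajectory stays within distance $5R/8$ forever. Starting near, one step lands at distance at most $5R/8$. If that point is far, every subsequent far step strictly decreases the distance, so the trajectory remains at distance at most $5R/8$ until it becomes near again, from which the same bound applies. So the final iterate satisfies $\norm{b^+-\beta}_2\le 5R/8\le 3R/4$. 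There is one boundary case: if the stage starts at distance exactly $R$ and all iterates are far, the descent count shows this cannot persist for $N_{\rm it}$ steps.

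The main obstacle is this bookkeeping, namely checking that oscillation between the near and far regimes never pushes the distance above $3R/4$. The inequality $R/2+\alpha=5R/8<3R/4$ gives exactly the slack needed. The other point to verify is that the margin requirement converts correctly between $\norm{\cdot}_\Sig$ and $\norm{\cdot}_2$, which is where the factor $4\sqrt\kappa$ in \eqref{eq:pre-target-stage} is used. Finally, the ``unless the algorithm returns'' clause is automatically vacuous at far iterates, by \cref{lem:aggregate-margin}.
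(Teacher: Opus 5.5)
Your proposal is correct and follows the paper's proof essentially step for step. Both arguments use the potential $\norm{b_t-\beta}_2^2$ and apply \cref{lem:aggregate-margin} to iterates outside $B_2(\beta,R/2)$, passing through $\norm{\cdot}_\Sig\ge\kappa^{-1/2}\norm{\cdot}_2$. Both then compare the per-step decrease with $N_{\rm it}\alpha^2\ge R^2$ to force entry into the half-radius ball, and finish with the same $R/2+R/8=5R/8$ induction. The only difference is the per-step constant: you use $R^2/(16D^2)$ where the paper uses $7\alpha^2$.
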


\begin{proof}
For the $t$-th iteration in a stage, let $b_t$ be the value of $b$ at the start of the iteration,  $D_t=\norm{b_t-\beta}_2^2$, and $u_t$ is the value of $u$ during this iteration.  Whenever
$\norm{b_t-\beta}_2>R/2$, \eqref{eq:pre-target-stage} implies
$\norm{b_t-\beta}_\Sig>\eta(m)$.  Hence
\cref{lem:aggregate-margin} gives
\[
        \ip{u_t}{\beta-b_t}
        \ge\frac{\norm{b_t-\beta}_2}{D}
        >\frac{R}{2D}=4\alpha.
\]
Since projection onto a closed convex set containing $\beta$ satisfies $\norm{\Pi_{B_2(B)}(z)-\beta}_2\le\norm{z-\beta}_2$, 
\begin{equation}\label{eq:dist-shrink}
        D_{t+1} 
        \le \norm{b_t + \alpha u_t - \beta}_2^2 
        = D_t - 2\alpha \ip{u_t}{\beta - b_t} + \alpha^2
        \le D_t-7\alpha^2.
\end{equation}
If the stage never entered $B_2(\beta,R/2)$, this decrease would occur on every
iteration, whereas
\[
        N_{\rm it}\alpha^2
        \ge64D^2\frac{R^2}{64D^2}=R^2,
\]
contradicting $D_0\le R^2$ and $D_t > 0$.  Thus, the half-radius ball is entered.  From a point within $R/2$, the next
update has length at most $R/(8D)\le R/8$ and therefore lands within $5R/8$.
From a point whose distance lies between $R/2$ and $5R/8$, the preceding
descent inequality makes the distance decrease.  Induction shows that every
later iterate remains within $5R/8<3R/4$.
\end{proof}
Similar logic shows that once $b$ is close to $\beta$, it will not move away:
\begin{lemma}\label{lem:post-target-stability}
Assume \eqref{eq:threshold} and work on the uniform convergence event.
Define $\overline R=4\sqrt\kappa\,\eta(m)$.
If $\norm{b-\beta}_2\le\overline R$ and an update is made during a stage with
$R_\ell\le\overline R$, then the next iterate also lies in
$B_2(\beta,\overline R)$.
\end{lemma}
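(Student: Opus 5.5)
We split according to whether the current iterate $b$ is in the inner part of the ball $B_2(\beta,\overline R)$ or near its boundary, reusing the descent inequality \eqref{eq:dist-shrink} from the proof of \cref{lem:stage-contract}. Fix a stage with $R_\ell\le\overline R$, so the step size is $\alpha=\alpha_\ell=R_\ell/(8D)\le \overline R/(8D)$. Let $b^+=\Pi_{B_2(B)}(b+\alpha u)$ be the next iterate. Since $\beta\in B_2(B)$ and projection onto a closed convex set is nonexpansive toward points of the set, $\norm{b^+-\beta}_2\le\norm{b+\alpha u-\beta}_2$. It therefore suffices to bound the unprojected step.

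\emph{Case 1: $\norm{b-\beta}_2\le \overline R/2$.} Here we use only the step length. Since $\norm{u}_2=1$ and $D\ge1$,
\[
\norm{b+\alpha u-\beta}_2\le \frac{\overline R}{2}+\frac{\overline R}{8D}\le \frac{5\overline R}{8}\le\overline R .
\]

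\emph{Case 2: $\overline R/2<\norm{b-\beta}_2\le\overline R$.} First convert to the $\Sig$-norm. Since $\Sig\succeq\kappa^{-1}I$, we have $\norm{b-\beta}_\Sig\ge\kappa^{-1/2}\norm{b-\beta}_2>\kappa^{-1/2}\cdot 2\sqrt\kappa\,\eta(m)=2\eta(m)>\eta(m)$. An update is being made and $b\in B_2(B)$, so \cref{lem:aggregate-margin} applies and gives
\[
\ip{u}{\beta-b}\ge \frac{\norm{b-\beta}_2}{D}>\frac{\overline R}{2D}\ge 4\alpha .
\]
Expanding the square as in \eqref{eq:dist-shrink},
\[
\norm{b+\alpha u-\beta}_2^2=\norm{b-\beta}_2^2-2\alpha\ip{u}{\beta-b}+\alpha^2\le\norm{b-\beta}_2^2-7\alpha^2\le\overline R^2 .
\]

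In both cases $b^+\in B_2(\beta,\overline R)$. The only delicate point is the norm conversion in Case 2, which is needed to invoke \cref{lem:aggregate-margin}. It uses exactly the factor $\sqrt\kappa$ built into $\overline R$, together with $\eta(m)$ being the threshold in that lemma. The condition $R_\ell\le\overline R$ is what keeps the step $\alpha$ small relative to $\overline R$, which both cases rely on.
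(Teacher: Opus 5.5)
Your proof is correct and follows the paper's argument: the inner half-ball is handled by the step-length bound $\alpha_\ell\le\overline R/(8D)$, and the annulus $\overline R/2<\norm{b-\beta}_2\le\overline R$ by \cref{lem:aggregate-margin} combined with the descent inequality \eqref{eq:dist-shrink}. You also spell out two steps the paper leaves implicit: the conversion from the Euclidean norm to the $\Sigma$-norm, and the nonexpansiveness of the projection.
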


\begin{proof}
The update length is at most $\overline R/(8D)\le\overline R/8$.  Thus an
iterate in $B_2(\beta,\overline R/2)$ remains inside
$B_2(\beta,\overline R)$ after one update.  If instead
$\overline R/2<\norm{b-\beta}_2\le\overline R$, then
$\norm{b-\beta}_\Sig>\eta(m)$ and
\[
        \ip{u}{\beta-b}
        \ge\frac{\norm{b-\beta}_2}{D}
        >\frac{\overline R}{2D}
        \ge4\alpha_\ell.
\]
By the same argument as \eqref{eq:dist-shrink}, the squared Euclidean distance decreases, and hence no update crosses the boundary of $B_2(\beta,\overline R)$.
\end{proof}
We combine \cref{lem:stage-contract,lem:post-target-stability} to conclude our desired guarantee:
\begin{lemma}\label{prop:descent-localization}
Assume \eqref{eq:threshold} and work on the uniform convergence event. The output of
$\mathsf{RB\mbox{-}Desc}$ satisfies
\[
        \norm{\widehat\beta-\beta}_2\le C\eta(m).
\]
\end{lemma}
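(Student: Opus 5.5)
We combine \cref{lem:stage-contract,lem:post-target-stability}, together with the observation that an early return can only happen near $\beta$. Set $\overline R=4\sqrt\kappa\,\eta(m)$ and let $\ell^\star$ be the first stage index with $R_{\ell^\star}<\overline R$. Such an index exists with $\ell^\star\le T_{\rm stage}$, because $R_{T_{\rm stage}}\le\eta_{\min}\le\eta(m)<\overline R$. If $B\le\eta_{\min}$, the algorithm returns $0$, and $\norm{0-\beta}_2\le B\le\eta(m)$, so we are done. Otherwise, start from $b=0$ with $\norm{b-\beta}_2\le B=R_0$.

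\textbf{Early returns.} The algorithm returns early only when $\calV(b)=\varnothing$ or $G_{\rm agg}(b)=0$. By \cref{lem:aggregate-margin}, either condition forces $\norm{b-\beta}_\Sig\le\eta(m)$. Since $\norm{\cdot}_2\le\sqrt\kappa\norm{\cdot}_\Sig$, such an output satisfies $\norm{\wh\beta-\beta}_2\le\sqrt\kappa\,\eta(m)$, so it suffices to handle runs with no early return.

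\textbf{Contraction stages.} For stages $\ell<\ell^\star$ we have $R_\ell\ge\overline R$, so \eqref{eq:pre-target-stage} holds. We induct on the invariant $\norm{b-\beta}_2\le R_\ell$ at the start of stage $\ell$, which holds at $\ell=0$. Given the invariant, \cref{lem:stage-contract} yields $\norm{b-\beta}_2\le\tfrac34R_\ell=R_{\ell+1}$ at the end of the stage. Hence stage $\ell^\star$ begins with $\norm{b-\beta}_2\le R_{\ell^\star}<\overline R$. If $\ell^\star=T_{\rm stage}$, no stages remain and this iterate is the output.

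\textbf{Stability stages.} Every remaining stage has $R_\ell\le R_{\ell^\star}\le\overline R$. By \cref{lem:post-target-stability}, each update keeps the iterate inside $B_2(\beta,\overline R)$. Inducting over all remaining updates shows that the output satisfies $\norm{\wh\beta-\beta}_2\le\overline R=4\sqrt\kappa\,\eta(m)$.

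Combining the three cases gives the claim with $C=4\sqrt\kappa$.

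The only point needing care is the boundary between the two regimes. At stage $\ell^\star-1$ we have $R\ge\overline R$, so the contraction lemma applies. From stage $\ell^\star$ on we have $R_\ell\le\overline R$, which is exactly the hypothesis of \cref{lem:post-target-stability}. Since $\ell^\star$ is defined as the first index below $\overline R$, every stage falls under one of the two lemmas.
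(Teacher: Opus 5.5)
Your proof is correct and follows the paper's argument essentially step for step: you dispose of the $B\le\eta_{\min}$ case and of early returns via \cref{lem:aggregate-margin}, chain \cref{lem:stage-contract} while $R_\ell\ge\overline R$, and invoke \cref{lem:post-target-stability} from the first stage with $R_\ell<\overline R$ (or use $R_{T_{\rm stage}}\le\eta_{\min}<\overline R$ if no such stage runs), which gives $C=4\sqrt\kappa$. Naming the index $\ell^\star$ explicitly only makes the paper's case split slightly cleaner.
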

\begin{proof}
If $B\le\eta_{\min}$, then
$\norm{\beta}_2\le B\le \eta(m)$, so the initial
return is valid.  Moving forward, we assume $B>\eta_{\min}$.

An early return with $\calV(b)=\varnothing$ or $G_{\rm agg}(b)=0$ is within
$C\eta(m)$ by \cref{lem:aggregate-margin}.

Suppose no early return occurs.  As long as
$R_\ell\ge\overline R$, \cref{lem:stage-contract} and induction show that
the stage begins within $R_\ell$ and ends within
$R_{\ell+1}=3R_\ell/4$.  If some stage is the first with
$R_\ell<\overline R$, its starting point is already in
$B_2(\beta,\overline R)$, and \cref{lem:post-target-stability} keeps all
later iterates there. Otherwise, the last stage ends within
\[
        R_{T_{\rm stage}}
        \le\eta_{\min}
        \le\eta(m)
        < \overline R.
\]
Thus in every case
$\norm{\widehat\beta-\beta}_2\le\overline R =4 \sqrt{\kappa} \eta(m)$, proving the claim.
\end{proof}
This concludes the high-probability estimation error guarantee. We also observe the algorithm has the desired runtime:
\begin{proposition}\label{prop:descent-runtime}
$\mathsf{RB\mbox{-}Desc}$ runs in time
\[
        O_{K,\kappa}(L^2 n^2 d) = O_{K,\kappa}\left( n^2 d \log^2 \left(\frac{end(B+1)}{\delta}\right)\right).
\]
\end{proposition}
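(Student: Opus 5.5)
The plan is to count operations: (number of stages) $\times$ (iterations per stage) $\times$ (cost of one iteration), and then show the product is $O_{K,\kappa}(L^2 n^2 d)$.

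\textbf{Number of stages.} By definition, $T_{\rm stage}=\lceil \log(B/\eta_{\min})/\log(4/3)\rceil$. Since $\eta_{\min}=L^5(\sqrt{d/n}+d/n)\ge L^5/\sqrt{n}$, we have $\log(B/\eta_{\min})\le \log(B\sqrt n)\lesssim \log(ndB/\delta)$. Hence $T_{\rm stage}\lesssim L$.

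\textbf{Iterations per stage.} With $D=\sqrt{\kappa n}$, the count is $N_{\rm it}=\lceil 64D^2\rceil=O_\kappa(n)$.

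\textbf{Cost of one iteration.} We perform the following steps.
\begin{enumerate}
\item Compute all residuals $r_i(b)$ in $O(nd)$ time.
\item For each scale $w\in\calW$ (there are $|\calW|\le L$ of them by \eqref{eq:Nsc-L}), compute $S_w(b)$ and $A_w(b)$ with one pass over the samples, costing $O(nd)$ per scale.
\item Compute $\tau_w(b)$ in $O(1)$, test violation via $\norm{S_w(b)}_2$ in $O(d)$, form $G_{\rm agg}(b)$ in $O(|\calW| d)$, then normalize and project onto $B_2(B)$ in $O(d)$.
\end{enumerate}
A naive per-iteration bound is therefore $O(Lnd)$. This can be sharpened to $O(nd + Ld)$ by computing the residuals once, sorting the $|r_i(b)|$ into dyadic buckets (assigning each sample the smallest $w$ containing it), and forming all $S_w(b)$ as prefix sums of bucket sums. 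Either version suffices.

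\textbf{Total.} Multiplying gives $O(L)\cdot O_\kappa(n)\cdot O(Lnd)=O_{K,\kappa}(L^2 n^2 d)$. The naive per-iteration bound already gives exactly this, which is why $L^2$ rather than $L$ appears. Substituting $L=c_L\log(end B/\delta)$, and writing $B+1$ to cover the trivial case, yields the stated bound.

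\textbf{Main obstacle.} The only delicate point is bounding $T_{\rm stage}$ by $O(L)$. This uses $\eta_{\min}\gtrsim 1/\sqrt n$ together with $B\ge 1$, so that $\log(B/\eta_{\min})$ is at most a constant times $\log(ndB/\delta)$. The early-return branch ($B\le\eta_{\min}$) costs $O(1)$, and computing $J$ and $\calW$ costs $O(L)$.
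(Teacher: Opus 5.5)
Your proof is correct and matches the paper's: (number of stages) $\times$ $O_\kappa(n)$ iterations per stage $\times$ $O(nd|\calW|)$ arithmetic per iteration gives $O_{K,\kappa}(L^2n^2d)$. Your explicit check that $\eta_{\min}\ge 1/\sqrt n$, and hence $T_{\rm stage}=O(L)$, fills in a step the paper leaves implicit when it writes $O(\log(2+B/\eta_{\min}))$ stages.
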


\begin{proof}
There are $O(\log(2+B/\eta_{\min}))$ stages, and each stage has $O(D^2)=O_\kappa(n)$ inner iterations. One inner iteration costs
$O(nd |\calW|)$ arithmetic operations.  In total, the number of arithmetic operations is 
\[
        O_{K,\kappa}(L^2 n^2 d) = O_{K,\kappa}\left( n^2 d \log^2 \left(\frac{end(B+1)}{\delta}\right)\right). \qedhere
\]
\end{proof}

\begin{proof}[Proof of \cref{thm:main-het}]
The theorem follows immediately from the uniform convergence event (\cref{prop:sample-event}), estimation error guarantee (\cref{prop:descent-localization}), and runtime bound (\cref{prop:descent-runtime}).
\end{proof}

%% file: adaptive.tex
\section{Adaptive linear regression}
In this section, we study the task of adaptive linear regression. In \cref{subsec:adapt-mixture}, we give an (inefficient) estimator for when the errors are mixtures of symmetric, log-concave distributions. In \cref{sec:k1}, we give an efficient estimator for when the errors are a single symmetric, log-concave distribution.

\subsection{Adaptive guarantees for symmetric, log-concave mixture errors}\label{subsec:adapt-mixture}
In this section, we show positive guarantees for an (inefficient) statistical estimator when the errors are mixtures of symmetric log-concave densities. This regression setting generalizes the main result of \cite{compton2026attainability}, and our estimator is a natural high-dimensional version of their approach. 

We outline our approach in \cref{sec:contribs}; we now set up our estimator more precisely. 

Formally, the error density belongs to the class
\[
\calP_k:=
\left\{
 p=\sum_{\ell=1}^k w_\ell p_\ell:
 w_\ell\ge0,\ \sum_{\ell=1}^k w_\ell=1,\ 
 p_\ell\text{ log-concave and symmetric about }0
\right\}.
\]

Later, we will also use the following condition:
\begin{definition}[Small-ball condition]\label{def:small-ball}
The design satisfies an $(c_1,c_2)$-small-ball condition if
\begin{equation*}
\Pp\!\left(|X^\top u|\ge c_1\norm{u}_\Sigma\right)\ge c_2
\qquad
\text{for every }u\in\R^d\text{ with }\norm{u}_\Sigma>0.
\end{equation*}
\end{definition}

\paragraph{Symmetrizing the covariate distribution.} Since our algorithm relies on testing some notion of symmetry, we will require that $X^\top v \stackrel{d}= X^\top (-v)$. This is not automatically true, since we do not assume any symmetry in the covariate distribution, yet the following classical transformation (e.g. this also appears in \cite[Section A.2]{d2021consistent}) resolves this issue.

Let $S_i$ be independent Rademacher signs, independent of the sample, and define
\begin{equation*}
\widetilde X_i=S_iX_i,
\qquad
\widetilde y_i=S_i y_i,
\qquad
\widetilde\varepsilon_i=S_i\varepsilon_i.
\end{equation*}
Then,
\[
\widetilde y_i=\widetilde X_i^\top\beta+\widetilde\varepsilon_i.
\]
Since the distribution of $\eps_i$ is symmetric, the distribution of $\widetilde\varepsilon_i$ is unchanged, and $\widetilde X_i$ and $\widetilde\varepsilon_i$ are independent. $\widetilde X_i$ now satisfies our desired symmetry condition, while its tail bounds and second moment matrix are unaffected since
\[
|\widetilde X^\top u|=|X^\top u|,
\qquad
\E[\widetilde X\widetilde X^\top]=\Sigma.
\]
For ease of notation, we will assume this preprocessing has already been applied in the remainder of this section, meaning that $X_i,y_i$ will implicitly represent $\widetilde X_i, \widetilde y_i$.

\paragraph{Designing the detection sets.}
For a candidate $b\in\R^d$, recall that the residual is $r_i(b) = y_i - X_i^\top b$.
For $c\in\R^d$, $s>0$, and an interval $I=[-b_0,-a_0]$ with $0\le a_0<b_0<\infty$, define
\begin{align}
N^+_{b,c,s,I}
&:=\sum_{i=1}^n
\one\{X_i^\top c \ge s,\ r_i(b)+X_i^\top c  \in I\},
\label{eq:Nplus}\\
N^-_{b,c,s,I}
&:=\sum_{i=1}^n
\one\{X_i^\top c \le-s,\ r_i(b)- X_i^\top c \in I\},
\label{eq:Nminus}
\end{align}
and let
\begin{equation*}
\ell_{n,\delta}:=\max\{1,\log(2en/\delta)\},
\qquad
A_{n,d,\delta}:=(d+1)\ell_{n,\delta},
\qquad
\Gamma:=C_\Gamma\sqrt{A_{n,d,\delta}},
\end{equation*}
{where $C_\Gamma$ is chosen to be sufficiently large later}. Then, we define a confidence set
\begin{equation}
\calC_n:=
\left\{
 b\in\R^d:
 \left|\sqrt{N^+_{b,c,s,I}}-\sqrt{N^-_{b,c,s,I}}\right|
 \le\Gamma
 \text{ for every }c,s,I
\right\}.
\label{eq:confidence-set}
\end{equation}
Currently, it should not be clear to the reader why this confidence set will typically contain only good estimates. Let us defer that question and first discuss some simpler intuition for why the confidence set will usually contain $\beta$. For the correct estimate $b=\beta$, the residuals $r_i(b)\stackrel{d}=\eps_i$ and are independent of $X_i$. Additionally, by symmetry of $X_i$, the distribution of $X_i^\top c$ restricted to $X_i^\top c \ge s$ should be exactly the same as the distribution of $-X_i^\top c$ restricted to $X_i^\top c \le -s$. Hence, the expectation of both counts $N^+_{b,c,s,I}, N^-_{b,c,s,I}$ are the same when $b=\beta$, and if we employ a uniform convergence result and set the threshold accordingly, then we should be able to show that $\beta \in \calC_n$ with high probability. The more interesting component of our proof is showing that only good estimates will be inside this confidence set.

\paragraph{Analysis.} Observe how each detection set is an intersection of three halfspaces in $\R^{d+1}$, so the relevant concept class has VC dimension $O(d)$.
As we examine the counts inside these sets, the following relative uniform convergence guarantee will be helpful (the proof is standard and is deferred to \cref{app:sqrt-unif}):
\begin{lemma}[Uniform convergence for square-root counts]\label{lem:sqrt-vc}
Let $Z_1,\ldots,Z_n$ be i.i.d. random elements of a Euclidean space. Let $\calA$ be a concept class with VC dimension at most $V\ge1$. For $A\in\calA$, define
\[
N(A):=\sum_{i=1}^n\one\{Z_i\in A\},
\qquad
Q(A):=\sum_{i=1}^n\Pp(Z_i\in A).
\]
For $\delta\in(0,1/2)$, let $\ell_{n,\delta}=\log(2en/\delta)$.
Then, with probability at least $1-\delta$,
\begin{equation}
\sup_{A\in\calA}
\left|\sqrt{N(A)}-\sqrt{Q(A)}\right|
\le C\sqrt{V\ell_{n,\delta}},
\label{eq:sqrt-vc}
\end{equation}
for a universal constant $C>0$.
\end{lemma}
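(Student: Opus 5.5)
The plan is to prove the bound through a relative (ratio-type) deviation inequality for VC classes and then convert it into a statement about square roots. The standard relative VC inequality, due to Vapnik and Chervonenkis and sharpened by Anthony and Shawe-Taylor, holds for independent but not necessarily identically distributed samples via symmetrization. It gives that with probability at least $1-\delta$, simultaneously for all $A\in\calA$,
\[
Q(A)-N(A)\le C\sqrt{Q(A)\,V\ell_{n,\delta}},\qquad N(A)-Q(A)\le C\sqrt{N(A)\,V\ell_{n,\delta}}+CV\ell_{n,\delta}.
\]
To derive it, first symmetrize with a ghost sample $Z'_1,\dots,Z'_n$, so that we compare $N(A)$ with $N'(A)$. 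Conditioned on the $2n$ points, Sauer's lemma bounds the number of distinct traces of $\calA$ by $(2en/V)^V$. For each fixed trace, swapping $Z_i\leftrightarrow Z'_i$ with random signs gives a Bernstein/hypergeometric-type tail in which the deviation of $N-N'$ is controlled by $\sqrt{(N+N')t}+t$. A union bound over the traces with $t\asymp V\log(2en/V)+\log(1/\delta)\lesssim V\ell_{n,\delta}$ finishes this step. Passing from $N'$ back to $Q$ needs the usual symmetrization lemma, which requires $Q(A)\gtrsim t$. Sets with $Q(A)\lesssim V\ell_{n,\delta}$ are handled separately: there the upper bound $N(A)\lesssim V\ell_{n,\delta}$ follows from the same ghost-sample argument with a multiplicative Chernoff bound.

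Given the two relative inequalities, the square-root conversion is elementary. Write $t=V\ell_{n,\delta}$.

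1. If $N\le Q$, the first inequality gives $\sqrt Q-\sqrt N=(Q-N)/(\sqrt Q+\sqrt N)\le C\sqrt{Qt}/\sqrt Q=C\sqrt t$.
2. If $N>Q$, then $\sqrt N-\sqrt Q\le (C\sqrt{Nt}+Ct)/(\sqrt N+\sqrt Q)\le C\sqrt t+Ct/\sqrt N$.
   - When $N\ge t$, the last term is at most $C\sqrt t$.
   - When $N<t$, we have trivially $\sqrt N-\sqrt Q\le\sqrt N<\sqrt t$.

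In every case $|\sqrt N-\sqrt Q|\le C'\sqrt{V\ell_{n,\delta}}$, which is \eqref{eq:sqrt-vc}. For the i.i.d. setting stated here, one could instead directly invoke a known relative deviation bound, for example Theorem 2.1 of Anthony and Shawe-Taylor, or Bousquet–Boucheron–Lugosi's normalized inequality $\sup_A (P(A)-P_n(A))/\sqrt{P(A)}\le 2\sqrt{(V\log(2en/V)+\log(4/\delta))/n}$ together with its symmetric counterpart. After multiplying by $n$, these give exactly the two displayed inequalities.

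The main obstacle is the symmetrization step that replaces $N'$ by $Q$ uniformly, since it needs a lower bound on $Q(A)$ for the ghost sample to concentrate. I expect to handle it by restricting to sets with $Q(A)\ge C_0 t$, where Chebyshev gives $\Pp(N'(A)\ge Q(A)/2)\ge 1/2$, and treating small-$Q$ sets via the trivial bound in the conversion above. The constant $C$ is universal because all the logarithmic factors are absorbed into $\ell_{n,\delta}\ge 1$, using $V\le n$ without loss of generality; if $V>n$, the bound holds trivially since $\sqrt N,\sqrt Q\le\sqrt n$.
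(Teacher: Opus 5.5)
Your proof is correct, but it takes a different route from the paper. The paper does not argue from scratch. It reads off the two-sided bound $|N(A)-Q(A)|\le C(\sqrt{VQ(A)\ell_{n,\delta}}+V\ell_{n,\delta})$ from its general uniform-convergence result, \cref{lem:appendix-vc}. That lemma is proved with a local maximal inequality for VC-type classes (Cattaneo--Feng--Underwood) together with Adamczak's Talagrand-type deviation inequality, peeled over dyadic levels of $Q(A)$. The square-root conversion then splits on $Q\le V\ell_{n,\delta}$ versus $Q>V\ell_{n,\delta}$, using only the population-normalized bound.

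You instead use the classical Vapnik--Chervonenkis / Anthony--Shawe-Taylor ratio inequalities: ghost sample, Sauer's lemma, and Hoeffding over traces. You normalize lower deviations by $\sqrt{Q}$ and upper deviations by $\sqrt{N}$, and split on the sign of $N-Q$ and on the size of $N$. Your route is elementary and self-contained for indicator classes, and in the i.i.d. setting of the statement it amounts to citing a textbook inequality with explicit constants. The paper's route is heavier, but it costs nothing extra there: \cref{lem:appendix-vc} is needed anyway for the heteroskedastic analysis, where samples are only independent and one also needs the vector-valued sums $Z(A)$.

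One detail in your sketch of the symmetrization step needs a fix. Knowing $N'(A)\ge Q(A)/2$ is not enough, because $N(A)$ may itself exceed $Q(A)/2$, and then $N'-N$ need not be positive. What you need is $N'(A)\ge Q(A)-\tfrac{C}{2}\sqrt{Q(A)t}$. Chebyshev gives this with probability at least $1/2$ once $C^2t\ge 8$. Combine it with $N(A)<Q(A)-C\sqrt{Q(A)t}$ and the monotonicity of $x\mapsto (x-N)/\sqrt{x+N}$ on $x\ge N$. This yields $N'-N\ge \tfrac{C}{2\sqrt2}\sqrt{(N+N')t}$, which is the event controlled by the union bound over traces. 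The issue disappears entirely if you simply cite the relative deviation bounds, as you also propose.
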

The following is the main theorem, which will immediately imply \cref{thm:main-adapt}:
\begin{theorem}\label{thm:adapt-technical}
Assume $p\in\calP_k$, the covariates are $\kappa$-well-conditioned, and satisfy a $(c_1,c_2)$-small-ball condition, where $\kappa \ge 1$ and $c_1,c_2>0$ are all constants. There exists a constant $C_{c_1,c_2,\kappa} \ge 1$ depending only on $c_1,c_2,\kappa$ such that the following holds. If $C_\Gamma$ is sufficiently large, then with probability at least $1-\delta$ the confidence set~\eqref{eq:confidence-set} satisfies:
\begin{enumerate}[label=(\alph*)]
\item $\beta\in\calC_n$;
\item for every $b\in\calC_n$,
\begin{equation*}
\norm{b - \beta }_2 \le C_{c_1,c_2,\kappa} \omega_p \left( C_{c_1,c_2,\kappa} \frac{dk \log(2n/\delta) \log(2kn) \log(2n)} {n}\right) .
\end{equation*}
\end{enumerate}
\end{theorem}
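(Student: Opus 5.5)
Both parts will follow from a single uniform convergence event. Consider the samples $Z_i=(X_i,y_i)\in\R^{d+1}$. Each detection set $\{X^\top c\ge s,\ y-X^\top b+X^\top c\in I\}$ is an intersection of three halfspaces in $\R^{d+1}$, with parameters $(b,c,s,I)$. The class of all such sets therefore has VC dimension $V\lesssim d$ (up to a logarithmic factor, which is absorbed in $A_{n,d,\delta}$). The same holds for the minus-type sets. We apply \cref{lem:sqrt-vc} to both classes with failure probability $\delta/2$ each. This gives, uniformly over $b,c,s,I$,
\[
\left|\sqrt{N^\pm}-\sqrt{Q^\pm}\right|\le C\sqrt{A_{n,d,\delta}},
\]
where $Q^\pm$ denotes the expected count.

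For part (a), take $b=\beta$. The residual $r_i(\beta)=\eps_i$ is independent of $X_i$, and by the symmetrization $X_i\stackrel{d}=-X_i$. Hence $(X_i^\top c,\eps_i+X_i^\top c)$ restricted to $X_i^\top c\ge s$ has the same law as $(-X_i^\top c,\eps_i-X_i^\top c)$ restricted to $X_i^\top c\le -s$. So $Q^+=Q^-$, and the triangle inequality gives $|\sqrt{N^+}-\sqrt{N^-}|\le 2C\sqrt{A}\le\Gamma$ once $C_\Gamma\ge 2C$.

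For part (b), I argue by contrapositive. Fix $b$ with $h=\beta-b$ and $\norm{h}_2$ larger than the claimed bound, and show that some test fails. Choose $c=h$ and $s=c_1\norm{h}_\Sigma$. For the plus set, $r_i(b)+X_i^\top h=\eps_i+2X_i^\top h$, which conditionally on $X_i$ is $p$ translated by $t_i=2X_i^\top h\ge 2s$. For the minus set, $r_i(b)-X_i^\top h=\eps_i$, which is not translated. This asymmetry is exactly why the sets are built with $\pm X_i^\top c$.

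Write $\mu(\cdot)$ for the conditional law of $X^\top h$ given $X^\top h\ge s$, which is also the law of $-X^\top h$ given $X^\top h\le -s$. By the small-ball condition, $n\Pp(X^\top h\ge s)\ge nc_2/2$. Then
\[
Q^+= n\Pp(X^\top h\ge s)\,\E_{\mu}\!\left[P_{p_{2t}}(I)\right],\qquad Q^-= n\Pp(X^\top h\ge s)\,P_p(I).
\]
For intervals on the negative side $I=[-b_0,-a_0]$, shifting right by $2t\ge 2s$ decreases the mass, using symmetry and unimodality of each mixture component. I would have to verify this monotonicity for the relevant intervals; the alternative is to choose $I$ by the interval-witness structure so that the comparison is monotone. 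Granting it, $Q^+\le n\Pp(X^\top h \ge s) P_{p_{2s}}(I)$ for every $I$.

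It now suffices to find $I$ with
\[
n\left(\sqrt{P_p(I)}-\sqrt{P_{p_{2s}}(I)}\right)^2\gtrsim \Gamma^2/c_2.
\]
Combined with the uniform bound, this yields $|\sqrt{N^+}-\sqrt{N^-}|>\Gamma$, so $b\notin\calC_n$. This is where I invoke the interval-witness theorem of \cite{compton2026attainability} (Corollary 2.18): for $p\in\calP_k$ there is an interval $J$ with
\[
\left(\sqrt{P_p(J)}-\sqrt{P_{p_{u}}(J)}\right)^2\gtrsim \frac{\hels(p,p_u)}{k\log(2kn)\log(2n)}.
\]
Since $\norm{h}_\Sigma\gtrsim\kappa^{-1/2}\norm{h}_2>\omega_p(\epsilon)$ up to constants, with $\epsilon=C\,dk\log(2n/\delta)\log(2kn)\log(2n)/n$, the definition of the modulus gives $\hels(p,p_{2s})>\epsilon$. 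Substituting makes the bound above $\gtrsim d\log(2n/\delta)\gtrsim\Gamma^2$ once $C$ is large relative to $C_\Gamma$.

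The main obstacle is reducing the witness interval $J$ to one of the allowed one-sided test intervals $[-b_0,-a_0]$ while handling the spread of translations $t\ge 2s$ rather than a single shift. By symmetry of $p$, a two-sided witness can be reflected or split into halves, losing a constant factor, and the infinite endpoint can be approximated by a finite $b_0$ by monotone convergence. For the range of translations, I would try restricting further to $X^\top h\in[s,C's]$, which still has probability $\gtrsim c_2$ by sub-Gaussianity, or else a bounded-ratio argument from the monotonicity of $u\mapsto P_{p_u}(I)$. If this fails, the fallback is to apply the witness at each fixed translation and average, losing only constants.
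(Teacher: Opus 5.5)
Your overall route is the paper's. You use the same square-root uniform convergence (\cref{lem:sqrt-vc}) and the same symmetry argument for (a). For (b) you use the same test direction: your choice $c=\beta-b$ instead of $b-\beta$ only swaps the roles of $N^+$ and $N^-$. The threshold $s=c_1\norm{b-\beta}_\Sigma$, the small-ball bound $\Pp(X^\top h\ge s)\ge c_2/2$, and the inversion through $\omega_p$ are also the same.

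The monotonicity you hedge on is true for \emph{every} interval $I\subset(-\infty,0]$. A mixture of log-concave densities symmetric about $0$ is symmetric unimodal, so $t\mapsto \Pp_{p_t}(I)=\Pp_p(I-t)$ is nonincreasing for $t\ge 0$. This already handles the whole spread of translations $2X_i^\top h\ge 2s$. So the fallbacks in your last paragraph are unnecessary, and the ``average the witnesses over translations'' option would not even give a single test interval. Also, intersections of three halfspaces in $\R^{d+1}$ have VC dimension $O(d)$ with no extra logarithm. An extra $\log n$ would not be absorbed into $A_{n,d,\delta}$, but none is needed.

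The one step that fails as written is the reduction of a general witness $J$ to the form $[-b_0,-a_0]$. The paper sidesteps this: its \cref{lem:interval-witness} already returns $I=[-b,-a]$ together with \eqref{eq:interval-monotone}.

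Starting from an arbitrary $J$, splitting is fine, since $(A,B)\mapsto(\sqrt A-\sqrt B)^2$ is subadditive over disjoint sets. However, reflecting the positive piece about $0$ ``by symmetry of $p$'' can destroy the signal. Take $p=\Unif[-1,1]$, shift $u>0$, and $J=[1,1+u]$. Then $\Pp_p(J)=0$ and $\Pp_{p_u}(J)=u/2$, while $-J$ has mass $0$ under both.

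The correct reflection is $x\mapsto u-x$. It exchanges $p$ and $p_u$ by symmetry of $p$, so it preserves $(\sqrt{\Pp_p(\cdot)}-\sqrt{\Pp_{p_u}(\cdot)})^2$. The fix has three steps:
\begin{enumerate}
\item Split $J$ at $u/2$ and reflect the right piece by $x\mapsto u-x$. Now everything lies in $(-\infty,u/2]$, where $p\ge p_u$ pointwise.
\item Split at $0$. Any piece inside $[0,u/2]$ may then be reflected about $0$: $\Pp_p$ is unchanged there, and $\Pp_{p_u}$ can only decrease because $p(x+u)\le p(x-u)$ for $x\ge 0$.
\item Truncate an infinite left endpoint.
\end{enumerate}
The two splits cost a factor $4$ and the truncation another constant. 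With this repair, or by simply using the witness lemma in the paper's form, your proof goes through.
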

\begin{proof}
Work on the event of \cref{lem:sqrt-vc} which holds with probability $1-\delta$.

\emph{The true parameter passes.}
This follows from the earlier-discussed intuition.
At $b=\beta$, $r_i(b)=\varepsilon_i$. For any $c,s,I$, since $X^\top c \stackrel{d}= -X^\top c$ and $X^\top c$ is independent of $\varepsilon$,
\[
\Pp\{X^\top c\ge s,\varepsilon+X^\top c\in I\}
=
\Pp\{X^\top c\le-s,\varepsilon-X^\top c\in I\}.
\]
Thus, the expected counts are equal. By \cref{lem:sqrt-vc}, their empirical square-root counts differ by at most $2C\sqrt{A_{n,d,\delta}}$; we choose $C_\Gamma$ large enough to cover this constant, and thus $\beta\in\calC_n$.

\emph{A bad estimate produces a population imbalance.}
Fix $b\in\calC_n$ and let $u=b-\beta$. We will aim to witness a poor estimate with a test that chooses $c=u$. We focus on the case where $u \ne 0$ as otherwise $b=\beta$. For convenience, we define
\[
T=X^\top u,
\qquad
T_i=X_i^\top u ,
\]
and remark that $r_i(b) = \eps_i - T_i$. At this point, it will be very informative to revisit the detection sets with these choices,
\begin{align}
&N^+_{b,c,s,I}
:=\sum_{i=1}^n
\one\{X_i^\top c\ge s,\ r_i(b)+ X_i^\top c \in I\}
=\sum_{i=1}^n
\one\{T_i \ge s,\ \eps_i  \in I\},\label{eq:mod-stat-1}
\\
&N^-_{b,c,s,I}
:=\sum_{i=1}^n
\one\{X_i^\top c\le-s,\ r_i(b)-X_i^\top c\in I\}
=\sum_{i=1}^n
\one\{T_i \le -s,\ \eps_i - 2T_i  \in I\}.\label{eq:mod-stat-2}
\end{align}
This is the key insight for the analysis: the first statistic \eqref{eq:mod-stat-1} roughly corresponds to counting the number of samples where $\eps_i$ is inside an interval $I$, and the second statistic \eqref{eq:mod-stat-2} roughly corresponds to counting the number of samples where a translation of $\eps_i$ is inside $I$, with the translation being at least $2s$. The main point is that we have now roughly reduced the problem to distinguishing samples from $\eps_i$ against samples from a translation, via an interval statistic. This is precisely the main technical ingredient in \cite{compton2026attainability}, and we may use their following result:
\begin{lemma}[Interval witness; follows from Corollary 2.18 of \cite{compton2026attainability}]\label{lem:interval-witness}
There is a universal constant $c_0>0$ such that the following holds. Let $p\in\calP_k$, $\Delta>0$, and $h=H_p(\Delta)\in(0,1]$. There is an interval
\[
I=[-b,-a],
\qquad 0\le a<b<\infty,
\]
such that
\begin{equation*}
\left(\sqrt{\Pp_p(I)}-\sqrt{\Pp_{p_\Delta}(I)}\right)^2
\ge
c_0\frac{h}{k\log(4k/h)\log(4/h)}.
\end{equation*}
Moreover, for every $\Delta'\ge\Delta$,
\begin{equation}
\Pp_{p_{\Delta'}}(I)\le \Pp_{p_\Delta}(I)\le \Pp_p(I).
\label{eq:interval-monotone}
\end{equation}
\end{lemma}
We now work to use this in our setting. Define 
\[
m(s):=\Pp(T\ge s)=\Pp(T\le-s),
\qquad
h(s):=H_p(2s).
\]
Invoking \cref{lem:interval-witness} with $\Delta=2s$ yields an interval $I_s$. We compute the probability of a sample contributing to $N^+_{b,c,s,I_s}$ as
\[
P_s^+ := \Pp(T \ge s,\ \eps  \in I_s)
=m(s)\Pp_p(I_s).
\]
For $N^-_{b,c,s,I_s}$, we use that the conditional translation is $-2T \ge 2s$, and \eqref{eq:interval-monotone}, to bound
\[
P_s^- := \Pp(T \le -s,\ \eps - 2T  \in I_s)
\le m(s)\Pp_{p_{2s}}(I_s).
\]
Consequently,
\begin{equation*}
\left(\sqrt{P_s^+}-\sqrt{P_s^-}\right)^2 \ge
m(s)\left(\sqrt{\Pp_p(I_s)}-\sqrt{\Pp_{p_{2s}}(I_s)}\right)^2 \ge
c_0m(s)\frac{h(s)}{k\log(4k/h(s))\log(4/h(s))}.
\end{equation*}
Leveraging the $(c_1,c_2)$-small-ball condition, by choosing $s = c_1 \norm{u}_\Sigma$, we may use $m(s) \ge c_2 /2$ which implies
\begin{equation}
\left(\sqrt{P_s^+}-\sqrt{P_s^-}\right)^2  \gtrsim \frac{h(s)}{k\log(4k/h(s))\log(4/h(s))}.
\label{eq:population-gap}
\end{equation}
This roughly indicates that as $h(\norm{b - \beta}_\Sigma)$ gets large, this becomes detectable by our tests.

\textit{Passing the empirical test bounds the estimation error.}
Since $b\in\calC_n$ and both counts satisfy \cref{lem:sqrt-vc},
\begin{align*}
\sqrt n\left|\sqrt{P_s^+}-\sqrt{P_s^-}\right|
&\le \left|\sqrt{n P_s^+} - \sqrt{N^+_{b,c,s,I_s}} \right| + \left|\sqrt{N^+_{b,c,s,I_s}} - \sqrt{N^-_{b,c,s,I_s}} \right|  + \left|\sqrt{n P_s^-} - \sqrt{N^-_{b,c,s,I_s}} \right| \\
&\le (C_\Gamma+2C)\sqrt{A_{n,d,\delta}}.
\end{align*}
 
Combining with~\eqref{eq:population-gap} yields
\begin{equation*}
h(s)
\le
C'\frac{kA_{n,d,\delta} \log \frac{4k}{h(s)} \log\!\frac{4}{h(s)}}{n},
\end{equation*}
for some constant $C' \ge 1$. We observe that this further implies
\begin{equation}
    h(s) \le C' \frac{kA_{n,d,\delta} \log(4kn) \log(4n)} {n}.\label{eq:final-h-bound}
\end{equation}
For sake of contradiction, if \eqref{eq:final-h-bound} did not hold, then using $h(s)\ge 1/n$,
\[
C' \frac{kA_{n,d,\delta} \log(4kn) \log(4n)} {n} < h(s) \le C'\frac{kA_{n,d,\delta}\log\frac{4k}{h(s)}\log\frac{4}{h(s)}}{n}
\le C'\frac{kA_{n,d,\delta} \log(4kn) \log(4n)}{n},
\]
which is a contradiction. We recall our choice of $s = c_1 \norm{u}_\Sigma = c_1 \norm{b - \beta}_\Sigma$ to conclude
\begin{align*}
    &H_p(2 c_1 \norm{b - \beta}_\Sigma) \le C \frac{kA_{n,d,\delta} \log(4kn) \log(4n)} {n}\\
    &\implies 2c_1 \norm{b - \beta}_\Sigma \le \omega_p \left( C \frac{kA_{n,d,\delta} \log(4kn) \log(4n)} {n}\right) \\
    &\implies \norm{b - \beta }_2 \le C \omega_p \left( C \frac{dk \log(2n/\delta) \log(2kn) \log(2n)} {n}\right) \quad\qedhere
\end{align*}
\end{proof}
\begin{proof}[Proof of \cref{thm:main-adapt}] This follows immediately from \cref{thm:adapt-technical}; the small-ball condition is implied, for example, by \cref{lem:annulus} (where $c_1,c_2$ depend only on $K$). We output any $b \in\calC_n$ (tiebreak arbitrarily), or an arbitrary estimate if $\calC_n$ is empty.
\end{proof}

\subsection{Efficient estimation for symmetric, log-concave errors ($k=1$)}\label{sec:k1}
In this section, we efficiently obtain Hellinger modulus rates for symmetric, log-concave errors (\cref{thm:k1}).

\textit{Proof intuition.} 
Our estimator for this task will be a form of $L_q$ regression where we choose $q$ depending on the data. This style of estimator has been comprehensively studied by \cite{kao2024choosing}; in \cref{sec:contribs} we discuss some of the technical obstacles for this technique. When analyzing the estimator, there are two main components: (i) proving there exists a $q$ where $L_q$ regression produces a good enough estimate, and (ii) adaptively choosing the right $q$. 

The adaptive choice (ii) of $q$ follows simply for our purposes. Suppose we have divided our samples into a small number of batches, and we are considering a fixed $q$. If we compute the $L_q$ estimator for each batch, and most of these estimates are close to some value $\wh\beta$, then we may show that it is very likely the true $\beta$ is close to $\wh\beta$. This intuition leads to a rigorous method for choosing the estimate, where you choose the value of $q$ with the best concentration. (In particular, we choose an estimate that has a large fraction of the other estimates in the smallest-possible radius around it; this style of technique appears previously in works such as \cite{nissim2007smooth,hsu2014heavy,pensia2022estimating}.)

The main difficulty is in (i): showing there exists a choice of $q$ where $L_q$ regression recovers the Hellinger modulus error. 
A guiding intuition comes from the work of \cite{compton2026attainability}, where they observe the Hellinger distance between a log-concave distribution and its translation is nearly captured by counting the number of samples inside a particular halfline (this uses the reverse data processing inequality of \cite{pensia2023communication}). This may be converted into a quantitative guarantee about the tail of $p$ that relates to the Hellinger modulus; we will use this tail control to prove guarantees for $L_q$ regression.

Another useful intuition is that $L_q$ regression may have guarantees in terms of the moments of $\eps \sim p$; this is crucially leveraged in prior works. Consider the following quantities,
\[
M_q = \E[|\eps|^q],
\qquad
V_p(q) = \frac{M_{2q-2}}{(q-1)^2 M_{q-2}^2}
\qquad
Q_p(q) = \frac{M_{2q-4}}{M_{q-2}^2}.
\]
We will leverage guarantees in terms of such quantities, as is done in e.g. \cite{el2023optimal,kao2024choosing}. Roughly, $Q_p(q)$ is a term that controls curvature, and when $Q_p(q) \ll \frac{n}{d}$, then $L_q$ estimation will typically have error at the scale
\[
\norm{\wh\beta - \beta}_2 \lesssim \sqrt{\frac{d V_p(q)}{n}}.
\]
This form of guarantee is quite strong; for example, it yields the right error for Gaussian errors with $q=2$, and uniform errors with $q \asymp n$. 

The main goal is to turn the earlier-mentioned tail control into a strong enough bound for $V_p(q)$ with the correct $q$; the hope is that the right choice of $q$ can extract the signal from the tail decay. We now very loosely describe the structure of this proof. Consider any $t$ where $\hels(p,p_t) \approx \frac{d}{n}$. If we could show there exists a choice of $q$ where
\begin{equation}\label{eq:vq-goals}
V_p(q) \ll \frac{t^2n}{d},
\qquad
Q_p(q) \ll \frac{n}{d},
\end{equation}
then this would be sufficient for our goals. Suppose we know the distribution $p$ has a tail such that 
\[
\Pp_{\eps \sim p}(|\eps| \ge r) \approx \tau,
\]
and the remaining $\tau$ mass decays exponentially over a local length $\ell$. In the most optimistic setting, there might be a choice of $q$ where the moment bounds satisfy
\[
M_{q-2} \approx \tau r^{q-2},
\qquad
M_{2q-2} \approx \tau r^{2q-2},
\qquad
M_{2q-4} \approx \tau r^{2q-4}.
\]
For this target, a natural choice of $q$ is the largest value such that the integrand of the moment integral is decreasing at $r$. Differentiating gives
\begin{equation*}
\frac{d}{dx}\left(q(r+x)^{q-1} \Pp(|\eps| \ge r+x)\right)\bigg|_{x=0} \approx \frac{d}{dx}\left(q(r+x)^{q-1} \tau\exp(-x/\ell)\right)\bigg|_{x=0} \propto -r^{q-1} \frac{1}{\ell} + (q-1) r^{q-2},
\end{equation*}
which is $0$ when $q \approx \frac{r}{\ell}$.
If this optimistic heuristic were true, then we could use the bounds
\[
V_p(q) \approx \frac{\ell^2}{\tau},
\qquad
Q_p(q) \approx \frac{1}{\tau}.
\]
Fortuitously, the Hellinger tail control result will control precisely these two quantities $\frac{\ell^2}{\tau}$ and $\frac{1}{\tau}$ such that $V_p(q)$ and $Q_p(q)$ would satisfy the desired conditions \eqref{eq:vq-goals}. While many of these calculations are heuristic, a proof along these lines can be made rigorous.

\paragraph{Algorithm description. }
Our algorithm will consist of computing $L_q$ regression estimators for many batches of samples.
Let
\[
B = \lceil C \log(64n/\delta)\rceil,
\qquad
m = \left\lfloor \frac{n}{B} \right\rfloor,
\]
where $C>0$ is a sufficiently large universal constant. We split the data into $B$ batches of size $m$, where the batches will be used to generate candidate estimates.
As is discussed and justified in \cref{subsec:adapt-mixture}, we will consider a preprocessing of all samples where $(X_i,Y_i) \rightarrow (S_i X_i, S_i Y_i)$ for independent Rademacher signs $S_i$; hence, we may consider the $X_i$ as centrally symmetric and the $\eps_i$ as still independent of $X_i$. 
Our algorithm will only consider $L_q$ regression for $q$ that are powers of 2 (as is done in \cite{kao2024choosing}):
\[
\mathcal{G}_m = \{2^j : j \in \{1,\dots,\lfloor \log_2 m \rfloor \}\}.
\]
We will compute the $L_q$ regression estimator for every batch and every $q \in \mathcal{G}_m$. For a fixed $q$, we denote our polynomial-time approximate optimizers for each of the $B$ batches as
\[
\wh{A}_{q,1},\dots,\wh{A}_{q,B},
\]
and denote the exact optimizers as
\[
A_{q,1},\dots,A_{q,B},
\]
where they can be any exact optimizer if there are multiple.
Roughly, we will use that an estimate $\wh{A}_{q,i}$ is better if there are many estimates $\wh{A}_{q,j}$ near it. We introduce $r_{q,i}$ as an effective radius for the estimate $\wh{A}_{q,i}$; we define it as the empirical $3/4$ quantile of the Euclidean distances
\[
\norm{\wh{A}_{q,j} - \wh{A}_{q,i}}_2,
\qquad
j \in \{1,\dots,B\} \setminus  \{i\}.
\]
Finally, we will choose the estimate with the smallest effective radius, denoted by
\[
(\wh q, \wh i) = \argmin_{q \in \mathcal{G}_m, \, i \in \{1,\dots,B\}} r_{q,i},
\qquad
\wh \beta = \wh{A}_{\wh q, \wh i} \,.
\]

The algorithm runs in polynomial time since there are $\polylog(n/\delta)$ instances of $L_q$ regression computed, each taking polynomial time.

\paragraph{$L_q$ regression guarantee in terms of moment control.}
We now provide a regression guarantee in terms of the moment control. 
{We do not believe this guarantee requires anything conceptually novel compared to prior works (e.g. \cite{el2023optimal,kao2024choosing}), yet we require a non-asymptotic guarantee with the right dependence on $q$ (since our $q$ may take super-constant values) in a way that we do not know an external result we may invoke as a black-box.}
We defer the proof of the following to \cref{sec:fixed-q}:
\begin{proposition}[Fixed-$q$ regression bound]\label{prop:fixed-q}
There exists a constant $C_{K,\kappa}$ such that, if
\begin{equation}\label{eq:prop-n-cond}
m\ge C_{K,\kappa}dQ_p(q)\log(2m),
\end{equation}
then with probability at least $17/18$, the exact empirical $L_q$ optimizer (for $q \ge 2$) given $m$ samples satisfies 
\begin{equation*}
\norm{\widehat\beta_q-\beta}_2
\le C_{K,\kappa}\sqrt{\frac{dV_p(q)}m},
\end{equation*}
and the design matrix $\mathbf{X} \in \R^{m \times d}$ has $\operatorname{rank}(\mathbf{X}) = d$, implying the optimizer is unique.
\end{proposition}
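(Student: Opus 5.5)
We analyze the convex objective $F(b)=\frac1m\sum_{i=1}^m |r_i(b)|^q$ through the reparametrization $h=b-\beta$, so that $r_i(b)=\eps_i-X_i^\top h$. Write $\phi(t)=|t|^q$, with $\phi'(t)=q\,\sgn(t)|t|^{q-1}$ and $\phi''(t)=q(q-1)|t|^{q-2}$. Since $\phi$ is convex and $\wh\beta_q$ minimizes $F$, it suffices to show that $F(\beta+h)>F(\beta)$ on the sphere $\norm{h}_2=\rho$, where $\rho:=C\sqrt{dV_p(q)/m}$. Convexity then forces the minimizer to lie inside this ball. The argument is the standard Taylor lower bound combined with localization.

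\textbf{Step 1 (linear term).} Let $g=\frac1m\sum_i \phi'(\eps_i)X_i$. The $\eps_i$ are symmetric and independent of $X_i$, so $\E g=0$. Moreover
\[
\E\norm{g}_2^2\le \frac{\kappa d}{m}\,q^2M_{2q-2}.
\]
By Chebyshev, with probability at least $1-1/100$ we have $\norm{g}_2\lesssim \sqrt{d q^2 M_{2q-2}/m}$. This costs only a constant probability, which is all the $17/18$ guarantee needs.

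\textbf{Step 2 (curvature).} Use the exact integral form of the second-order remainder:
\[
\phi(\eps-a)-\phi(\eps)+\phi'(\eps)a=\int_0^1(1-s)\,\phi''(\eps-sa)\,a^2\,ds .
\]
Lower bound $|\eps-sa|^{q-2}$ on the event $\{|X_i^\top h|\le \eta|\eps_i|\}$ for a small constant $\eta$. To keep the factor $(1-\eta)^{q-2}$ bounded below, take $\eta\asymp 1/q$, since then $(1-\eta)^{q-2}\gtrsim 1$. On this event the remainder is at least
\[
c\,q(q-1)\,|\eps_i|^{q-2}(X_i^\top h)^2\,\one\{|X_i^\top h|\le |\eps_i|/q\}.
\]
We then need a uniform-in-$h$ lower bound for
\[
\frac1m\sum_i w_i (X_i^\top h)^2\one\{\cdots\},\qquad w_i=|\eps_i|^{q-2}.
\]
The plan is a small-ball (Mendelson-style) argument. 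Paley--Zygmund applied to the weights $w_i$ shows that a $\gtrsim 1/Q_p(q)$ fraction of the mass of $\sum_i w_i$ comes from samples with $w_i\gtrsim M_{q-2}$. Here we use $\E w^2/(\E w)^2=Q_p(q)$, together with \cref{lem:annulus} for $|X^\top u|\ge a$. A VC/net union bound over directions $u$ and over the truncation then requires $m\gtrsim dQ_p(q)\log(2m)$, which is exactly \eqref{eq:prop-n-cond}. The outcome is a curvature of order $q(q-1)M_{q-2}\norm{h}_2^2$ for all $\norm{h}_2$ up to the scale where the truncation $|X_i^\top h|\le|\eps_i|/q$ still holds for most of the weighted mass.

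\textbf{Step 3 (localization).} If the truncation scale is below $\rho$, restrict attention to $\norm{h}_2=\rho'$, the largest admissible radius, and rely on convexity. Combining Steps 1 and 2 on the sphere gives
\[
F(\beta+h)-F(\beta)\ge -\norm{g}_2\rho+c\,q(q-1)M_{q-2}\rho^2>0
\]
once
\[
\rho\gtrsim \frac{\sqrt{dq^2M_{2q-2}/m}}{q(q-1)M_{q-2}}\asymp\sqrt{\frac{dV_p(q)}{m}}.
\]

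\textbf{Step 4 (rank and uniqueness).} The rank claim follows from the uniform lower bound in Step 2 applied with $\eps$-free weights, i.e.\ from the design being injective with high probability once $m\gtrsim d\log m$. Uniqueness then follows from strict convexity of $|t|^q$ for $q\ge2$ restricted to the range of $\mathbf X$.

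\textbf{Main obstacle.} Step 2 is the hard part: getting a curvature bound that is uniform over $h$ with the correct $q$-dependence. The truncation at $|\eps|/q$ interacts with large $q$, since the weighted mass could concentrate on a few large $|\eps_i|$. The first thing to try is a peeling argument over dyadic levels of $|\eps_i|$, showing that the heavy levels contain enough samples by the $Q_p(q)$ condition. Failing that, one can compare directly with $\E[|\eps|^{q-2}\one\{|\eps|\ge q\rho\}]$ and argue that this is at least $M_{q-2}/2$ whenever $\rho\le$ the target error.
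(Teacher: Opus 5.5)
Your proposal has a genuine gap in Steps 2--3. Your curvature bound only uses samples with $\abs{X_i^\top h}\le\abs{\eps_i}/q$. That makes it non-homogeneous in $h$: it certifies curvature $\asymp q(q-1)M_{q-2}\norm{h}_2^2$ only below some truncation radius.

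Step 3 therefore needs that radius to be at least the target $C\sqrt{dV_p(q)/m}$. Your fallback of restricting to the largest admissible radius $\rho'$ does not help. On a sphere with $\rho'<\norm{g}_2/(c\,q(q-1)M_{q-2})$ the linear term can dominate, and convexity then gives nothing.

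The fact you would need is that most of the $\abs{\eps}^{q-2}$-weighted mass sits at $\abs{\eps}\gtrsim q\sqrt{dV_p(q)/m}$. This is exactly the ``main obstacle'' you leave open, and it does not follow from $m\gtrsim dQ_p(q)\log(2m)$. Take $q=3$ and $p=(1-T^{-2})\Unif[-1,1]+T^{-2}\,\Unif([-T-1,-T]\cup[T,T+1])$.
\begin{itemize}
\item Here $Q_p(3)=M_2/M_1^2=O(1)$, but $V_p(3)\asymp M_4\asymp T^2$.
\item So for $T\gg\sqrt{m/d}$ the target radius exceeds $1$.
\item At that radius, bulk samples ($\abs{\eps_i}\le1$) pass your truncation only when $\abs{X_i^\top h}\le 1/3$, so they contribute $O(1)$ in total, while Step 3 needs $\gtrsim M_1\norm{h}_2^2\gg 1$.
\item The tail carries only an $O(1/T)$ fraction of $M_1$, so the curvature you certify is too small by a factor $\asymp T$.
\end{itemize}
The proposition itself is still true for this $p$, since the paper's argument uses only symmetry of $p$. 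Under log-concavity you would at least need a moment-comparison argument that the proposal does not contain.

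Separately, your Paley--Zygmund step, as stated, only gives a $\gtrsim 1/Q_p(q)$ fraction of samples with $w_i\gtrsim M_{q-2}$. By itself that yields curvature $M_{q-2}/Q_p(q)$, off by a factor $Q_p(q)$.

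The missing idea is a one-sided restriction instead of a truncation in $h$. If $\eps_iX_i^\top h\le0$, then $\abs{\eps_i-tX_i^\top h}\ge\abs{\eps_i}$ for every $t\in[0,1]$, however large $X_i^\top h$ is. Differentiating $t\mapsto F(\beta+th)$ at $t=1$ via the integral of the second derivative then gives
$\langle\nabla F(\beta+h),h\rangle\ge -g^\top h+q(q-1)\frac1m\sum_i\abs{\eps_i}^{q-2}(X_i^\top h)^2\one\{\eps_iX_i^\top h\le0\}$.
The curvature term here is exactly quadratic in $h$. So a single bound over unit directions suffices, and localization at every radius $\ge C\sqrt{dV_p(q)/m}$ is immediate.

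For that uniform bound, the paper proceeds as follows.
\begin{itemize}
\item It uses that $\xi=\abs{\eps}^{q-2}$ is independent of the symmetric vector $-\sgn(\eps)X$.
\item Via the small-ball bound, it reduces to showing $\frac1m\sum_i\xi_i\one\{-\sgn(\eps_i)X_i^\top u\ge a\}\gtrsim M_{q-2}$.
\item It truncates $\xi$ at $U=8M_{q-2}Q_p(q)$. This costs at most $M_{q-2}/8$ in mean, since $\mathbb{E}\xi^2=M_{q-2}^2Q_p(q)$.
\item It writes the truncated sum as $\frac1m\int_0^U N(E_{u,s})\,ds$ over the VC-dimension-$O(d)$ sets $E_{u,s}=\{\xi\ge s,\ -\sgn(\eps)X^\top u\ge a\}$.
\item It controls these counts with relative VC deviations plus Cauchy--Schwarz.
\end{itemize}
This last step is precisely where $m\gtrsim dQ_p(q)\log(2m)$ enters, and it recovers the full $M_{q-2}$. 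Your Steps 1 and 4 are fine and match the paper.
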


Note that $\wh\beta_q$ in \cref{prop:fixed-q} refers to the exact empirical $L_q$ optimizer. $L_q$ regression is a convex optimization task which permits efficient algorithms, but not with exact solutions. We use existing algorithms that enable an approximate solution (proof deferred to \cref{sec:poly-lq}):
\begin{corollary}[Polynomial-time regression accuracy]\label{cor:poly-lq}
There exist constants $c_{K,\kappa} > 0$ and $C_{K,\kappa} \ge 1$ such that, if
\begin{equation}\label{eq:ptime-m-cond}
m\ge C_{K,\kappa}d \log(2m),
\end{equation}
then with probability at least $17/18$, 
\begin{equation}\label{eq:X-singular}
    \sigma_{\text{min}}^2(\mathbf{X}) \ge c_{K,\kappa} m
\end{equation} 
holds for the design matrix $\mathbf{X} \in \R^{m \times d}$. Further, when \eqref{eq:X-singular} holds, the algorithm of \cite{adil2024fast} outputs an estimate $\wh A_q$ such that it is close to the unique, exact $L_q$ optimizer $\wh\beta_q$,
\[
\norm{\wh A_q - \wh \beta_q}_2 \le \gamma,
\]
and only requires 
\[
O\left(q^2 m^{1/3} \max\left\{1, \log\left(\frac{C_{K,\kappa} m \norm{\mathbf{Y}}_\infty}{\gamma}\right)\right\}\right) \qedhere
\]
calls to a linear system solver (for $0 < \gamma \le 1$).
\end{corollary}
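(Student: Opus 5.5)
The plan has two independent parts: a spectral bound on the design, and a conversion from objective suboptimality of the approximate $L_q$ solver into Euclidean distance to the exact optimizer.

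\emph{Singular value bound.} The rows $X_i$ are i.i.d., $K$ sub-Gaussian, and have second-moment matrix $\Sig \succeq \kappa^{-1} I$. I will invoke the standard covariance-estimation bound for sub-Gaussian rows, for instance via a net argument over the sphere with Bernstein's inequality for the sub-exponential variables $\ip{u}{X_i}^2$. It gives, with probability at least $17/18$,
\[
\norm{\tfrac1m \mathbf{X}^\top \mathbf{X} - \Sig}_{\mathrm{op}} \le C_K \kappa\left(\sqrt{\tfrac{d + \log 18}{m}} + \tfrac{d+\log 18}{m}\right).
\]
Under \eqref{eq:ptime-m-cond}, with $C_{K,\kappa}$ large, the right side is at most $\frac{1}{2\kappa}$. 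Hence $\sigma_{\min}^2(\mathbf{X}) \ge m/(2\kappa)$, which is \eqref{eq:X-singular}. In particular $\mathbf{X}$ has full column rank. Since $q\ge 2$, the map $b\mapsto \norm{\mathbf{Y}-\mathbf{X}b}_q^q$ is then strictly convex, so $\wh\beta_q$ is unique.

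\emph{From objective accuracy to parameter accuracy.} Write $f(b)=\norm{\mathbf{Y}-\mathbf{X}b}_q^q$ and $h = \wh A_q - \wh\beta_q$. For $q\ge 2$ I will use the scalar inequality
\[
|x+y|^q \ge |x|^q + q|x|^{q-2}xy + c_q|y|^q, \qquad c_q \ge 2^{-q}.
\]
Summing it over the residuals at $\wh\beta_q$, the linear term vanishes by first-order optimality, so $f(\wh A_q)-f(\wh\beta_q) \ge 2^{-q}\norm{\mathbf{X}h}_q^q$. Norm comparison on $\R^m$ and \eqref{eq:X-singular} then give
\[
\norm{\mathbf{X}h}_q^q \;\ge\; m^{1-q/2}\norm{\mathbf{X}h}_2^q \;\ge\; m^{1-q/2}(c_{K,\kappa} m)^{q/2}\norm{h}_2^q \;=\; c_{K,\kappa}^{q/2}\, m\, \norm{h}_2^q .
\]
So it suffices that the additive suboptimality be at most $2^{-q}c_{K,\kappa}^{q/2} m\gamma^q$. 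The solver's guarantee is relative, $f(\wh A_q)\le(1+\epsilon)\,\mathrm{OPT}$, and $\mathrm{OPT}\le f(0)\le m\norm{\mathbf{Y}}_\infty^q$. It is therefore enough to take
\[
\epsilon = \left(\frac{c'_{K,\kappa}\,\gamma}{\norm{\mathbf{Y}}_\infty}\right)^q, \qquad \text{so that} \qquad \log(1/\epsilon) = O\!\left(q \max\Bigl\{1,\log\tfrac{C_{K,\kappa}\norm{\mathbf{Y}}_\infty}{\gamma}\Bigr\}\right).
\]
The degenerate case $\mathbf{Y}=0$ is trivial, and the $\max\{1,\cdot\}$ covers small ratios.

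\emph{Invoking the solver.} The iterative-refinement algorithm of \cite{adil2024fast} reaches $(1+\epsilon)$-relative accuracy for $\ell_p$ regression in $O(p\, m^{(p-2)/(3p-2)}\log(m/\epsilon))$ linear system solves, and $m^{(p-2)/(3p-2)}\le m^{1/3}$. Substituting $p=q$ and the above $\log(1/\epsilon)$ gives $O(q^2 m^{1/3}\max\{1,\log(C m\norm{\mathbf{Y}}_\infty/\gamma)\})$ solves. Here the additive $\log m$ is absorbed by putting $m$ inside the logarithm, as in the statement.

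The step I expect to be delicate is this last bookkeeping. The exponentially small $\epsilon\approx(\gamma/\norm{\mathbf{Y}}_\infty)^q$ is exactly what produces the second factor of $q$, so I must make sure the cited iteration count is linear in $p$ times $\log(1/\epsilon)$. If it is not, I would instead apply the refinement guarantee directly to the residual problem, which yields an additive-error form and avoids passing through relative accuracy.
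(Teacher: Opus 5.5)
Your proposal is correct and follows the paper's proof step for step: a singular-value bound holding with probability $17/18$, $q$-uniform convexity at the exact optimizer to turn objective suboptimality into a bound on $\norm{\mathbf{X}(\wh A_q-\wh\beta_q)}_q^q$, the $\ell_q$--$\ell_2$ norm comparison combined with \eqref{eq:X-singular}, and an accuracy parameter of order $\gamma^q$ that produces the second factor of $q$. The only differences are in the tools used at each step. The paper gets \eqref{eq:X-singular} from its \cref{lemma:curve} with $q=2$ rather than from standard covariance concentration. It uses Clarkson's inequality together with optimality of $\wh\beta_q$ against the midpoint, which is equivalent to your scalar inequality with constant $2^{1-q}$. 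Finally, it invokes \cite{adil2024fast} directly in the additive form, with $\log(\norm{\mathbf{Y}}_q^q/\eps)$ from the initial point $(\mathbf{Y},0)$ and $\eps = m(c_{K,\kappa}\gamma^2)^{q/2}$, which is exactly the fallback you describe.
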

Note that the condition on $m$ \eqref{eq:ptime-m-cond} holds since we may assume $n \ge C_{K,\kappa} d \log^4(64n/\delta)$ (for a large enough choice of constant), as otherwise the desired coefficient bound for the theorem is infinite.

We will use the algorithm of \cite{adil2024fast} for efficiently estimating the $L_q$ regression parameters when the singular value condition of \eqref{eq:X-singular} holds, and otherwise we may output anything.

\paragraph{Proving desirable moment control.}
We will leverage the reverse data processing inequality proven by \cite{pensia2023communication}; we state a version that follows after using \cite[Remark 2.3]{compton2026attainability}:
\begin{theorem}[Reverse data processing inequality; follows from Corollary 3.4 of \cite{pensia2023communication}]\label{thm:rdpi}
Let $P,Q$ be distributions with densities $p,q$, and let
\[
\eta=\hels(P,Q)\in(0,1].
\]
After exchanging $P$ and $Q$ if necessary, there exists a $\lambda\ge1$ where the likelihood-threshold event
\[
A=\{x:p(x)/q(x)\ge\lambda\}
\]
satisfies
\begin{equation*}
\left(\sqrt{P(A)}-\sqrt{Q(A)}\right)^2
\ge\frac{\eta}{1800\log(4/\eta)},
\qquad P(A)\ge\frac{\eta}{1800\log(4/\eta)}.
\end{equation*}
\end{theorem}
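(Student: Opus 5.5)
This statement is deduced from Corollary 3.4 of \cite{pensia2023communication} together with the normalization in \cite[Remark 2.3]{compton2026attainability}, so the plan is a translation of constants and conventions rather than a fresh argument.

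\textbf{Step 1: extract a binary channel.} The cited corollary is a reverse data processing inequality. It gives a threshold $\lambda>0$ such that the binary channel $f_\lambda(x)=\one\{p(x)/q(x)\ge\lambda\}$ satisfies
\[
\hels(f_\lambda(P),f_\lambda(Q))\gtrsim \eta/\log(4/\eta).
\]
Writing $A=\{p/q\ge\lambda\}$, this says the squared Hellinger distance between $\mathrm{Ber}(P(A))$ and $\mathrm{Ber}(Q(A))$ is large. That quantity equals
\[
\tfrac12\Bigl[\bigl(\sqrt{P(A)}-\sqrt{Q(A)}\bigr)^2+\bigl(\sqrt{P(A^c)}-\sqrt{Q(A^c)}\bigr)^2\Bigr],
\]
so at least one of the two terms is at least the Hellinger bound.

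\textbf{Step 2: reduce to $\lambda\ge1$ and an event with large mass.} If $\lambda<1$, pass to the complement. The set $A^c=\{q/p>\lambda^{-1}\}$ is, up to a boundary measure-zero issue, a likelihood threshold event for the swapped pair with threshold $1/\lambda>1$. This accounts for the ``exchange $P$ and $Q$'' clause. Handling the boundary of the strict versus non-strict inequality is the fiddly part. I would either note that ties can be absorbed by choosing nearby thresholds, or follow Remark 2.3, which presumably does exactly this.

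Next, the term that is large is realized on a set $E$, either $A$ or $A^c$. By the likelihood ordering, one measure dominates the other on that set. Call the dominant one $P$ after relabeling, so $P(E)\ge Q(E)$ whenever $\lambda\ge1$. Then
\[
P(E)\ge\bigl(\sqrt{P(E)}-\sqrt{Q(E)}\bigr)^2,
\]
so the mass lower bound follows from the separation bound. The relabeling of which measure is $P$ must be made consistent with the threshold being at least $1$.

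\textbf{Main obstacle.} The obstacle is bookkeeping: the large term may live on the complement of the threshold set. On $A^c$ the domination direction flips, since $Q(A^c)\ge P(A^c)$ when $\lambda\ge1$. Rewriting $A^c$ as a threshold event for the swapped pair, as in Step 2, restores both properties simultaneously.

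Finally, track the constant: a factor $1/2$ from splitting the two Bernoulli terms, times the constant in the cited corollary, is bounded by $1/1800$.
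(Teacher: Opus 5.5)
Your overall plan matches the paper's, which gives no argument beyond citing Corollary 3.4 of \cite{pensia2023communication} together with \cite[Remark 2.3]{compton2026attainability}. Your Step 1 is fine, and so is deriving the mass bound from dominance ($P(E)\ge Q(E)$ implies $P(E)\ge(\sqrt{P(E)}-\sqrt{Q(E)})^2$).

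The gap is in the normalization. Relabeling $P\leftrightarrow Q$ only sends $(\lambda, A, A^c)$ to $(1/\lambda, A^c, A)$, up to boundaries. It therefore cannot change one fact: whether the set carrying the large Bernoulli term is a super-level set of the dominant measure's likelihood ratio at a level below $1$. Concretely, after your first reduction to $\lambda\ge1$, suppose the large term lies on $A^c=\{p/q<\lambda\}$. Then $Q$ dominates there, but $A^c=\{q/p>1/\lambda\}$ has threshold $1/\lambda\le 1$ for the swapped pair. Rewriting it ``as in Step 2'' simply undoes Step 2. So your claim that this ``restores both properties simultaneously'' is false, and this case is not covered. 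It is the same as the case $\lambda<1$ with the large term on $A$.

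The missing ingredient is a short monotonicity argument: move the threshold up to $1$. Take $E=A^c$ and $E_1=\{q/p>1\}\subseteq E$. On the removed region $E\setminus E_1=\{1\le p/q<\lambda\}$ we have $p\ge q$, so
\[
Q(E_1)-P(E_1)\ \ge\ Q(E)-P(E)\ \ge\ 0,
\qquad
\sqrt{Q(E_1)}+\sqrt{P(E_1)}\ \le\ \sqrt{Q(E)}+\sqrt{P(E)} .
\]
Since $(\sqrt x-\sqrt y)^2=(x-y)^2/(\sqrt x+\sqrt y)^2$, the separation on $E_1$ is at least the separation on $E$. Now $E_1$ is a threshold event for the swapped pair at level exactly $1$, and the relabeled $P$ dominates on it, which gives both conclusions. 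Passing from the strict set $\{q/p>1\}$ to a non-strict threshold $\lambda'\downarrow 1$ is the boundary issue you already flagged, handled by continuity of measure.

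Separately, splitting the Bernoulli Hellinger distance costs no factor $1/2$. The larger of the two terms is at least their average, and that average is exactly $\hels$ of the two Bernoullis.
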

Further (as noted in \cite{compton2026attainability}), when $P=p$ and $Q=p_t$ for a log-concave $p$, then the likelihood ratio is monotone, and hence the set $A$ is simply a halfline. For some rough intuition, this result indicates that if $p$ and $p_t$ are distinguishable from $N \approx 1/\eta$ samples, then they are also distinguishable from $\approx N \log N$ samples just by counting the number of samples in some halfline. \cref{thm:rdpi} will enable helpful tail control for $p$ {that aligns with the desired control in the proof intuition}:
\begin{lemma}[Informative tail scale]\label{lemma:tail}
Let $t>0$, $p$ is symmetric and log-concave, and
\[
\eta=H_p(t),
\qquad
L_\eta=\log(4/\eta),
\]
where $0<\eta\le1$. There exists a choice of $r\ge0$, one-sided tail mass
\[
\tau=\Pp_{\eps \sim p}(\varepsilon\ge r),
\]
and a local tail length
\begin{equation*}
\ell=\frac{\tau}{p(r)},
\end{equation*}
such that
\begin{equation}\label{eq:tau-lb}
\tau\ge\frac{\eta}{3600L_\eta},
\end{equation}
and
\begin{equation}\label{eq:ell-tau-ratio}
\frac{\ell^2}{\tau}\le1800\frac{t^2L_\eta}{\eta}.
\end{equation}
\end{lemma}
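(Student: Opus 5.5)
We apply \cref{thm:rdpi} to the pair $p$ and $p_t$, using log-concavity to make the witnessing event a halfline, and then convert the square-root gap on that halfline into the claimed bound on $\ell^2/\tau$. The key inequality is that for $P(A)\ge Q(A)$,
\[
\left(\sqrt{P(A)}-\sqrt{Q(A)}\right)^2=\frac{(P(A)-Q(A))^2}{(\sqrt{P(A)}+\sqrt{Q(A)})^2}\le\frac{(P(A)-Q(A))^2}{P(A)}.
\]
For a halfline, $P(A)-Q(A)$ is the $p$-mass of an interval of length $t$, so it is at most $t$ times the largest density value on that interval. Here $P(A)\ge Q(A)$ holds automatically, since the likelihood ratio is at least $\lambda\ge1$ on $A$.

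\textbf{Step 1 (reduction to an upper halfline).} Since $p$ is log-concave, $p_t(x)/p(x)=p(x-t)/p(x)$ is nondecreasing in $x$ for $t>0$. So the threshold event in \cref{thm:rdpi} is an upper halfline $[a,\infty)$ if $P=p_t,Q=p$, and a lower halfline if the roles are exchanged. By the symmetry $x\mapsto -x$, which maps $p$ to $p$ and $p_t$ to $p_{-t}$, followed by translating by $t$, the exchanged case becomes the same configuration: an upper halfline where the heavier measure is $p_t$. (Hellinger distance is invariant under these maps.) So we may assume $A=[a,\infty)$ with $P(A)=\Pp_p(\eps\ge a-t)$ and $Q(A)=\Pp_p(\eps\ge a)$. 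We also get $P(A)\ge \eta/(1800L_\eta)$ and $(\sqrt{P(A)}-\sqrt{Q(A)})^2\ge\eta/(1800L_\eta)$.

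\textbf{Step 2, case $a-t\ge0$.} Set $r=a-t$, so $\tau=P(A)\ge\eta/(1800L_\eta)$, which gives \eqref{eq:tau-lb}. Since $p$ is symmetric log-concave, it is nonincreasing on $[0,\infty)$, so $P(A)-Q(A)=\int_{r}^{r+t}p\le t\,p(r)$. The key inequality then gives
\[
\frac{\eta}{1800L_\eta}\le\frac{t^2p(r)^2}{\tau}.
\]
Rearranging yields $\ell^2/\tau=\tau/p(r)^2\le1800\,t^2L_\eta/\eta$, which is \eqref{eq:ell-tau-ratio}. Note $p(r)>0$, since otherwise $\tau=0$, contradicting $\tau>0$.

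\textbf{Step 3, case $a-t<0$.} Take $r=0$, so $\tau=1/2$. We have $P(A)\le1$, so $\tau\ge P(A)/2\ge\eta/(3600L_\eta)$; this is where the constant $3600$ comes from. The interval $[a-t,a]$ has length $t$ and density at most $p(0)$ (the mode), so $P(A)-Q(A)\le t\,p(0)$. Also $P(A)=\Pp_p(\eps\ge a-t)\ge1/2=\tau$. The key inequality gives $\eta/(1800L_\eta)\le t^2p(0)^2/\tau$, and rearranging gives \eqref{eq:ell-tau-ratio} with $\ell=\tau/p(0)$.

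\textbf{Main obstacle.} The main subtlety is Step 1: checking that the orientation and translation reduction really lands in the stated configuration in both branches of \cref{thm:rdpi}. A second point is that Step 3 must use $P(A)\ge1/2$ rather than $P(A)\ge\eta/(1800L_\eta)$. A naive bound such as $\eta\le t\,p(0)$ (via TV) only gives $\ell^2/\tau\lesssim t^2/\eta^2$, which loses a factor of $\eta$.
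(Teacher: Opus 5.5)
Your proof is correct and follows essentially the same route as the paper. You apply \cref{thm:rdpi} to $p$ and $p_t$, use log-concavity to make the witnessing set a halfline, split on where the length-$t$ interval sits relative to the mode, and combine $P(A)-Q(A)\le t\,p(r)$ with $(\sqrt{P(A)}-\sqrt{Q(A)})^2\le (P(A)-Q(A))^2/\tau$.

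The differences are minor. You work with the mirrored upper halfline instead of the paper's $(-\infty,a]$. You also merge the paper's cases (ii) and (iii) into a single case with $r=0$ and $\tau=1/2$. That merge is valid because $P(A)=\Pp_p(\varepsilon\ge a-t)\ge 1/2$ whenever $a-t<0$.
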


\begin{proof}
Apply \cref{thm:rdpi} to $p$ and $p_t$. By symmetry, there exists a choice of $A$ of the form $A=(-\infty,a]$. Let $F$ be the CDF of $p$, and denote
\[
P(A)=F(a),
\qquad
Q(A)=F(a-t),
\qquad
\Delta=P(A)-Q(A)\ge0,
\]
where \cref{thm:rdpi} implies
\[
(\sqrt{P(A)}-\sqrt{Q(A)})^2
\ge\frac{\eta}{1800L_\eta}.
\]
We distinguish three positions of the interval $[a-t,a]$ relative to the mode zero.

\textit{Case (i): $a \le 0$.} Choose $r=-a$ and $\tau=F(a)=\Pp(\varepsilon\ge r)$ by symmetry. Since $p$ is nondecreasing on $(-\infty,0]$,
\[
\Delta=\int_{a-t}^ap(x)\,dx\le tp(a)=tp(r).
\]
Moreover,
\[
\frac{\Delta^2}{\tau}
=(\sqrt{P(A)}-\sqrt{Q(A)})^2 \frac{(\sqrt{P(A)}+\sqrt{Q(A)})^2}{P(A)}
\ge (\sqrt{P(A)}-\sqrt{Q(A)})^2
\]
and
\[
\qquad
\tau=P(A)\ge (\sqrt{P(A)}-\sqrt{Q(A)})^2.
\]

\textit{Case (ii): $a-t\ge0$.} Choose $r=a-t$ and $\tau=1-F(a-t)=\Pp(\varepsilon\ge r)$. Since $p$ is nonincreasing on $[0,\infty)$,
\[
\Delta=\int_r^{r+t}p(x)\,dx\le tp(r),
\]
and similarly using $P(A),Q(A) \ge 1/2$,
\[
\frac{\Delta^2}{\tau} = (\sqrt{P(A)}-\sqrt{Q(A)})^2 \frac{(\sqrt{P(A)} + \sqrt{Q(A)})^2}{1 - Q(A)} \ge (\sqrt{P(A)}-\sqrt{Q(A)})^2
\]
and
\[
\tau \ge P(A) - Q(A) \ge (\sqrt{P(A)}-\sqrt{Q(A)})^2.
 \]

\textit{Case (iii): $a-t < 0 < a$.} Choose $r=0$ and $\tau=1/2$. Since $p(x)\le p(0)$,
\[
\Delta=\int_{a-t}^ap(x)\,dx\le tp(r).
\]
and similarly using $P(A) \ge 1/2$,
\[
\frac{\Delta^2}{\tau}=2(\sqrt{P(A)}-\sqrt{Q(A)})^2(\sqrt{P(A)} + \sqrt{Q(A)})^2\ge (\sqrt{P(A)}-\sqrt{Q(A)})^2
\]
and
\[
\tau=1/2\ge (\sqrt{P(A)}-\sqrt{Q(A)})^2/2.
\]

\textit{Combining cases.} All cases satisfy
\[
\tau\ge\frac{\eta}{3600L_\eta},
\qquad
\frac{\Delta^2}{\tau}\ge\frac{\eta}{1800L_\eta},
\qquad
\Delta\le tp(r).
\]
Finally,
\[
\frac{\ell^2}{\tau}
=\frac{\tau}{p(r)^2}
\le\frac{t^2\tau}{\Delta^2}
\le1800\frac{t^2L_\eta}{\eta}.\qedhere
\]
\end{proof}

This enables the desired moment control (this is mostly calculation):

\begin{lemma}[Moment control]\label{lemma:moment-control}
Under the assumptions of \cref{lemma:tail}, there is a positive integer $2 \le q\le2+\frac{900L_\eta}{\eta}$ such that 
\begin{equation*}
V_p(q)\lesssim \frac{t^2L_\eta^3}{\eta},
\qquad
Q_p(q)\lesssim \frac{L_\eta^2}{\eta}.
\end{equation*}
\end{lemma}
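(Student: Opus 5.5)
The plan is to take the triple $(r,\tau,\ell)$ supplied by \cref{lemma:tail}, set $q-2\approx r/\ell$ (up to a constant), and compute the moments $M_{q-2},M_{2q-4},M_{2q-2}$ directly from the tail function $S(x)=\Pp(\eps\ge x)$. Two facts about $S$ carry the whole argument.

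\textbf{Step 1: two tail envelopes.}
\begin{itemize}
\item[(a)] For $p$ log-concave, $S$ is also log-concave. The tangent line to $\log S$ at $r$ has slope $-p(r)/S(r)=-1/\ell$. Concavity then gives the global envelope $S(x)\le \tau e^{-(x-r)/\ell}$ for all $x\ge 0$, on both sides of $r$.
\item[(b)] Since $p$ is nonincreasing on $[0,\infty)$, $S(r+y)\ge \tau-yp(r)\ge \tau/2$ for $0\le y\le \ell/2$. Hence $\Pp(\eps\in[r,r+\ell/2])$-type lower bounds hold, and $M_k\ge 2\cdot\frac{\tau}{2}r^k=\tau r^k$ for every $k\ge 0$.
\end{itemize}
The upper bound on $q$ also comes from monotonicity: $\tfrac12-\tau=\int_0^r p\ge rp(r)$. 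Therefore $r/\ell=rp(r)/\tau\le 1/(2\tau)\le 1800L_\eta/\eta$. A choice $q-2=\lceil r/(2\ell)\rceil$ (or $q=2$ when $r\lesssim \ell$) thus satisfies $q\le 2+900L_\eta/\eta$.

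\textbf{Step 2: moment upper bounds.} Write $M_k=2\int_0^\infty kx^{k-1}S(x)\,dx$ and plug in envelope (a). On both sides of $r$, the integrand is at most $k\tau r^{k-1}\exp\bigl((k-1)\log(x/r)-(x-r)/\ell\bigr)$. For $k$ of order $r/\ell$, this is a Gamma-type integral. It is maximized at $x^\ast\approx (k-1)\ell$, which lies within a constant factor of $r$. Evaluating it (Stirling) gives $M_k\lesssim \tau (C\max\{r,k\ell\})^{k}\cdot\mathrm{poly}(k)$.

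\textbf{Step 3: form the ratios.} Divide by the squared lower bound $M_{q-2}^2\ge \tau^2 r^{2q-4}$ from Step 1(b). This should give $V_p(q)\lesssim \ell^2/\tau$ and $Q_p(q)\lesssim 1/\tau$, up to the constant-base exponential factors. Combining with \eqref{eq:tau-lb} and \eqref{eq:ell-tau-ratio} then yields $Q_p\lesssim L_\eta/\eta$ and $V_p\lesssim t^2L_\eta/\eta$. The extra $L_\eta^2$ of slack in the statement is what I would spend on the losses in Step 4. The degenerate cases $r=0$ or $q=2$ reduce to variance bounds for log-concave $p$: there $V_p(2)=M_2$ and $Q_p(2)=1$, and $M_2\lesssim \ell^2$ with $\ell=1/(2p(0))$.

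\textbf{Step 4 (main obstacle): the exponent mismatch.} The exponents $q-2$ and $2q-2$ differ by a factor of $2$. For $M_{2q-2}$ the peak sits near $2r$ rather than near $r$. Consequently the naive ratio carries a factor like $(x^\ast/r)^{2q}$, which can be exponentially large in $q$. My first attempt is to pick $q$ more carefully as a function of the profile. Among the dyadic candidates $k\in[r/\ell,\,r/\ell\cdot L_\eta]$, take one where the lower bound on $M_{q-2}$ is also taken at the true peak. That lower bound comes from (b) applied at the point $x^\ast$, using log-concavity of $S$ between $r$ and $x^\ast$, i.e.\ the chord bound $S\ge\tau^{1-\theta}S(x')^{\theta}$. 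The goal is to make the two peaks match up to a factor $e^{O(L_\eta)}$-free loss, accepting polylog losses absorbed into $L_\eta^3$.

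If this fails, the fallback is to re-run the argument of \cref{lemma:tail} at the peak point of $x^{2q-2}S(x)$, using the freedom in $r$. This works because \eqref{eq:tau-lb} and \eqref{eq:ell-tau-ratio} only degrade by constants when $r$ is shifted within $O(\ell L_\eta)$.
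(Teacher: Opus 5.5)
Your Step 1 is sound and matches what the paper uses: log-concavity of $S$, the tangent envelope $S(r\pm y)\lesssim\tau e^{\mp y/\ell}$, the trivial bound $M_j\ge 2\tau r^j$, and $h:=r/\ell\le 1/(2\tau)$. The gap is the core of the lemma. You need an upper bound $M_s\lesssim L_\tau^{O(1)}\tau r^s$ for $s\in\{2q-4,2q-2\}$, where $L_\tau=\log(e/\tau)$. This is never established, and your choice $q-2=\lceil r/(2\ell)\rceil$ cannot deliver it.

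That choice gives $s-1\ge h+1$ for $s=2q-2$. So on $[r,\infty)$ the bound $x^{s-1}\tau e^{-(x-r)/\ell}$ is still increasing at $x=r$, and it has an interior Gamma-type peak of width $\asymp\sqrt{h}\,\ell$. The $\mathrm{poly}(k)$ factor in your Stirling step is precisely this $\sqrt{h}$. Since $h$ can be of order $1/\eta$, it is polynomial, and the spare $L_\eta^2$ cannot absorb it.

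Nor is the loss an artifact of the envelope. Take $p$ uniform on $[-r,r]$ with exponential tails of scale $\ell\ll r$, so the envelope is exact beyond $r$ and $\tau\approx 1/(2h)$. Take $t\asymp\ell$; then $\eta\asymp 1/h$, and both \eqref{eq:tau-lb} and \eqref{eq:ell-tau-ratio} hold. For your $q$, a direct computation gives $M_{2q-2}\asymp\sqrt{h}\,\tau r^{2q-2}$, $M_{2q-4}\asymp\sqrt{h}\,\tau r^{2q-4}$ and $M_{q-2}\asymp\tau r^{q-2}$. Hence $V_p(q)\asymp\sqrt{h}\,\ell^2/\tau$ and $Q_p(q)\asymp\sqrt{h}/\tau$. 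These exceed the claimed bounds by factors $\sqrt{h}/(\log h)^{O(1)}\to\infty$.

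The missing idea is to place the largest exponent $k=2q-2$ slightly \emph{below} $h$, in a window of relative width $\Theta(1/L_\tau)$. Your Step 4 searches $k\ge r/\ell$, which is the wrong side. The paper takes $q=2$ when $h\le 24L_\tau$, where $M_2\le (r+2\ell)^2\lesssim L_\tau^2\ell^2$. Otherwise it takes an even $k\in[h(1-\tfrac{1}{3L_\tau}),\,h(1-\tfrac{1}{4L_\tau})]$ and $q=(k+2)/2$. This gives $\delta_s=\tfrac{1}{\ell}-\tfrac{s-1}{r}\in[\tfrac{1}{4L_\tau\ell},\tfrac{1}{2L_\tau\ell}]$ for all $s\in[k-2,k]$, and the paper then splits the moment integral at $r$:
\begin{itemize}
\item Above $r$, the integrand decays at rate $\delta_s$ from its value at $r$, so $I_+\lesssim s\tau r^{s-1}/\delta_s\lesssim L_\tau\tau r^s$.
\item Below $r$, it grows at rate at most $\delta_s$, but only on the window $y\le\ell\log\tfrac{1}{2\tau}$ where $\tau e^{y/\ell}\le\tfrac12$. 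The total growth there is at most $e^{1/2}$.
\item Past that window, one uses the cap $S\le\tfrac12$ together with $(1-y/r)^{s-1}\le e^{-(s-1)y/r}$.
\end{itemize}
The cap is essential. An uncapped Gamma integral as in your Step 2 loses at least $\sqrt{k}$ for every $k$, plus an extra $e^{\epsilon^2h/2}$ when $k=h(1-\epsilon)$.

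Once $M_s\lesssim L_\tau\tau r^s$ is known, the trivial $M_{q-2}\ge 2\tau r^{q-2}$ suffices in the denominators, so no peak-matching is needed. With your own $q$, the peak for $M_{2q-2}$ is near $r$, not $2r$, so the Step 4 diagnosis is off. Your fallback is also unsound. Under an exponential tail, moving $r$ by $\Theta(\ell L_\eta)$ can change $\tau$ by a factor $e^{\Theta(L_\eta)}=\eta^{-\Theta(1)}$, so \eqref{eq:tau-lb} is not preserved up to constants.
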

\begin{proof}
Let
\[
R=\abs\varepsilon,
\qquad
S(x)=\Pp(R\ge x),
\]
and let $(r,\tau,\ell)$ be supplied by \cref{lemma:tail}. Thus
\begin{equation}\label{eq:s-deriv}
S(r)=2\tau,
\qquad
-\left.\frac{d}{dx}\log S(x)\right|_{x=r}=\frac1\ell.
\end{equation}
First, we note that $S(x)$ is log-concave. To prove this, consider the function $F(x,y) = \one\left[y \ge x \right] R(y)$. Observe that $F$ is log-concave because both $\one\left[y \ge x \right]$ and $R(y)$ are log-concave, and the product of log-concave functions is log-concave. We can write 
\[
S(x) = \int_\R F(x,y) \, dy,
\]
and by Pr\'{e}kopa's theorem \cite[Theorem 6]{prekopa1973logarithmic} this implies $S(x)$ is log-concave. Using \eqref{eq:s-deriv} and log-concavity of $S(x)$, we may bound that for every $y\ge0$ and every $0\le y\le r$, respectively,
\begin{equation}\label{eq:s-bounds}
S(r+y)\le2\tau e^{-y/\ell},
\qquad
S(r-y)\le\min\{1,2\tau e^{y/\ell}\}.
\end{equation}
Let us denote
\[
L_\tau=\log(e/\tau),
\qquad
h=r/\ell.
\]
Using \eqref{eq:tau-lb},
\[
L_\tau
\le\log\left(\frac{3600eL_\eta}{\eta}\right)
\lesssim  L_\eta.
\]
We will give the desired moment control based on two separate cases, depending on whether $h$ is small (corresponding to when $r$ is not too far out in terms of the local tail length), or whether $h$ is large.

\textit{Case (i): $h\le24L_\tau$.}
In this case, the tail is not too far out and we will choose $q=2$. We may write the second moment as
\[
M_2=2\int_0^\infty xS(x)\,dx.
\]
We split the integral at $r$ and bound
\begin{align*}
M_2
&=2\int_0^r xS(x)\,dx+2\int_r^\infty xS(x)\,dx\\
&\le2\int_0^r x\,dx+4\tau\int_0^\infty(r+y)e^{-y/\ell}\,dy \rtag{via $S(x)\le 1$ and the tail bound in \eqref{eq:s-bounds}}\\
& = r^2+4\tau(r\ell+\ell^2)\le r^2+4r\ell+4\ell^2=(r+2\ell)^2\\
&\lesssim  L_\tau^2\ell^2. \rtag{via $r=h\ell\le24L_\tau\ell$}
\end{align*}
Hence, we may bound $V_p(2)$ and $Q_p(2)$ as desired,
\begin{align*}
&V_p(2)=M_2
\lesssim  L_\eta^2\frac{\ell^2}{\tau}
\lesssim \frac{t^2L_\eta^3}{\eta} \rtag{via \eqref{eq:ell-tau-ratio}} \\
&Q_p(2) = 1 \lesssim L_\eta^2/\eta
\end{align*}

\textit{Case (ii): $h>24L_\tau$.}
In this case, we choose a larger $q$. Consider any even integer $k$ satisfying
\begin{equation}\label{eq:k-range}
h\left(1-\frac1{3L_\tau}\right)
\le k\le
h\left(1-\frac1{4L_\tau}\right).
\end{equation}
This interval has length $h/(12L_\tau)>2$, so it contains an even integer. We choose
\[
q=\frac{k+2}{2}.
\]
The relevant moments in the numerators of $V_p(q)$ and $Q_p(q)$ are $M_s$ for $s \in [k-2,k]$. We will prove that for every such $s \in [k-2,k]$,
\begin{equation}\label{eq:m-goal}
M_s\lesssim L_\tau\tau r^s.
\end{equation}
We again use the expression
\[
M_s=s\int_0^\infty x^{s-1}S(x)\,dx
\]
and will split this integral into
\[
M_s=I_-+I_+,
\qquad
I_-=s\int_0^r x^{s-1}S(x)\,dx,
\qquad
I_+=s\int_r^\infty x^{s-1}S(x)\,dx.
\]

For the right-tail integral, write $x=r+y$. Since
\[
(r+y)^{s-1}
=r^{s-1}\left(1+\frac yr\right)^{s-1}
\le r^{s-1}e^{(s-1)y/r},
\]
we may bound $I_+$ by
\begin{align*}
I_+
&\le2s\tau r^{s-1}\int_0^\infty e^{-y/\ell + (s-1)y/r}\,dy\rtag{via \eqref{eq:s-bounds}}\\
&=2s\tau r^{s-1}\int_0^\infty e^{-\delta_sy}\,dy, \numberthis\label{eq:Iplus-inter}
\end{align*}
if we define $\delta_s$ as
\[
\delta_s = \frac{1}{\ell} - \frac{s-1}{r}.
\]
This will be an important quantity that controls the exponent. Using $s \le k$ and \eqref{eq:k-range}, we may lower bound $\delta_s$ by
\[
\delta_s\ge\frac1{4L_\tau\ell}.
\]
Similarly, using $s-1 \ge k-3$ and \eqref{eq:k-range} yields
\begin{align*}
\delta_s
&\le\frac{1}{3L_\tau \ell}+\frac3r\\
&\le\frac{1}{3L_\tau \ell}+\frac{1}{8L_\tau \ell}\rtag{via $h>24L_\tau$ and $3/r=3/(h\ell)\le1/(8L_\tau\ell)$}\\
&\le\frac{1}{2L_\tau \ell} \numberthis \label{eq:last-delta-ub}.
\end{align*}
Together these imply
\begin{equation}\label{eq:delta-range}
\frac1{4L_\tau\ell}
\le\delta_s\le
\frac1{2L_\tau\ell}.
\end{equation}
Revisiting our bound on $I_+$ with \eqref{eq:Iplus-inter} and \eqref{eq:delta-range},
\begin{align*}
I_+
&\le2s\tau r^{s-1}\int_0^\infty e^{-\delta_sy}\,dy\\
&=2s\tau r^{s-1}\frac1{\delta_s}\\
&\le8sL_\tau\ell\tau r^{s-1}\\
&\le8L_\tau\tau r^s. \rtag{via $s\le k\le h=r/\ell$}
\end{align*}
We now turn towards bounding $I_-$. Since
\[
(r-y)^{s-1}
=r^{s-1}\left(1-\frac yr\right)^{s-1}
\le r^{s-1}e^{-(s-1)y/r},
\]
using \eqref{eq:s-bounds} yields
\[
I_-
\le sr^{s-1}\int_0^r
 e^{-(s-1)y/r}\min\{1,2\tau e^{y/\ell}\}\,dy.
\]
We will split this bound into $I_- \le I_{-,1} + I_{-,2}$ where
\[
I_{-,1} = 2s\tau r^{s-1}\int_{0}^{y_0} e^{\delta_s y} \,dy,
\qquad
I_{-,2} = sr^{s-1}\int_{y_0}^r e^{-(s-1)y/r}\,dy,
\qquad
y_0=\ell\log\frac1{2\tau}.
\]
Observe that $0\le y_0<r$ because $\tau \le 1/2$, $\log(1/(2\tau))\le L_\tau$, and $h>24L_\tau$. 

For $I_{-,1}$, we bound
\begin{align*}
I_{-,1}
&=2s\tau r^{s-1}\frac{e^{\delta_sy_0}-1}{\delta_s}\\
&\le 2s\tau r^{s-1}\frac{e^{1/2}-1}{\delta_s} \rtag{via \eqref{eq:delta-range} and $\delta_sy_0 \le\frac1{2L_\tau\ell} \, \ell\log\frac1{2\tau} \le\frac12$}\\
&\lesssim sL_\tau\ell\tau r^{s-1} \lesssim L_\tau\tau r^s. \rtag{again via $s\le k\le h=r/\ell$}
\end{align*}

For $I_{-,2}$, we bound
\begin{align*}
I_{-,2}
&=\frac{s}{s-1}r^s
\left(e^{-(s-1)y_0/r}-e^{-(s-1)}\right)\\
&\le2r^s e^{-(s-1)y_0/r} \rtag{via $s \ge h\left(1 - \frac{1}{3 L_\tau} \right) - 2 > 24 L_\tau - 10 > 2$}\\
&= 2r^s (2\tau)^{(s-1)\ell/r}\\
&\le 2r^s (2\tau)^{1 - 1/(2 L_\tau)} \rtag{via $\frac{(s-1)\ell}{r} \ge 1 - \frac{1}{2L_\tau}$ by the same steps as \eqref{eq:last-delta-ub}}\\
&= 4\tau r^s e^{\log(1/(2\tau))/(2 L_\tau)} \le 4 \tau r^s e^{1/2} \lesssim \tau r^s.
\end{align*}
Combining the bounds for $I_+,I_{-,1},I_{-,2}$ proves our desired bound \eqref{eq:m-goal} for $M_s$; this corresponds to the quantities in the numerators of $V_p(q)$ and $Q_p(q)$. For the moments in the denominators, we may use the much simpler bound
\begin{equation}\label{eq:m-lb}
M_j=\E R^j\ge r^j\Pp(R\ge r)=2\tau r^j.
\end{equation}
We may now bound $V_p(q)$ using \eqref{eq:m-goal} and \eqref{eq:m-lb}
\begin{align*}
V_p(q)
&=\frac{M_k}{(k/2)^2M_{k/2-1}^2}\\
&\lesssim \frac{L_\tau\tau r^k}{(k^2/4)(4\tau^2r^{k-2})} \lesssim \frac{L_\tau r^2}{k^2\tau}\\
& \lesssim \frac{L_\tau r^2}{h^2\tau} = \frac{L_\tau \ell^2}{\tau} \rtag{via $k \ge (2/3)h$ by \eqref{eq:k-range}}\\
& \lesssim \frac{L_\tau t^2 L_\eta}{\eta} \lesssim \frac{t^2 L_\eta^2}{\eta}. \rtag{via \eqref{eq:ell-tau-ratio}}
\end{align*}
For $Q_p(q)$ we similarly bound
\begin{align*}
Q_p(q)
&=\frac{M_{k-2}}{M_{k/2-1}^2} \lesssim \frac{L_\tau\tau r^{k-2}}{4\tau^2r^{k-2}} \lesssim \frac{L_\tau}{\tau} \\
&\lesssim \frac{L_\tau L_\eta}{\eta} \lesssim \frac{L_\eta^2}{\eta} \rtag{via \eqref{eq:tau-lb}}.
\end{align*}

Finally, we upper bound the value of $q$ by
\begin{align*}
q&=\frac{k+2}2
\le1+\frac h2 \rtag{via \eqref{eq:k-range}}\\
&\le1+\frac1{4\tau} \rtag{via $h=\frac r\ell=\frac{rp(r)}\tau\le \frac{\int_0^rp(x)\,dx}{\tau }\le \frac1{2\tau}$}\\
&\le2+\frac{900L_\eta}{\eta}.\rtag{via \eqref{eq:tau-lb}} \qquad\qedhere
\end{align*}
\end{proof}

We further use that there exists a choice of $q$ that is a power of 2:
\begin{corollary}\label{cor:power-q}
There is a choice of $q$ that is a power of 2 and satisfies the conditions of \cref{lemma:moment-control}.
\end{corollary}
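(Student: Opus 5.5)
The plan is to rerun the proof of \cref{lemma:moment-control} at a slightly shifted tail point, rather than trying to round the $q$ it produces. Case (i) of that proof already uses $q=2$, a power of $2$, so only case (ii) needs work. There, $k$ must lie in a window of multiplicative width only about $1+1/(12L_\tau)$. That window generally contains no number of the form $2q-2$ with $q$ a power of two, so rounding is hopeless.

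The window's lower endpoint cannot be relaxed: the bound on $I_{-,2}$ needs $(s-1)/h \ge 1-1/(2L_\tau)$. So instead of moving $k$, I will move the reference point $r$ downward so that the ratio $h=r/\ell$ moves to a value compatible with a power of two.

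\textbf{Step 1 (a one-parameter family of tail points).} For $x\in(0,r]$ set $\tau(x)=S(x)/2$, $\ell(x)=\tau(x)/p(x)$, and $g(x)=x/\ell(x)=2x\,p(x)/S(x)$. By log-concavity of $S$ (established in the proof of \cref{lemma:moment-control}), the hazard $-\tfrac{d}{dx}\log S = 2p/S$ is nondecreasing. Hence $g$ is nondecreasing, with $g(0^+)=0$ and $g(r)=h$.

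\textbf{Step 2 (quality is preserved).} Take any $x\le r$ with $g(x)\ge h/2$. Then $\tau(x)\ge\tau$, so \eqref{eq:tau-lb} still holds and $L_{\tau(x)}\le L_\tau$. Also $\ell(x)=x/g(x)\le r/(h/2)=2\ell$, so
\[
\frac{\ell(x)^2}{\tau(x)}\le 4\,\frac{\ell^2}{\tau}\le 7200\,\frac{t^2L_\eta}{\eta}.
\]
The rest of the proof of \cref{lemma:moment-control} used only \eqref{eq:tau-lb}, \eqref{eq:ell-tau-ratio}, and the bounds \eqref{eq:s-bounds}, which follow from log-concavity at any base point. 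It therefore goes through verbatim with $(x,\tau(x),\ell(x))$ in place of $(r,\tau,\ell)$, losing only constants.

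\textbf{Step 3 (hitting a power of two).} Assume case (ii), $h>24L_\tau$. Let $q$ be the largest power of $2$ with $2q-2\le h(1-1/(4L_\tau))$, and put $k=2q-2$. Then $k\in[h/2-O(1),\,h]$ up to the factor $1-1/(4L_\tau)$, and $k$ is even. Choose $x\le r$ with
\[
k\in\Bigl[g(x)\bigl(1-\tfrac1{3L_{\tau(x)}}\bigr),\; g(x)\bigl(1-\tfrac1{4L_{\tau(x)}}\bigr)\Bigr],
\]
which is the window \eqref{eq:k-range} at the new base point. Such an $x$ should exist by an intermediate-value argument on $g$. The window's endpoints depend on $x$ through $L_{\tau(x)}$, but they move monotonically, so the argument should still apply.

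\textbf{Step 4 (fallback and range of $q$).} If instead $g(x)\le 24L_{\tau(x)}$ at the relevant point, use case (i) with $q=2$. Its bound $(x+2\ell(x))^2\lesssim L^2\ell(x)^2$ is preserved by Step 2. The bound $q\le 2+900L_\eta/\eta$ follows as before from $g(x)\le 1/(2\tau(x))\le 1/(2\tau)$.

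\textbf{Main obstacle.} The delicate point is Step 3: continuity of $g$ and existence of the exact crossing. Continuity of $p$ on the interior of the support of a log-concave density should give it. If $g$ jumps, for example at the support edge, I would use the one-sided limits of the hazard at the jump; log-concavity of $S$ still gives \eqref{eq:s-bounds} with $1/\ell$ replaced by any value between the left and right derivatives of $-\log S$, which supplies the needed intermediate ratio.
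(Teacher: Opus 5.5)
Your argument is correct in substance but takes a different route from the paper, and its motivating premise is wrong: the paper does round. It shows that for $2\le s\le q<2s$ one has $Q_p(s)\le Q_p(q)$ and $V_p(s)\le 9V_p(q)$. The reason is that $\phi(a)=\log M_a$ is convex in $a$ (by H\"older), so $x\mapsto\phi(2x+c)-2\phi(x)$ is nondecreasing for $c\ge0$. Moreover, $\log Q_p(a)$ and $\log\bigl((a-1)^2V_p(a)\bigr)$ are exactly this map at $x=a-2$, with $c=0$ and $c=2$ respectively. Rounding the $q$ from \cref{lemma:moment-control} down to a power of $2$ therefore costs at most a factor $9$. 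The window \eqref{eq:k-range} is an artifact of that lemma's proof, not a constraint on which $q$ satisfy its conclusions. The paper's route takes a few lines, uses only finiteness of moments, and is independent of how the lemma was proved. Yours re-runs the tail analysis at a shifted base point. That costs you a continuity/intermediate-value step and a fallback case, but keeps everything inside the tail framework.

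On your details, the obstacle you flag is not real. Since $\tau>0$, the point $r$ lies strictly inside the support, so $p$ is continuous and positive on $[0,r]$. Hence $b(x)=g(x)\bigl(1-\tfrac{1}{4L_{\tau(x)}}\bigr)$ is continuous on $(0,r]$, with $b(0^+)=0$ and $b(r)\ge k$. Any $x^*$ with $b(x^*)=k$ then puts $k$ in the window.

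There is one small fix. Such an $x^*$ only guarantees $g(x^*)\ge k>h/2-h/(8L_\tau)-1\ge h/3$, not $g(x^*)\ge h/2$. So Step 2 should be stated with $h/3$, which gives $\ell(x^*)\le 3\ell$.

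Your Step 4 fallback is genuinely needed. The value $g(x^*)$ can be about $h/2$, which falls below $24L_{\tau(x^*)}$ when $h$ is within a constant factor of $24L_\tau$. In that regime the bound $M_2\le \ell(x^*)^2(g(x^*)+2)^2\lesssim L_\tau^2\ell^2$ handles it.
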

\begin{proof}
We will show that for any
\[
2 \le s \le q < 2s,
\]
it holds that
\begin{equation*}
V_p(s) \le 9 V_p(q),
\end{equation*}
and
\begin{equation}\label{eq:q-goal}
Q_p(s) \le Q_p(q).
\end{equation}
Together, these would immediately imply our desired result: consider the $q$ given by \cref{lemma:moment-control}, and choose this value rounded down to the nearest power of 2.

It will be helpful to consider the function 
\[
\phi(a):=\log M_a.
\]
$\phi$ is convex since $M_a$ is log-convex. To prove the latter, we use that for $a,b\ge0$ and $\theta\in[0,1]$, H\"older's inequality gives
\[
M_{\theta a+(1-\theta)b}
=\E\left[(R^a)^\theta(R^b)^{1-\theta}\right]
\le(\E R^a)^\theta(\E R^b)^{1-\theta}
=M_a^\theta M_b^{1-\theta}.
\]
The convexity of $\phi$ will almost immediately imply our desired result. For any $c\ge0$, consider
\[
F_c(x)=\phi(2x+c)-2\phi(x),
\qquad x\ge0.
\]
$F_c$ is non-decreasing since $\phi$ has non-decreasing derivative
\[
F_c'(x)=2\phi'(2x+c)-2\phi'(x)\ge 0.
\]

In these terms, 
\[
\log Q_p(a) = F_0(a-2),
\]
and hence $Q_p(a)$ is non-decreasing for $a\ge 2$, which implies \eqref{eq:q-goal}.
We may similarly bound $V_p(s)$ by
\[
V_p(s) = \frac{e^{F_2(s-2)}}{(s-1)^2} \le \frac{e^{F_2(q-2)}}{(s-1)^2} = \left( \frac{q-1}{s-1}\right)^2 V_p(q) \le \left( \frac{2s-1}{s-1}\right)^2 V_p(q) \le 9 V_p(q).\qedhere
\] 
\end{proof}

\paragraph{Choosing the estimate and concluding the analysis.} Together, our bound for regression error in terms of moments (\cref{prop:fixed-q}), our moment control (\cref{cor:power-q}), and the polynomial-time regression error (\cref{cor:poly-lq}), imply the following guarantee:
\begin{corollary}\label{cor:lq-regress}
For $t>0$, let $\eta = H_p(t)$, and assume $\eta \ge \frac{1}{m}$. There exists a constant $C_{K,\kappa} \ge 1$ such that if 
\[
m \ge C_{K,\kappa} \frac{d L_\eta^2 \log(2n)}{\eta},
\]
then there exists a $q^* \in \mathcal{G}_m$ such that for each $i \in \{1,\dots,B\}$, it holds with probability at least $\frac{7}{8}$ that
\[
\norm{A_{q^*,i} - \beta}_2 \le t\qquad
\text{and}\qquad
\norm{\wh A_{q^*,i} - A_{q^*,i}}_2 \le \gamma.
\]
\end{corollary}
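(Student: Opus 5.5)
The plan is to chain \cref{lemma:moment-control}, \cref{cor:power-q}, \cref{prop:fixed-q} and \cref{cor:poly-lq}, with a union bound over two failure events of probability $1/18$ each. The key is to apply the moment control not at $t$ itself but at a rescaled translation, so that the $\sqrt{dV_p/m}$ error from \cref{prop:fixed-q} comes out at most $t$.

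\textbf{Choosing the translation and $q^*$.} Let $t' = t/A$ for a large constant $A$, and set $\eta' = H_p(t')$. Since $t\mapsto H_p(t)$ is nondecreasing for symmetric log-concave $p$, we have $\eta'\le \eta$. To see that $\eta'$ is not much smaller, I would use a subadditivity property of the Hellinger distance between translations: $H_p(t)^{1/2}\le A\, H_p(t/A)^{1/2}$, by the triangle inequality for $\hel$ along $A$ equal steps. This gives $\eta'\ge \eta/A^2$. If $\eta'=0$, then $p_{t'}=p$, which is impossible for a density; so $\eta'\in(0,1]$.

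Apply \cref{lemma:moment-control} with $t'$, and round down using \cref{cor:power-q}. This produces a power of two $q^*\ge2$ with
\begin{equation*}
V_p(q^*)\lesssim \frac{t'^2L_{\eta'}^3}{\eta'}, \qquad Q_p(q^*)\lesssim \frac{L_{\eta'}^2}{\eta'}, \qquad q^*\le 2+\frac{900L_{\eta'}}{\eta'}.
\end{equation*}
For membership in $\mathcal{G}_m$ we need $q^*\le m$. This follows from $\eta'\gtrsim \eta$ together with the assumption $m\ge C d L_\eta^2\log(2n)/\eta$, since that bound dominates $L_{\eta'}/\eta'$. Note also that $L_{\eta'}\lesssim L_\eta$.

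\textbf{Applying the regression bound.} The condition \eqref{eq:prop-n-cond} reads $m\ge C dQ_p(q^*)\log(2m)$. It holds because $dQ_p(q^*)\log(2m)\lesssim dL_\eta^2\log(2n)/\eta$. \cref{prop:fixed-q} then gives, with probability $17/18$,
\begin{equation*}
\norm{A_{q^*,i}-\beta}_2\lesssim \sqrt{\frac{d\, t^2 L_\eta^3}{A^2\eta\, m}}.
\end{equation*}
The hypothesis on $m$ supplies only $L_\eta^2$ in the denominator, not $L_\eta^3$, so there is a leftover factor $\sqrt{L_\eta/\log(2n)}$. Because $\eta\ge 1/m\ge 1/n$, we have $L_\eta\le\log(4n)\lesssim\log(2n)$, which absorbs this factor. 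Choosing $A$ (equivalently, the constant $C_{K,\kappa}$) large then makes the right-hand side at most $t$.

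\textbf{Approximate optimizer and union bound.} \cref{cor:poly-lq} gives, with probability $17/18$, the singular value bound \eqref{eq:X-singular}. On that event the approximate optimizer satisfies $\norm{\wh A_{q^*,i}-A_{q^*,i}}_2\le\gamma$; the exact optimizer is unique there, so $A_{q^*,i}=\wh\beta_{q^*}$. A union bound over the two events gives success probability at least $1-2/18\ge 7/8$.

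\textbf{Main obstacle.} The delicate point is the rescaling: the moment bound carries $L^3$ while the hypothesis supplies $L^2$, and the gap has to be closed by $\eta\ge 1/m$. The step from $t$ to $t/A$ also depends on the comparison $\eta'\gtrsim \eta/A^2$, which I would verify carefully via the triangle inequality for $\hel$ together with translation invariance.
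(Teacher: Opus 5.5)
Your chain of \cref{lemma:moment-control}, \cref{cor:power-q}, \cref{prop:fixed-q} and \cref{cor:poly-lq} is the paper's proof. So is your absorption of the extra $L_\eta$ via $\eta\ge 1/m$ (so $L_\eta\le\log(4n)\le 2\log(2n)$, a step the paper leaves implicit), and so is the union bound over the two $1/18$ events. The paper simply applies the moment control at $t$ itself and then takes $C_{K,\kappa}$ large.

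The detour through $t'=t/A$ buys nothing, and the displayed bound with $A^2$ in the denominator does not follow. You only know $\eta'\ge\eta/A^2$, hence
\begin{equation*}
\frac{t'^2}{\eta'}\le\frac{t^2}{A^2}\cdot\frac{A^2}{\eta}=\frac{t^2}{\eta},
\end{equation*}
so the $A^{-2}$ cancels. For Gaussian-like $p$, where $H_p(s)\asymp s^2$ near $0$, the cancellation is exact. The correct consequence is only
\begin{equation*}
\norm{A_{q^*,i}-\beta}_2\lesssim t\sqrt{\frac{dL_{\eta'}^3}{\eta m}}.
\end{equation*}

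Increasing $A$ also makes $L_{\eta'}$ larger. It forces $C_{K,\kappa}\gtrsim A^2$ to keep $q^*\le m$ and \eqref{eq:prop-n-cond}. So ``choosing $A$ large'' is not a valid way to close the argument, and the rescaling is not ``the key''. The proof goes through only by fixing $A$ (take $A=1$) and then choosing $C_{K,\kappa}$ large, which is exactly the paper's route. With $A=1$ your check that $\eta'>0$ is also unnecessary, since $\eta\ge 1/m>0$ by hypothesis.
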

\begin{proof}
\cref{cor:power-q} yields a power of two $q^* \le 2 + 900L_\eta/\eta$. For a large enough choice of $C_{K,\kappa}$, it holds that $q^* \le m$ and hence $q^* \in \mathcal{G}_m$. Further, $q^*$ satisfies
\[
V_p(q^*) \lesssim \frac{t^2L_\eta^3}{\eta},
\qquad
Q_p(q^*) \lesssim \frac{L_\eta^2}{\eta}.
\]
This satisfies \eqref{eq:prop-n-cond} of \cref{prop:fixed-q} when our $C_{K,\kappa}$ is sufficiently large. This yields
\[
\norm{A_{q^*,i} - \beta}_2 \le C_{K,\kappa} \sqrt{\frac{dV_p(q^*)}{m}} \lesssim C_{K,\kappa}  t \sqrt{\frac{d L_\eta^3 / \eta}{m}} \le t
\]
with probability at least $17/18$, when our choice of $C_{K,\kappa}$ is large enough. \cref{cor:poly-lq} yields the final guarantee $\norm{\wh A_{q^*,i} - A_{q^*,i}}_2 \le \gamma$.
\end{proof}
Consider two parameters
\[
\eta_0 = C_{K,\kappa} \frac{d \log^4(64n/\delta)}{n},
\qquad
t_0 = \omega_p(\eta_0) + \gamma.
\]
If $\eta_0 \ge 1$, then our desired theorem is always vacuously true; we otherwise focus on $\eta_0 < 1$.
By \cref{lem:H-translation}, $H_p(t)$ is non-decreasing in $[0,\infty)$. Hence,  $H_p(t_0) >  \eta_0$. We may invoke  \cref{cor:lq-regress} with $t_0$ since the condition on $m$ holds when
\[
m \ge \frac{n}{C_{K,\kappa} \log(64n /\delta)}, 
\]
where the constant in the denominator grows with the constant in the choice of $\eta_0$, and hence the condition holds from our choice of $B$ when we last set the constant in $\eta_0$ large enough.

For a large enough choice of constant in our choice of $B$, we may invoke \cref{cor:lq-regress} and a Chernoff bound to demonstrate that, with probability at least $1-\delta/10$, more than $3/4$ of $A_{q^*,1},\dots,A_{q^*,B}$ will be within $t_0$ of $\beta$ and have $\norm{A_{q^*,i} - \wh A_{q^*,i}}_2 \le \gamma$. For any such $i$, we observe
\[
r_{q^*,i} \le 2t_0 + 2\gamma.
\]

All that remains is to show that all balls nearly contain $\beta$, meaning
\begin{equation}\label{eq:ball-contains}
\beta \in B(\wh A_{q,i},r_{q,i} + \gamma), 
\qquad
\text{for all } q \in \mathcal{G}_m \text{ and } i \in \{1,\dots,B\}.
\end{equation}
This would immediately imply
\begin{equation}\label{eq:final-k1}
\norm{\wh\beta - \beta}_2 \le \min_{q\in\mathcal{G}_m, \, i \in \{1,\dots,B\}} r_{q,i}  + \gamma  \le 2 t_0 + 3\gamma .
\end{equation}

We finally turn towards proving the ball containment claim \eqref{eq:ball-contains}. 

\begin{proof}[Proof of \eqref{eq:ball-contains}]
We use the fact
\begin{claim}\label{claim:ball-contains}
Let $P$ be a distribution that is centrally symmetric around $\beta$. If \[
P( B(c,r) ) > 1/2,
\]
then $\norm{c - \beta}_2 \le r$.
\end{claim}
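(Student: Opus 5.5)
Suppose for contradiction that $\norm{c-\beta}_2 > r$. We will reflect $B(c,r)$ through $\beta$ and show that the ball and its reflection are disjoint. Each then carries more than half of the mass of $P$, which is impossible.

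\emph{Reflection.} Define the reflection map $\sigma(x) = 2\beta - x$. Central symmetry of $P$ about $\beta$ means $P(\sigma(A)) = P(A)$ for every measurable $A$. The image of the ball is $\sigma(B(c,r)) = B(2\beta - c, r)$, since $\sigma$ is an isometry. Hence
\[
P\big(B(2\beta - c, r)\big) = P\big(B(c,r)\big) > 1/2 .
\]

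\emph{Disjointness.} The two centers are at distance $\norm{c - (2\beta - c)}_2 = 2\norm{c-\beta}_2$. Take any $x$ in both balls. The triangle inequality gives
\[
2\norm{c-\beta}_2 \le \norm{x-c}_2 + \norm{x-(2\beta-c)}_2 \le 2r .
\]
This contradicts $\norm{c-\beta}_2 > r$, so the balls are disjoint. (The argument is the same for closed balls; nonempty intersection of closed balls already forces the center distance to be at most $2r$.)

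\emph{Conclusion.} By disjointness,
\[
1 \ge P\big(B(c,r)\big) + P\big(B(2\beta-c,r)\big) > 1,
\]
a contradiction. Therefore $\norm{c-\beta}_2 \le r$.

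The only point needing any care is the strict versus non-strict inequality at the boundary. It is handled by assuming the strict inequality $\norm{c-\beta}_2 > r$ for contradiction, which makes the balls disjoint whether they are taken open or closed. No other step is an obstacle.
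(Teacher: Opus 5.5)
Your proof is correct and follows the paper's argument exactly: reflect $B(c,r)$ through $\beta$ to get $B(2\beta - c, r)$ with equal mass, and note that $\norm{c-\beta}_2 > r$ makes the two balls disjoint, so their masses would sum to more than $1$. You also verify the disjointness step explicitly via the triangle inequality, which the paper leaves implicit.
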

\begin{proof}
For sake of contradiction, suppose this were not true. Then, $B(c,r)$ and $B(2\beta - c, r)$ are disjoint and have the same probability under $P$; yet this would imply $P(B(c,r)) + P(B(2\beta-c,r)) > 1$ which is a contradiction.
\end{proof}
\cref{claim:ball-contains} will be helpful because the distribution of the exact $L_q$ optimizer is centrally symmetric around $\beta$ for any fixed $\mathbf{X}$ where $\operatorname{rank}(\mathbf{X})=d$ (so the estimator is unique). This follows from the fact that if $v$ is the unique minimizer for a fixed $\beta$, the design matrix $\mathbf{X}$, and the collection of errors $\eps$, then $2\beta - v$ is the unique estimator for $\beta, \mathbf{X},-\eps$, and by independence and symmetry of $\eps$, both are equally likely.

Consider some function $f_q(X_1,\dots,X_m,Y_1,\dots,Y_m)$ which always outputs an exact optimizer of the $L_q$ objective on $m$ samples (it may output any optimizer if there are multiple).
Let $P_\text{all}$ denote the distribution of $f_q$ when the $m$ samples $(X_i,Y_i)$ are sampled i.i.d., and let $P_\text{good}$ denote the distribution of $f_q$ conditioned on the minimum singular value condition of \eqref{eq:X-singular} holding (this also implies $\operatorname{rank}(\mathbf{X})=d$, so the optimizer is unique). 
We have already shown that $P_\text{good}$ is centrally symmetric around $\beta$. Moreover, by \cref{cor:poly-lq}, the singular value condition holds with probability at least $17/18$, and hence we may write
\[
P_\text{all} = \alpha P_\text{good} + (1-\alpha)Q,
\]
for some distribution $Q$ and $\alpha \in [17/18,1]$. Similarly, let $\wh P_{\text{all}}$ denote the distribution of our algorithm's polynomial time estimate of the optimizer given $m$ i.i.d. samples, and let $\wh P_{\text{good}}$ denote the distribution of our algorithm's polynomial time estimate of the optimizer conditioned on the minimum singular value condition holding. Note that $\wh P_{\text{good}}$ need not be symmetric around $\beta$. 
We may likewise write 
\[
\wh P_\text{all} = \alpha \wh P_\text{good} + (1-\alpha) \wh Q,
\]
for some distribution $\wh Q$ and $\alpha \in [17/18,1]$.

Condition on a particular fixed candidate estimate $\wh A_{q,i}$. For each $\wh A_{q,j}$ ($i \ne j$), let us denote the distance from $\wh A_{q,i}$ as $D_j = \norm{\wh A_{q,i}- \wh A_{q,j}}_2$. Denote the CDF of this corresponding random variable as $F$, and the empirical CDF from $D_1,\dots,D_B$ as $F_B$. By DKW inequality \cite{massart1990tight}, we know
\[
\Pp(\sup_x | F_B(x) - F(x)| > a) \le 2e^{-2(B-1)a^2}.
\]
Hence, using $a=0.1$, it holds that
\[
\Pp( \wh P_\text{all}(B(\wh A_{q,i},r_{q,i})) < 0.65) \le 2e^{-0.02(B-1)}. 
\]
In total, there are at most $B \log_2(m)$ candidate estimates, and thus by union bound all candidate estimates satisfy 
\[
 \wh P_\text{all}(B(\wh A_{q,i},r_{q,i})) \ge 0.65
\]
with probability at least $1-\delta/10$ when the constant in $B$ is chosen large enough. Furthermore, this implies that
\[
\wh P_\text{good}(B(\wh A_{q,i},r_{q,i})) > 0.5
\]
for all candidates, as otherwise
\[
\wh P_\text{all}(B(\wh A_{q,i},r_{q,i})) \le \frac{17}{18} \cdot \frac{1}{2} + \frac{1}{18} < 0.65,
\]
which is a contradiction. 

Further, if we consider the relationship between $P_{\text{good}}$ and $\wh P_{\text{good}}$, recall that the algorithmic optimizer is always within $\gamma$ of the true optimizer under the singular value condition, and hence
\[
P_{\text{good}}(B(\wh A_{q,i},r_{q,i} + \gamma )) \ge \wh P_{\text{good}}(B(\wh A_{q,i},r_{q,i} )) > 0.5.
\]

Since $P_\text{good}$ is centrally symmetric around $\beta$, \cref{claim:ball-contains} implies that $B(\wh A_{q,i},r_{q,i} + \gamma)$ contains $\beta$ for all candidates, concluding our proof.
\end{proof}
This implies our desired guarantee by \eqref{eq:final-k1}
\[
\norm{\wh\beta - \beta}_2 \le 2 t_0 + 3\gamma  \le C_{K,\kappa} \omega_p\left( C_{K,\kappa} \frac{d \log^4(64n/\delta)}{n}\right) + 5\gamma.
\]

We may use the above guarantee with $\gamma' = \gamma/5$ for the desired theorem statement. 

Finally, we discuss the running time. The algorithm makes $O(B \log(n)) = O(\log^2(2n/\delta))$ calls to the optimizer of \cite{adil2024fast}. By \cref{cor:poly-lq}, each call solves $O\left(q^2 m^{1/3} \max\left\{1, \log\left(\frac{C_{K,\kappa} m \norm{\mathbf{Y}}_\infty}{\gamma}\right)\right\}\right)$ linear systems. Outside of this, the runtime is dominated by computing pairwise distances between candidate estimates and choosing $(\wh q, \wh i)$; this runs in $O(\log(n) \cdot d B^2) = O(d \log^3(n/\delta))$ time. In total, the algorithm runs in $\poly(n,d) \cdot \polylog(1/\delta, \norm{\mathbf{Y}}_\infty, 1/\gamma)$ time. We refer to \cite{adil2024fast} for a more fine-grained description of optimization subroutine's runtime.

\subsection{Statistical lower bound}

In this section, we will provide a statistical lower bound for adaptive linear regression in terms of the Hellinger modulus. Our lower bound (\cref{thm:main-hel-lb}) will focus on the setting where $X_i \sim N(0,I_d)$, $\eps_i \sim p$, and $p$ is symmetric unimodal.

\textit{Further proof intuition.} Please recall the proof intuition discussed in \cref{sec:contribs}. In this section, we will denote our linear regression parameter as $\Theta$, where it is drawn from a prior.

Recall the original technical obstacle of bounding $\E_{\Theta, X_i}[\kll{p_{X_i^\top \Theta}}{p}]$, and recall the KL decomposition observation discussed in \cref{sec:contribs}, where $P$ is decomposed into two components $P = (1-\alpha)G + \alpha R$, where $\alpha \lesssim  \hels(P,Q)$ and $\kll{G}{Q} \lesssim \hels(P,Q)$. A more typical Fano approach will eventually work well for the samples corresponding to $G$ in our decomposition, yet samples corresponding to $R$ may be more unwieldy. In these cases, our lower bound will consider a more extreme version where instead of observing just $Y_i = R_{X_i^\top \Theta}$, we reveal directly the value of $X_i^\top \Theta$, which is strictly more informative. The key point is that we will tune parameters such that there are typically, say, $\le d/2$ samples corresponding to $R$, and hence $\Theta$ is still uncertain over a subspace of dimension $\ge d/2$. Extra care is required for a rigorous proof. A notable consideration is the revealed subspace is dependent with $\Theta$, since samples from $R$ are more likely when $\hels(p_{X_i^\top \Theta},p)$ is larger, and hence the revealed subspace might correlate with directions where $\Theta$ is large; this will be handled more clearly later in the full proof. 

 \textit{Proof structure.} In \cref{subsec:hel-lemmas}, we begin our proof with some Hellinger helper lemmas that show the Hellinger distance has controlled growth for translations of symmetric unimodal distributions, and the existence of this decomposition that is amenable to $\kl$. In \cref{subsec:lb-conditional}, we prove our desired lower bound, conditional on a lemma that yields a lower bound whenever the dimension of the revealed subspace is not too large, the choice of subspace is not too informative about $\Theta$, and the other samples are not too informative about $\Theta$ restricted to the unrevealed subspace. Finally, in \cref{subsec:lb-lemma}, we prove said lemma.

\subsubsection{Hellinger lemmas}\label{subsec:hel-lemmas}
We first show control for Hellinger between translations:
\begin{lemma}[Translation Hellinger growth]\label{lem:H-translation}
If $p$ is symmetric unimodal, then $H_p$ is even, non-decreasing on $[0,\infty)$, and satisfies
\begin{equation}\label{eq:H-scaling}
  H_p(t)
  \le
  \left(1+\frac{|t|}{a}\right)^2H_p(a)
  \qquad(a>0,\ t\in\R).
\end{equation}
Consequently, every $|r|<\omega_p(u)$ satisfies $H_p(r)\le u$.
\end{lemma}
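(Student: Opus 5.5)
Write $H_p(t)=1-\rho(t)$ with $\rho(t)=\int\sqrt{p(x)p(x-t)}\,dx$ the Bhattacharyya affinity. I will prove the three parts (evenness, monotonicity, the growth bound) in that order, and then read off the consequence from monotonicity.

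\emph{Evenness.} This follows from symmetry of $p$: substituting $x\mapsto -x$ gives $\rho(-t)=\rho(t)$. The same translation invariance shows more generally that $\hels(p_a,p_b)$ depends only on $|a-b|$.

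\emph{Monotonicity.} I plan to use the decomposition of \cref{def:unimodal}. A symmetric unimodal density is a mixture of uniforms $\Unif[-T,T]$, i.e. $p=\int u_T\,d\mu(T)$ with $u_T=\frac{1}{2T}\one_{[-T,T]}$. A cleaner route is via the layer-cake form: $\sqrt p$ is itself symmetric and nonincreasing in $|x|$, so $\sqrt{p}=\int_0^\infty \one\{|x|\le s(\lambda)\}\,d\lambda$ for level-set half-widths $s(\lambda)$. Then
\[
\rho(t)=\iint |[-s,s]\cap[t-s',t+s']|\,d\lambda\,d\lambda',
\]
and each overlap length of two centered intervals, one shifted by $t$, is nonincreasing in $|t|$. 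Hence $\rho$ is nonincreasing on $[0,\infty)$, and $H_p$ is nondecreasing there.

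\emph{Growth bound.} Here I will use that Hellinger distance $\hel=\sqrt{\hels}$ is a metric, so translations satisfy a triangle inequality. For $|t|\le a$, monotonicity already gives $H_p(t)\le H_p(a)$. For $|t|>a$, let $N=\lceil |t|/a\rceil\le 1+|t|/a$ and split the shift $t$ into $N$ steps each of size $|t|/N\le a$. By the triangle inequality together with translation invariance,
\[
\hel(p,p_t)\le N\,\hel(p,p_{t/N})\le N\sqrt{H_p(a)},
\]
where the last step uses monotonicity and $|t/N|\le a$. Squaring gives $H_p(t)\le(1+|t|/a)^2H_p(a)$, which is \eqref{eq:H-scaling}.

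\emph{Consequence.} Fix $|r|<\omega_p(u)$. By the definition of the supremum there is some $s\ge0$ with $s>|r|$ and $H_p(s)\le u$. Evenness and monotonicity then give $H_p(r)=H_p(|r|)\le H_p(s)\le u$.

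The main obstacle is the monotonicity step. The layer-cake identity for $\sqrt p$ has to be made rigorous, but this is routine. Unimodality is only needed there; evenness and the growth bound use just symmetry and the metric property.
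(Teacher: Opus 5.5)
Your proof is correct and matches the paper's argument essentially step for step. For monotonicity, the paper also writes the affinity of $f=\sqrt p$ as a double integral of overlap lengths of centered superlevel intervals. For the growth bound, it applies the triangle inequality to $\norm{f-f(\cdot-t)}_2=\sqrt{2H_p(t)}$ over $\lceil |t|/a\rceil$ equal sub-shifts. Your separate treatment of $|t|\le a$ also avoids the $t=0$ division in the paper's display.

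One correction to your closing remark: the growth bound does rely on unimodality, through the step $H_p(t/N)\le H_p(a)$. It can fail for symmetric multimodal $p$, for example a long comb of narrow bumps spaced $a$ apart, where $H_p(a)$ is small but $H_p(3a/2)=1$.
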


\begin{proof}
Let $f=\sqrt p$ and
\[
  C_f(t)=\int f(x)f(x-t)\,dx.
\]
Then $H_p(t)=1-C_f(t)$.  Since $f$ is even and non-increasing on $[0,\infty)$, its superlevel sets are centered intervals.  $C_f(t)$ can be written as a double integral over the overlap lengths of the sets where $f(x) \ge a$ and $f(x - t) \ge b$ for $a,b\ge 0$; since each of these overlap lengths are non-increasing in $t$ for $t \ge 0$, then $C_f$ is non-increasing and $H_p$ is non-decreasing for $t \ge 0$. Further, define
\[
  \rho(t)=\norm{f-f(\cdot-t)}_2=\sqrt{2H_p(t)}.
\]
Translation invariance and the triangle inequality imply $\rho(s+t)\le\rho(s)+\rho(t)$.  Thus,
\[
  \rho(t)\le \lceil|t|/a\rceil\rho(t/\lceil|t|/a\rceil)\le \lceil|t|/a\rceil\rho(a)
  \le\left(1+\frac{|t|}{a}\right)\rho(a),
\]
using the monotonicity of $\rho$; squaring proves \eqref{eq:H-scaling}.
\end{proof}

We next show the desirable $\kl$ decomposition:
\begin{lemma}[Hellinger-to-KL decomposition]\label{lem:splitting}
There are universal constants $0<C,K<\infty$ such that, for arbitrary densities $P,Q$, there exist densities $G,R$ and
\[
  \alpha=\min\{1,K \hels(P,Q) \},
\]
for which
\begin{equation*}
  P=(1-\alpha)G+\alpha R,
  \qquad
  \kll{G}{Q} \le C \hels(P,Q).
\end{equation*}
\end{lemma}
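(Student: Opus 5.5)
Throughout, write $h:=\hels(P,Q)$. The natural construction truncates $P$ where the likelihood ratio is large. Fix a constant threshold, say $\lambda=4$, and let $A=\{x: p(x)>\lambda q(x)\}$, where $p,q$ are the densities of $P,Q$. First, on $A$ we have $\sqrt{p}-\sqrt{q}\ge(1-\lambda^{-1/2})\sqrt p=\tfrac12\sqrt p$. Hence
\[
P(A)\le 4\int_A(\sqrt p-\sqrt q)^2\le 8h.
\]
If $K h\ge 1$, we take $\alpha=1$, $R=P$ and $G=Q$, so that $\kll{G}{Q}=0$, and there is nothing to prove. Otherwise $P(A)\le 8h<1$.

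For the main case, the first idea is $G=P|_{A^c}$, but then the mixing weight would be $P(A)$ rather than exactly $\alpha=\min\{1,Kh\}$. To fix this, set
\[
G=\frac{p\one_{A^c}}{P(A^c)}, \qquad R=\frac{P-(1-\alpha)G}{\alpha}.
\]
Then $R$ is a valid density as long as $(1-\alpha)/P(A^c)\le 1$, i.e. $\alpha\ge P(A)$, which holds once $K\ge 8$. If $h=0$ then $P=Q$ and the statement is trivial.

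It remains to bound the divergence:
\[
\kll{G}{Q}=\frac{1}{P(A^c)}\int_{A^c}p\log\frac{p}{q}+\log\frac{1}{P(A^c)}.
\]
The second term is at most about $2P(A)\lesssim h$, using $P(A^c)\ge 1/2$, which we may assume by taking $K\ge 16$ so that otherwise $\alpha=1$. For the first term, use the elementary bound: for ratios $u=p/q\in[0,\lambda]$,
\[
u\log u-(u-1)\le C_\lambda(\sqrt u-1)^2.
\]
This holds because both sides vanish to second order at $u=1$, and for $u\to0$ the left side tends to $1$ while the right side tends to $C_\lambda$. Integrating against $q$ over $A^c$ gives
\[
\int_{A^c}p\log\frac pq\le\int_{A^c}(p-q)+C_\lambda\cdot 2h.
\]
Finally, $\int_{A^c}(p-q)=Q(A)-P(A)\le 0$ on $A$, since $p>q$ there. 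So $\kll{G}{Q}\lesssim h$.

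The main obstacle is the pointwise inequality near $u\to 0$ and the careful bookkeeping of the linear term $\int_{A^c}(p-q)$, which must have the right sign; the choice $\lambda$ constant handles both.
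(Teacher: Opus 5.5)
Your proof is correct. The construction matches the paper's: restrict $P$ to the region where $p/q$ is bounded by a constant, show the excluded mass is $O(\hels(P,Q))$, and let $R$ absorb the remainder so the mixing weight is exactly $\alpha$. The paper takes the region $\{q\ge p/2\}$ with $K=50$; you take $\{p\le 4q\}$ with $K=16$.

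The difference is in how you bound $\kll{G}{Q}$. The paper uses $G\le 4Q$ and applies the bounded-ratio comparison \cref{lem:ratio-control} to the pair $(G,Q)$. It then still has to control $\hels(G,Q)$, which it does via the Hellinger triangle inequality plus a separate estimate $\hels(G,P)\lesssim P(A^c)$.

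You instead write $\kll{G}{Q}$ exactly as $\frac{1}{P(A^c)}\int_{A^c}p\log(p/q)+\log(1/P(A^c))$. You bound the integral pointwise with $u\log u-(u-1)\le C_\lambda(\sqrt u-1)^2$ on $[0,\lambda]$. The linear term, which would cancel if you integrated over the whole space, is removed by its sign: $\int_{A^c}(p-q)=Q(A)-P(A)\le 0$.

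In effect you inline the proof of the bounded-ratio lemma, applied to the unnormalized restriction $p\one_{A^c}$ rather than to $G$. This makes your argument self-contained and lets it skip the detour through $\hels(G,P)$. The paper's version is shorter given the cited lemma.

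Two small points you leave implicit are fine. On $A^c$, $q=0$ forces $p=0$, so substituting $u=p/q$ loses nothing. And the prefactor $1/P(A^c)\le 2$ only matters when $\int_{A^c}p\log(p/q)$ is positive.
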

\begin{proof}
We work towards using the following convenient fact, which shows the $\kll{P}{Q}$ is bounded in terms of Hellinger when the ratio of $P(x)/Q(x)$ is bounded (this is a well-used fact in the area, and results of this form also appear in e.g. \cite[Lemma 4.4]{birge1983approximation}, \cite[Lemma 5]{birge1998minimum}, \cite[Fact 4]{tang2022divergence}):
\begin{lemma}[KL control from Hellinger and bounded ratio; Lemma 1 of \cite{compton2026ratio}]\label{lem:ratio-control}
Assume that $p,q$ are two densities on $\mathcal{X}$ with respect to $\mu$.
It holds that
\[
\kl(p,q) \le \frac{2}{(\sqrt{e}-1)^2}\,\hels(p,q)\,
\max\left\{1,\ \sup_{x\in\mathcal{X}}\log\left(\frac{p(x)}{q(x)}\right)\right\}.
\]
\end{lemma}
This fact illuminates that the problematic regions of $P$ for $\kll{P}{Q}$ are only those where, say, $Q(x) < P(x)/2$. We define a set corresponding to the favorable region of the domain as
\[
A = \{ x : Q(x) \ge P(x)/2 \}.
\]
Further, we can bound the mass outside this region by
\begin{align*}
&\frac{1}{2} \int_{A^c} \left(\sqrt{P(x)} - \sqrt{Q(x)} \right)^2 \ge \frac{1}{2} \int_{A^c} P(x) \left(\sqrt{1} - \sqrt{1/2} \right)^2 \ge 0.04 \int_{A^c} P(x) \\
&\implies P(A^c) \le 25 \hels(P,Q). \label{eq:pa-bound}\numberthis
\end{align*}
We will choose $K = 50$, and hence we may focus on $P,Q$ where $\hels(P,Q) \le \frac{1}{50}$ and $P(A^c) \le \frac12$. (When $\hels(P,Q) \ge \frac{1}{50}$, set $G=Q$ and $R=P$.) We now choose the distribution $G = (P \mid A)$, and the distribution of $R$ so that $P$ has the right marginal (this will be valid since $(1 - \alpha) \le P(A)$). 

We now must bound $\kll{G}{Q}$. Observe how $G(x) = 0$ outside $A$, and 
\[
G(x) = \frac{1}{1-P(A^c)} P(x) =  \left( 1 + \frac{P(A^c)}{1-P(A^c)}\right) P(x) \le (1 + 2 P(A^c)) P(x) \le 2 P(x) \le 4Q(x) 
\]
inside $A$. Using this and \cref{lem:ratio-control}, we finally bound
\begin{align*}
    \kll{G}{Q} &\lesssim \hels(G,Q) \lesssim \hels(P,Q) + \hels(G,P)\\
    &\le \hels(P,Q) + \int_{A} \left(\sqrt{G(x)} - \sqrt{P(x)} \right)^2 +  \int_{A^c} \left(\sqrt{G(x)} - \sqrt{P(x)} \right)^2 \\
    &\le \hels(P,Q) + \int_{A} P(x) \left(\sqrt{1 + 2P(A^c)} - 1 \right)^2 +  \int_{A^c} P(x) \\
    &\lesssim \hels(P,Q) + \left(\int_{A} P(x) P(A^c)^2 \right)+  P(A^c) \lesssim \hels(P,Q) \quad \qedhere
\end{align*}
\end{proof}

\subsubsection{Proof of \cref{thm:main-hel-lb}}\label{subsec:lb-conditional}
We now prove \cref{thm:main-hel-lb}. Let 
\[
  \eta=c\frac dn,
\]
for a $c>0$ later chosen to be sufficiently small.  We will choose a lower bound incurring error relative to a benchmark
\[
  r = \omega_p(\eta)/2
\]
(where the division by 2 is an arbitrary constant, but it implies $H_p(r) \le \eta$), and choose the prior 
\begin{equation*}
  \Theta\sim N\!\left(0,\frac{r^2}{d}I_d\right).
\end{equation*}
For an independent $X\sim N(0,I_d)$, let $T=X^\top\Theta$ be the exact inner product before adding any error distribution. For any realization $t$, we will compute the decomposition given by \cref{lem:splitting} with $P = p_t$, $Q = p$, and $\alpha(t) = \min\{1,K \hels(p,p_t)\}$.
In terms of this decomposition, the $i$-th observation can be generated by drawing
\[
  B_i\mid(\Theta,X_i)\sim\operatorname{Bern}(\alpha(T_i)),
  \qquad T_i=X_i^\top\Theta,
\]
and then drawing $Y_i$ from $G_{T_i}$ if $B_i=0$, or from $R_{T_i}$ if $B_i=1$.

We will modify this setup, by revealing even more information
\[
  X^n,
  \qquad B=(B_1,\ldots,B_n),
  \qquad T_B=(T_i:B_i=1),
  \qquad Y^n.
\]
This is strictly more informative than the original observations, as it also tells you the exact inner product $T_i$ whenever $B_i=1$.
Given $(X^n,B,T_B)$, the outputs $Y_i$ with $B_i=1$ have known laws $R_{T_i}$ independent of the remaining uncertainty about $\Theta$; hence, these values of $Y_i$ may be deleted.  We define the modified version of $Y^n$ as $W=(W_1,\ldots,W_n)$, where
\[
  W_i=\begin{cases}
    Y_i,&B_i=0,\\
    *,&B_i=1.
  \end{cases}
\]
It suffices to provide a lower bound for estimators that observe
\[
  (X^n,B,T_B,W),
\]
where $T_B$ denotes the collection of exact linear equations $X_i^\top \Theta = T_i$ for $B_i=1$. In the remaining proof, we will show that $T_B$ usually does not contain too many entries, $B$ is not too informative about $\Theta$ conditioned on $X^n$, and $W$ is not too informative about $\Theta$ conditioned on $X^n,B,T_B$. Together, these conditions will imply a lower bound by invoking the following (proof deferred to \cref{subsec:lb-lemma}):
\begin{lemma}\label{lem:rank-info}
Let $\Theta\sim N(0,\sigma^2I_d)$, and let $X$ be side information independent of $\Theta$.  Conditional on $(X,\Theta)$, let $J$ be a random subset of a finite set.  For each $(x,j)$, let $A_j(x)$ be a known matrix with $|j|$ rows, and reveal $U_j=A_j(X)\Theta$. 
Let $W$ be any additional observation.  Suppose
\begin{equation}\label{eq:rank-tail}
  \Pp(|J|>d/2)\le\frac18
\end{equation}
and
\begin{equation}\label{eq:rank-info-assumption}
  2I(\Theta;J\mid X)
  +I(\Theta;W\mid X,J,U_J)
  \le d/2.
\end{equation}
Then, every estimator based on $(X,J,U_J,W)$ satisfies
\begin{equation*}
  \Pp\left(
    \norm{\wh\Theta-\Theta}_2
    \ge e^{-8}\sigma\sqrt{d/2}
  \right)
  \ge\frac38.
\end{equation*}
\end{lemma}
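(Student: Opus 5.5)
The plan is a Fano-type argument that uses only the part of $\Theta$ the revealed equations do not determine. Fix a realization $(x,j)$ and let $V_j(x)$ be the orthogonal complement of the row space of $A_j(x)$. Then $\dim V_j(x)\ge d-|j|$, which is at least $d/2$ on the event $\{|J|\le d/2\}$; by \eqref{eq:rank-tail} this event has probability at least $7/8$. Write $\Theta=\Theta_R+\Theta_V$ with $\Theta_V=\Pi_{V_J(X)}\Theta$. Under the prior, and for any fixed $(x,j)$, the components $\Theta_V$ and $\Theta_R$ are independent Gaussians, $U_j=A_j(x)\Theta_R$ depends only on $\Theta_R$, and $\Theta_V\sim N(0,\sigma^2 I_k)$ with $k=\dim V_j(x)$. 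Since $\norm{\wh\Theta-\Theta}_2\ge\norm{\Pi_V\wh\Theta-\Theta_V}_2$, it suffices to show that no estimator can locate $\Theta_V$ within $\rho:=e^{-8}\sigma\sqrt{d/2}$.

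\textbf{Small-ball bound.} For $Z\sim N(0,\sigma^2I_k)$ with $k\ge d/2$, every ball of radius $\rho$ has probability at most $\mathrm{vol}(B_k(\rho))(2\pi\sigma^2)^{-k/2}$. Stirling gives a bound of roughly $(e\rho^2/(k\sigma^2))^{k/2}$, and since $\rho^2/\sigma^2=e^{-16}d/2\le e^{-16}k$, this is at most $e^{-7k}$. In other words, $\log(1/\max_c P(B(c,\rho)))\ge 7k\ge 3.5d$.

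\textbf{Fano step.} I will apply the generalized Fano inequality (data processing for KL applied to the success indicator) conditionally on $(X,J)$, comparing the joint law of $(\Theta_V,\text{obs})$ against the product of marginals. This gives
\[
\Pp(\text{success})\le \Pp(|J|>d/2)+\frac{I(\Theta_V;U_J,W\mid X,J)+\log 2}{3.5d}+(\text{correction for conditioning on }J).
\]
The correction arises because, conditional on $J$, the law of $\Theta_V$ is no longer exactly $N(0,\sigma^2I_k)$. I will handle it by running Fano against the unconditional Gaussian reference, which adds a term $I(\Theta;J\mid X)$ (the KL between the conditional and prior laws of $\Theta$, averaged over $J$). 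For the information term, the chain rule gives
\[
I(\Theta_V;U_J,W\mid X,J)\le I(\Theta_V;U_J\mid X,J)+I(\Theta;W\mid X,J,U_J).
\]
I also need $I(\Theta_V;U_J\mid X,J)\le I(\Theta;J\mid X)$. This should hold because under the prior $U_j\perp\Theta_V$ for each fixed $j$, so any dependence between them must be created by the selection of $J$. Writing the conditional density of $\Theta$ given $J=j$ as the prior reweighted by $\Pp(J=j\mid X,\Theta)/\Pp(J=j\mid X)$ and using convexity of KL should bound this by $I(\Theta;J\mid X)$. That step produces the factor $2$ in \eqref{eq:rank-info-assumption}.

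\textbf{Conclusion and main obstacle.} Combining the bounds with \eqref{eq:rank-info-assumption}, the success probability is at most $1/8+(d/2+\log2)/(3.5d)$ plus lower-order terms, which is at most $5/8$ for $d\ge1$. Hence the failure probability is at least $3/8$. I expect the main obstacle to be the rigorous version of the decoupling step: the subspace $V_J(X)$, and the conditional non-Gaussianity of $\Theta_V$, both depend on $J$, which is correlated with $\Theta$. I will first try to make it rigorous by conditioning on $(X,J=j)$ and bounding the KL between the conditional law of $(\Theta_V,U_j)$ and the product measure (prior on $\Theta_V$) $\otimes$ (conditional law of $U_j$). Averaged over $j$, this KL should be at most $I(\Theta;J\mid X)$ by the data processing inequality. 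If that fails, I will fall back on a cruder bound on the conditional small-ball probability via a change of measure whose cost is $I(\Theta;J\mid X)$.
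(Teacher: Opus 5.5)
Your proposal is correct and follows essentially the same route as the paper. The paper compares the true joint law with one in which the null-space component $V=\Pi_{\mathcal N}\Theta$ is redrawn from its Gaussian prior independently of the observables, and splits that KL as $I(V;U,W\mid X,J)+\Ex_{X,J}\kl(P_{V\mid X,J}\,\|\,\Gamma_{\mathcal N})$. It bounds this by $2I(\Theta;J\mid X)+I(\Theta;W\mid X,J,U_J)$ and finishes with the same Gaussian small-ball estimate and data processing on the success event.

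The one step you left vague, $I(\Theta_V;U_J\mid X,J)\le I(\Theta;J\mid X)$, is cleanest via the chain-rule identity $\kl(M_{U,V}\,\|\,G_U\otimes G_V)=\kl(M_U\,\|\,G_U)+\kl(M_V\,\|\,G_V)+I_M(U;V)$, as in the paper, rather than via convexity. Your fallback version in fact shows that a single $I(\Theta;J\mid X)$ already covers both this term and the reference correction, so the factor $2$ is not needed.
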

We remark that the constants in the statement of \cref{lem:rank-info} are somewhat arbitrary. We invoke \cref{lem:rank-info} with $X=X^n$, $J=\{ i : B_i = 1\}$, and $A_J(X)$ equal to the matrix of selected rows of $X^n$; in the remaining proof we verify the lemma's conditions.

\paragraph{Few exact measurements.} We must prove $\Pp(|B| > d/2) \le \frac18$. Since the Bernoulli parameters for $B_i$ are $\alpha(T_i) = \min\{1,K \hels(p,p_{T_i})\}$, we naturally want to bound $\E_{\Theta,X} H_p(T)$. Using $\E_{\Theta,X}T^2=r^2$, $\E_{\Theta,X}|T|\le r$, and \cref{lem:H-translation} yields
\begin{equation*}
  \E_{\Theta,X} H_p(T)
  \le H_p(r)\E_{\Theta,X}\left(1+\frac{|T|}{r}\right)^2
  \le4H_p(r).
\end{equation*}
This directly implies
\begin{equation}\label{eq:Bmean-main}
  \E_{\Theta,X^n,B}|B|
  =n\E_{\Theta,X}\alpha(T)
  \le Kn\E_{\Theta,X}H_p(T)
  \le4KnH_p(r)
  \le 4Kcd.
\end{equation}
Our desired condition holds by Markov's inequality when we choose a sufficiently small $c$.

\paragraph{Limited information from $B$.}
We must bound $I(\Theta; B \mid X^n)$.
Condition on $X_i=x$.  Then, $T_i=x^\top\Theta$ is Gaussian with variance $\norm{x}_2^2r^2/d$, and $B_i$ depends on $\Theta$ only through $T_i$.  This means
\begin{equation}\label{eq:first-b-step}
  I(\Theta;B_i\mid X_i=x)
  =I(T_i;B_i\mid X_i=x)
\end{equation}
We will bound this quantity using the following result:

\begin{lemma}[Information in a Gaussian-shift label]\label{lem:label-information}
Let $\alpha:\R\to[0,1]$ be even, nondecreasing in $|t|$, and satisfy
\begin{equation}
  \alpha(sz)\le(1+|z|)^2\alpha(s)
  \qquad(s\ge0,z\in\R).
  \label{eq:alpha-grow}
\end{equation}
Let $Z\sim N(0,1)$ and, conditionally on $Z$, let
\[
  B\sim\operatorname{Bern}(\alpha(sZ)).
\]
Then,
\begin{equation*}
  I(Z;B)\le C\E_Z\alpha(sZ)
\end{equation*}
for a universal constant $C>0$.
\end{lemma}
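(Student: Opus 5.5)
The approach is to write $I(Z;B)$ as an averaged Bernoulli KL divergence and bound it pointwise using the growth condition \eqref{eq:alpha-grow}. Let $a(z)=\alpha(sz)$ and $\bar a=\E_Z a(Z)$. The marginal law of $B$ is $\operatorname{Bern}(\bar a)$, so
\[
I(Z;B)=\E_Z\Bigl[a(Z)\log\tfrac{a(Z)}{\bar a}+(1-a(Z))\log\tfrac{1-a(Z)}{1-\bar a}\Bigr].
\]
If $\bar a\ge 1/2$ there is nothing to do, since $I(Z;B)\le H(B)\le\log 2\le 2\log 2\cdot\bar a$. So assume from now on that $\bar a\le 1/2$.

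\emph{Second term.} Using $\log x\le x-1$, the second term is at most
\[
(1-a)\frac{\bar a-a}{1-\bar a}\le \frac{\bar a}{1-\bar a}\le 2\bar a
\]
pointwise. Its expectation is therefore $O(\bar a)$.

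\emph{First term: comparing $\bar a$ with $\alpha(s)$.} This is the main step. The idea is to compare both $a(z)$ and $\bar a$ to the single number $\alpha(s)$.
\begin{itemize}
\item Lower bound on $\bar a$. The function $\alpha$ is even and nondecreasing in $|t|$, so $a(z)\ge\alpha(s)$ whenever $|z|\ge 1$. Hence $\bar a\ge \alpha(s)\,\Pp(|Z|\ge 1)\ge c_0\,\alpha(s)$ with $c_0=\Pp(|Z|\ge1)>0.3$.
\item Upper bound on $a(z)$. By \eqref{eq:alpha-grow}, $a(z)\le(1+|z|)^2\alpha(s)$.
\end{itemize}
Combining the two bounds gives
\[
\frac{a(z)}{\bar a}\le C(1+|z|)^2 .
\]

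\emph{First term: finishing.} Where $a(z)\le\bar a$ the integrand $a\log(a/\bar a)$ is nonpositive, so we may drop those $z$. Where $a(z)>\bar a$ we use both bounds above:
\[
a(z)\log\frac{a(z)}{\bar a}\le (1+|z|)^2\alpha(s)\,\log\bigl(C(1+|z|)^2\bigr).
\]
Taking expectations over $Z\sim N(0,1)$, the factor $\E\bigl[(1+|Z|)^2\log(C(1+|Z|)^2)\bigr]$ is a finite absolute constant because Gaussian moments are finite. The first term is therefore $\lesssim\alpha(s)\le \bar a/c_0$.

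Summing the bounds on the two terms gives $I(Z;B)\le C\,\E_Z\alpha(sZ)$. The only delicate point is the lower bound $\bar a\gtrsim\alpha(s)$. That is exactly where monotonicity of $\alpha$ in $|t|$ is used, and it is what makes the logarithmic ratio uniformly controlled by a polynomial in $|z|$.
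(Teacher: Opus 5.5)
Your proof is correct and follows the paper's argument. Both write $I(Z;B)=\E_Z \kl(\operatorname{Bern}(a(Z)),\operatorname{Bern}(\bar a))$, dispose of $\bar a\ge 1/2$ via $H(B)\le\log 2$, and obtain the envelope $a(Z)\lesssim \bar a\,(1+|Z|)^2$ from $\bar a\ge \Pp(|Z|\ge 1)\alpha(s)$ plus the growth condition. The only minor difference is in bounding the Bernoulli KL: the paper uses the $\chi^2$ bound $2\E(a-\bar a)^2/\bar a$ and $\E(1+|Z|)^4<\infty$, while you bound the two KL terms separately (via $\log x\le x-1$ and the pointwise log-ratio bound) and need only $\E[(1+|Z|)^2\log(C(1+|Z|)^2)]<\infty$.
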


\begin{proof}
Let $A=\alpha(sZ)$ and $\mu=\E_Z A$.  Monotonicity implies
\[
  \mu\ge\Pp(|Z|\ge1)\alpha(s) \gtrsim \alpha(s),
\]
and the \eqref{eq:alpha-grow} therefore gives
\begin{equation}\label{eq:A-envelope}
  A\le C\mu(1+|Z|)^2.
\end{equation}
Since the marginal of $B$ is $\operatorname{Bern}(\mu)$, the mutual information is
\[
  I(Z;B)=\E_Z\kll{\operatorname{Bern}(A)}{\operatorname{Bern}(\mu)}.
\]
If $\mu \ge 1/2$, then
\[
  I(Z;B)\le H(B)\le\log2\le2\mu\log2.
\]
Otherwise, if $\mu < 1/2$, we use that KL is bounded by chi-square divergence, and \eqref{eq:A-envelope}:
\[
  I(Z;B)
  \le\frac{\E_Z(A-\mu)^2}{\mu} + \frac{\E_Z(A-\mu)^2}{1 - \mu}
  \le\frac{2\E_Z (A- \mu)^2}{\mu}
  \le C\mu\E_Z(1+|Z|)^4
  \lesssim \mu. \qedhere
\]
\end{proof}

We bound the desired quantity by
\begin{align*}
  I(\Theta;B\mid X^n)
  &= H(B \mid X^n) - H(B \mid \Theta, X^n) \le \sum_{i=1}^n H(B_i \mid X^n) - \sum_{i=1}^n H(B_i \mid \Theta , X^n)\\
  &= \sum_{i=1}^n \left( H(B_i \mid X_i) - H(B_i \mid X_i, \Theta)\right) = \sum_{i=1}^n I(\Theta;B_i\mid X_i)\\
  &=\sum_{i=1}^n I(T_i;B_i\mid X_i) && \rtag{via \eqref{eq:first-b-step}}\\
  &\le Cn\E_{\Theta,X}\alpha(T) && \rtag{via \cref{lem:label-information}}\\
  &\le CnH_p(r) \le Ccd,&& \numberthis \label{eq:Binfo-main}
\end{align*}
which satisfies our desired guarantees when $c$ is chosen sufficiently small.

\paragraph{Limited information from $W$.}
Conditional on $(\Theta,X^n,B,T_B)$, the $W_i$ where $B_i=0$ are independent with density $G_{T_i}$.  Compare them with a product reference having density $p$ on every $B_i=0$ coordinate and the $*$ symbol on every $B_i=1$ coordinate.  This yields the bound
\begin{align}
  I(\Theta;W\mid X^n,B,T_B)
  &\le
  \E_{\Theta,X^n,B}\sum_{i:B_i=0}\kll{G_{T_i}}{p}\notag\\
  &\le Cn\E_{\Theta,X}H_p(T)
  \le CnH_p(r) \le Ccd.
  \label{eq:Winfo-main}
\end{align}
\paragraph{Concluding the lower bound.} 
Together \eqref{eq:Bmean-main}, \eqref{eq:Binfo-main} \eqref{eq:Winfo-main} imply the required conditions when $c$ is chosen small enough. This implies that with probability at least $3/8$ any estimator incurs error
\[
c \omega_p(\eta) = c \omega_p \left(c \frac dn \right).
\]

\subsubsection{Proof of \cref{lem:rank-info}}\label{subsec:lb-lemma}
\textit{Proof intuition.} After $J,U_J$ are revealed, the only remaining uncertainty in $\Theta$ is in the orthogonal complement of $\operatorname{row}(A_j(x))$. In the extreme case where $J$ was chosen independently from $\Theta$, and there is no additional observation $W$, then $\Theta$ on this remaining subspace would be distributed like $N(0,\sigma^2 I_{d - |j|})$, and it would be impossible to estimate $\Theta$ with error much better than $\sigma \sqrt{d - |j|}$. However, in our setting $J$ is not chosen independently of $\Theta$, and there is an additional observation $W$, but we assume a bounded mutual information condition for $J$ and $W$ with $\Theta$. Let $P$ be the actual joint distribution over $(X,J,U_J,W,\Theta)$, and let $Q$ denote the same distribution over the observable variables, except the distribution of $\Theta$ on the unmeasured subspace is replaced with $N(0,\sigma^2 I_{d - |j|})$. We work to bound $\kll{P}{Q}$, and then since estimation is hard under $Q$, we will show hardness for estimation under $P$.
\begin{proof}
Fix a realization $(x,j)$.  Let
\[
  \mathcal R_{x,j}=\operatorname{row}(A_j(x)),
  \qquad
  \mathcal N_{x,j}=\mathcal R_{x,j}^{\perp},
\]
We split $\Theta$ into two components, $\Theta = U+V$, where
\[
  U=\Pi_{\mathcal R_{x,j}}\Theta,
  \qquad
  V=\Pi_{\mathcal N_{x,j}}\Theta.
\]
Given $(x,j)$, the measurement $U_j=A_j(x)\Theta$ determines $U$ (and vice versa). Thus, an estimator based on $(X,J,U_J,W)$ has exactly the same information as one based on $(X,J,U,W)$.
For a fixed $(x,j)$, we denote the related Gaussian distributions over the corresponding subspaces as
\[
  \Gamma_{\mathcal R_{x,j}}=N(0,\sigma^2 I_{\mathcal R_{x,j}}),
  \qquad
  \Gamma_{\mathcal N_{x,j}}=N(0,\sigma^2 I_{\mathcal N_{x,j}}).
\]

Let $P$ be the joint distribution over $(X,J,U,V,W)$. We define the distribution $Q$ by the process:
\begin{enumerate}[label=(\roman*),leftmargin=2.2em]
\item draw $(X,J,U,W)$ from its marginal under $P$;
\item conditional on $(X,J,U,W)=(x,j,u,w)$, draw $V\sim\Gamma_{\mathcal N_{x,j}}$ independently of $(u,w)$.
\end{enumerate}
Observe how $P$ and $Q$ have exactly the same marginal for everything observable to the estimator $(X,J,U,W)$; they differ only in the conditional distribution of the hidden component $V$. Our proof will leverage that estimation of $\Theta = U+V$ under $Q$ is hard because there is no observable information about $V$, yet $V$ is still distributed in a well-spread manner under $Q$. By showing $\kll{P}{Q}$ is bounded, this will imply hardness for estimating $\Theta$ under $P$. 

\paragraph{Bounding $\kll{P}{Q}$.} For ease of notation, let the observables be represented by $O = (X,J,U,W)$. Since $P,Q$ have the same distribution over $O$, it holds that
\[
\kll{P}{Q} = \E_{O \sim P} \kll{P_{V|O}}{\Gamma_{\mathcal N_{x,j}}}.
\]
It will be helpful to introduce an intermediary $P_{V | X,J}$ and use 
\[
  \log\frac{P_{V\mid X,J,U,W}}{\Gamma_{\mathcal N_{X,J}}}(v)
  =
  \log\frac{P_{V\mid X,J,U,W}}{P_{V\mid X,J}}(v)
  +
  \log\frac{P_{V\mid X,J}}{\Gamma_{\mathcal N_{X,J}}}(v).
\]
Taking expectation under $P$, this yields
\begin{equation}\label{eq:PQ-decomp}
  \kll{P}{Q}
  = I(V;U,W\mid X,J) + 
  \E_{(X,J)\sim P}
    \kll{P_{V\mid X,J}}{\Gamma_{\mathcal N_{X,J}}}.
\end{equation}
The rest of the proof will bound this quantity. It will be helpful to work with $I(\Theta ; J | X)$ since we assume this quantity is small in \eqref{eq:rank-info-assumption}. We use
\begin{equation}\label{eq:J-posterior-KL-expanded}
  I(\Theta;J\mid X)
  =
  \E_{(X,J)\sim P}
  \kll{P_{\Theta\mid X,J}}{N(0,\sigma^2I_d)}
  =\E_{(X,J)\sim P}
  \kll{
    P_{U,V\mid X,J}}{
    \Gamma_{\mathcal R_{X,J}}\otimes
    \Gamma_{\mathcal N_{X,J}}},
\end{equation}
where the last step used that for any fixed $(x,j)$, there is a bijection between $\Theta$ and $(U,V)$. We use the fact that for any joint distribution $M_{U,V}$ and product distribution $G_U \otimes G_V$, then it holds that
\[
\kll{M_{U,V}}{G_U \otimes G_V}
= \kll{M_U}{G_U} + \kll{M_V}{G_V} + I_M(U; V),
\]
where $I_M(U; V)$ denotes the mutual information between $U,V$ when they are distributed according to $M_{U,V}$. Invoking this with \eqref{eq:J-posterior-KL-expanded} yields
\[
I(\Theta; J | X)
= \E_{(X,J)\sim P} \kll{P_{U | X,J}}{\Gamma_{\mathcal R_{X,J}}} + \E_{(X,J)\sim P} \kll{P_{V | X,J}}{\Gamma_{\mathcal N_{X,J}}} + I(U ; V | X,J),
\]
which by non-negativity of all terms implies
\begin{align*}
  \E_{(X,J)\sim P}
  \kll{P_{V\mid X,J}}{\Gamma_{\mathcal N_{X,J}}}
  &\le I(\Theta;J\mid X),\\
  I(U;V\mid X,J)
  &\le I(\Theta;J\mid X).
\end{align*}
Revisiting \eqref{eq:PQ-decomp} with these implications gives
\begin{align*}
    \kll{P}{Q}
  &= I(U; V |  X,J) + I(V; W |  U, X,J) + 
  \E_{(X,J)\sim P}
    \kll{P_{V\mid X,J}}{\Gamma_{\mathcal N_{X,J}}} \\
& \le 2I(\Theta;J\mid X) + I(V; W |  U, X,J).
\end{align*}
All that remains is to bound $I(V; W | U,X,J)$. Since there is a bijection between $U$ and $U_J$ given $(X,J)$, and a bijection between $V$ and $\Theta$ given $(U,X,J)$, this implies
\[
I(V; W | U,X,J) = I(\Theta ; W | X,J,U_J),
\]
and hence our final bound via assumption \eqref{eq:rank-info-assumption},
\begin{equation}
\kll{P}{Q} \le 2I(\Theta;J\mid X) + I(\Theta ; W | X,J,U_J) \le d/2.\label{eq:PQ-final}
\end{equation}

\paragraph{Showing hardness of estimation under $Q$.} 
Set a target accuracy parameter
\[
  \rho= e^{-8}\sigma\sqrt{d/2}.
\]
Consider an event corresponding to when $|J|$ is not too large and the estimator $\wh\Theta$ is close to $\theta$,
\[
  E=\bigl\{|J|\le d/2,\ \norm{\wh\Theta-\Theta}_2\le\rho\bigr\}.
\]
Fix the observed values $(X,J,U,W)=(x,j,u,w)$ with $|j|\le d/2$, and let $q=\dim(\mathcal N_{x,j}) \ge d/2$.
Under $Q$, conditional on the observed variables, $V$ is independent of the estimator and is distributed according to $N(0,\sigma^2I_q)$ on $\mathcal N_{x,j}$.
Observe how estimating $\Theta$ well requires estimating $V$ well, since
\[
  \norm{\wh\Theta-\Theta}_2\le\rho
  \quad\Longrightarrow\quad
  \norm{V-\Pi_{\mathcal N_{x,j}}\wh\Theta}_2\le\rho.
\]
However, for any estimate $\wh\Theta$, since $V$ is independent of the observations, we will be able to show $v_0 = \Pi_{\mathcal N_{x,j}}\wh\Theta$ is often far from $V$. This follows from how the Gaussian density is at most $(2\pi\sigma^2)^{-q/2}$ everywhere, so
\begin{align*}
  Q\!\left(\norm{V-v_0}_2\le\rho\mid x,j,u,w\right)
  &\le
  (2\pi\sigma^2)^{-q/2}\,\operatorname{vol}(B_2^q(\rho))\\
  &=
  \frac{(\rho/(\sigma\sqrt2))^q}{\Gamma(q/2+1)}\\
  &\le
  \left(\frac{e\rho^2}{\sigma^2q}\right)^{q/2}\\
  &\le
  e^{-15 (q/2)}.
\end{align*}
Here we used $\Gamma(q/2+1)\ge(q/(2e))^{q/2}$ and $\rho^2=e^{-16}\sigma^2(d/2)\le e^{-16}\sigma^2q$. Using $q \ge d/2$ yields
\begin{equation}\label{eq:Q-small-ball-expanded}
  Q(E)\le e^{-15 (d/4)}.
\end{equation}

\paragraph{Transferring hardness from $Q$ to $P$.} For ease of notation, let $\kll{a}{b}$ denote the $\kl$ divergence between $\operatorname{Bern}(a),\operatorname{Bern}(b)$ when $a,b \in [0,1]$. Let $p_E = P(E)$ and $q_E = Q(E)$. Then, by data processing,
\begin{align*}
\kll{P}{Q} &\ge \kll{p_E}{q_E} =  p_E \log \frac{1}{q_E} + (1-p_E) \log \frac{1}{1-q_E} - \left(p_E \log \frac{1}{p_E} + (1-p_E) \log \frac{1}{1-p_E}\right) \\
&\ge p_E \log \frac{1}{q_E} - \log 2.
\end{align*}
Combining this with \eqref{eq:PQ-final} and \eqref{eq:Q-small-ball-expanded} yields
\[
P(E) \le \frac{\kll{P}{Q} + \log 2}{\log \frac{1}{q_E}} \le \frac{d/2 + \log(2)}{15(d/4)} \le \frac{1}{2}.
\]

Finally, if the estimator has error at most $\rho$, then either $E$ occurs or $|J|>d/2$.  Using \eqref{eq:rank-tail},
\begin{align*}
   & \Pp\!\left(\norm{\wh\Theta-\Theta}_2\le\rho\right)
  \le P(E)+\Pp(|J|>d/2)
  \le\frac12+\frac18=\frac58 \\
  & \implies \Pp\!\left(
    \norm{\wh\Theta-\Theta}_2
    \ge e^{-8}\sigma\sqrt{d/2}
  \right)
  \ge\frac38.\quad\qedhere
\end{align*}
\end{proof}

%% file: planted.tex
\section{Planted linear regression}
In this section, we present our results for planted linear regression. In \cref{subsec:exact-positive}, we demonstrate that our heteroskedastic regression estimator yields exact recovery for planted linear regression when $m \gg d^{3/4}n^{1/4}$. In \cref{sec:sq}, we present a Statistical Query (SQ) lower bound nearly matching this threshold. Finally, in \cref{subsec:m-barrier}, we show that no convex $M$-estimator exactly recovers $\beta$ when $m \ll \sqrt{nd}$. Combined, these results inform the computational-statistical landscape in \cref{fig:transition}.

\subsection{Exact recovery via heteroskedastic linear regression}\label{subsec:exact-positive}

We briefly sketch how planted linear regression follows from heteroskedastic regression:

\begin{corollary}\label{cor:planted-positive}
Consider the planted linear regression setting with  $\norm{\beta}_2 \le B$, $n \ge d \ge 1$, $B \ge 1$, and $\delta \in (0,1/2]$. There exists a constant $C \ge 1$ such that when
\begin{equation}\label{eq:planted-threshold}
        m\ge C \log^5 \left( \frac{ndB}{\delta}\right) d^{3/4}n^{1/4},
\end{equation}
then a polynomial-time estimator recovers $\beta$ exactly (up to the precision of solving a linear system) with probability at least $1-\delta$.
\end{corollary}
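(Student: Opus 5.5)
The plan has two stages. First, run $\operatorname{RB-Desc}$ to get a coarse estimate. Then refine it by selecting the noiseless samples, which lie exactly on the true hyperplane, and solving a linear system on them.

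\textbf{Stage 1 (coarse estimate).} Planted linear regression is a special case of \cref{thm:main-het}. The covariates $X_i\sim N(0,I_d)$ are $1$-well-conditioned and $O(1)$ sub-Gaussian. Each error is either $\delta_0$ or $N(0,1)$, and both are symmetric unimodal; the point mass can be treated as the limit $T=0$ in \cref{def:unimodal}, which the analysis of \cref{lem:pointwise,lem:scalar-drift} already allows. In fact all $n$ errors have variance at most $1$, though we only need the $m$ noiseless ones to count. Running $\operatorname{RB-Desc}$ with failure probability $\delta/2$ under \eqref{eq:planted-threshold} therefore gives $\wh\beta_0$ with
\[
\norm{\wh\beta_0-\beta}_2\le \rho:=C\log^5(ndB/\delta)\sqrt d\,(n/m^4)^{1/6},
\]
and \eqref{eq:planted-threshold} makes $\rho\lesssim 1$. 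If needed, enlarge $C$ so that $\rho\le 1/\polylog$.

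\textbf{Stage 2 (exact recovery).} Compute the residuals $r_i(\wh\beta_0)=\eps_i+X_i^\top(\beta-\wh\beta_0)$.
\begin{itemize}
\item For a noiseless sample, $|r_i|\le \norm{X_i}_2\rho\le \sqrt{Ld}\,\rho$, using \eqref{eq:max-X}.
\item For a noisy sample, $r_i$ equals $N(0,1)$ plus an independent shift, so its density is at most $1/\sqrt{2\pi}$. Hence $\Pp(|r_i|\le w)\le w$.
\end{itemize}
Set $w=\sqrt{Ld}\,\rho$. All $m$ clean samples fall in the window, and roughly $nw$ noisy ones do too.

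With this window, two selection rules are possible:
\begin{enumerate}
\item Keep the samples with $|r_i|\le w$, then look for a hyperplane through $d+1$ of them.
\item Better: run a second, localized stage. Restart $\operatorname{RB-Desc}$, or iterate the descent with smaller stage radii, using the stronger effective signal in the shrunken window. The aim is to drive $\rho$ below the typical spacing $1/(n\sqrt{Ld})$ of noisy residuals.
\end{enumerate}
Once $w\ll 1/n$ up to log factors, a union bound shows that with probability $1-\delta/2$ no noisy residual lies in the window. The selected set is then exactly the clean samples. Since $m\ge d$, those $X_i$ span $\R^d$ almost surely, and solving the linear system $X_i^\top b=Y_i$ over the selected set returns $\beta$ exactly.

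The \textbf{main obstacle} is Stage 2. The error $\rho$ from \cref{thm:main-het} is only $o(1)$, not $o(1/n)$, so the window still contains about $n\rho\sqrt d$ noisy points. What I would try first is a contraction argument. While the current estimate $b$ has $r=\norm{b-\beta}$ small, rerun the analysis of \cref{lem:strong-score} at the finest scale $w\asymp r\sqrt{L}$, not $w_0$; this needs a grid extended down to $\eta_{\min}$-type scales, which is a harmless change. In that regime the signal $\ip{h}{\overline S_w(b)}$ comes from the $m$ clean points contributing $\asymp m r$, because $\sgn(X_i^\top h)$ is fully aligned. The noise is $\sqrt{d\,A_w}$ with $A_w\approx m+n r\sqrt{Ld}$. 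Comparing the two,
\[
m r\gg L^3\sqrt{d(m+nr\sqrt{d})},
\]
which holds for all $r$ with $r\gtrsim L^6 d^2 n/m^2$ and when $m\gg L^6 d$. This lets the descent continue geometrically down to that radius. Combined with the threshold, this should bring the error far enough down that thresholding isolates the clean points. If that falls short, the fallback is to use the identifiability fact directly: among the windowed points, the clean ones form $m\ge d+1$ points exactly on one hyperplane. Then, for example, iteratively reweighted or trimmed least squares on the window converges to $\beta$, and I would verify this by checking that any hyperplane fit through a majority of the windowed points must be $\beta$.
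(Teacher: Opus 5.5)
\textbf{Stage 2 has a genuine gap.} As you say, Stage 2 is where all the difficulty lies, and none of your routes brings the error from $\rho$ down to the $\ll 1/(n\sqrt d)$ scale that thresholding needs. (Also, $\rho$ is not $\lesssim 1$. Under \eqref{eq:planted-threshold} you only get $\rho\lesssim C^{-2/3}\log^{5/3}(ndB/\delta)$, and enlarging the constant $C$ cannot beat a growing logarithm.)

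Your contraction computation drops a factor of $r$. The quantity to compare with $\ip{h}{\overline S_w(b)}\asymp mr$ is $r\norm{S_w(b)-\overline S_w(b)}_2\approx rL^2\sqrt{dA_w}$, not $\sqrt{dA_w}$; this is what matters once the descent direction is normalized, cf.\ \cref{lem:aggregate-margin}. As written, your inequality gives a \emph{lower} bound on $r$ that is polynomially large near $m\approx d^{3/4}n^{1/4}$, so it would not even reach constant error. With the factor restored, the fine-scale condition becomes $m\gg\sqrt{d(m+nr)}$ up to logarithms, which does get easier as $r\to 0$. You would then still need to redo \cref{lem:strong-score} with clean-sample signal $m$ in place of $mr\min\{r,1\}$, using a grid with no lower cutoff. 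You would also need the $w=\infty$ scale to bridge from $\rho$ down to $r\approx m^2/(dn)$; these radii meet only if $m\gtrsim d^{9/16}n^{7/16}$. None of this is carried out.

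The fallbacks also fail:
\begin{itemize}
\item The window holds about $n\sqrt{Ld}\,\rho$ noisy points against $m$ clean ones. Near the threshold that is larger by roughly $n^{3/4}d^{-1/4}$ up to logarithms, so the clean points are far from a majority.
\item Convergence of reweighted or trimmed least squares is only asserted, not shown.
\item Searching the window for $d+1$ coplanar points is again a planted-regression instance, with no polynomial-time algorithm given.
\item A noisy residual is not ``$N(0,1)$ plus an independent shift'' when $\wh\beta_0$ was fit on the same samples, so your union bound needs sample splitting.
\end{itemize}

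\textbf{The missing idea is rescaling.} The noiseless samples are invariant under rescaling the labels, while \cref{thm:main-het} depends on the norm bound only through $\log^5(ndB/\delta)$.
\begin{itemize}
\item The paper splits the data in half.
\item On the first half it runs $\operatorname{RB-Desc}$ on $(X_i,Ry_i)$ with $R=n^{10}\log^5(B/\delta)/\delta$. This is a model with parameter $R\beta$, norm bound $RB$, and errors $R\eps_i$.
\item The at least $m/4$ clean samples still have variance $0\le 1$, and the theorem allows the others arbitrary variance. Since $\log(ndRB/\delta)\lesssim\log(ndB/\delta)$, the error for $R\beta$ is $\lesssim\log^5(ndB/\delta)\sqrt d\,(n/m^4)^{1/6}$. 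Dividing by $R$ gives error at most $\delta/n^9$ for $\beta$.
\item On the second half, the estimate is independent of the data. Each noisy residual lands in a window of width about $\delta/n^9$ with probability $O(\delta/n^9)$, and a union bound excludes all of them.
\item A constant fraction of the clean residuals land inside the window. Solving the linear system on $d$ of them returns $\beta$ exactly.
\end{itemize}
In effect, the rescaling gives you your ``grid extended to arbitrarily fine scales'' without any new analysis.
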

\begin{proof}
Divide the samples into two halves of size $n/2$. Using a Chernoff bound, each half will have at least $m/4$ noiseless samples, with probability at least $1-\delta/4$. 

Consider a scaling factor $R := n^{10} \log^5\left( \frac{B}{\delta}\right)/\delta$.
Then, consider a rescaled $y_i' =  Ry_i$ for the first half of samples. If we run $\operatorname{RB-Desc}$ on $(X_i,y_i')$, this corresponds to a model $y_i' = X_i^\top \beta' + \eps_i'$, where $\beta' = R \beta$, $\eps_i' = R \eps_i$, and the norm bound for $\beta'$ is now $RB$. 

We now aim to use \cref{thm:main-het}. Observe that the required condition on $m$ holds, as long as we choose a large enough constant in \eqref{eq:planted-threshold}.

Let $\wh\beta'$ denote the estimate given by $\operatorname{RB-Desc}$ on $(X_i,y_i')$. Since the rescaled $\eps_i'$ still have at least $m/4$ noiseless samples, we may conclude by \cref{thm:main-het} (with $\delta' = \delta/4$) that
\[
\norm{\wh\beta' - \beta'}_2 \lesssim \log^5 \left( \frac{ndB'}{\delta}\right) \cdot \sqrt{d} \left( \frac{n}{m^4}\right)^{1/6} \lesssim \log^5 \left( \frac{ndB}{\delta}\right) \cdot \sqrt{d} \left( \frac{n}{m^4}\right)^{1/6}.
\]

We now convert $\wh\beta'$ into an estimate of $\beta$, by defining $\wh\beta = \wh\beta' / R$. This yields
\[
\norm{\wh\beta - \beta}_2 \lesssim \frac{\log^5 \left( \frac{ndB}{\delta}\right) \cdot \sqrt{d} \left( \frac{n}{m^4}\right)^{1/6}}{R} \le \frac{\delta}{n^9},
\]
for a large enough constant in \eqref{eq:planted-threshold}.

With the second half of the samples, consider $\tilde y_i = y_i - X_i^\top \wh\beta$. By a Chernoff bound, at least $d$ of the noiseless samples in this half will have $\abs{\tilde y_i } \le \frac{\delta}{n^9}$, with probability at least $1-\delta/4$. Further, by a union bound, none of the noisy samples in this half will have $\abs{\tilde y_i } \le \frac{\delta}{n^9}$, with probability at least $1-\delta/4$. Thus, we may take any $d$ samples with $\abs{\tilde y_i } \le \frac{\delta}{n^9}$, and output the line that exactly fits all of them (this line is unique with probability 1). 
\end{proof}

The simpler heteroskedastic linear regression algorithm for standard Gaussian covariates (see \cref{sec:simple-gaussian}) would have also sufficed. We remark that the dependence on $B$ in this theorem could be removed by first running OLS to localize within a ball of radius on the order of $\sqrt{d/n}$.

Using the same rescaling argument as \cref{cor:planted-positive} implies that any polynomial-time Subset-of-Signals heteroskedastic regression estimator with $\poly(n,d,1/\delta)$ error when $m \ll d^{3/4}n^{1/4}$, would yield a planted linear regression estimator in the same regime.

\subsection{Statistical Query lower bound}\label{sec:sq}

Let $d \le m \le n$ be positive integers, and $0<\eta \ll \poly(d/n)$ arbitrarily small (say, superexponentially small in $n$).
For each $\beta \in \R^d$, we define a distribution $\calD_\beta$ over $\R^d \times \R$ from which one samples $(x,y)$ as follows: 
\begin{align*}
x &\sim \mathcal{N}(0,\Id_d), \\
\eps &\sim \tfrac{m}{n}\cdot\mathcal{N}(0,\eta^2) + (1-\tfrac{m}{n}) \cdot \mathcal{N}(0,1),\\
y &= \langle \beta,x \rangle + \eps.
\end{align*}

Suppose we observe a sample from $\calD_\beta^{\otimes n}$ for some $\beta$ with $\|\beta\| > 1/2$.
The OLS estimator produces an estimate $\hat\beta_{\OLS}$ with $\|\beta-\hat\beta_{\OLS}\|^2 = (1+o(1))\frac{d}{n}$ with high probability.
We will consider the ability of \emph{statistical query (SQ)} algorithms with access to the $\VSTAT$ oracle to improve over the OLS estimator.

\begin{definition}
    Given a function $\phi:\R^k \to [0,1]$, the $\VSTAT(n)$ oracle to a distribution $\calD$ over $\R^k$ returns $\Ex_{\calD} \phi + \tau$ for an arbitrary $\tau \in \R$ satisfying $|\tau| \le \max\left(\frac{1}{n},\sqrt{\frac{(1-\Ex_{\calD}\phi)\Ex_{\calD} \phi }{n}}\right)$.
\end{definition}

\begin{theorem}\label{thm:SQ}
    Let $n,d,m$ be non-negative integers satisfying $\log n \ll d \ll m \ll n$.
    Suppose $\beta \sim \mathcal{N}(0,\Id_d)$ is unknown.
    When $m \ll (nd^3)^{1/4}$, any estimate $\hat\beta$ produced by $1 < q \le \exp(c\cdot\min(d, \frac{(nd^3)^{1/6}}{m^{2/3}}))$ or fewer $\VSTAT(n)$ queries to $\calD_\beta$ will have error $\|\beta-\hat\beta\|^2 = \Omega(\frac{d}{n \log^2 q})$ with high probability.
\end{theorem}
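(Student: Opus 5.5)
The plan is to reduce to a hidden-direction (non-Gaussian component analysis style) SQ problem around a coarse estimate, and then to bound pairwise correlations of the resulting family, following the statistical-dimension framework of \cite{FGRVX} as used in \cite{diakonikolas2026information}.

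\textbf{Step 1: reduction to a local hidden-direction family.} Since $\beta\sim N(0,\Id_d)$, I expect the SQ algorithm's information about $\beta$ to be at most OLS-level in most directions. So I would give the algorithm a coarse center $\beta_0$ for free and consider the local family $\beta=\beta_0+\rho u$. Here $u$ ranges over a set $\mathcal U$ of $\exp(\Omega(d))$ unit vectors with pairwise $|\ip{u}{u'}|\le \theta\approx C\sqrt{\log q/d}$, and $\rho^2\asymp d/(n\log^2 q)$. If an estimator with error $\ll \rho$ existed, it would identify $u\in\mathcal U$. In residual coordinates $r=y-x^\top\beta_0=\rho\, u^\top x+\eps$, the law $\calD_u$ factors as a standard Gaussian on $u^\perp$ times a two-dimensional law $A$ of $(u^\top x,r)$. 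The reference law $\calD_0$ is $x\sim N(0,\Id_d)$ independent of $r$, where $r$ has the marginal of $\rho Z+\eps$.

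\textbf{Step 2: Hermite expansion of pairwise correlations.} Conditioning on $r$, I would write $\chi_{\calD_0}(\calD_u,\calD_{u'})=\sum_{k\ge1}\ip{u}{u'}^k\,\E_{r}\big[a_k(r)^2\big]$, where $a_k(r)=\E_{A_r}[h_k]$ is the $k$-th normalized Hermite coefficient of the conditional law of $u^\top x$ given $r$. This law $A_r$ is a mixture of two components, weighted by their posterior probabilities.
\begin{itemize}
\item The noisy component is $N\big(\tfrac{\rho r}{1+\rho^2},\tfrac{1}{1+\rho^2}\big)$. Its coefficients are $O(\rho^k)$, so it contributes roughly $\sum_k \theta^k\rho^{2k}$, dominated by the $k=1$ (OLS-like) term $\theta\rho^2$.
\item The planted component is essentially the point mass at $r/\rho$, with posterior weight $w(r)\approx$ (its likelihood ratio), which is large only for $|r|\lesssim\eta$. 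Its contribution is about $\E_r[w(r)^2 h_k(r/\rho)^2]$. Since $r/\rho$ is approximately $N(0,1)$ on the planted component, this should evaluate to roughly $\tfrac{m}{n}\cdot\tfrac{1}{\rho}\cdot(\text{poly}(k))$ times an $\eta$-independent factor once $\eta\to0$. This is the place where the added noise $\eta>0$ keeps everything finite.
\end{itemize}

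\textbf{Step 3: statistical dimension and parameter balancing.} Summing over $k$, the planted terms give a correlation of order $\frac{m}{n\rho}\,\theta^{k}$ at degree $k$. Only a few of the low-degree terms exceed the $\VSTAT(n)$ tolerance $\approx 1/n$. I would therefore choose the degree cutoff and $\theta$ so that the total pairwise correlation is $\lesssim 1/n$ for all but a vanishing fraction of pairs. The high-degree tail is killed by $\theta^k$, and the low-degree part is controlled by $\rho^2\theta$ and $m/(n\rho)\cdot\theta^2$. The standard SQ lemma then says that $q\le\exp(c\min\{d,\cdot\})$ queries cannot identify $u$. Plugging in $\rho^2\approx d/n$ and $\theta^2\approx\log q/d$, I expect the crossover between the planted degree-$2$/degree-$3$ contributions and the tolerance to occur exactly at $m\approx(nd^3)^{1/4}$. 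The bound on $q$ should come out in the form $\exp((nd^3)^{1/6}/m^{2/3})$, since $\log q$ enters through $\theta$ and the error scale is $d/(n\log^2 q)$.

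\textbf{Main obstacle.} The main difficulty is Step 2's planted contribution. The conditional law is nearly a Dirac mass, so the chi-square is dominated by the rare small-$|r|$ event. I need sharp, $k$-uniform bounds on $\E_r[w(r)^2h_k(r/\rho)^2]$ that are independent of $\eta$, and that do not lose the exact powers of $m/n$ and $\rho$ which set the $(nd^3)^{1/4}$ threshold. I would first try computing the mixture's Hermite coefficients exactly, via the Gaussian generating function $e^{tz-t^2/2}$ evaluated at both component means, then bound the cross terms by Cauchy--Schwarz.
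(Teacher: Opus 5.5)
There is a genuine gap, and it is in Step 3. Your family matches no moments, so the degree-$1$ Hermite term survives. Controlling it through a worst-case (or, as in \cite{FGRVX}, absolute-value) pairwise bound $|\ip{u}{u'}|\le\theta\approx\sqrt{\log q/d}$ loses a factor of roughly $\sqrt d$.

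\begin{itemize}
\item \textbf{Noisy component.} With your radius $\rho^2\asymp d/(n\log^2 q)$, the noisy component contributes $\theta\rho^2\approx\sqrt d/(n\log^{3/2}q)$. This exceeds the $\VSTAT(n)$ tolerance $1/n$ unless $\log q\gtrsim d^{1/3}$.
\item \textbf{Planted component.} The per-degree size is $\Ex_r[w(r)^2h_k(r/\rho)^2]\approx\frac{m^2}{n^2\rho}\int\phi(z)^2h_k(z)^2\,dz$, with $\phi$ the standard normal density. This is of order $(m/n)^2/\rho$, not $(m/n)/\rho$: you dropped one factor of $w(r)\lesssim m/(n\rho)$. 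Also, this weight lives on the scale $|r|\lesssim\rho$, not $|r|\lesssim\eta$. The planted degree-$1$ term $\theta(m/n)^2/\rho\le 1/n$ then forces $m\lesssim\sqrt d\,n^{1/4}$ up to logarithms.
\end{itemize}

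Near the claimed threshold the theorem only allows $\log q=O\bigl(((nd^3)^{1/4}/m)^{2/3}\bigr)$, which is $\ll d^{1/3}$. So this failure occurs exactly in the regime that matters. With your stated prefactor, even the degree-$2$ balance gives $m\lesssim d^{3/2}/\sqrt n$ up to logarithms. So the $(nd^3)^{1/4}$ crossover does not follow from your own computation.

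The missing idea is a \emph{signed} statistical dimension. The paper bounds $\chi^2\bigl(\Ex_{u\sim\pi_A}\calD_u\,\|\,Q\bigr)$ for the continuous uniform prior $\pi$ on $\mathbb{S}^{d-1}$ and every event with $\pi(A)\ge\delta/q$. This requires re-proving Theorem~2.7 of \cite{FGRVX} without absolute values, which the paper does via the sets $\Theta_i^{+},\Theta_i^{-}$. The degree-$1$ term is then multiplied by $\Ex_{u,u'\sim\pi_A}\ip{u}{u'}=\norm{\Ex_{\pi_A}u}_2^2=O(\log(1/\pi(A))/d)$ rather than by $\theta$. The paper gets this from the optimal-transport coupling of \cite{LMSY}. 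The degree-$1$ term therefore behaves like a degree-$2$ term. This yields the bound $C\log^2\pi(A)\,\bigl(\rho^2/d+m^2/(\rho dn^2)\bigr)$ (the paper's $a$ is your $\rho$), and hence the threshold.

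Second, a discrete packing cannot handle arbitrarily small $\eta$. Given $r$, the planted conditional law of $u^\top x$ is a Gaussian of width $\eta/\rho$. So the self-correlation is $\chi^2(\calD_u\|\calD_0)\asymp m^2/(n^2\eta)$. Your series $\sum_k\ip{u}{u'}^k\Ex_r[a_k(r)^2]$ is essentially $\eta$-free only when $|\ip{u}{u'}|$ is bounded away from $1$; on the diagonal it diverges as $\eta\to0$. The pairwise statistical-dimension lemma then gives only about $Nn\eta/m^2$ with $N\le e^{O(d)}$. That is vacuous for, say, superexponentially small $\eta$, which is the regime the statement is meant to cover.

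The paper sidesteps this by averaging over the continuous prior before bounding anything. After Gaussian integration in $(x,z)$, the kernel satisfies $M_\eta(\ip{u}{u'})\le\frac{2}{\rho}(1-\ip{u}{u'})^{-1/2}$. This is integrable against the law of $\ip{u}{u'}$. The near-diagonal region $|\ip{u}{u'}|\ge 1/2$ costs at most $e^{-c'd}/\pi(A)^2$, which is where the restriction $\pi(A)\ge e^{-cd}$, and hence $\log q\le cd$, comes from.

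Your Step~1 is fine in spirit. The paper makes it exact by writing $\beta=\tilde\beta+\alpha$ and revealing $\tilde\beta$ and $\norm{\alpha}_2$, so the hidden direction is exactly uniform on the sphere. It then concludes directly: an algorithm that never rejects $Q$ outputs a guess independent of that direction, which incurs squared error $\gtrsim\norm{\alpha}_2^2$ with high probability.
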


To prove the theorem, we will invoke the framework of \cite{FGRVX}, wherein SQ lower bounds are a consequence of bounds on a quantity called the \emph{statistical dimension}.
Here we will use slightly different definition:
\begin{definition}
    Let $\calP = \{P_\theta\}_{\theta \in \Theta}$ be a family of probability distributions and $\pi$ a distribution over $\Theta$. 
    Let $Q$ be a reference distribution with the same domain.
    The \emph{statistical dimension} of $\calP$ under $\pi$ relative to $Q$ with average correlation $\xi$ is the largest value $q$ such that for any event $E$ with $\Pr_\pi[E] \ge \frac{1}{q}$, 
    \[
    \chi^2\left( \Ex_{\theta \sim \pi \mid E} P_\theta \,\| \, Q \right) \le \xi.
    \]
\end{definition}

In \cite{FGRVX}, a slightly different notion of statistical dimension is used, in which $\chi^2(\E_{\theta} P_\theta\|Q) = \E_{\theta,\theta' \sim \pi | E} \langle \frac{P_\theta}{Q},\frac{P_{\theta'}}{Q}\rangle_Q -1$ is replaced with the potentially larger $\E_{\theta,\theta' \sim \pi|E}\left| \langle \frac{P_\theta}{Q},\frac{P_{\theta'}}{Q}\rangle_Q -1\right|$.
Their argument can easily be adapted to accommodate our definition, at a loss of constant factors.

We'll say that an SQ algorithm ``fails to reject'' $Q$ if for every query $\phi$ that it makes, the answer $\E_Q \phi$ is valid for the SQ oracle.

\begin{theorem}[Mild adaptation of Theorem 2.7 in \cite{FGRVX}]\label{thm:SQ-chisq}
Let $\delta > 0$.
    If the statistical dimension of $\calP$ under $\pi$ relative to $Q$ with average correlation $\xi$ is at least $q/\delta$, then any adaptive SQ algorithm making $q$ or fewer queries to $\VSTAT(1/9\xi)$ fails to reject $Q$ with probability at least $1-2\delta$ over the choice of $\theta \sim \pi$.
\end{theorem}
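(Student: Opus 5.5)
The plan is to follow the original argument of \cite{FGRVX}: run the algorithm against $Q$, and show that for each query the set of parameters $\theta$ for which $\E_Q\phi$ is an invalid answer has small $\pi$-mass. If the algorithm is randomized, we fix its internal randomness first and average at the end. While every answer is $\E_Q\phi$, the algorithm's transcript is a deterministic sequence of at most $q$ queries $\phi_1,\dots,\phi_q$, and this sequence does not depend on $\theta$. The algorithm fails to reject $Q$ exactly when, for every $t$, $\E_Q\phi_t$ is a valid $\VSTAT(1/9\xi)$ answer for $P_\theta$. So it suffices to show, for a single fixed $\phi:\R^k\to[0,1]$, that
\[
\Pr_{\theta\sim\pi}\left[\E_Q\phi \text{ is not a valid answer for } P_\theta\right] < \frac{2\delta}{q}.
\]
A union bound over the $q$ queries then gives failure probability at most $2\delta$.

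Fix $\phi$ and write $p=\E_Q\phi$ and $x_\theta=\E_{P_\theta}\phi$. The answer $p$ is invalid for $P_\theta$ exactly when $|x_\theta-p|>\max\bigl(9\xi,\,3\sqrt{\xi x_\theta(1-x_\theta)}\bigr)$. We split the invalid set into $E_+=\{x_\theta>p+\cdots\}$ and $E_-=\{x_\theta<p-\cdots\}$ and show each has mass $<\delta/q$. Take $E_+$, and suppose for contradiction that $\Pr_\pi[E_+]\ge\delta/q$. Since the statistical dimension is at least $q/\delta$, the definition gives $\chi^2(\bar P\,\|\,Q)\le\xi$ for the mixture $\bar P=\E_{\theta\sim\pi\mid E_+}P_\theta$. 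By Cauchy--Schwarz under $Q$,
\[
\E_{\theta\sim\pi\mid E_+}x_\theta-p=\Big\langle \phi-p,\ \tfrac{\bar P}{Q}-1\Big\rangle_Q\le\sqrt{\Var_Q(\phi)\,\chi^2(\bar P\|Q)}\le\sqrt{\xi\,p(1-p)},
\]
where $\Var_Q\phi\le p(1-p)$ because $\phi\in[0,1]$.

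The step that needs care, and the only real deviation from the proof in \cite{FGRVX}, is that the $\VSTAT$ tolerance is measured at $x_\theta$, whereas the averaged bound above is measured at $p$. We resolve this with a pointwise claim: every $\theta\in E_+$ satisfies $\Delta:=x_\theta-p>\sqrt{\xi p(1-p)}$. Averaging this strict inequality over $\pi\mid E_+$ then contradicts the display.

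To prove the claim, suppose instead that $\Delta\le\sqrt{\xi p(1-p)}$. The function $t\mapsto t(1-t)$ is $1$-Lipschitz on $[0,1]$, so $p(1-p)\le x_\theta(1-x_\theta)+\Delta$. Hence
\[
\Delta^2\le\xi x_\theta(1-x_\theta)+\xi\Delta<\tfrac{\Delta^2}{9}+\xi\Delta,
\]
where the last step uses $\Delta>3\sqrt{\xi x_\theta(1-x_\theta)}$. This forces $\Delta<\tfrac98\xi$, which contradicts $\Delta>9\xi$.

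The set $E_-$ is handled identically, either by applying the same argument to $1-\phi$ or by repeating it with the signs reversed. This gives total bad mass $<2\delta/q$ per query, and the union bound finishes the proof. The constant $9$ in $\VSTAT(1/9\xi)$ is exactly what makes the two-regime case analysis close. The replacement of the average-absolute-correlation quantity of \cite{FGRVX} by $\chi^2$ of the conditional mixture costs nothing here, because we only ever use Cauchy--Schwarz against that mixture.
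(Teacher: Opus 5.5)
Your proof is correct and is essentially the paper's argument: fix the query sequence produced under $Q$-answers, split each query's bad set by the sign of $\mathbb{E}_{P_\theta}\phi_i-\mathbb{E}_Q\phi_i$, bound the conditional mixture's mean shift by Cauchy--Schwarz against $\chi^2\le\xi$, and use the $1$-Lipschitzness of $t\mapsto t(1-t)$ to move the tolerance from $\mathbb{E}_{P_\theta}\phi$ to $\mathbb{E}_Q\phi$. The differences are cosmetic: the paper phrases the union bound as a pigeonhole contradiction, and it obtains the pointwise constant $3/2$ (via $\sqrt{A+B}\le\sqrt{A}+\sqrt{B}$ and AM--GM) where you obtain $1$; either constant suffices.
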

We provide a proof of this adaptation; the main difference is that we work with the chi-squared divergence rather than the signed version, for which we introduce the sets $\Theta_i^+$ and $\Theta_i^-$ (whereas in \cite{FGRVX} they are merged together and handled with an absolute value). Also, for us $\pi$ need not be uniform over $\Theta$, though this is mainly a matter of bookkeeping.
\begin{proof}
Let $\cal{A}$ be any adaptive, deterministic algorithm making $q$ queries to $\VSTAT(1/9\xi)$  (the result can be extended to randomized algorithms by averaging over the random seed).
Let $\phi_1,\ldots,\phi_q$ be the sequence of queries made by the algorithm when each $\phi_i$ is answered with $\Ex_Q \phi_i$.
For convenience, define the query means $p_i = \Ex_Q \phi_i$ and $p_{i,\theta} = \Ex_{P_\theta} \phi_i$, as well as the $\VSTAT(1/9\xi)$ tolerances $\tau_{i,Q} = \max\left(9\xi,3\sqrt{\xi p_{i}(1-p_i)}\right)$ and $\tau_{i,\theta} = \max\left(9\xi,3\sqrt{\xi p_{i,\theta}(1-p_{i,\theta})}\right)$.

Let $\Theta^* \subset \Theta$ be the set of parameters $\theta$ for which, when receiving oracle answers about $P_\theta$, $\cal{A}$ rejects $Q$, and suppose that $\pi(\Theta^*) \ge 2\delta$.
Then, for each $i \in [q]$, let $\Theta_i^+ \subset \Theta$ be the set of parameters $\theta$ such that
\[
p_{i,\theta} - p_i > \tau_{i,\theta}
\]
and define $\Theta_i^-$ analogously as the set $\{\theta \in \Theta \mid p_{i,\theta} - p_i < - \tau_{i,\theta} \}$.
Since the event of rejecting $Q$ necessitates at least one $\phi_i$ to be answered with a value other than $p_i$, it is necessarily the case that
\[
\pi(\Theta^*) \le \sum_{i=1}^q \pi(\Theta_i^+) + \pi(\Theta_i^-),
\]
and thus there must exist some $i \in [q], s\in \{\pm 1\}$ such that $\pi(\Theta_i^s) \ge \frac{\pi(\Theta^*)}{2q} \ge \frac{\delta}{q}$; note that this implies our bound on the statistical dimension applies to the event that $\theta \in \Theta_i^s$.
Let $\xi_i^s$ be the average absolute difference in the means on this set:
\[
\xi_i^s 
= \Ex_{\theta \sim \pi \mid \Theta_i^s} s \cdot \left(p_{i,\theta} - p_i\right) 
= \Ex_{\theta \sim \pi \mid \Theta_i^s} s \cdot \left\langle\phi_i, \frac{dP_\theta}{dQ}-1\right\rangle_Q
= \Ex_{\theta \sim \pi \mid \Theta_i^s} s \cdot \left\langle\phi_i - p_i, \frac{dP_\theta}{dQ}-1\right\rangle_Q.
\]
By Cauchy-Schwarz,
\begin{align*}
(\xi_i^s)^2 
&\le \Var_Q(\phi_i) \cdot \chi^2\left(\Ex_{\theta \sim \pi \mid \Theta_i^s} P_\theta\, \|\, Q\right) 
\,\le\, \Var_Q(\phi_i) \cdot \xi,
\end{align*}
where the last inequality follows by our bound on the statistical dimension.
We can also lower bound $\xi_i^s$ by noting that for each $\theta \in \Theta_i^s$,
\[
|p_{i,\theta} - p_i| \ge \frac{3}{2}\sqrt{\xi p_i(1-p_i)}.
\]
Indeed, the $1$-Lipschitzness of the function $x \mapsto x(1-x)$ implies that
\[
\sqrt{\xi p_{i}(1-p_{i})} \le \sqrt{\xi p_{i,\theta}(1-p_{i,\theta})} + \sqrt{\xi \cdot |p_i - p_{i,\theta}|}
\le \frac{1}{3}\tau_{i,\theta} + \frac{1}{3} \sqrt{9\xi \cdot |p_i - p_{i,\theta}|} < \frac{2}{3}|p_i - p_{i,\theta}|,
\]
where we have used that $A \le B+C \implies \sqrt{A} \le \sqrt{B} + \sqrt{C}$, the AM-GM inequality, and the fact that $|p_i-p_{i,\theta}| > \tau_{i,\theta} \ge 9\xi $ by assumption.
Thus
\[
(\xi_i^s)^2  
= \left(\Ex_{\theta \sim \pi \mid \Theta_i^s} |p_i-p_{i,\theta}|\right)^2 > \frac{9}{4}\xi \cdot p_i(1-p_i)
= \frac{9}{4} \Var_Q(\phi_i) \cdot \xi.
\]
But our upper bound on $(\xi_i^s)^2$ is smaller than our lower bound, a contradiction. Hence we must have $\pi(\Theta^*) < 2\delta$.
\end{proof}

Our proof will bound the statistical dimension of an appropriately chosen family of distributions, then appeal to the theorem of \cite{FGRVX}.
As usual, we will use $C,c>0$ to denote constants independent of $n,d,m,q$, whose value may change from line to line.
\begin{proof}[Proof of \cref{thm:SQ}]
We begin with a mathematically convenient way to account for the information available to the algorithm from the OLS estimator:
we will use the classic ``genie'' trick and give the algorithm access to a random vector which is at least as correlated with $\beta$ as the OLS. 
In particular, let 
$s = \sqrt{d/(\ell n)}$ with $\ell = C\log^2 q$.
We decompose
\[
\beta = \tilde{\beta} + \alpha, \qquad \text{ with } \tilde{\beta} \sim \sqrt{1-\tfrac{s^2}{d}}\cdot \mathcal{N}(0,\Id_d),\quad \alpha \sim \tfrac{s}{\sqrt{d}} \cdot \mathcal{N}(0,\Id_d)\quad \text{independently.}
\]
With probability $1-\exp(-cd)$, $a = \|\alpha\|$ satisfies $c \le a \cdot \sqrt{\ell n / d} \le C$; condition on this event from now on.
We reveal to the algorithm $\tilde{\beta}$ and $a = \|\alpha\|$, after which
\[
\beta = \tilde{\beta} + a \cdot v,
\]
where the posterior on $v = \alpha / \|\alpha\|$ is uniform over the sphere $\mathbb{S}^{d-1}$. 
Let $\pi$ denote the uniform measure on $\mathbb{S}^{d-1}$; we will make repeated use of the fact that given $\tilde{\beta},a$, $v \sim \pi$.

Given $\tilde\beta,a$, any statistical query $\phi$ depends on each datapoint $x,y$ only through $x$ and the residual $z = y - \langle x,\tilde\beta \rangle = a \langle x,v\rangle + \eps$.
Let $\mathcal{D}_v$ be the marginal law of $(x,z)$ given $\tilde{\beta},a$.

Fix $\delta > 0$.
Our result will follow from the following fact: there exist constants $C,c > 0$ depending on $\delta$ but independent of $d,n$ such that for all $n,d$ sufficiently large, for all $A \subset \mathbb{S}^{d-1}$ with $1/e \ge \pi(A) \ge e^{-c d}$,
\begin{equation}\label{eq:chisq}
\chi^2\left(\Ex_{u \sim \mathrm{Unif}(A)} \mathcal{D}_u \,\| \, Q\right) \le C\log^2\pi(A)\cdot\left(\frac{a^2}{d} + \frac{m^2}{adn^2}\right).
\end{equation}
Using convexity of chi-squared divergence (any $A$ with $\pi(A) \ge 1/e$ can be written as a mixture of subsets with each $A'$ satisfying $\pi(A') \le 1/e$), this implies nearly the same bound for the broader range $\pi(A) \in [e^{-cd},1]$, 
\begin{equation}\label{eq:v2-chisq}
\chi^2\left(\Ex_{u \sim \mathrm{Unif}(A)} \mathcal{D}_u \,\| \, Q\right) \le C\log^2\frac{e}{\pi(A)}\cdot\left(\frac{a^2}{d} + \frac{m^2}{adn^2}\right).
\end{equation}

Since $a \sim \sqrt{d/(n\log^2 q)}$ and $m \le c(\frac{nd^3}{\log^{6} q})^{1/4}$, the right-hand side of \cref{eq:v2-chisq} is $\le 1/(9n)$ for all $A$ with $\frac{\delta}{q} \le \pi(A) < 1$.
Thus, invoking \Cref{thm:SQ-chisq},  any adaptive SQ algorithm which makes at most $q$ queries to $\mathrm{VSTAT}(n)$ has probability at most $2\delta$ (with respect to the choice of $v \sim \pi$) of rejecting $Q$ when run on $\mathcal{D}_v$.
\medskip
To see why this implies \cref{thm:SQ}, consider any deterministic algorithm which fails to reject $Q$. 
To estimate $\beta$ given $a,\tilde{\beta}$, the algorithm must output a deterministic $\hat\beta = \tilde{\beta} + \hat\alpha$ which is not a function of $\mathcal{D}_v$.
Then
\[
\|\beta - \hat\beta\|^2 = \|a \cdot v - \hat \alpha\|^2 = a^2 + \|\hat\alpha\|^2 - 2a\langle\hat\alpha, v \rangle.
\]
As $v \sim \pi$ the inner product $\langle \hat\alpha,v \rangle$ is subgaussian with variance proxy $\frac{1}{d}\|\hat\alpha\|^2$, and so except with probability $e^{-cd}$, $\|\beta-\hat\beta\|^2 \ge \frac{1}{2} a^2$.
The conclusion follows by averaging over the random seed.
\medskip

We'll prove \cref{eq:chisq} by directly bounding the integral of the centered, squared density of $\E_{u\sim \pi_A} \mathcal{D}_u$ relative to $Q$.
It is crucial that we center using the mixture density $Q$; however, we will be able to substitute a simpler expression in the denominator to make the bound more tractable.
The subsequent bound follows from straightforward (and tedious) analytic arguments. We'll use Gaussian integration to express the integral as an analytic function of $\rho = \langle u,w\rangle$ in expectation under $u,w \sim \pi$ and $u,w \sim \pi|A$, and then use a polynomial expansion to get bounds in terms of the differences of low-order moments in $\rho$ under both distributions.
We will then show that the moments are within $\poly\log\pi(A)$ factors, completing the proof.

Plowing on, let $\gamma^d$ be the density of $\mathcal{N}(0,\Id_d)$ in $\R^d$, let $\gamma_\sigma$ be the density of $\mathcal{N}(0,\sigma^2)$ in $\R$.
For each $v \in \mathbb{S}^{d-1}$, we can express the density $p_v(x,z)$ of $\mathcal{D}_v$ as a mixture:
\[
   p_v(x,z) = \frac{m}{n} \cdot h_v(x,z) + \left(1-\frac{m}{n}\right)\cdot g_v(x,z),
   \]
   where $h_v$ is the small-label-noise component $h_v(x,z) = \gamma^d(x) \cdot \gamma_\eta(z - a\langle x,v\rangle)$ and $g_v$ is the large-label-noise component $g_v(x,z) = \gamma^d(x) \cdot \gamma_1(z - a\langle x,v\rangle)$.

Fix the event $A$ and let $\pi_A = \pi \mid A$.
For convenience let $p_\pi = \Ex_{u \sim \pi} p_u$, let $p_A = \Ex_{u \sim \pi_A} p_u$, and let $g_\pi,g_A,h_\pi,h_A$ be defined similarly.
We can express the chi-squared divergence
\begin{align}
    \chi^2\left( \Ex_{u \sim \pi_A} \mathcal{D}_u \,\|\, Q \right)
    &= \int \int \frac{\left(p_A(x,z) - p_\pi(x,z)\right)^2}{p_\pi(x,z)} dx dz \nonumber\\
    &\le \frac{1}{1-\frac{m}{n}}\int \int \frac{\left(p_A(x,z) - p_\pi(x,z)\right)^2}{g_\pi(x,z)} dx dz
    \le 2 \int \int \frac{\left(p_A(x,z) - p_\pi(x,z)\right)^2}{g_\pi(x,z)} dx dz\label{eq:pause},
\end{align}
where the final inequality follows because $m \ll n$.
By Jensen's inequality, we have that
\[
g_\pi(x,z) = \Ex_{v \sim \pi} g_v(x,z) 
= \gamma^d(x) \cdot \gamma_1(z) \cdot \Ex_{v\sim \pi} \exp( z a \langle x,v \rangle - \tfrac{1}{2}a^2 \langle x,v \rangle^2) 
\ge \gamma^d(x) \cdot \gamma_1(z) \cdot \exp\left(- \tfrac{a^2}{2d}\|x\|^2\right).
\]
So we may substitute the right-hand side above into the denominator in \cref{eq:pause},
\begin{align}
    \cref{eq:pause}
    &\le 2\int \int \frac{\left(p_A(x,z) - p_\pi(x,z)\right)^2}{\gamma^d(x) \gamma_1(z) e^{-\frac{a^2}{2d}\|x\|^2}} dx dz,\nonumber\\
    \intertext{By Cauchy-Schwarz,}
    &\le 4 \left(\tfrac{m}{n}\right)^2\int \int \frac{\left(h_A(x,z) - h_\pi(x,z)\right)^2}{\gamma^d(x) \gamma_1(z) e^{-\frac{a^2}{2d}\|x\|^2}} dx dz  + 4 \int \int \frac{\left(g_A(x,z) - g_\pi(x,z)\right)^2}{\gamma^d(x) \gamma_1(z) e^{-\frac{a^2}{2d}\|x\|^2}} dx dz.\label{eq:leftoff}
    \end{align}

We will now integrate over $x,z$.
To make use of the common form of both integrals, let $k_A = \Ex_{u \sim \pi_A} \gamma^d(x) \gamma_\sigma(z - a \langle x,u \rangle)$, and define $k_\pi$ similarly. 
Note that when $\sigma = 1$ then $k_A = g_A$ and when $\sigma = \eta$ then $k_A = h_A$.
We factor out a multiple of $\gamma^d(x)\gamma_1(z)$ and obtain the bound
\begin{align}
&\int\int \frac{(k_A(x,z) - k_\pi(x,z))^2}{\gamma^d(x)\gamma_1(z)e^{-\frac{a^2}{2d}\|x\|^2}}dx dz\nonumber\\
&\qquad \qquad = \frac{1}{\sigma^2} \Ex_{\substack{x\sim \gamma^d\\z \sim \gamma_1}} e^{-\left(\tfrac{1}{\sigma^2}-1\right)z^2 + \frac{a^2}{2d}\|x\|^2} \left(\Ex_{u \sim \pi_A} e^{\tfrac{a}{\sigma^2}z\langle x,u \rangle  - \tfrac{a^2}{2\sigma^2}\langle x,u\rangle^2} - \Ex_{v \sim \pi} e^{\tfrac{a}{\sigma^2}z\langle x,v \rangle  - \tfrac{a^2}{2\sigma^2}\langle x,v\rangle^2}\right)^2,\nonumber\\
&\qquad \qquad = \Ex_{u_1,u_2 \sim \pi_A} M_\sigma(u_1,u_2)+ \Ex_{v_1,v_2 \sim \pi} M_\sigma(v_1,v_2)- 2 \Ex_{u\sim \pi_A,v\sim \pi} M_\sigma(u,v),\label{eq:diffs}
\end{align}
Where $M_\sigma$ is obtained via Gaussian integration over $x,z$, using that $\sigma \le 1$ and $a \ll 1$ to ensure the integral is defined. 
Note that because of the rotational symmetry of the law of $x$, $M_\sigma(w,w')$ can depend on $w,w'$ only through their inner product.
In particular, letting $t = 1-\tfrac{a^2}{d}$ and letting $\rho = \langle w,w'\rangle$,
\begin{align*}
M_\sigma(w,w') = M_\sigma(\rho) 
&= t^{-(d-2)/2}\bigg(\left(t(2-\sigma^2) - a^2 (1+\rho)\right)\left(t\sigma^2 + a^2(1-\rho)\right)\bigg)^{-1/2}.
\end{align*}

Since $M_\sigma(w,w')$ depends only on the inner product $\rho = \langle w,w'\rangle$, and because for any unit vector $w$ the law of $\langle v,w \rangle$ when $v \sim \pi$ is independent of $w$, we can further simplify
\begin{align}
\cref{eq:diffs}
&= \Ex_{\substack{\rho_A \sim \pi_A \\ \rho \sim \pi}} M_\sigma(\rho_A) - M_\sigma(\rho), \label{eq:diffs2}
\end{align}
where we abuse notation slightly and denote by $\rho_A \sim \pi_A$ the distribution of the inner product of $u,u' \sim \pi_A$ (and similarly for $\rho \sim \pi$).

We now specialize to the different cases $\sigma = 1,\eta$, as we will require a different bound in each case.
Considering first the case $\sigma = 1$, 
\begin{align*}
M_1(\rho) 
&= c_0\left(1 - c_1 a^2 \rho + c_2 (a^2 \rho)^2\right)^{-1/2}
\end{align*}
where we can apply the fact that $t = 1 - a^2/d$ and the fact that $a = o(1)$ to conclude that $c_0,c_2 = 1 \pm o(1)$ and $c_1 = 2 \pm o(1)$.
Thinking of the coefficients $c_0,c_2$ as fixed quantities of magnitude at most 1.01 and $c_1$ as a fixed quantity of magnitude at most $2.01$, because the quantity $|a^2 \rho| < \frac{1}{2}$ by assumption, the Taylor series of $M_1(\rho)$ in $a^2 \rho$ converges:
\[
M_1(\rho) = c_0\left(1 + \frac{c_1}{2} a^2 \rho + R_1(\rho)\right)
\]
where $|R_1(\rho)| \le (a^2\rho)^2$.

Hence, applying the above and \cref{eq:diffs2} to bound the integral of the $N(0,1)$-noise component,
\begin{align}
\int \int \frac{\left(g_A(x,z) - g_\pi(x,z)\right)^2}{\gamma^d(x) \gamma_1(z) e^{-\frac{a^2}{2d}\|x\|^2}} dx dz
&= \Ex_{\substack{\rho_A \sim \pi_A\\ \rho\sim\pi}} M_1(\rho_A) - M_1(\rho)\nonumber\\
&\le \frac{c_0 c_1 a^2}{2} \left|\Ex_{\substack{\rho_A \sim \pi_A\\ \rho \sim \pi}} \rho_A - \rho \right| + c_0\Ex_{\substack{\rho_A \sim \pi_A \\ \rho\sim\pi}} |R_1(\rho)| + |R_1(\rho_A)|\nonumber\\
&\le a^2 \left|\Ex_{\substack{\rho_A \sim \pi_A\\ \rho \sim \pi}} \rho_A - \rho \right| + 2\Ex_{\substack{\rho_A \sim \pi_A \\ \rho\sim\pi}} a^4 (\rho^2 + \rho_A^2).\label{eq:gbound}
\end{align}

When $\sigma = \eta$, we take the parameterization
\begin{align*}
M_\eta(\rho) 
&= \frac{b_0}{a}\left(1 - b_1 \rho + b_2 a^2 \rho^2\right)^{-1/2} 
\end{align*}
where $b_0,b_2 = 2^{-1/2} \pm o(1)$, $b_1=1 \pm o(1)$, and all three are independent of $\rho$.
Here, we take a Taylor expansion in $\rho$:
\[
M_\eta(\rho) = \frac{b_0}{a}\left( 1 +\frac{b_1}{2}\rho + R_\eta(\rho) \right),
\]
where for all $|\rho| \le \frac{1}{2}$, $|R_{\eta}(\rho)| \le \rho^2$.
We also require a bound on $M_\eta$ for all $\rho$ which we will apply when $|\rho| \ge \frac{1}{2}$: 
\[
M_\eta(\rho) \le \frac{2}{a}\frac{1}{\sqrt{1-\rho}},
\]
which can be verified by inspection of the formula for $M_\sigma$ (the first two factors are bounded by $2$ when $\sigma = \eta$, the last term is smaller than $(a^2(1-\rho))^{-1/2}$).

Applying the above in \cref{eq:diffs2} to bound the integral of the $N(0,\eta^2)$-noise component,
\begin{align}
\int \int \frac{\left(h_A(x,z) - h_\pi(x,z)\right)^2}{\gamma^d(x) \gamma_1(z) e^{-\frac{a^2}{2d}\|x\|^2}} dx dz
&= \Ex_{\substack{\rho_A \sim \pi_A\\ \rho\sim\pi}} M_\eta(\rho_A) - M_\eta(\rho)\nonumber\\
&\le \frac{b_0 b_1}{2a} \left|\Ex_{\substack{\rho_A \sim \pi_A\\ \rho \sim \pi}} \rho_A - \rho \right| + \frac{b_0}{a}\Ex_{\substack{\rho_A \sim \pi_A \\ \rho\sim\pi}} |R_\eta(\rho)| + |R_\eta(\rho_A)|.\label{eq:partway-h}
\end{align}
We will bound the remainders by separately considering the event $E(\rho) = \{|\rho| \le \frac{1}{2}\}$ and its complement.
Treating $\rho$ (the logic is identical for $\rho_A$),
\begin{align*}
\Ex |R_\eta(\rho)|
&= \Ex |R_\eta(\rho)|\cdot \Ind_{E(\rho)} + |R_\eta(\rho)|\cdot \Ind_{\overline{E}(\rho)}\\
&\le \Ex \rho^2 + \Ex_\rho\left[\left( \frac{a}{b_0}|M_\eta(\rho)| + \left|1 + \frac{b_1}{2}\rho\right|\right)\Ind_{\overline{E}(\rho)}\right]\\
&\le \Ex \rho^2 + \Ex_\rho\left[\left( \frac{2}{b_0}(1-\rho)^{-1/2} + 2\right)\Ind_{\overline{E}(\rho)}\right]\\
&\le \Ex \rho^2 + \Ex_\rho\left[\left( 10(1-\rho)^{-1/2}\right)\Ind_{\overline{E}(\rho)}\right].
\end{align*}
So we have
\begin{align}
    \cref{eq:partway-h} 
    &\le \frac{1}{a} \left|\Ex \rho_A -\rho\right| +\frac{2}{a} \Ex(\rho_A^2 + \rho^2) + \frac{11}{a}\left(\Ex \frac{\Ind_{\overline{E}(\rho)}}{\sqrt{1-\rho}} + \frac{\Ind_{\overline{E}(\rho_A)}}{\sqrt{1-\rho_A}}\right).\label{eq:hbound}
\end{align}
So to finish up given \cref{eq:gbound,eq:hbound}, we require a bound on the difference in the first moment of $\rho,\rho_A$, an upper bound on the second moments, and control on the expectation under the tail event $\overline{E}$.

To control the moments, we invoke Lemma 4.4 of \cite{LMSY}, where it is shown that the optimal transport coupling of $\pi_A,\pi$ provides a way to jointly sample $u \sim \pi_A, v\sim \pi$ such that $\Delta = u-v$ satisfies, for all $t > 0$, $\Pr(\|\Delta\| > t + \sqrt{-2\log \pi(A)/d}) \le \exp\left(-\frac{d-1}{8}t^2\right)$.
In particular, letting $(v,v+\Delta)$ and $(v',v'+\Delta')$ be two independent samples from the optimal transport coupling of $(\pi,\pi_A)$,
\[
\Ex \rho_A - \rho
= \Ex \langle v + \Delta, v' + \Delta' \rangle - \langle v,v'\rangle
= \Ex \langle \Delta,\Delta'\rangle 
\le \left(\Ex \|\Delta\|\right)^2 \le C \frac{-\log\pi(A)}{d},
\]
where we have used that $\E[v] = 0$, Cauchy-Schwarz, the independence of $\Delta,\Delta'$, and the fact that $\|\Delta\| - \sqrt{-\log \pi(A)/d}$ is subgaussian with variance proxy $O(1/d)$.

The same coupling allows us to upper bound the second moments. 
First, we directly compute $\Ex \rho^2 = \Ex (v')^\top(vv^\top)v' = \frac{1}{d}$.
We further have that
\[
\Ex \rho_A^2 =\Ex \langle v + \Delta, v' + \Delta'\rangle^2 
\le \Ex 4\langle v,v'\rangle^2 + 8\langle v,\Delta'\rangle^2 + 4\langle \Delta,\Delta'\rangle^2
\le \frac{4}{d} + \frac{8}{d} \E[\|\Delta'\|^2] + 4\Ex[\|\Delta\|^2]^2
\le \frac{C}{d} + \frac{C(\log \pi(A))^2}{d^2},
\]
where we have again used Cauchy-Schwarz and the subgaussianity of $\|\Delta\|$.

Finally, we note that $(1-\rho)^{-1/2}\Ind_{|\rho|\ge \frac{1}{2}} \ge 0$ for $|\rho| \le 1$, and for any non-negative function $f(\rho)$,
\[
\Ex f(\rho_A) = \frac{\Ex_{\substack{v,v' \sim \pi}} f(\langle v,v'\rangle) \Ind_{v,v' \in A}}{\Pr[v,v' \in A]} \le \frac{1}{\pi(A)^2}\E[f(\rho)].
\]
And using that the density of $\rho$ under $\pi$ is $\pi(\rho) = C_d (1-\rho^2)^{(d-3)/2}$ for $C_d = \frac{\Gamma(d/2)}{\sqrt{\pi}\Gamma((d-1)/2)}$,
\begin{align*}
\Ex_\pi \frac{\Ind_{|\rho|> \frac{1}{2}}}{\sqrt{1-\rho}}
&= \int_{-1}^1 \frac{\Ind_{|\rho|> \frac{1}{2}} }{\sqrt{1-\rho}} \cdot C_d\cdot  \left(1-\rho^2\right)^{(d-3)/2} d\rho\\
&= \int_{-1}^1 \Ind_{|\rho|> \frac{1}{2}} C_d \cdot (1+\rho)^{1/2} \cdot \left(1-\rho^2\right)^{(d-4)/2} d\rho\\
&\le \int_{-1}^1 \Ind_{|\rho|> \frac{1}{2}} C_d \cdot \sqrt{2}\cdot \left(1-\rho^2\right)^{(d-4)/2} d\rho\\
&= \sqrt{2} \frac{C_d}{C_{d-1}} \Pr_{w,w' \sim \mathrm{Unif}(\mathbb{S}^{d-2})}[|\langle w,w' \rangle| \ge \tfrac{1}{2}]\\
&\le C \exp(-c' d ),
\end{align*}
where the last line follows for all $d$ sufficiently large by the $O(1/d)$-subgaussianity of the inner product $\langle w,w' \rangle$ and because $C_d/C_{d-1} = 1+o_d(1)$.

We can now use these estimates to complete our bound from \cref{eq:leftoff} using \cref{eq:hbound,eq:gbound} and the assumption that $1/e \ge \pi(A) \ge e^{-c d}$ for $c < c'/2$:
\begin{align*}
    \cref{eq:leftoff}
    &\le C\left(\tfrac{m}{n}\right)^2\cdot \left(\frac{\log^2\pi(A)}{a d} + \frac{e^{-c'd}}{a\pi(A)^2} \right) + C \left(a^2 \frac{-\log\pi(A)}{d} + a^4 \frac{\log^2 \pi(A)}{d}\right) \\
    &\le C\log^2\pi(A)\left( \frac{m^2}{ adn^2} + \frac{a^2}{d}\right),
\end{align*}
which completes the proof of \cref{eq:chisq}.
\end{proof}

\subsection{Barrier for convex $M$-estimation}\label{subsec:m-barrier}
We will show that convex $M$-estimation cannot exactly recover  $\beta$ in planted linear regression once $m \ll \sqrt{nd}$.

In particular, let $\rho: \R \rightarrow \R$ be any finite convex function chosen independently of the sample.\footnote{The proof of this lower bound would also work in a setting where e.g. $\rho$ is chosen based on the first half of the samples, and then the estimator takes the optimizer of $\rho$ on the second half of the samples.}
We define the $M$-estimation objective as
\[
L_\rho(b) := \sum_{i=1}^n \rho(y_i - X_i^\top b),
\]
and we say the estimator exactly identifies $\beta$ if it is the unique minimizer, meaning
\[
\argmin_{b \in \R^d} L_\rho(b) = \{\beta \}.
\]
We show that no such $M$-estimator succeeds in exactly identifying $\beta$ when $m \ll \sqrt{nd}$:
\begin{theorem}\label{thm:no-m-est}
    There exists a universal constant $c > 0$ such that for any finite convex loss $\rho$, if $m \le \min\{c \sqrt{nd}, n/2\}$ then
    \[
    \Pp\left(\argmin_{b \in \R^d} L_\rho(b) = \{\beta\} \right) \lesssim  e^{-cd} + e^{-cm}.
    \]
\end{theorem}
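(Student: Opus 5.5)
The plan is to use only the first-order optimality condition: being the unique minimizer implies being a minimizer, and I will show that even the latter fails with the claimed probability. Since $\rho$ is finite and convex on $\R$, $L_\rho$ is finite and convex on $\R^d$, and the subdifferential sum rule applies. So $\beta\in\argmin L_\rho$ iff
\[
0\in\sum_{i=1}^n X_i\,\partial\rho(\eps_i).
\]
The noisy $\eps_i$ are Gaussian and independent of everything else. A convex function is differentiable off a countable set, so almost surely $\partial\rho(\eps_i)=\{\psi(\eps_i)\}$ for the nondecreasing derivative $\psi=\rho'$. For the $m$ planted samples $\partial\rho(0)=[a,b]$, where $a=\rho'_-(0)\le b=\rho'_+(0)$. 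Write $D=b-a\ge 0$ and $c=(a+b)/2$, let $P$ be the planted index set and $N$ the noisy one. Set $t_i=\psi(\eps_i)$ for $i\in N$ and $t_i=c$ for $i\in P$. The condition then becomes
\[
w:=\sum_{i=1}^n t_iX_i\in -Z,\qquad Z:=\tfrac D2\sum_{i\in P}[-1,1]X_i ,
\]
where $Z$ is a centrally symmetric zonotope. It therefore suffices to bound $\Pp(w\in -Z)$.

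First I lower bound the variance of $w$. By monotonicity, $\psi(\eps)\ge b$ for $\eps>0$ and $\psi(\eps)\le a$ for $\eps<0$. Since $\max\{|a|,|b|\}\ge D/2$, there is a fixed sign $\pm$ such that every noisy $i$ with $\sgn(\eps_i)=\pm$ has $|t_i|\ge D/2$. By a Chernoff bound, with probability $1-e^{-cn}\ge 1-e^{-cm}$ at least $(n-m)/4\ge n/8$ noisy indices have that sign, so
\[
S:=\sum_{i\in N}t_i^2\ \ge\ \frac{nD^2}{32}.
\]
Now condition on all $\eps_i$ and all planted $X_i$. Then $-Z$ and $c\sum_{i\in P}X_i$ are fixed, and $\sum_{i\in N}t_iX_i\sim N(0,S\,I_d)$, since the noisy covariates are independent of everything conditioned on.

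Next I bound the size of $Z$. Let $\mathbf X_P\in\R^{m\times d}$ be the planted design. For $y\in[-1,1]^m$ we have $\norm{\mathbf X_P^\top y}_2\le \sigma_{\max}(\mathbf X_P)\sqrt m$. With probability $1-e^{-cd}$ the standard Gaussian operator-norm bound gives $\sigma_{\max}(\mathbf X_P)\le C(\sqrt m+\sqrt d)$. So $Z$ lies in a Euclidean ball of radius
\[
R\lesssim D\bigl(m+\sqrt{md}\bigr).
\]
The Gaussian $N(0,SI_d)$ puts mass at most
\[
(2\pi S)^{-d/2}\operatorname{vol}(B_2^d(R))\le \Bigl(\frac{eR^2}{Sd}\Bigr)^{d/2}
\]
on any translate of this ball. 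This is exactly the density-times-volume bound used in the proof of \cref{lem:rank-info}. With $S\ge nD^2/32$ the ratio $R^2/(Sd)$ is $\lesssim (m^2+md)/(nd)$. This is at most $e^{-3}$ once $m\le c\sqrt{nd}$ (so $m^2\le c^2 nd$) and $m\le n/2$ (so $md\le nd/2$; a smaller constant is needed there, or else use $m\le c\sqrt{nd}\le c\,n$ since $d\le n$). That yields a bound of $e^{-cd}$ on this event.

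The degenerate case $D=0$ needs separate care. There $Z=\{0\}$, so success requires $w=0$ exactly. This has conditional probability zero whenever $S>0$. And $S=0$ on the event above forces $D=0$ together with $\psi=0$ for a.e.\ value, i.e.\ $\rho$ is constant, in which case the minimizer is not unique. Collecting the failure probabilities $e^{-cm}$ (sign count), $e^{-cd}$ (operator norm) and $e^{-cd}$ (small ball) gives the stated bound.

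The main obstacle is the bookkeeping in the second step. I must ensure that the Gaussian part of $w$ is genuinely independent of the zonotope, which is why I condition on the planted covariates and keep the planted $cX_i$ terms inside the fixed shift. I must also make sure the lower bound on $S$ uses only the signs of the $\eps_i$, which are independent of $N$'s covariates. Beyond that, I expect the only delicate point to be checking that the constants in $m\le\min\{c\sqrt{nd},n/2\}$ suffice for the $md$ term.
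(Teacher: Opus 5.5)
Your main case is fine, but the degenerate case has a genuine gap. When $a=b=0$ (so $D=0$ and $c=0$) and $S=0$, you conclude that $\psi=0$ a.e., so $\rho$ is constant. But $S=0$ only says $\psi(\eps_i)=0$ at the $n-m$ sampled noisy errors. That makes $\rho$ flat on an interval containing $0$ and the $\eps_i$, not constant.

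For the dead-zone loss $\rho(t)=\max\{0,\abs{t}-T\}$ with $T$ large, $S=0$ holds with probability close to one (and your sign-count event holds for either sign), yet $\rho$ is not constant. More seriously, this sub-case can genuinely produce a unique minimizer. Take $\rho(t)=\max\{t,0\}^2$, so $\partial\rho(0)=\{0\}$. On the event that every noisy $\eps_i<0$, which has probability $2^{-(n-m)}$, we have $L_\rho(\beta)=0$. Moreover $L_\rho(\beta+h)=0$ forces $X_i^\top h\ge 0$ for every planted $i$. So $\beta$ is the unique minimizer whenever the planted covariates do not lie in a common half-space through the origin, which is likely once $m$ is a large multiple of $d$. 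This case therefore carries a nonzero probability that must be bounded, and your argument never bounds it.

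The missing step is the one the paper uses for its case $R=0$:
\begin{itemize}
\item If $\partial\rho(0)=\{0\}$ and $\psi(\eps_i)=0$ for every noisy $i$, then all residuals at $\beta$ lie in $M:=\argmin\rho=\{t:0\in\partial\rho(t)\}$, a closed interval containing $0$.
\item Since $\rho$ is fixed independently of the sample, the continuous $\eps_i$ hit its at most two endpoints with probability zero.
\item If $0$ is also interior to $M$, small perturbations of $\beta$ keep every residual in $M$, so uniqueness fails.
\item Otherwise $0$ is an endpoint of $M$, which forces all noisy $\eps_i$ onto one side of $0$. This has probability at most $2^{1-(n-m)}\le 2e^{-cm}$, since $n-m\ge n/2\ge m$.
\end{itemize}
For the sub-case $D=0\neq c$, make explicit that your sign argument gives $S\ge n\max\{a^2,b^2\}/8>0$, not just $S\ge nD^2/32=0$. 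With these fixes the proof is complete.

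Your main-case route differs from the paper's. The paper projects onto the direction $Z/\norm{Z}_2$ of the noisy score, which is independent of the planted covariates, and bounds the width of the planted subgradient set in that direction by $Rm$. That needs neither an operator-norm bound nor $d\le n$. Your volumetric route is also valid: you center $[a,b]$, enclose the zonotope in a ball via $\sigma_{\max}(\mathbf X_P)$, and apply density times volume. It does, however, need $m\le c\,n$. Justify $d\le n$ by noting that for $d>n$ the design has a nontrivial kernel, so the minimizer is never unique.
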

\begin{proof}
Our proof will follow by studying the conditions where $0 \in \partial L_\rho (\beta)$.
We leverage that $\rho$ has subderivatives everywhere, and is differentiable almost everywhere. We denote
\[
\partial\rho(0)=[l,r],
    \qquad
    R:=\max\{|l|,|r|\}.
\]
For $i \notin S$, it will be helpful to define
\[
a_i := \rho'(\eps_i),
\]
which is well-defined with probability 1 because $\rho$ is differentiable almost everywhere and $\eps_i$ is a continuous distribution.
We now study $\partial L_\rho(\beta)$; since the residual is $0$ for $i$ in the noiseless set 
$S$, it holds that
\begin{equation}\label{eq:subgradient-objective}
    \partial L_\rho(\beta)
    =
    -\left\{
        \sum_{i\notin S}a_i X_i+\sum_{i\in S}s_i X_i:
        s_i\in[l,r]
      \right\}.
\end{equation}
We separate this into two terms 
\[
Z:=\sum_{i\notin S}a_i X_i,
\qquad
K:=\left\{\sum_{i\in S}s_i X_i:s_i\in[l,r]\right\}.
\]
In this language, we may use the equivalent characterizations
\[
\beta\in\argmin L_\rho
    \quad\Longleftrightarrow\quad
    0\in\partial L_\rho(\beta)
    \quad\Longleftrightarrow\quad
    -Z\in K.
\]
We separate our proof into two cases: (i) when $R > 0$, we show $\beta$ is usually not a minimizer, and (ii) when $R = 0$, the flatness at $0$ will let us show that $\beta$ is usually not a unique minimizer.

\textbf{Case (i): $R>0$.} We will prove that with high probability it holds that both
\begin{align}
    \norm{Z}_2 &\ge cR\sqrt{nd}, \label{eq:score-lower}\\
    \sup_{w\in K}\ip{-Z/\norm{Z}_2}{w} &\le CRm. \label{eq:width-upper}
\end{align}
Together, these imply that if $m \le c_0 \sqrt{nd}$ for small enough $c_0$, then $-Z \notin K$ and hence $\beta \notin \argmin L_\rho$.

\textit{Proof of \eqref{eq:score-lower}.} By monotonicity of $\rho'$, it holds that $|\rho'(t)| \ge R$ either for all $t>0$, or for all $t<0$. Hence, for each $i \notin S$, it holds that $|a_i| \ge R$ with probability at least $1/2$. Using a Chernoff bound, this implies that at least $\frac{n-m}{4} \ge n/8$ samples from $i \notin S$ satisfy $|a_i | \ge R$, with probability at least $1 - e^{-cn}$. Conditioned on this event, we denote
\[
q := \sum_{i \notin S} a_i^2 \ge \frac{nR^2}{8}.
\]
Conditioned on the values of $a_i$, the value of $Z$ is distributed like
\[
Z \mid (a_i)_{i \notin S} \sim N(0,q I_d).
\]
We may use another Chernoff bound to conclude that its norm satisfies
\[
\norm{Z}_2 \gtrsim \sqrt{qd} \gtrsim R \sqrt{nd}
\]
with probability at least $1-e^{-cd}$.

\textit{Proof of \eqref{eq:width-upper}.} Let $v = \frac{Z}{\norm{Z}_2}$ denote the relevant vector in \eqref{eq:width-upper}; observe how $v$ is independent of $(X_i)_{i \in S}$. For every $w = \sum_{i \in S} s_i X_i \in K$, it holds that
\[
\langle -v , w \rangle = \sum_{i \in S} (-s_i) \langle v, X_i \rangle \le R \sum_{i \in S} | \langle v, X_i \rangle|.
\]
Conditioned on $v$, the random variables $|\langle v, X_i \rangle|$ are i.i.d. $|N(0,1)|$, and hence
\[
\sum_{i \in S} | \langle v, X_i \rangle| \lesssim m
\]
with probability at least $1 - e^{-cm}$ by Bernstein's inequality.

In total, this yields the desired guarantee for $R>0$ with probability at least $1 - e^{-cd} - e^{-cm}$.
\textbf{Case (ii): $R=0$.} When $R=0$, it holds that $\partial \rho(0) = \{0\}$, and hence 
\[
\partial L_\rho(\beta) = \{ -Z\},
\]
meaning $\beta$ can only be an optimizer when $Z=0$. If at least one value of $a_i$ is nonzero, then conditioned on the values of $(a_i)_{i \notin S}$, $Z$ is a nondegenerate Gaussian vector and hence $Z \ne 0$ with probability 1 (in which case $\beta$ is not an optimizer). 

Hence, we may focus on the event where $\rho'(\eps_i)=0$ for every $i \notin S$. Consider the region 
\[
    M:=\argmin_{t\in\R}\rho(t),
\]
or equivalently $M= \{t : 0 \in \partial \rho (t) \}$; this is a nonempty interval containing $0$. If no $\eps_i$ is at the boundary of $M$, then $\beta$ is not a unique minimizer since a small perturbation of $\beta$ is also a minimizer. For any $i \notin S$, an $\eps_i$ is realized exactly at the boundary with probability 0. We only need to consider the possibility where $0$ is at the boundary of $M$. In this case, it must hold that either: (i) all $\eps_i < 0$ for $i \notin S$, or (ii) all $\eps_i > 0$ for $i \notin S$. This event occurs with probability at most $e^{-cn}$, thus implying our desired result.
\end{proof}

%% file: simulations.tex
\section{Simulations}\label{sec:simulations}

In this section we discuss our simulations. 
We have modified the estimators given in the theorems in order to make them more practical; we detail these changes, and then discuss the results of our simulation for planted and adaptive linear regression.

\paragraph{Estimator implementations.}
The existing estimators we compare to include OLS, LAD ($L_1$ regression), $L_\infty$ regression, and the ASM (antitonic score matching) estimator of Feng, Kao, Xu, and Samworth \cite{feng2026optimal}. For the ASM estimator, we use version 0.2.4 of their R package with the default parameters except we set $\operatorname{alt\_iter}=1$ (which corresponds to their ASM estimator), and we use the default pilot estimator LAD in all settings other than in \cref{fig:plot2-location} (where the pilot estimator is OLS, since this is done in their analogous experiment).

We now discuss our estimator implementations, which are also available at \url{https://github.com/SpencerCompton/adaptive-regression}.

\subparagraph{Adaptive $L_q$. } Compared to the mathematically-analyzed version of this estimator, there are two main differences. First, we include $q=1$ (LAD) as an option for the estimator. For some theoretical justification, the only concern is whether the candidate selection in \cref{thm:k1} is still valid when including $q=1$. The proof only uses the property that when $\operatorname{rank}(\mathbf{X})=d$, the optimizer of the objective must be unique, and the optimizer's distribution is symmetric around $\beta$. This does not immediately hold for $q=1$ (the optimizer need not be unique), but it is true when we tiebreak among the LAD optimizers by choosing the optimizer whose residuals have the smallest $L_2$ norm; we use this tiebreaking version in our implementation for adaptive $L_q$.
We additionally include the powers of $2$ up to $n$ as options for $q$.
Second, instead of outputting the selected batch estimate $\wh{A}_{\wh q, \wh i}$, we run $L_q$ regression on the whole dataset with this choice of $\wh q$, and output this estimate (this version improves performance, but because $\wh q$ is not independent of the data, our proof no longer applies). 

\subparagraph{Residual-balance descent} ($\operatorname{RB-Desc}$). Our implementation incorporates substantial changes to optimize empirical performance.

First, we discuss some relatively minor changes. We initialize with the OLS estimate instead of the all-zeros vector (this is not precluded by our analysis). Instead of taking in bounds on $B,K,\kappa$, we apply a whitening preprocessing step, and then use a $B$ at the scale of the typical residual magnitude; if the estimator ever leaves the initial $B$-radius, we double $B$ and restart (repeating as necessary). To improve runtime, we reduce the number of iterations per stage from $O(n)$ to $O(\sqrt{n})$. The mathematically-analyzed version chooses $\calW$ as powers of $2$ starting at a constant $w_0$; we instead consider $\calW$ as all the $j$-th smallest absolute value of residuals at the current iteration, for $j$ of the form $(d+1) 2^{i/4} \in [d+1,n]$ for integer $i$ (this is a scale-invariant choice of $\calW$).

The main obstacle for practical performance is that the estimator only uses $S_w(b)$ with norm larger than a threshold, and the empirical performance is very sensitive to minor perturbations to this threshold. If the threshold is too large, the estimator may not leverage important signal; if the threshold is too small, the estimator may descend in erroneous directions. 

We use empirical simulation to choose our thresholding parameter for our ``standard'' implementation of $\operatorname{RB-Desc}$:
At the start of the algorithm, we fit OLS and compute the residuals for all the samples. Our aim is to choose a threshold such that if the $X_i$ were symmetrized independently of the $r_i(b)$ via random signing, then our threshold is larger than all the considered $\norm{S_w(b)}_2$; this is because after random signing, there is no correlation anymore between $r_i(b)$ and $X_i$. This gives a quantitative indicator for the typical scale of $\norm{S_w(b)}_2$ under the ``null hypothesis'' of no correlation. Thus, at the start of the algorithm, our implementation does this Rademacher signing 256 times, and chooses as the threshold the smallest number such that no $\norm{S_w(b)}_2$ exceeds the threshold for at least 95\% of those samplings. Given this threshold constant, we proceed with the iterative estimator normally (and do not change the constant again). This is the threshold-selection approach we generally recommend.

In the case of planted linear regression, the thresholds chosen by this ``standard'' variant are too conservative. In our ``aggressive'' variant, we use the same threshold as the ``standard'' variant to determine when a $w$ is violated, yet we also consider a $w$ as violated whenever it is at the scale of the current stage diameter $2R_\ell$ (or smaller). This is quite aggressive, so it does not throw away signal as much as the ``standard'' variant, yet it may also erroneously move in directions caused by noise (so we do not recommend this in general).

We also include a ``hybrid'' variant of our estimator, which is designed with planted linear regression in mind. This first runs the ``standard'' variant, then checks if the $d+1$ smallest residuals are all zero (up to numerical tolerance), and uses this estimate if so. Otherwise, it tries the ``aggressive'' variant, and uses its output if its $d+1$ smallest residuals are zero; else, we use the standard variant estimate. This corresponds to using the aggressive strategy only when it clearly attains exact recovery, and the standard strategy does not.

We emphasize there are significant modifications in these implementations, and that many ablations were run to choose and understand what modifications seemed conducive for practical performance. A natural hope for future work is to design other estimators with the same desirable guarantees, yet with fewer degrees of freedom that require tuning for practical performance.

\subsection{Planted linear regression.}\label{subsec:planted-sims}

Our simulations evaluate the empirical likelihood of exactly recovering $\beta$ for planted linear regression (which is the definitive criterion for the estimator's success in this case; for completeness we include figures showing the median coefficient error and average runtime in \cref{sec:extra-sims}). The model is as described in \cref{def:planted}, and we sample $\beta$ from the unit sphere. Each data point corresponds to 100 repetitions. We evaluate the three versions of our $\operatorname{RB-Desc}$ estimator, with comparisons to LAD and OLS. Recall that our theoretical guarantees show that a version of our $\operatorname{RB-Desc}$ estimator attains exact recovery when $m \gg d^{3/4}n^{1/4}$, and that no convex $M$-estimator (including LAD and OLS) may attain exact recovery when $m \ll \sqrt{dn}$. We accordingly observe a notable gap between what is possible with the best, practical versions of our $\operatorname{RB-Desc}$ estimators, and what is achieved by LAD and OLS.

\begin{figure}[H]
\centering
\begin{subfigure}{.32\linewidth}
    \centering
    \includegraphics[width=\linewidth]{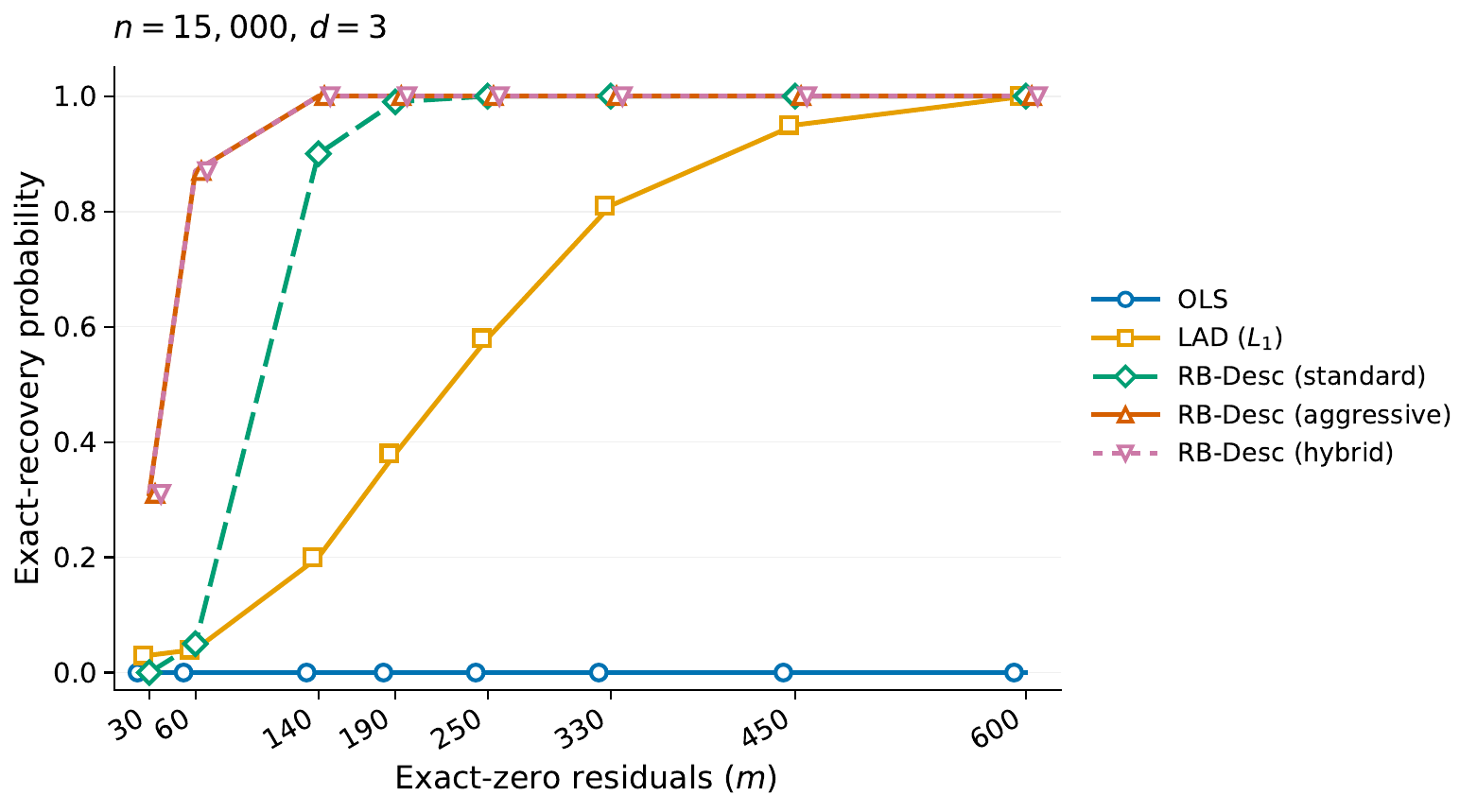}
    \caption{}
\end{subfigure}
    \hfill
\begin{subfigure}{.32\linewidth}
    \centering
    \includegraphics[width=\linewidth]{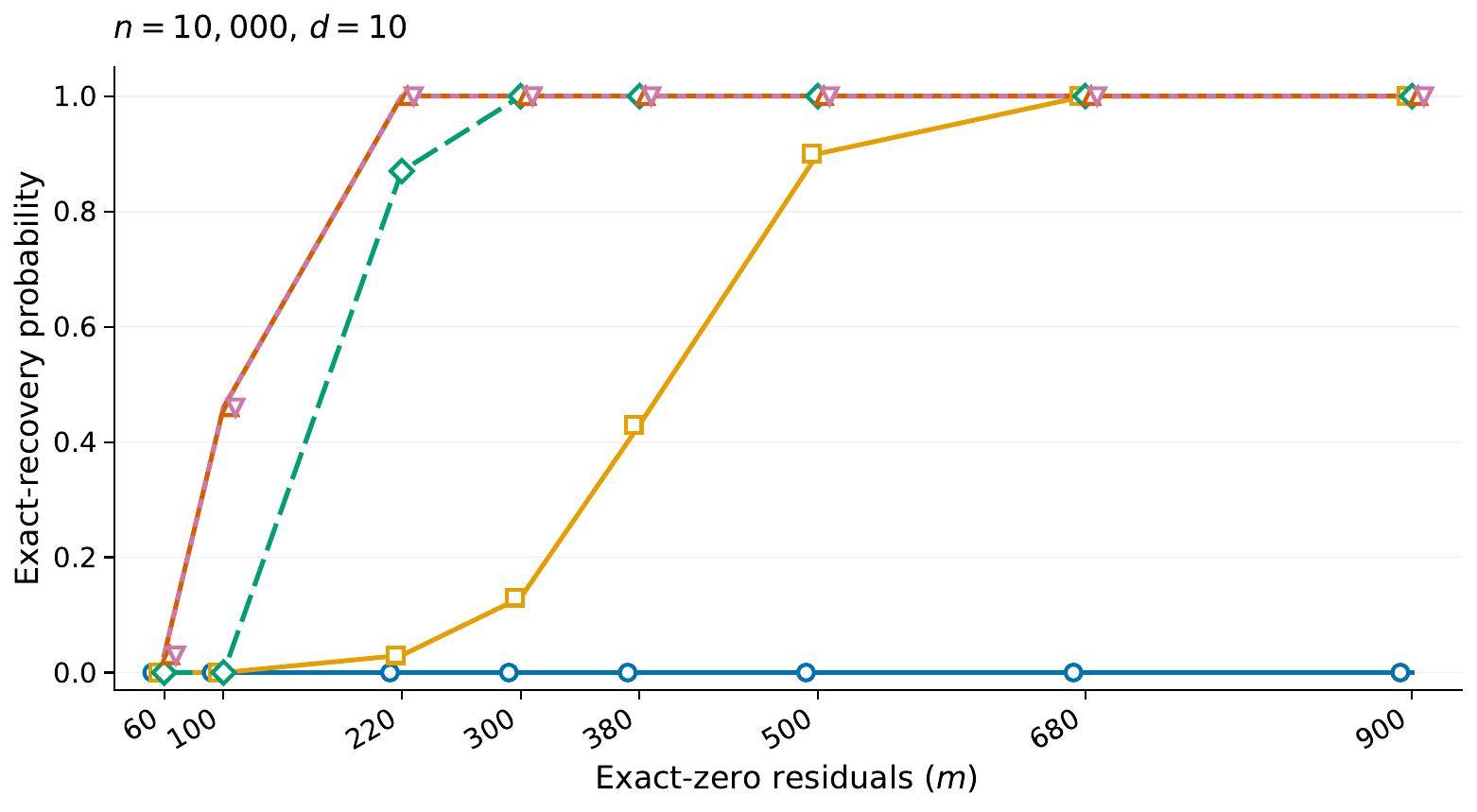}
    \caption{}
\end{subfigure}
   \hfill
\begin{subfigure}{.32\linewidth}
    \centering
    \includegraphics[width=\linewidth]{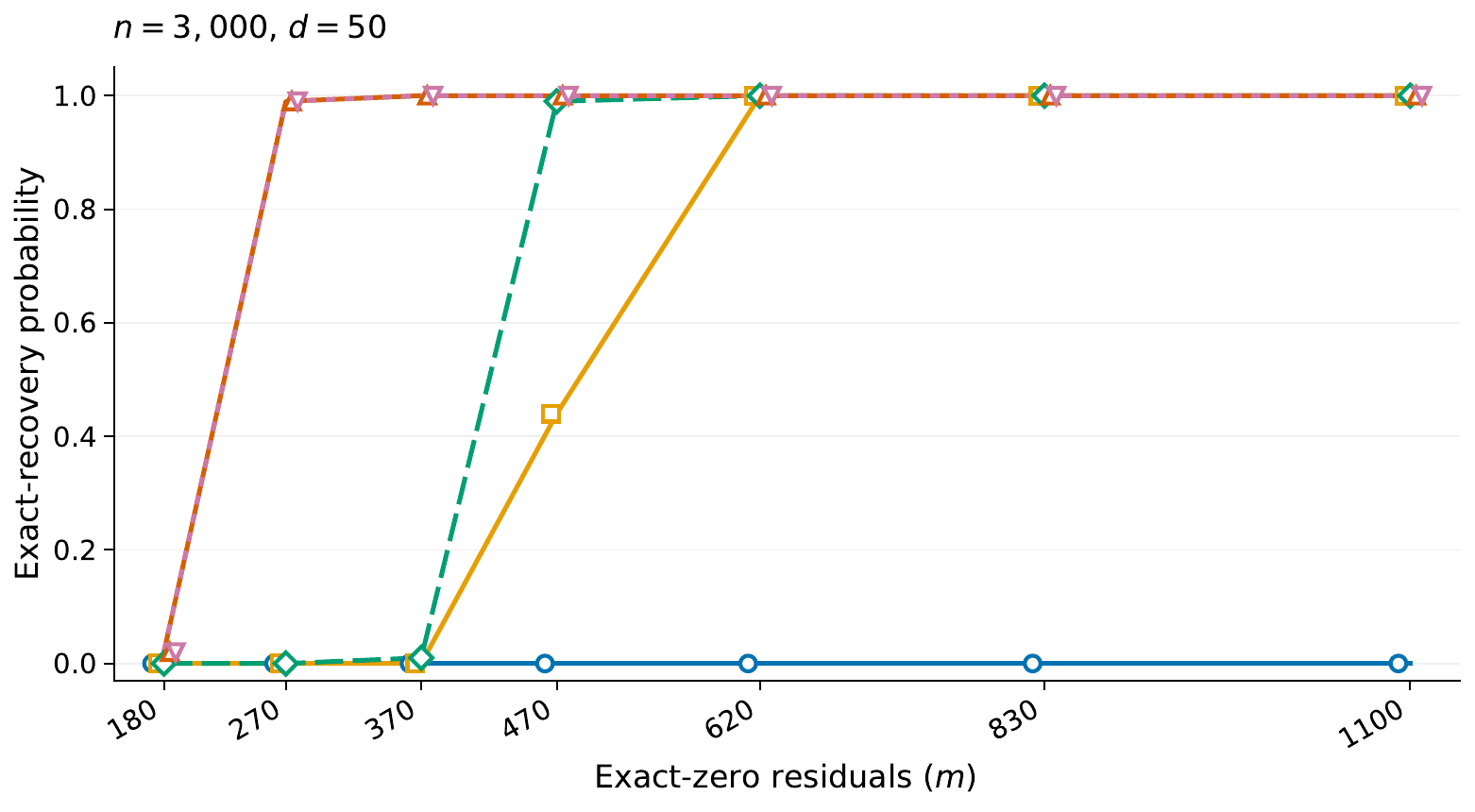}
    \caption{}
\end{subfigure}

{\caption{Empirical probability of recovery within $\norm{\wh\beta-\beta}_2 \le 10^{-5}$ (for 100 repetitions) for planted linear regression.}
\label{fig:planted-recovery}}
\end{figure}

The aggressive variant of $\operatorname{RB-Desc}$ performs even better than the standard variant. As discussed in our implementation details, this is because the aggressive variant considers more scales of $w$ violated (which may erroneously move in bad directions in general, but is helpful for stronger performance in the case of planted regression). While it is somewhat undesirable that these practical implementations of $\operatorname{RB-Desc}$ required substantial changes, we still believe this is an interesting demonstration of significant improvement relative to classical estimators. 

Lastly, we remark that LAD achieves exact recovery when $m$ is moderately large (consistent with the $m \gg \sqrt{nd}$ exact recovery guarantee given by \cite[Theorem 3.2]{gao2020model}), but OLS does not achieve exact recovery in these regimes. For some intuition, consider a toy 1-dimensional setting where $m$ samples are exactly $0$, and $n-m$ samples are $N(0,1)$. The empirical median (analogous to LAD) will output $0$ when $m \gg \sqrt{n}$, yet the empirical mean (analogous to OLS) will not output $0$ until $m = n$.

\subsection{Adaptive linear regression.}

We analyze the performance of OLS, LAD, $L_\infty$ regression, ASM, adaptive $L_q$, and $\operatorname{RB-Desc}$ on 5 symmetric noise distributions. The first coordinate of each $X_i$ is 1 (representing an intercept term), and the remaining coordinates are sampled from $N(0,I_{d-1})$. $\beta$ is sampled from the unit sphere. Each data point corresponds to 100 repetitions. 
For context, the average running time for Gaussian $N(0,1)$ errors with $d=10$ and $n=10{,}000$ was 0.00158 seconds for OLS, 0.526 for LAD, 0.455 for $L_\infty$,  10.3 for ASM, 3.67 for adaptive $L_q$, and 0.214 for $\operatorname{RB-Desc}$ (standard). 
The runtime is mostly similar for the other errors, except for in the location mixture simulation, the average runtime of $\operatorname{RB-Desc}$ is significantly larger (189.3 seconds).

We first explain why one should only expect $\operatorname{RB-Desc}$ and adaptive $L_q$ to have strong performance for some of these distributions. 

\begin{figure}[H]
\centering
\setcounter{subfigure}{0}

\begin{tabular}{@{}c@{\hspace{0.015\linewidth}}c@{}}

\makebox[0.48\linewidth][l]{%
\begin{subfigure}[t]{0.63\linewidth}
    \centering
    \includegraphics[width=\linewidth]{figs/adaptive/gaussian-error.pdf}
    \caption{}\label{fig:plot2-gaussian}
\end{subfigure}
}
&
\\[0.1cm]

\begin{subfigure}[t]{0.48\linewidth}
    \centering
    \includegraphics[width=\linewidth]{figs/adaptive/uniform-error.pdf}
    \caption{}\label{fig:plot2-unif}
\end{subfigure}
&
\setcounter{subfigure}{3}
\begin{subfigure}[t]{0.48\linewidth}
    \centering
    \includegraphics[width=\linewidth]{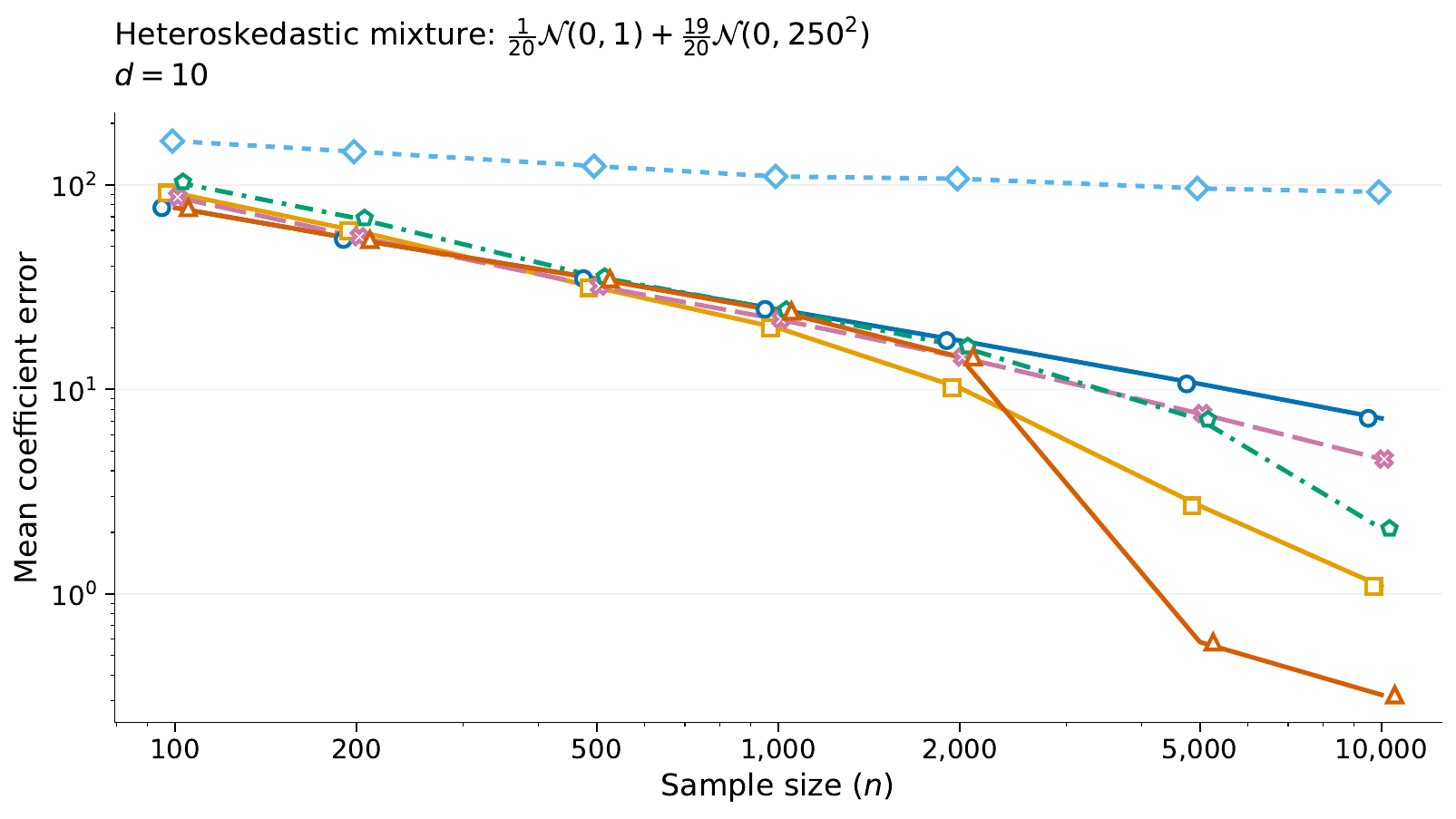}
    \caption{}\label{fig:plot2-heteroskedastic}
\end{subfigure}
\\[0.1cm]

\setcounter{subfigure}{2}
\begin{subfigure}[t]{0.48\linewidth}
    \centering
    \includegraphics[width=\linewidth]{figs/adaptive/smoothed-uniform-error.pdf}
    \caption{}\label{fig:plot2-smoothed-unif}
\end{subfigure}
&
\setcounter{subfigure}{4}
\begin{subfigure}[t]{0.48\linewidth}
    \centering
    \includegraphics[width=\linewidth]{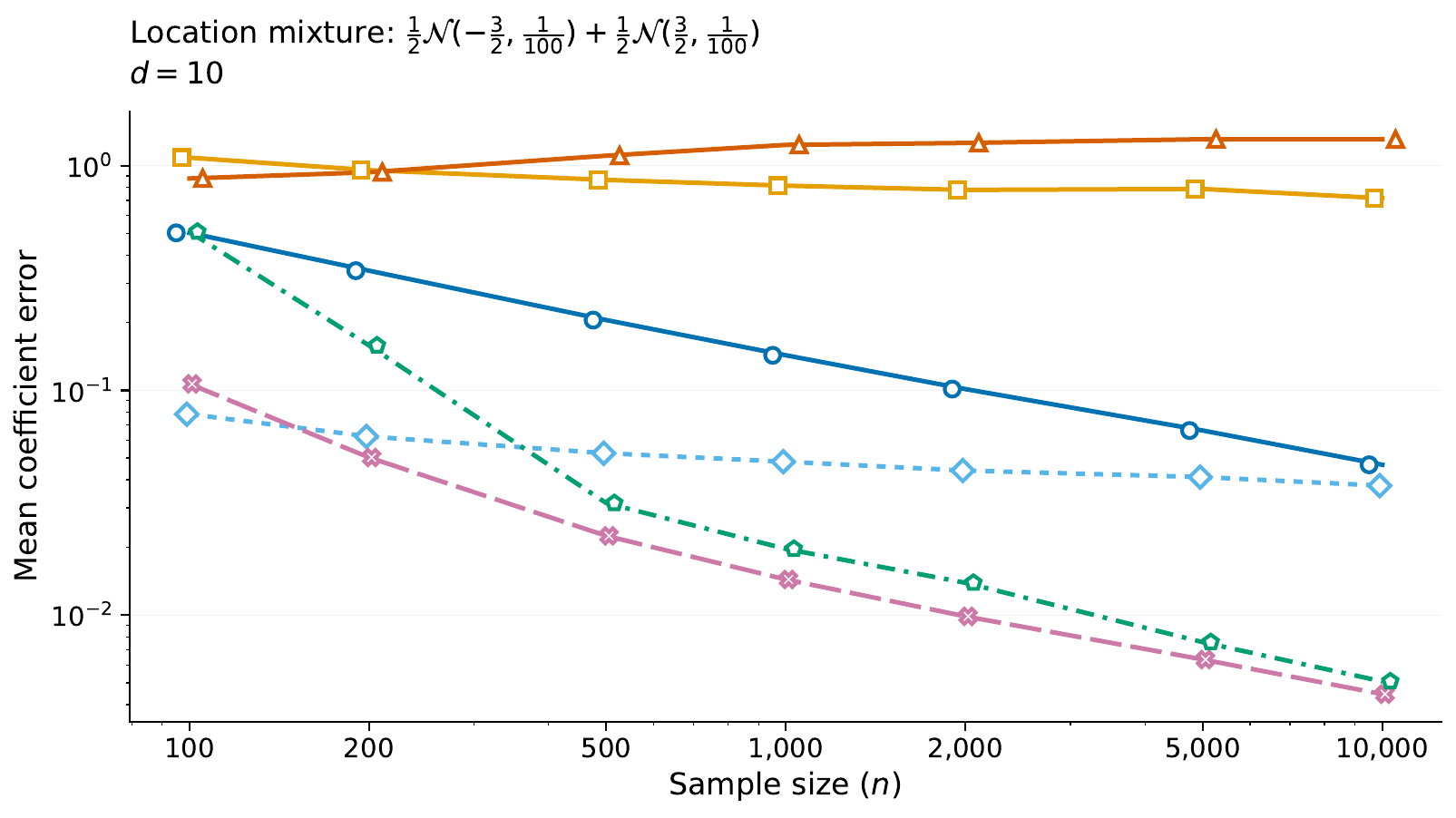}
    \caption{}\label{fig:plot2-location}
\end{subfigure}

\end{tabular}

\caption{Average $\norm{\wh\beta-\beta}_2$ error for adaptive linear regression.}
\label{fig:adaptive-plots-2}
\end{figure}

$\operatorname{RB-Desc}$ is designed for heteroskedastic estimation, and it mostly extracts signal by trying to leverage small ``bumps'' in the noise density near 0. It achieves nearly the best performance (like many estimators) for standard Gaussian noise (\cref{fig:plot2-gaussian}), and achieves the best performance for the two-component heteroskedastic mixture (\cref{fig:plot2-heteroskedastic}), where it has a big jump in performance once it has enough samples to detect the small-variance component. $\operatorname{RB-Desc}$ has suboptimal performance for uniform and smoothed uniform noise (\cref{fig:plot2-unif,fig:plot2-smoothed-unif}) where important signal is in the tails, since $\operatorname{RB-Desc}$ aims to leverage interior bumps and not tail information. Its performance for the location mixture (\cref{fig:plot2-location}) is predictably poor, since the noise distribution is not unimodal (and $\operatorname{RB-Desc}$ may unfortunately be trying to leverage bumps centered far away from 0).

On the other hand, adaptive $L_q$ is designed to leverage information in the tails. This is provably sufficient for near-optimal estimation with symmetric, log-concave noise (see \cref{thm:k1}), and this matches our empirical observations for the three such noise distributions (\cref{fig:plot2-gaussian,fig:plot2-unif,fig:plot2-smoothed-unif}). Adaptive $L_q$ achieves nearly the best performance (like many estimators) for standard Gaussian noise (\cref{fig:plot2-gaussian}); achieves strong performance for uniform noise (\cref{fig:plot2-unif}), where only $L_\infty$ has better performance (which is an MLE for uniform errors); and achieves the best performance for smoothed uniform noise (\cref{fig:plot2-smoothed-unif}), where our discussion in the introduction already indicated that OLS and $L_\infty$ should have suboptimal guarantees. Since adaptive $L_q$ leverages information in the tails, it is less able to leverage information in the interior of the noise distribution (this is also discussed in \cite{kao2024choosing}), and hence achieves middling performance on the heteroskedastic mixture (\cref{fig:plot2-heteroskedastic}). For the location mixture (\cref{fig:plot2-location}), the performance is only worse than the ASM estimator; we remark that the plot shows their errors approaching each other for large $n$, yet we expect that ASM might have superior performance in an even larger regime of $n$ if the variance in the mixture components was tuned differently.

In terms of general performance, the adaptive $L_q$ and ASM estimators seem to generally have the most versatile performance over these five simulations. We suspect the ASM estimator of \cite{feng2026optimal} may have comparatively stronger performance for asymmetric noise, and its strong performance in \cref{fig:plot2-location} gives an example of a type of signal that neither of our implemented estimators can leverage even with symmetric noise. More broadly, there is no single estimator that simultaneously achieved the best performance for all five noise distributions. We suspect that some version of our inefficient estimator (\cref{thm:main-adapt}) could leverage the signal in all of these distributions, yet we do not have a reasonable implementation of this estimator even for extremely small $d$ (the estimator requires a search over a prohibitively fine net). Designing a natural, practical estimator that can leverage the signal in all such distributions would be an interesting line of work.

Some of these noise distributions have been studied with similar simulation setups in previously-discussed related work. \cite{kao2024choosing} have linear regression simulations for Gaussian and uniform errors. \cite{feng2026optimal} have linear regression simulations corresponding to all five of these styles of noise distributions other than uniform noise (the location mixture we use is exactly their location mixture). \cite{compton2026attainability} have 1-dimensional mean estimation simulations corresponding to all five of these styles of noise distributions other than the location mixture; they also observe an analogous jump in improvement for heteroskedastic mixtures when $n$ becomes large enough to detect the small-variance component.

%% file: simple-gaussian.tex
\section{Simple heteroskedastic linear regression for standard Gaussian covariates}\label{sec:simple-gaussian}
The task of heteroskedastic regression is much simpler when the covariates are drawn according to $X_i \sim N(0,I_d)$ (and the errors are $\eps_i \sim N(0,\sigma_i^2)$). In this restricted setting, we observe that the reduction from linear regression to mean estimation used in the work of d’Orsi, Novikov, and Steurer   \cite[Section 4.2.1]{d2021consistent} is amenable to recent algorithms for heteroskedastic mean estimation. We will give brief intuition on how this works, but omit a rigorous analysis since this will be subsumed by our \cref{thm:main-het} for more general covariates; this is a standalone section and other parts of the paper do not use these techniques.

Consider a candidate $b \in \R^d$, and the difference $h = \beta - b$. Let us focus on the task of estimating $h_1$. In \cite{d2021consistent}, they observe how the following manipulation helps reduce the task to mean estimation:
\begin{equation}
    \frac{r_i(b)}{X_i(1)} = h_1 + \frac{\eps_i + \sum_{j=2}^d h_j \cdot X_i(j)}{X_i(1)}
\end{equation}
Moreover, conditioned on the value of $X_i(1)$, the distribution is exactly the Gaussian
\begin{equation}
    \frac{r_i(b)}{X_i(1)} \stackrel{d}= N\left(h_1, \frac{\sigma_i^2 + \norm{h_{-1}}_2^2}{X_i(1)^2}\right),
\end{equation}
where $h_{-1}$ denotes the vector without the first coordinate. Thus, if we condition on the values of $X_i(1)$, and only use the samples where $|X_i(1)| \ge 1$ (this is a constant fraction), then we immediately have a mean estimation problem with new variances bounded by $\sigma_i^2 + \norm{h_{-1}}_2^2$.

We now roughly outline how to get a Subset-of-Signals guarantee with this approach when  $m \gg d^{3/4}n^{1/4}$. Consider two regimes: (i) $\norm{h_{-1}}_2^2 > 1$ and (ii) $\norm{h_{-1}}_2^2 \le 1$.

For regime (i), using existing Subset-of-Signals heteroskedastic mean estimation guarantees (such as in \cite{compton2024near}) implies that we learn $h_1$ with error $\ll \norm{h_{-1}}_2 / \sqrt{d}$. If we simultaneously estimate every coordinate of $h$ in this manner, and adjust our estimate of $\beta$ accordingly by adding the estimated means, then the norm of the new $h$ will be at most, say, half of what it was originally. This is the same paradigm used in \cite{d2021consistent}, where all coordinates are estimated and improved iteratively. We continue doing so until we are in regime (ii).

In regime (ii), we now have $\approx m$ samples with variance bounded by a constant, and can run one last round of mean estimation with the existing Subset-of-Signals guarantees again; this will yield error $\approx (n/m^4)^{1/6}$ for each coordinate, implying total error
\[
\approx \sqrt{d} \left( \frac{n}{m^4} \right)^{1/6},
\]
which is the desired guarantee. Hence, the problem follows rather simply from mean estimation in the setting where covariates are sampled from a standard Gaussian. However, this approach does not seem conducive to handling more general covariates, and thus in \cref{sub:general-het} we design a new algorithm for heteroskedastic regression.

%% file: subset-signals-lb.tex
\section{Subset-of-Signals statistical lower bound}\label{sec:subset-lb}
\begin{theorem}[Subset-of-Signals lower bound]\label{thm:subset-lb}
There exist universal constants
$c,c_0>0$ such that the following holds.
Suppose $1 \le d,m \le n$. For every estimator $\widehat\beta=\widehat\beta\left((X_i,Y_i)_{i=1}^n\right)$ that is not given the standard-deviation vector, there exists a
$\boldsymbol\sigma=(\sigma_1,\ldots,\sigma_n)$ with at least $m$ values of $\sigma_i = 1$, and a $\beta \in \R^d$ such that, under 
\[
    X_i \sim N(0,I_d),
    \qquad
    Y_i=X_i^\top\beta+\varepsilon_i,
    \qquad
    \varepsilon_i\ \sim N(0,\sigma_i^2),
\]
it holds that
\[
\mathbb P
\left(
    \|\widehat\beta-\beta\|_2
    \ge c\,r_{n,d,m}
\right)
\ge \frac14,
\]
where
\[
r_{n,d,m}:=
\begin{cases}
\displaystyle
\frac{\sqrt n\,d^{3/2}}{m^2},
&
m<c_0 d^{3/4} n^{1/4},
\\[1.2ex]
\displaystyle
\sqrt{\frac dn}\left(\frac nm\right)^{2/3},
&
m\ge c_0 d^{3/4} n^{1/4}.
\end{cases}
\]
\end{theorem}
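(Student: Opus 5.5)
The plan is to reduce to a Bayesian, Fano-type argument in the spirit of \cref{lem:rank-info}, with a random (hidden) choice of which $m$ samples are good. Fix a noise level $s\ge 1$ to be tuned, put the prior $\Theta\sim N(0,\frac{r^2}{d}I_d)$ on $\beta$, and choose the variance vector at random: a uniformly random subset $S$ of size $m$ has $\sigma_i=1$ and the rest have $\sigma_i=s$. Since the estimator must work for every $\boldsymbol\sigma$, a lower bound under this joint prior (averaging over $S$) implies the existence of a bad $(\boldsymbol\sigma,\beta)$. Equivalently, conditionally on $X_i$ the labels are i.i.d.\ from the mixture $\frac mn N(X_i^\top\Theta,1)+(1-\frac mn)N(X_i^\top\Theta,s^2)$ (exchangeable vs.\ i.i.d.\ versions differ negligibly; I would work with the i.i.d.\ mixture $p_s$ and pass back by a Chernoff/coupling step, ensuring at least $m$ good indices by using $2m$ in the mixture weight). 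Then by \cref{lem:rank-info} with $J=\varnothing$ and $W=Y^n$, it suffices to show $I(\Theta;Y^n\mid X^n)\le d/2$, giving error $\gtrsim r$ with probability $\ge 3/8\ge 1/4$.

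The mutual information is bounded by $n\,\E_{\Theta,X}\kll{p_{s,X^\top\Theta}}{p_s}$, i.e.\ by $n$ times the expected KL between the mixture and its translate by $T=X^\top\Theta$, where $T\sim N(0,r^2)$ roughly. The key one-dimensional estimate: for $|t|\lesssim 1$, I will bound $\chi^2(p_{s,t}\|p_s)$ by splitting the numerator into the narrow and wide components and using $p_s\ge(1-\frac mn)N(0,s^2)$ in the denominator (as in the SQ proof). The wide component contributes $\lesssim t^2/s^2$; the narrow component contributes $\lesssim (\frac mn)^2 s\, \min\{t^2,1\}$ (Gaussian integral of a width-1 bump squared against a width-$s$ density, which yields a factor $s$). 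So
\[
I(\Theta;Y^n\mid X^n)\lesssim n\left(\frac{r^2}{s^2}+\frac{m^2 s}{n^2}\min\{r^2,1\}\right).
\]
Now optimize. In the regime $r\le 1$, choose $s=(n/m)^{2/3}$, balancing the two terms, so the bound is $\lesssim n r^2 (m/n)^{4/3}$; setting this $\asymp d$ gives $r\asymp\sqrt{d/n}\,(n/m)^{2/3}$, which is $\le 1$ exactly when $m\gtrsim d^{3/4}n^{1/4}$ — the second case. In the regime $m<c_0d^{3/4}n^{1/4}$, take $r\ge 1$ so the narrow term is $\frac{m^2 s}{n}$ and the wide term $\frac{nr^2}{s^2}$; choose $s\asymp c\,nd/m^2$ so the narrow term is $\le cd$, and then $r\asymp s\sqrt{d/n}\asymp \sqrt n d^{3/2}/m^2$, the first case. (One must check $s\ge 1$, which holds since $m\le\sqrt{nd}$ in that regime.)

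The main obstacle is the one-dimensional divergence bound when $|t|$ is not small relative to the narrow width, since $T$ is Gaussian and for $r\ge1$ shifts exceed 1: there I need the narrow-component term to saturate at $\frac{m^2 s}{n^2}$ rather than grow, which the chi-square computation gives since $\int N(t,1)^2/N(0,s^2)\lesssim s\,e^{t^2/(2s^2)\cdot O(1)}$, and $t\lesssim r\sqrt{\log}\ll s$ typically; heavy tails of $T$ must be handled by truncating $\Theta$ and $X$ (or bounding KL by $\log$ of the density ratio on a rare event) and bounding the wide-component chi-square $e^{t^2/s^2}-1$ using $r\lesssim s\sqrt{d/n}\ll s$.
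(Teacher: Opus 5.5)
Your argument is correct in outline, but it takes a more direct route than the paper. The paper never works with KL for this theorem. It invokes the general Hellinger-modulus bound \cref{thm:main-hel-lb}, whose proof needs the Hellinger-to-KL splitting (\cref{lem:splitting}) and the device of revealing exact measurements, because a general symmetric unimodal $p$ can have infinite KL between translates. It then only has to bound $H_{p_{q,\sigma}}(t)$ at one shift $|t|\le\sigma$ (\cref{lem:mixture}), using the same chi-square split you describe. The randomness of $T=X^\top\Theta$, including its tails, is handled inside \cref{thm:main-hel-lb} by the boundedness of Hellinger distance together with the growth estimate of \cref{lem:H-translation}.

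Gaussian mixtures have finite KL between translates, and it is controlled by chi-square. So you can instead apply \cref{lem:rank-info} with $J=\varnothing$ and $I(\Theta;Y^n\mid X^n)\le n\,\mathbb{E}\operatorname{KL}(p_{s,T}\| p_s)$. This is self-contained and avoids the splitting machinery. Your parameter choices $s=(n/m)^{2/3}$ and $s\asymp nd/m^2$ coincide with the paper's choices of $\sigma$ in \cref{lem:modulus}. The i.i.d.-mixture-plus-Chernoff averaging step is the same as the paper's.

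The price of your route is that you must control KL at every shift, not just one, and the step you flagged needs a specific fix.

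\emph{Why $\operatorname{KL}\le\chi^2$ fails.} Conditionally on $X$ one has $\mathbb{E}_T e^{T^2/s^2}=(1-2r^2\|X\|^2/(ds^2))^{-1/2}$. This is infinite on the positive-probability event $\|X\|^2\ge ds^2/(2r^2)$. The true $\chi^2(p_{s,t}\|p_s)$ does grow like $e^{t^2/s^2}$, because of the wide component. Hence $\mathbb{E}_{X,T}\chi^2=\infty$ in every regime, not only when $d\asymp n$.

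\emph{Why truncating $\Theta$ is awkward.} \cref{lem:rank-info} uses the exact Gaussian prior in its small-ball step, so truncation would require reworking that lemma.

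\emph{The clean fix.} Use $\operatorname{KL}\le\log(1+\chi^2)$. With $s\ge2$ and $p_s\ge\varphi_s/2$ you get $\chi^2(p_{s,t}\|p_s)\lesssim(q^2s+1)e^{t^2/s^2}$ for all $t$. In both regimes $q^2s\lesssim1$. Therefore $\operatorname{KL}\lesssim q^2s\min\{t^2,1\}+t^2/s^2$ uniformly in $t$. Averaging with $\mathbb{E}T^2=r^2$ and Jensen then gives your displayed bound rigorously.

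\emph{Two bookkeeping points.}
\begin{itemize}
\item With mixture weight $2m/n$ and $m=1$, the probability of having no good index is $(1-2/n)^n\approx e^{-2}>1/8$. Use weight $\kappa m/n$ for a large constant $\kappa$, as the paper does.
\item When $m\gtrsim n/\kappa$, this weight is no longer at most $1/2$ (and can exceed $1$), and $s$ would drop below $2$. In that range, fall back on the classical $\sqrt{d/n}$ bound with all $\sigma_i=1$.
\end{itemize}
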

\begin{proof}
This proof will follow by invoking the general Hellinger modulus lower bound of \cref{thm:main-hel-lb}, and then analyzing the Hellinger modulus for a 1-dimensional mixture with two variances (the latter is mostly calculations, and is not conceptually novel beyond related analyses in \cite{liang2020learning,han2026empirical}).

We first analyze the Hellinger distance for translations of such mixtures. 
Let
\[
\varphi_\tau(x):=\frac1{\sqrt{2\pi}\tau}
\exp\!\left(-\frac{x^2}{2\tau^2}\right)
\]
be the density of $N(0,\tau^2)$, and define
\[
p_{q,\sigma}:=q\varphi_1+(1-q)\varphi_\sigma.
\]
This density is symmetric and unimodal. We bound the distance between translations:

\begin{lemma}[Translation bound]\label{lem:mixture}
For
\(0<q\le1/2\), \(\sigma\ge2\), and \(\abs{t}\le\sigma\),
\[
H_{p_{q,\sigma}}(t)
\lesssim
\left[
q^2\sigma\min\{t^2,1\}
+\frac{t^2}{\sigma^2}
\right].
\]
\end{lemma}

\begin{proof}
We use
\[
\hels(f,g)\lesssim \chi^2(g\|f)
=\int\frac{(g(x)-f(x))^2}{f(x)}.
\]
Since \(p_{q,\sigma}\ge(1-q)\varphi_\sigma\ge\varphi_\sigma/2\),
\[
H_{p_{q,\sigma}}(t)\lesssim \int \frac{\left(q\left(\varphi_1(x-t) - \varphi_1(x) \right) + \left(\varphi_\sigma(x-t) - \varphi_\sigma(x) \right)\right)^2}{\varphi_\sigma(x)} \lesssim q^2A_\sigma(t)+B_\sigma(t),
\]
where
\[
A_\sigma(t):=
\int\frac{(\varphi_1(x-t)-\varphi_1(x))^2}{\varphi_\sigma(x)}\,dx,
\qquad
B_\sigma(t):=
\int\frac{(\varphi_\sigma(x-t)-\varphi_\sigma(x))^2}{\varphi_\sigma(x)}\,dx.
\]
The value of $B_\sigma(t)$ has the closed-form
\[
B_\sigma(t)
=
\chi^2\!\left(N(t,\sigma^2)\,\middle\|\,N(0,\sigma^2)\right)
=e^{t^2/\sigma^2}-1
\lesssim \frac{t^2}{\sigma^2},
\]
using \(\abs t\le\sigma\).
For the $A_\sigma(t)$ term, it will be helpful to write \(z=x-u\) and bound
\begin{align*}
\frac{\varphi_1(x-u)^2}{\varphi_\sigma(x)}
&=
\frac{\sigma}{\sqrt{2\pi}}
\exp\!\left(-z^2+\frac{(z+u)^2}{2\sigma^2}\right) \\
&\le
\frac{\sigma}{\sqrt{2\pi}}
\exp\!\left(-z^2+\frac{z^2}{\sigma^2}
+\frac{u^2}{\sigma^2}\right) \\
& \lesssim \sigma e^{-3z^2/4}. &&\rtag{via \(\sigma\ge2\) and \(|u|\le\sigma\)} 
\end{align*}
Consequently, for \(|u|\le\sigma\),
\begin{equation}\label{eq:J-I-bounds}
\int\frac{\varphi_1(x-u)^2}{\varphi_\sigma(x)}\,dx\lesssim \sigma,
\qquad
\int\frac{\varphi_1'(x-u)^2}{\varphi_\sigma(x)}\,dx\lesssim \sigma,
\end{equation}
because \(\varphi_1'(x-u)=-(x-u)\varphi_1(x-u)=-z\varphi_1(z)\), and both
\(\int e^{-3z^2/4}dz\) and \(\int z^2e^{-3z^2/4}dz\) are finite
constants.

If \(|t|\le1\), using Cauchy-Schwarz gives,
for each \(x\),
\[
\bigl(\varphi_1(x-t)-\varphi_1(x)\bigr)^2
\le
|t|\int_0^{|t|}
\varphi_1'\!\left(x-\operatorname{sgn}(t)u\right)^2\,du.
\]
Divide by \(\varphi_\sigma(x)\), integrate in \(x\), and use
\eqref{eq:J-I-bounds}:
\[
A_\sigma(t)
\le
|t|\int_0^{|t|}
\left[
\int
\frac{\varphi_1'(x-\operatorname{sgn}(t)u)^2}{\varphi_\sigma(x)}\,dx
\right]du
\lesssim  \sigma t^2.
\]
Otherwise, when $1 < |t|\le\sigma$, then \((a-b)^2\le2a^2+2b^2\) and
\eqref{eq:J-I-bounds} give
\[
A_\sigma(t)
\le
2\int\frac{\varphi_1(x-t)^2}{\varphi_\sigma(x)}\,dx
+
2\int\frac{\varphi_1(x)^2}{\varphi_\sigma(x)}\,dx
\lesssim \sigma.
\]
Combining the two cases yields
\[
A_\sigma(t)\le C\sigma\min\{t^2,1\}.
\]
This gives the desired guarantee.
\end{proof}

This enables the Hellinger modulus bound:
\begin{lemma}[Piecewise Hellinger modulus bound]\label{lem:modulus}
There exists a universal constant $C>0$ such that, for every
\(0<a<1\) and \(0<q\le1/2\), one can choose \(\sigma\ge2\) for which
\[
\omega_{p_{q,\sigma}}(Ca)\ge 
\begin{cases}
2a^{3/2}q^{-2},&q\le a^{3/4},\\[0.5ex]
a^{1/2}q^{-2/3},&q\ge a^{3/4}.
\end{cases}
\]
\end{lemma}

\begin{proof}
\noindent \paragraph{Case (i) $q\ge a^{3/4}$.}  Choose
\[
\sigma=2q^{-2/3},
\qquad
t=a^{1/2}q^{-2/3}.
\]
This satisfies $\sigma \ge 2$ and $t \le 1$; invoking \cref{lem:mixture} yields
\[
H_{p_{q,\sigma}}(t)
\lesssim  \left(q^2\sigma t^2+t^2/\sigma^2\right)
\le Ca.
\]

\paragraph{Case (ii): $q\le a^{3/4}$.}  Choose
\[
\sigma=2aq^{-2},
\qquad
t=\sigma\sqrt a
= 2a^{3/2}q^{-2}.
\]
This satisfies $\sigma \ge 2$ and $t \le \sigma$; invoking \cref{lem:mixture} yields
\begin{equation*}
H_{p_{q,\sigma}}(t)
\lesssim \left(q^2\sigma\min\{t^2,1\}+t^2/\sigma^2\right)
\lesssim a.\qedhere
\end{equation*}
\end{proof}

\paragraph{Concluding the lower bound.} 
Let $\kappa \ge 1$ be a sufficiently large universal constant.
We first focus on the case where \(m\le n/(2\kappa)\), and set
\[
q:=\kappa\frac mn\le\frac12.
\]
Independently for each observation, draw a hidden scale label:
the noise standard deviation is \(1\) with probability \(q\) and is \(\sigma\) otherwise.
When the labels are hidden, the errors are i.i.d. \(p_{q,\sigma}\).

Let
\[
a:= \frac{c_1}{C_2}\frac dn,
\]
where \(c_1\) is the constant in \cref{thm:main-hel-lb} and $C_2$ is the constant in \cref{lem:modulus}. Choose \(\sigma\) according to \cref{lem:modulus} with parameter $a$. \cref{thm:main-hel-lb} then implies that with probability at least $3/8$, any estimator incurs error of order
\begin{equation}
\begin{cases}
a^{3/2}q^{-2},&q\le a^{3/4},\\
a^{1/2}q^{-2/3},&q\ge a^{3/4}.
\end{cases}
\label{eq:q-rate}
\end{equation}

We now relate this error in the i.i.d. setting to the Subset-of-Signals setting where at least $m$ scale labels are $1$.
If \(N_1\sim\mathrm{Bin}(n,q)\) denotes the number of observations with $\sigma_i = 1$,
then \(\E N_1=\kappa m\), and a Chernoff bound gives
\[
\Pp(N_1<m)\le e^{-c\kappa m}\le\frac{1}{8}.
\]
for sufficiently large $\kappa$.
For any estimator, the failure probability on i.i.d. samples from $p_{q,\sigma}$, is the average of its failure probabilities
over the hidden scale vectors. Since our use of \cref{thm:main-hel-lb} gave a failure probability of at least $\frac{3}{8}$, then subtracting the event \(\{N_1<m\}\) shows that there exists a fixed scale vector with
at least \(m\) entries having variance \(1\), where the estimator fails with probability at least $\frac{3}{8}-\frac{1}{8} = \frac{1}{4}$.

We now substitute our values into the error radius given by \eqref{eq:q-rate}. Since \(q\asymp m/n\) and \(a\asymp d/n\), the two quantities in
\eqref{eq:q-rate} are
\[
a^{3/2}q^{-2}
\asymp
\left(\frac dn\right)^{3/2}\left(\frac nm\right)^2
=
\frac{\sqrt n\,d^{3/2}}{m^2},
\]
and
\[
a^{1/2}q^{-2/3}
\asymp
\sqrt{\frac dn}\left(\frac nm\right)^{2/3}.
\]
The branching point \(q\asymp a^{3/4}\) is equivalent to
\[
m\asymp d^{3/4} n^{1/4}.
\]
Changing the branch point by a fixed multiplicative constant changes the displayed piecewise
rate by only a fixed multiplicative constant, so this proves the theorem for
\(m\le n/(2\kappa)\).

For the \(m>n/(2\kappa)\) case, we will simply consider the instance where all variances are equal to 1.  The classical lower bound in this setting is \(c\sqrt{d/n}\) (this would also follow from invoking \cref{thm:main-hel-lb}).  By choosing \(c_0\) in the theorem statement sufficiently
small, the transition \(c_0 d^{3/4} n^{1/4}\) lies below \(n/(2\kappa)\).  Hence, this case corresponds to the second branch, and
\[
\sqrt{\frac dn}\left(\frac nm\right)^{2/3}
\le (2\kappa)^{2/3}\sqrt{\frac dn}.
\]
Thus, this lower bound yields our desired result.
\end{proof}

%% file: additional-sims.tex
\section{Additional simulation figures}\label{sec:extra-sims}
In this section, we provide additional simulation figures for planted linear regression. We find these figures less insightful than \cref{fig:planted-recovery}, but we still include them for completeness.
The experiment setup is the same as \cref{subsec:planted-sims}, and each data point corresponds to 100 repetitions. 

\begin{figure}[H]
\centering
\begin{subfigure}{.32\linewidth}
    \centering
    \includegraphics[width=\linewidth]{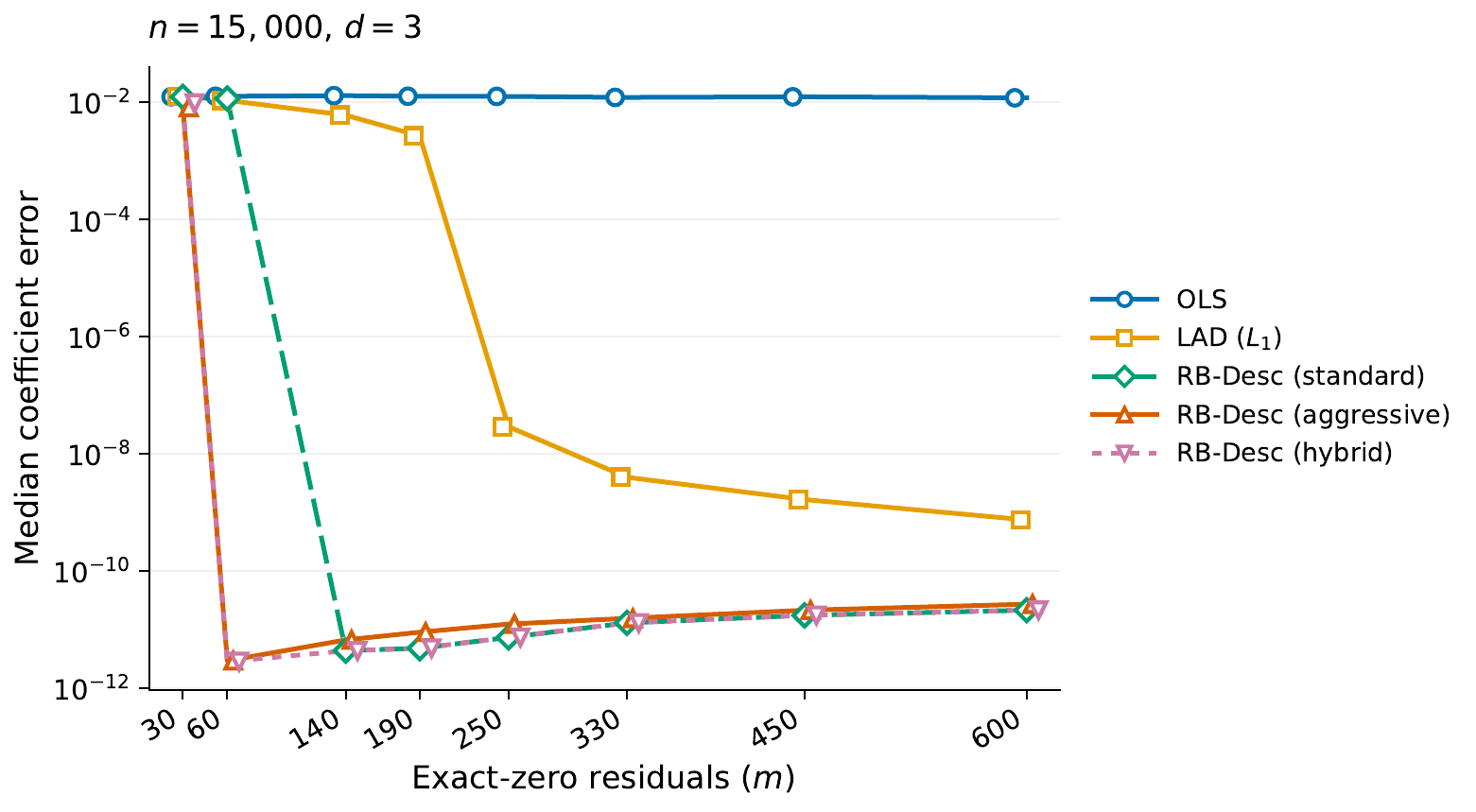}
    \caption{}
\end{subfigure}
    \hfill
\begin{subfigure}{.32\linewidth}
    \centering
    \includegraphics[width=\linewidth]{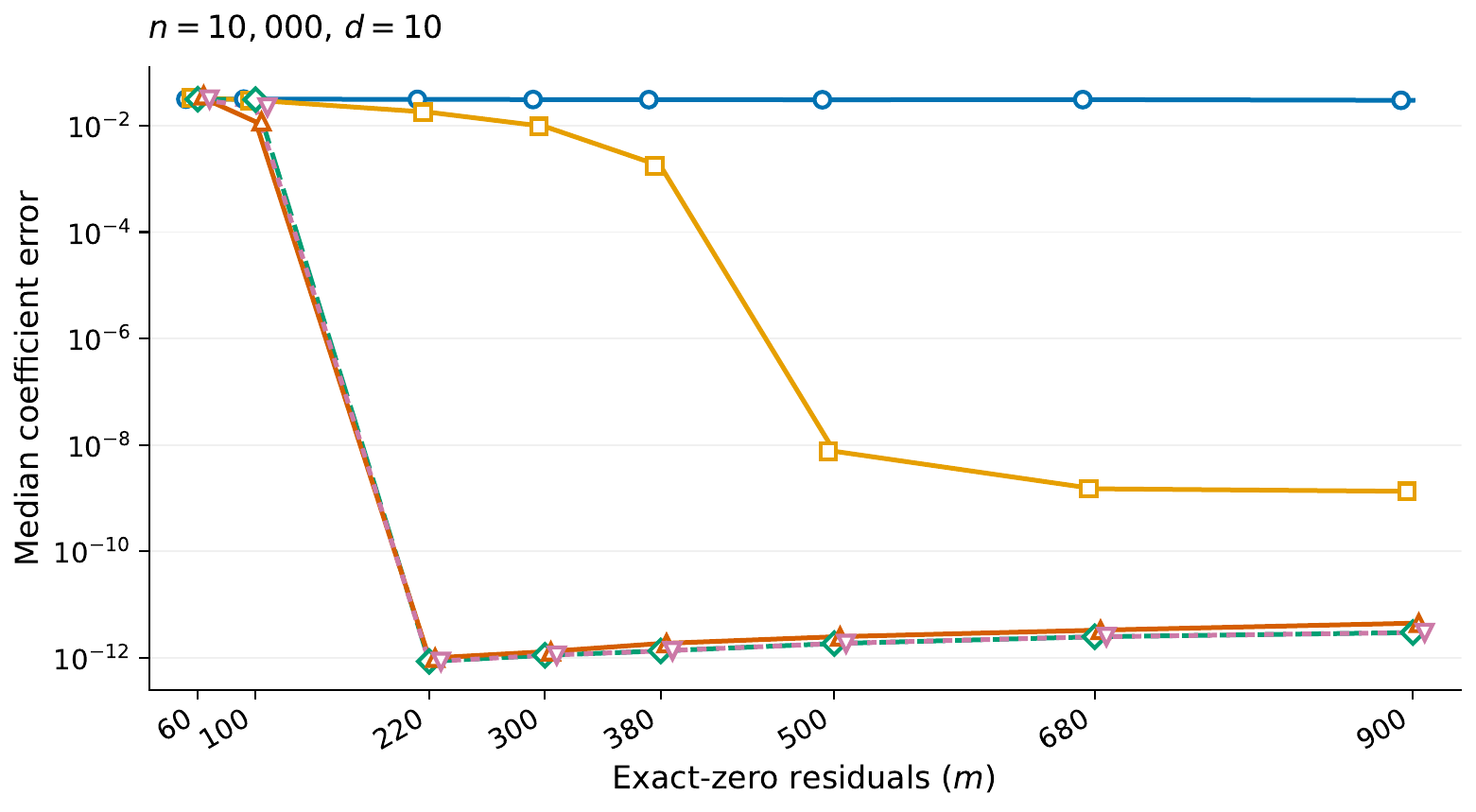}
    \caption{}
\end{subfigure}
   \hfill
\begin{subfigure}{.32\linewidth}
    \centering
    \includegraphics[width=\linewidth]{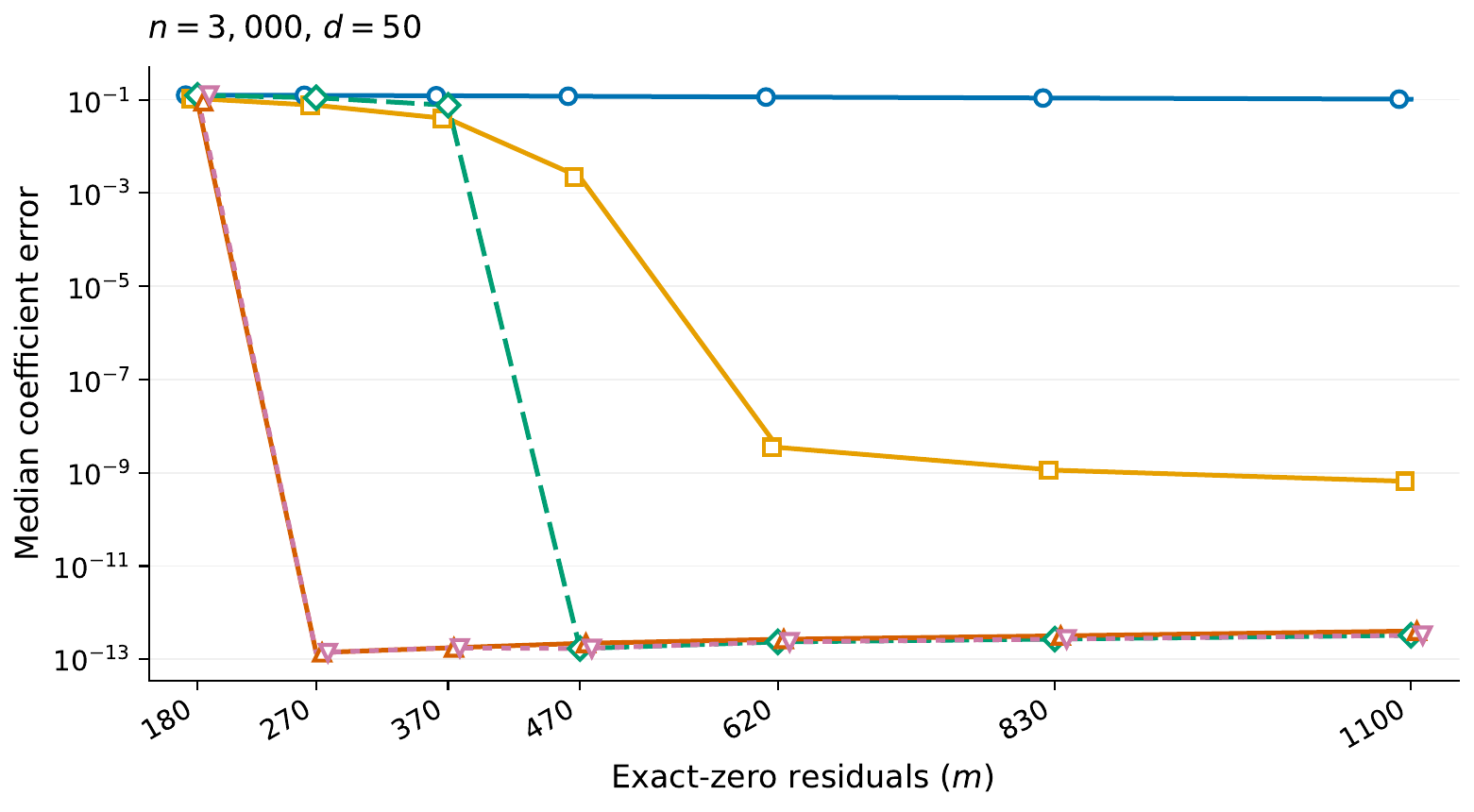}
    \caption{}
\end{subfigure}

{\caption{Median $\norm{\wh\beta-\beta}_2$ error for planted linear regression. }
\label{fig:planted-error}}
\end{figure}

\begin{figure}[H]
\centering
\begin{subfigure}{.32\linewidth}
    \centering
    \includegraphics[width=\linewidth]{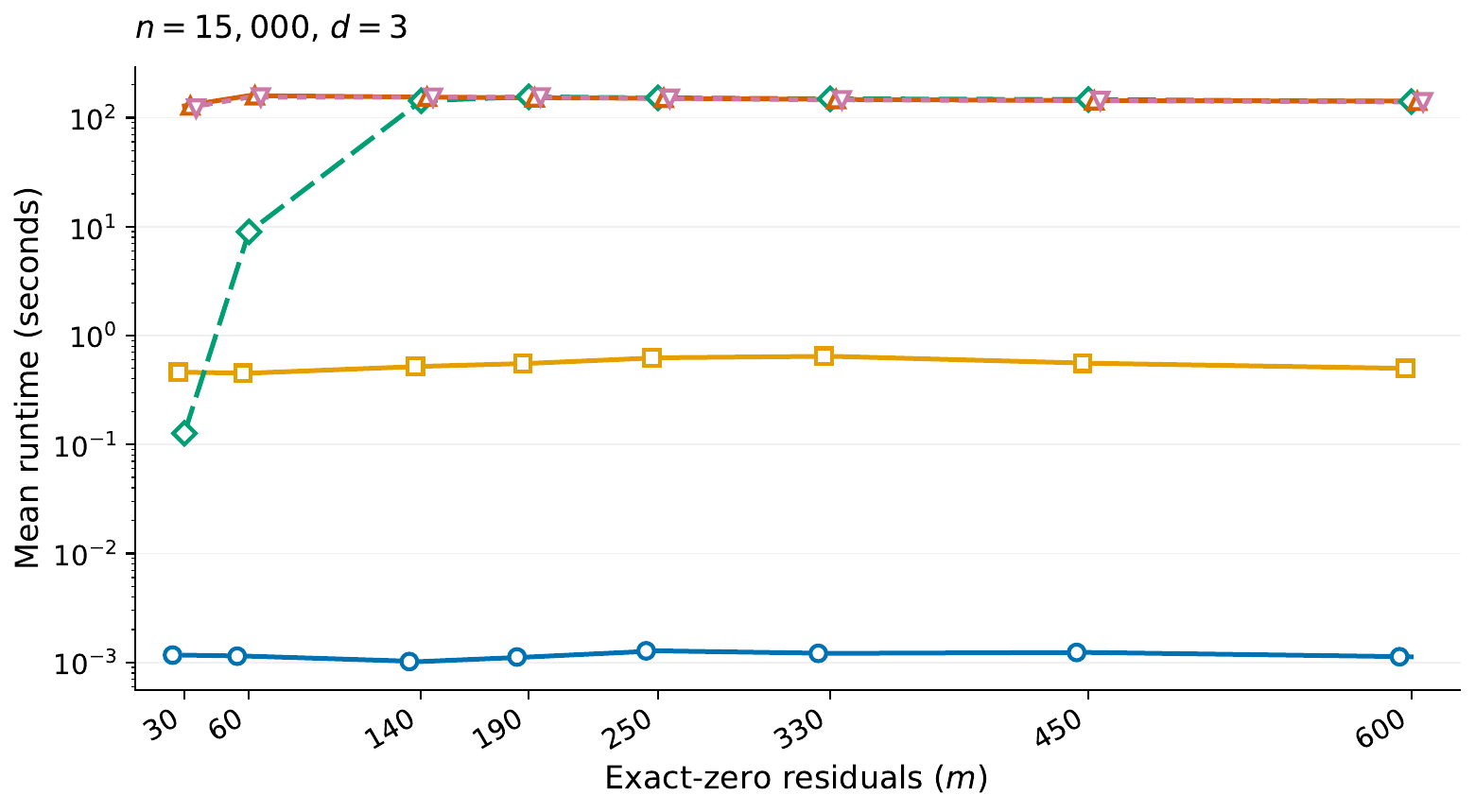}
    \caption{}
\end{subfigure}
    \hfill
\begin{subfigure}{.32\linewidth}
    \centering
    \includegraphics[width=\linewidth]{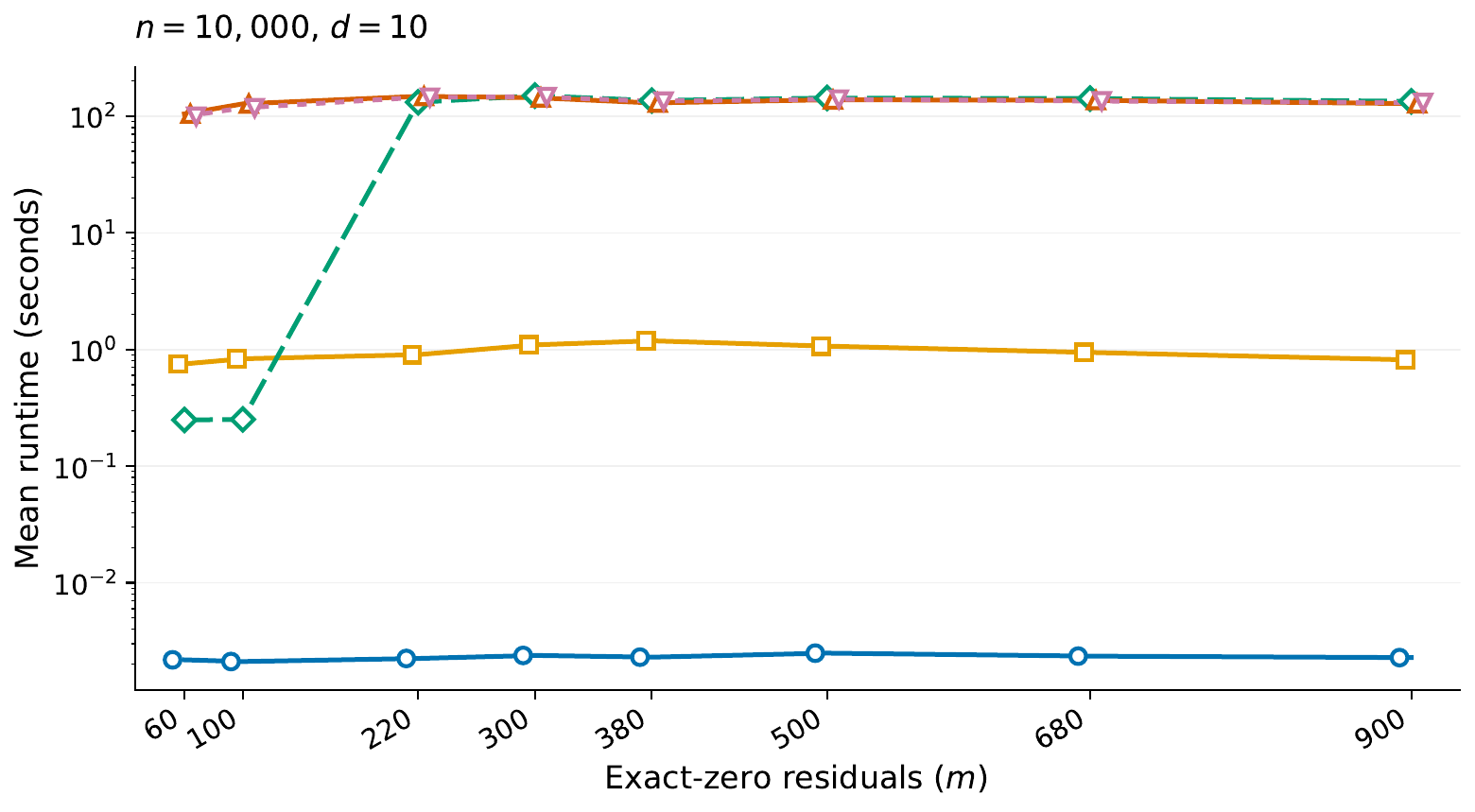}
    \caption{}
\end{subfigure}
   \hfill
\begin{subfigure}{.32\linewidth}
    \centering
    \includegraphics[width=\linewidth]{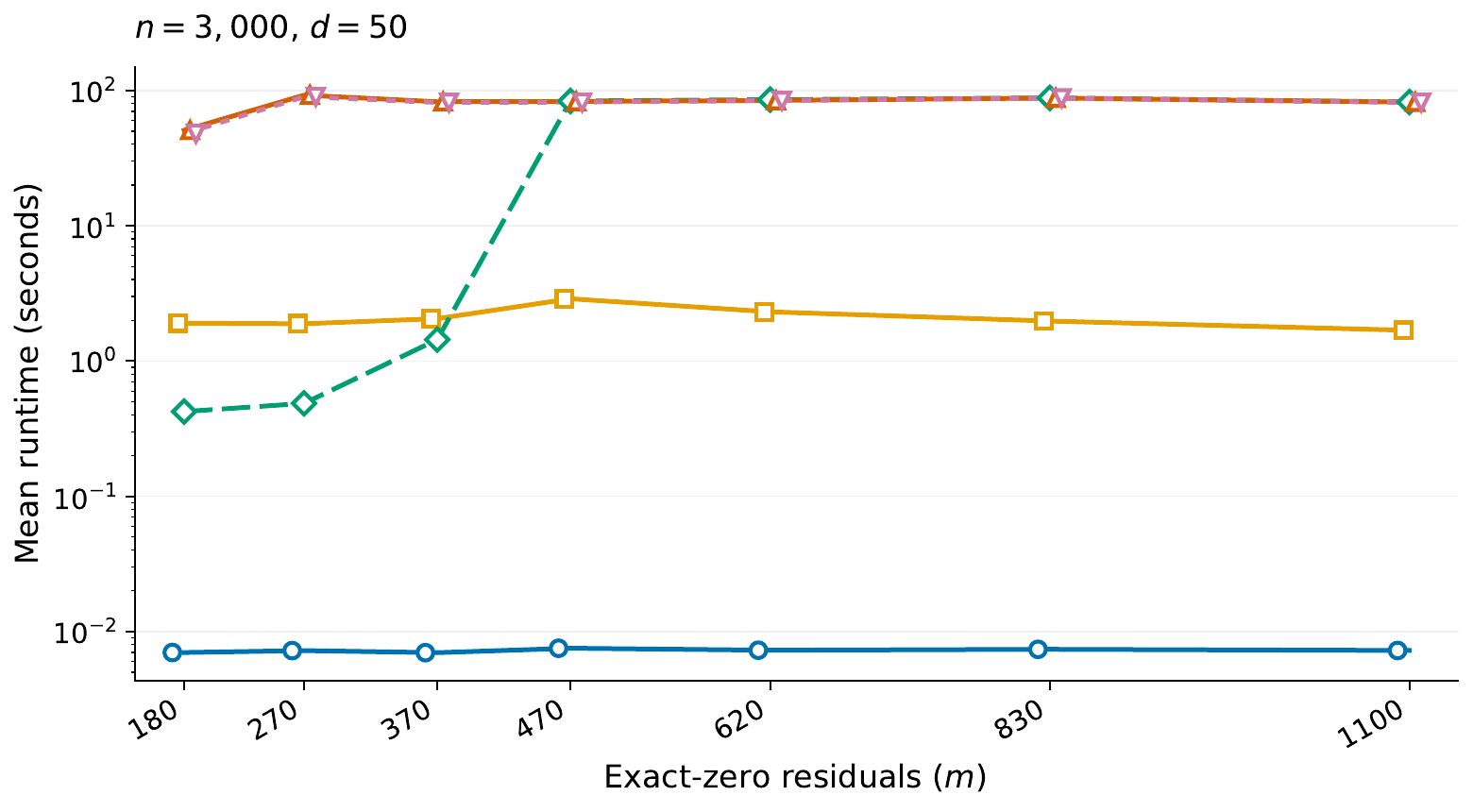}
    \caption{}
\end{subfigure}

{\caption{Average runtime for planted linear regression.}
\label{fig:planted-runtime}}
\end{figure}

\cref{fig:planted-error} illustrates the median error of the estimators as $m$ varies. 
The differences in median error between LAD and $\operatorname{RB-Desc}$ in the large-$m$ regime are likely only due to numerical precision differences. Plotting the median error seemed more informative than the mean error, as the latter was functionally a (harder to read) transformation of the likelihood of exact recovery (seen in \cref{fig:planted-recovery}). \cref{fig:planted-runtime} plots the average runtime of the estimators as $m$ varies.

%% file: unif-conv.tex
\section{Proof of uniform convergence results}
\subsection{Proof of \cref{prop:sample-event}}\label{app:empirical-process}
In this section, we use relatively standard techniques to prove the desired uniform convergence result. We begin by stating a VC-style result (\cref{lem:appendix-vc}), then in \cref{subsec:residual-specialization} we show how this implies our desired uniform convergence result (\cref{prop:sample-event}), and finally in \cref{subsec:selected-sum-proof} we prove \cref{lem:appendix-vc}.

\begin{lemma}[VC-selected counts and feature sums]\label{lem:appendix-vc}
Let $W_i=(X_i,Y_i)$ be independent random elements of $\R^d\times\R$.
Assume that $X_1,\ldots,X_n$ are i.i.d. copies of a covariate $X$; the conditional laws of $Y_i$
given $X_i$ may differ with $i$.  Let $\calA$ be a pointwise measurable class of
measurable subsets of $\R^{d+1}$ with VC dimension at most $V\ge1$.  For
$A\in\calA$, define
\[
 N(A)=\sum_{i=1}^n\one\{W_i\in A\},\qquad
 Q(A)=\sum_{i=1}^n\Pp(W_i\in A),
\]
and
\[
 Z(A)=\sum_{i=1}^n
 \bigl(X_i\one\{W_i\in A\}-\E[X_i\one\{W_i\in A\}]\bigr).
\]
For $\varepsilon\in(0,1/2)$, put
\[
 \ell_\varepsilon=\max\{1,\log(en/\varepsilon)\},
 \qquad D=d+V.
\]
There is a universal constant $C$ such that, with probability at least
$1-\varepsilon$, simultaneously for all $A\in\calA$,
\begin{equation}
 |N(A)-Q(A)| \le C\bigl(\sqrt{VQ(A)\ell_\varepsilon}+V\ell_\varepsilon\bigr). \label{eq:appendix-relative-count}
\end{equation}
Further, if $X$ satisfies 
\eqref{eq:well-conditioned} and \eqref{eq:subg}, then also simultaneously for all $A \in \calA$,
\begin{equation}
 \norm{Z(A)}_2 \le C_{K,\kappa}\ell_\varepsilon^2
 \bigl(\sqrt{D(N(A)+D)}+D\bigr).
 \label{eq:appendix-vector-sum}
\end{equation}
\end{lemma}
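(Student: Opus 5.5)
The two bounds will be handled separately: \eqref{eq:appendix-relative-count} is a relative (Bernstein-type) VC inequality, and \eqref{eq:appendix-vector-sum} reduces to it through a net over directions and a truncation of $X$.

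\textbf{Relative count bound.} This is the classical relative-deviation inequality for VC classes under independent but non-identical sampling. I would symmetrize with a ghost sample $W_i'$ (independent copies with the same laws) and Rademacher signs. Conditional on the $2n$ points, the Sauer--Shelah lemma gives at most $(2en/V)^V$ distinct traces of $\calA$. For each trace, Bernstein's inequality (or Hoeffding on the signed sum restricted to the points falling in $A$) bounds $\sum_i \sigma_i(\one\{W_i\in A\}-\one\{W_i'\in A\})$ by $\sqrt{(N(A)+N'(A))\log(\cdot)}+\log(\cdot)$. A union bound over the traces turns $\log(\cdot)$ into $V\log(en/V)+\log(1/\varepsilon)\lesssim V\ell_\varepsilon$. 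Desymmetrizing then bounds $N(A)-N'(A)$, and a standard argument compares $N'(A)$ to $Q(A)$ for a fixed $A$ via Bernstein. Solving the resulting quadratic inequality in $\sqrt{N}$ versus $\sqrt{Q}$ gives $|N-Q|\lesssim\sqrt{VQ\ell}+V\ell$, which is the Vapnik--Chervonenkis relative-deviation form. The non-identical laws are harmless because symmetrization only uses independence.

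\textbf{Vector sum bound.} First, on an event of probability $1-\varepsilon/3$, I would truncate: $\max_i|\ip{u}{X_i}|\lesssim K\sqrt{\kappa}\sqrt{\ell_\varepsilon+d}$ for all $u$ in a $1/2$-net $\calN$ of the sphere with $|\calN|\le 5^d$. The truncation bias $\E[X\one\{|\ip{u}{X}|>T\}]$ is exponentially small by sub-Gaussianity.

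Next, for fixed $u\in\calN$, I would write $\ip{u}{Z(A)}=\int_0^\infty(\cdot)$ by a layer-cake decomposition in $\ip{u}{X}$ at dyadic thresholds $s_j=2^{-j}T$ up to scale $T$. Each piece $\sum_i \one\{W_i\in A,\ \ip{u}{X_i}>s\}$ is a count over the class $\calA\cap\{\text{halfspace in }x\}$, whose VC dimension is $\lesssim V+d=D$. Applying the relative count bound from the first step to this enlarged class, uniformly over all halfspaces (so the net union bound is absorbed), gives deviations $\sqrt{D\,Q(A\cap H_s)\ell}+D\ell$.

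Summing over the $O(\ell)$ dyadic levels, with weights $s_j\le T\lesssim\sqrt{\ell}$ per level, and noting that the $\sqrt{d}$ part of $T$ is controlled because $\E\ip{u}{X}^2\le\kappa$ limits the mass at large levels, yields $|\ip{u}{Z(A)}|\lesssim\ell^2(\sqrt{D\,Q(A)}+D)$. I would then replace $Q(A)$ by $N(A)+D\ell$ using the first bound. The standard net argument, $\norm{Z}_2\le 2\max_{u\in\calN}\ip{u}{Z}$, finishes the proof.

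\textbf{Main obstacle.} The delicate point is the second step: getting $\sqrt{D N(A)}$ rather than $\sqrt{D N(A)}\cdot\sqrt{d}$, i.e. avoiding the crude bound $|\ip{u}{X_i}|\le\sqrt{d}$. The layer-cake decomposition against halfspaces, combined with the sub-Gaussian tail making the level-$s$ counts decay like $e^{-s^2/K^2}$, is what I would try first. Any leftover polylogarithmic loss is absorbed into the $\ell_\varepsilon^2$ factor.
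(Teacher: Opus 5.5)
Your relative-count step is fine. Ghost-sample symmetrization with Sauer--Shelah and Hoeffding on each trace is the classical Vapnik--Chervonenkis relative-deviation argument, and it uses only independence. The paper takes a different route: it applies the Cattaneo--Feng--Underwood i.n.i.d.\ maximal inequality to the localized classes $\{A\in\calA: Q(A)\le q\}$, upgrades to high probability with Adamczak's inequality, and peels over dyadic $q$.

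The gap is in the vector-sum step. The additive term $D\ell$ in the relative count bound does not decay with the level $s$; it is present even for sets $A\cap H_s$ of negligible mass. Integrating the per-level deviation $\sqrt{D\,Q(A\cap H_s)\ell}+D\ell$ over $s\in[0,T]$ therefore leaves an additive error $TD\ell$. Your truncation must hold simultaneously for all net directions, and this forces $T\gtrsim\sqrt d$: a $1/2$-net contains, for each $i$, a direction $u$ with $\ip{u}{X_i}\ge\norm{X_i}_2/2$, and typically $\norm{X_i}_2\asymp\sqrt d$. You therefore end up with $D\ell\sqrt{d+\ell}$, which exceeds the claimed $\ell^2 D$ by a factor of about $\sqrt d/\ell$ whenever $d\gg\ell^2$. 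The step ``$s_j\le T\lesssim\sqrt\ell$'' contradicts your own choice $T\asymp\sqrt{\ell+d}$. The remark that $\E\ip{u}{X}^2\le\kappa$ limits the mass at large levels only controls $Q(A\cap H_s)$, that is, the $\sqrt{DQ\ell}$ part. So the real obstacle is the additive term at high levels, not the $\sqrt{DN(A)}$ term you flagged.

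The repair is to cut at $s_0\asymp\sqrt{\ell}$ rather than at $\sqrt{d+\ell}$.
\begin{itemize}
\item On $[0,s_0]$, run your layer-cake with the relative bound applied uniformly to $\calA$ intersected with slabs $\{s<\pm\ip{u}{x}\le s_0\}$. This gives $s_0(\sqrt{DQ(A)\ell}+D\ell)\asymp\ell\sqrt{DQ(A)}+D\ell^{3/2}$.
\item For the overflow, drop the supremum over $A$. With $Y_i=|\ip{u}{X_i}|\one\{|\ip{u}{X_i}|>s_0\}$, the overflow part of $\ip{u}{Z(A)}$ is bounded by $\sum_i(Y_i+\E Y_i)$ for every $A$.
\item Apply Bernstein's inequality in moment form to these $Y_i$. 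Once the constant in $s_0$ is large, $n\E Y_i^2\lesssim 1$ and the Bernstein scale is $\asymp s_0$. At confidence $e^{-c(d+\ell)}$ this gives $O(s_0(d+\ell))\lesssim D\ell^{3/2}$ per direction, which survives the union bound over the $5^d$ net points.
\end{itemize}
This is exactly what Adamczak's inequality does in the paper. There the net is paid for through $u=c(d+\ell)$ multiplying a $\psi_1$-envelope of size $\sqrt{\log(en)}$ for a fixed direction, instead of through a truncation level of size $\sqrt{d+\ell}$. With this change your argument reaches the paper's intermediate bound $\norm{Z(A)}_2\lesssim\ell\sqrt{D(Q(A)+1)}+D\ell^{3/2}$. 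Your final substitution of $N(A)$ for $Q(A)$ then goes through as you describe.
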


\subsubsection{Concluding \cref{prop:sample-event}}\label{subsec:residual-specialization}

\begin{proof}[Proof of \cref{prop:sample-event}]
Let $W_i=(X_i,y_i)$, and let $\mathcal A$ be the union of the following three
residual-set classes, with $b\in B_2(B)$ and $w>0$ varying over their full
ranges:
\[
\begin{split}
 &\{(x,y):\abs{y-x^\top b}\le w\},\\
 &\{(x,y):0<y-x^\top b\le w\},\qquad
   \{(x,y):-w\le y-x^\top b<0\}
\end{split}
\]
Since each set is the intersection of two halfspaces in $\R^{d+1}$, then $\operatorname{VC}(\mathcal A)\le Cd$. 
By applying \cref{lem:appendix-vc} to $\mathcal A$ with failure probability
$\delta/2$, then for every $A\in\mathcal A$,
\begin{align}
 |N(A)-Q(A)|
 &\le C\left(\sqrt{d\ell_{\delta/2} Q(A)}+d\ell_{\delta/2}\right),
 \label{eq:residual-count-input}\\
 \norm{Z(A)}_2
 &\lesssim \ell_{\delta/2}^2
 \left(\sqrt{d(N(A)+d)}+d\right).
 \label{eq:residual-vector-input}
\end{align}
We now focus separately on the three claims of \cref{prop:sample-event}.

\paragraph{Counts.}
For a two-sided residual slab, $N(A)=A_w(b)$ and $Q(A)=Q_w(b)$.  Using
\[
 |A-Q|\le C(\sqrt{aQ}+a)
 \quad\Longrightarrow\quad
 A\le2Q+Ca,\qquad Q\le2A+Ca
\]
with $a=d\ell_{\delta/2}$ proves \eqref{eq:count-comp}, after using $\ell_{\delta/2}\le L$ (where the constant in $L$ was chosen to be sufficiently large).

\paragraph{Scores.}
For a given $w$, set
\[
 A_+(b,w)=\{(x,y):0<y-x^\top b\le w\},\qquad
 A_-(b,w)=\{(x,y):-w\le y-x^\top b<0\}.
\]
In these terms, it is clear that
\[
 S_w(b)-\overline S_w(b)=Z(A_+(b,w))-Z(A_-(b,w)),
 \qquad
 N(A_+(b,w))+N(A_-(b,w))\le A_w(b).
\]
Using the triangle inequality, \eqref{eq:residual-vector-input}, and
$\sqrt{x}+\sqrt{y}\le\sqrt{2(x+y)}$ implies
\[
 \norm{S_w(b)-\overline S_w(b)}_2
 \lesssim \ell_{\delta/2}^2
 \left(\sqrt{d(A_w(b)+d)}+d\right),
\]
which immediately yields \eqref{eq:score-conc} (again using $\ell_{\delta/2} \le L$).

\paragraph{Feature norms.}
A $1/2$-net of $S^{d-1}$ has at
most $5^d$ points.  The sub-Gaussian assumption and
$\Sig\preceq\kappa I$ therefore give
\[
 \Pp\!\left(\norm{X_i}_2>C\sqrt{d+t}\right)\le e^{-t}
\]
for $t\ge 1$. Choosing $t=C\log(2n/\delta)$ and union-bounding over $i$, shows that with probability at least
$1-\delta/2$,
\[
 \max_i\norm{X_i}_2\le C\sqrt{d+\log(n/\delta)}\le\sqrt{Ld},
\]
where the constant in $L$ was chosen to be sufficiently large. 
Intersecting this event with the uniform convergence event over $\mathcal{A}$ yields \eqref{eq:max-X}.
\end{proof}

\subsubsection{Proof of \cref{lem:appendix-vc}}\label{subsec:selected-sum-proof}

The proof follows by combining three prior results that we now describe.

\begin{lemma}[VC covering bound; e.g. van der Vaart and Wellner Section~2.6 \cite{van1996weak}]\label{ext:vc-cover}
Let $\calA$ be a class of sets of VC dimension at most $V$.  There are universal
constants $A_0\ge e$ and $c_0\ge1$ such that, for every finite discrete
probability measure $Q$ and every $0<\eta\le1$,
\begin{equation}\label{eq:external-vc-cover}
 N\!\left(\{\one_A:A\in\calA\},L_2(Q),\eta\right)
 \le \left(\frac{A_0}{\eta}\right)^{c_0V}.
\end{equation}
\end{lemma}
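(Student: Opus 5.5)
Since the statement is a standard external result, I would reprove it with Dudley's probabilistic packing argument combined with the Sauer--Shelah lemma; Haussler's sharper bound is not needed because the constant $c_0$ is allowed to be generous.

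\textbf{Step 1: reduce to a packing bound in symmetric-difference measure.} For indicators,
\[
\norm{\one_A-\one_B}_{L_2(Q)}^2=Q(A\,\triangle\, B),
\]
so an $\eta$-cover in $L_2(Q)$ is the same as an $\eta^2$-cover in the pseudometric $Q(A\triangle B)$. Take a maximal family $A_1,\ldots,A_M\in\calA$ with $Q(A_j\triangle A_k)>\eta^2$ for all $j\neq k$. By maximality this family is an $\eta$-cover in $L_2(Q)$. It therefore suffices to bound the packing number $M$; if $M$ is infinite, run the argument on any finite sub-packing.

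\textbf{Step 2: transfer the packing to a finite sample.} Draw $Z_1,\ldots,Z_k$ i.i.d.\ from $Q$. For a fixed pair $j\neq k$, the probability that no $Z_\ell$ lands in $A_j\triangle A_k$ is
\[
(1-Q(A_j\triangle A_k))^k\le (1-\eta^2)^k\le e^{-k\eta^2}.
\]
Choose $k=\lceil 2\log M/\eta^2\rceil+1$. Then $\binom M2 e^{-k\eta^2}<1$, so a union bound over pairs shows that with positive probability every pair is separated by some sample point. On that event the traces $A_j\cap\{Z_1,\ldots,Z_k\}$ are pairwise distinct. The Sauer--Shelah lemma then gives
\[
M\le\sum_{i\le V}\binom ki\le\left(\frac{ek}{V}\right)^V .
\]
Nothing in this step uses that $Q$ is finite and discrete; the hypothesis only guarantees measurability.

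\textbf{Step 3: solve the self-referential inequality.} Substituting $k$ gives roughly
\[
M\le\left(\frac{C\log M}{V\eta^2}\right)^V,
\qquad\text{i.e.}\qquad x\le a\log x \text{ with } x=M^{1/V},\ a=\frac{C}{\eta^2}.
\]
I expect this step to be the only real obstacle, since it is just bookkeeping with the logarithm. The elementary fact I would use is that $x\le a\log x$ with $a\ge e$ implies $x\le 2a\log a$. This yields
\[
M\le\left(\frac{2C}{\eta^2}\log\frac{C}{\eta^2}\right)^V .
\]
Since $\log(1/\eta)\le 1/\eta$, this is at most $(A_0/\eta)^{3V}$ for a suitable universal $A_0\ge e$. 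Hence the bound holds with $c_0=3$ (or any larger constant), which proves \eqref{eq:external-vc-cover}.
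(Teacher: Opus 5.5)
Your argument is correct, and it takes a different route from the paper, which gives no proof of this lemma. The paper imports the bound from van der Vaart--Wellner, where it is stated in a sharper log-free form; that form goes back to Haussler's packing lemma. For $L_2(Q)$ it has exponent $2V$ with a prefactor of order $K(V+1)(4e)^{V+1}$, and absorbing the prefactor into $A_0$ gives the lemma with $c_0=2$.

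You instead run Dudley's classical argument:
\begin{enumerate}
\item take a maximal $\eta^2$-separated packing in $Q(A\triangle B)$;
\item draw a random sample of size $k\approx 2\log M/\eta^2$ that separates every pair;
\item apply Sauer--Shelah;
\item solve $x\le a\log x$.
\end{enumerate}
This costs an extra $\log(1/\eta)$ factor, which you absorb into one more power of $1/\eta$, giving $c_0=3$. The paper only uses the entropy exponent through $c_0V=O(V)$, in the weighted cover \eqref{eq:weighted-vc-cover} and the entropy condition of \cref{ext:cfu-maximal}. There the value of $c_0$ only changes constants, so your weaker constant is fully adequate. Your proof is also elementary and works for any $Q$ under which the sets are measurable.

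Two harmless bookkeeping points in Step 3:
\begin{itemize}
\item The estimate $\sum_{i\le V}\binom ki\le (ek/V)^V$ requires $k\ge V$. If $k<V$ you get $M\le 2^k\le 2^V$ directly; alternatively, use the uniform bound $(k+1)^V$.
\item The ceiling and the $+1$ in $k$ make the inequality $x\le a\log x+2e$ with $a=2e/\eta^2$, rather than $x\le a\log x$. This only enlarges $A_0$.
\end{itemize}
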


We will leverage a variant of \eqref{eq:external-vc-cover} for weighted classes. An envelope for a class of functions $\mathcal H$ is a non-negative
measurable function $H$ satisfying $|h(u)|\le H(u)$ for every $h \in \mathcal H $ and every $u$.
The indicator class in \cref{ext:vc-cover} has envelope $H=1$.
If $r$ is a fixed measurable function and
\[
 \mathcal H_r=\{r\one_A:A\in\calA\},
\]
then $H=|r|$ is an envelope because
$|r(u)\one_A(u)|\le |r(u)|$ for every $A$.

The covering consequence used below is
\begin{equation}\label{eq:weighted-vc-cover}
 N\!\left(\mathcal H_r,L_2(Q),
           \eta\norm{r}_{L_2(Q)}\right)
 \le \left(\frac{A_0}{\eta}\right)^{c_0V}.
\end{equation}
Here the radius is scaled by the $L_2(Q)$ size of the envelope,
$\norm{|r|}_{L_2(Q)}=\norm r_{L_2(Q)}$.  To verify
\eqref{eq:weighted-vc-cover}, suppose first that
$\norm r_{L_2(Q)}>0$ and define a probability measure $Q_r$ by
\[
 dQ_r(u)=\frac{r(u)^2}{\norm r_{L_2(Q)}^2}\,dQ(u).
\]
\cref{ext:vc-cover} gives indicators
$\one_{A_1},\ldots,\one_{A_N}$, with
$N\le(A_0/\eta)^{c_0V}$, such that for every $A\in\calA$ some $A_j$ obeys
$\norm{\one_A-\one_{A_j}}_{L_2(Q_r)}\le\eta$.  For that same $A_j$,
\begin{align*}
 \norm{r\one_A-r\one_{A_j}}_{L_2(Q)}^2
 &=\int r(u)^2|\one_A(u)-\one_{A_j}(u)|^2\,dQ(u)\\
 &=\norm r_{L_2(Q)}^2
   \norm{\one_A-\one_{A_j}}_{L_2(Q_r)}^2\\
 &\le \eta^2\norm r_{L_2(Q)}^2.
\end{align*}
Thus the same indicator net becomes a weighted-function net after multiplication
by $r$.  If $\norm r_{L_2(Q)}=0$, every member of $\mathcal H_r$ is zero in
$L_2(Q)$, so the claim is immediate.

\begin{lemma}[i.n.i.d. VC maximal inequality; Lemma SA25 of Cattaneo, Feng, and Underwood \cite{cattaneo2024uniform}]\label{ext:cfu-maximal}
Let $U_1,\ldots,U_n$ be independent, with laws $P_1,\ldots,P_n$, and put
$\overline P=n^{-1}\sum_iP_i$.  Let $\mathcal F$ be a pointwise measurable
class with envelope $F$ such that
$0<\norm{F}_{L_2(\overline P)}<\infty$, and suppose that, for some
$A\ge e$ and $V\ge1$,
\begin{equation}\label{eq:external-vc-type}
 \sup_Q N\!\left(\mathcal F,L_2(Q),
                  \eta\norm{F}_{L_2(Q)}\right)
 \le \left(\frac A\eta\right)^V,
 \qquad 0<\eta\le1,
\end{equation}
where the supremum is over finite discrete probability measures.  Suppose
\[
 \sup_{f\in\mathcal F}\norm{f}_{L_2(\overline P)}\le\sigma
 \le\norm{F}_{L_2(\overline P)},
 \qquad
 M=\max_{1\le i\le n}F(U_i).
\]
Then
\begin{equation}\label{eq:external-cfu}
 \E\sup_{f\in\mathcal F}
 \left|\sum_{i=1}^n\{f(U_i)-P_if\}\right|
 \le C\left[
 \sqrt n\,\sigma
 \sqrt{V\log\!\left(\frac{A\norm{F}_{L_2(\overline P)}}{\sigma}\right)}
 +\norm{M}_2\,V
 \log\!\left(\frac{A\norm{F}_{L_2(\overline P)}}{\sigma}\right)
 \right].
\end{equation}
\end{lemma}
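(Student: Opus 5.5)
The plan is the standard route for local maximal inequalities under uniform entropy: symmetrize, chain conditionally on the sample, then close a self-referential bound on the random empirical radius. This follows the argument of Chernozhukov, Chetverikov and Kato for i.i.d. data, adapted to the averaged measure $\overline P$.

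\emph{Symmetrization and conditional chaining.} Write $Z:=\E\sup_{f\in\mathcal F}|\sum_{i}\{f(U_i)-P_if\}|$. Since the $U_i$ are independent, the usual symmetrization applies even though they are not identically distributed. Thus $Z\le 2\,\E\sup_f|\sum_i\epsilon_if(U_i)|$ with independent Rademacher signs $\epsilon_i$.

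Condition on $U_1,\dots,U_n$ and let $P_n$ be their empirical measure. Set $\hat\sigma_n^2:=\sup_f P_nf^2$. Dudley's entropy integral for the sub-Gaussian Rademacher process, together with \eqref{eq:external-vc-type} applied with $Q=P_n$, gives
\[
\E_\epsilon\sup_f\Bigl|\sum_i\epsilon_if(U_i)\Bigr|\lesssim \sqrt n\,\norm{F}_{L_2(P_n)}\,J\!\left(\frac{\hat\sigma_n}{\norm{F}_{L_2(P_n)}}\right),
\qquad J(\delta):=\int_0^\delta\sqrt{1+V\log(A/\eta)}\,d\eta .
\]
A direct computation, using $A\ge e$, shows $J(\delta)\lesssim\delta\sqrt{V\log(A/\delta)}$ for $\delta\le1$. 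It also shows that $(x,y)\mapsto \sqrt y\,J(\sqrt{x/y})$ is jointly concave on $\{0\le x\le y\}$.

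\emph{Taking expectations.} By Jensen's inequality and the concavity just noted, the expectation over $U$ passes inside. Note that $\E\norm{F}^2_{L_2(P_n)}=\norm{F}^2_{L_2(\overline P)}$, which is exactly where the averaged measure $\overline P$ enters. This gives
\[
Z\lesssim\sqrt n\,\norm{F}_{L_2(\overline P)}\,J\!\left(\sqrt{\E\hat\sigma_n^2}\big/\norm{F}_{L_2(\overline P)}\right).
\]

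\emph{Controlling the random radius.} It remains to bound $\E\hat\sigma_n^2$ in terms of $\sigma^2$. We have $\E\hat\sigma_n^2\le\sigma^2+n^{-1}\E\sup_f|\sum_i\{f^2(U_i)-P_if^2\}|$.

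To handle the second term, symmetrize again. Then apply the contraction principle conditionally, using $|f^2-g^2|\le 2F|f-g|$ and $\max_iF(U_i)=M$. Finally apply Cauchy--Schwarz to separate $M$ from the supremum. These steps give
\[
\E\hat\sigma_n^2\le\sigma^2+\frac{C}{n}\,\norm{M}_2\,B,
\qquad B:=\Bigl(\E\sup_f\Bigl|\sum_i\epsilon_if(U_i)\Bigr|^2\Bigr)^{1/2}.
\]
The second moment $B$ is controlled by the same chaining bound as its first moment, either through Hoffmann--Jørgensen's inequality or by running the argument above directly in $L_2$.

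\emph{Solving the fixed point.} Write $\Lambda:=\log\bigl(A\norm{F}_{L_2(\overline P)}/\sigma\bigr)$. Substituting the bound on $\E\hat\sigma_n^2$ and using monotonicity of $J(\delta)/\delta$ yields a self-referential inequality of the form
\[
Z\lesssim\sqrt{V\Lambda}\,\sqrt{n\sigma^2+\norm{M}_2 Z}.
\]
We may replace $\sigma$ by $\sqrt{\E\hat\sigma_n^2}$ inside the logarithm because that ratio only increases. Solving this quadratic inequality in $\sqrt Z$ gives $Z\lesssim\sqrt n\,\sigma\sqrt{V\Lambda}+\norm{M}_2V\Lambda$, which is exactly \eqref{eq:external-cfu}.

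\emph{Main obstacle.} I expect the hardest part to be this last stage. Applying contraction to the squared class requires care, because the random envelope bound $M$ is correlated with the supremum. Closing the fixed point also requires keeping the logarithm at $\log(A\norm{F}/\sigma)$ rather than letting it degrade to $\log n$. My first attempt would be the Cauchy--Schwarz separation described above combined with an $L_2$ version of the chaining bound. If that fails, I would fall back on Talagrand's inequality for the squared empirical process.
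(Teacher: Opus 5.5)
Your proof is correct. The paper gives no argument of its own for this statement: it imports it as Lemma SA25 of Cattaneo, Feng, and Underwood. That lemma has exactly the form of the Chernozhukov--Chetverikov--Kato local maximal inequality for VC-type classes, extended to independent non-identically distributed data. Your proposal faithfully reconstructs that argument: symmetrization, conditional Dudley with $Q=P_n$, Jensen through a concave function of $(\hat\sigma_n^2,\norm{F}^2_{L_2(P_n)})$, contraction plus Cauchy--Schwarz for the random radius, and a fixed point. The citation buys brevity. Your version shows that identical distribution is never used: symmetrization and Hoffmann--J\o{}rgensen need only independence, and Dudley and contraction are applied conditionally on the sample. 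It also shows that $\overline P$ enters only through $\mathbb{E}\norm{F}^2_{L_2(P_n)}=\norm{F}^2_{L_2(\overline P)}$ and $\sup_f\overline Pf^2\le\sigma^2$.

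A few points need tightening, but none is a real gap.

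\emph{Which quantity the fixed point is for.} The chaining bound controls $Z':=\mathbb{E}\sup_f\abs{\sum_i\epsilon_if(U_i)}$, and $B$ is controlled through $Z'$. So the fixed-point inequality should be written for $Z'$, and then you use $Z\le2Z'$. Solving it needs $Z'<\infty$, which holds since $Z'\le\sum_iP_iF\le n\norm{F}_{L_2(\overline P)}$.

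\emph{The extra $\norm{M}_2^2$ term.} Hoffmann--J\o{}rgensen gives $B\lesssim Z'+\norm{M}_2$, so an extra $\norm{M}_2^2$ appears under your square root. It only adds $\sqrt{V\Lambda}\,\norm{M}_2\le V\Lambda\,\norm{M}_2$, since $\Lambda\ge\log A\ge1$.

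\emph{The concavity claim.} The joint concavity you assert is cleanest via $G(t):=J(\sqrt t)^2$. This is concave on $[0,1]$ because $G'(t)=J'(s)\,J(s)/s$ with $s=\sqrt t$, a product of nonnegative nonincreasing functions. Hence $(x,y)\mapsto yG(x/y)$ is a concave perspective, and $\sqrt y\,J(\sqrt{x/y})=\sqrt{yG(x/y)}$ is concave.

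\emph{Your direct $L_2$ alternative.} The same fact makes this route work. Applying Jensen to the conditional $L_2$ Dudley bound $n\norm{F}^2_{L_2(P_n)}J(\hat\sigma_n/\norm{F}_{L_2(P_n)})^2$ gives $B\lesssim\sqrt n\,\norm{F}_{L_2(\overline P)}\,J\bigl(\sqrt{\mathbb{E}\hat\sigma_n^2}/\norm{F}_{L_2(\overline P)}\bigr)$ directly, with no extra $\norm{M}_2^2$ term. So the Talagrand fallback is not needed.
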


We leverage the following to yield a high-probability bound from the in-expectation bound:

\begin{theorem}[Deviation inequality; Theorem~4 of Adamczak~\cite{adamczak2008tail}]\label{ext:adamczak}
Let $U_1,\ldots,U_n$ be independent and let $\mathcal G$ be a countable class
such that $\E g(U_i)=0$ for every $g\in\mathcal G$ and every $i$.  Define
\[
 Z=\sup_{g\in\mathcal G}\left|\sum_{i=1}^ng(U_i)\right|,
 \qquad
 \sigma^2=\sup_{g\in\mathcal G}\sum_{i=1}^n\E g(U_i)^2,
\]
and
\[
 B=\left\|\max_{1\le i\le n}
          \sup_{g\in\mathcal G}|g(U_i)|\right\|_{\psi_1}.
\]
There is a universal constant $C$ such that, for every $u\ge1$, with
probability at least $1-4e^{-u}$,
\begin{equation}\label{eq:external-adamczak}
 Z\le2\E Z+C\bigl(\sigma\sqrt u+Bu\bigr).
\end{equation}
This is the $\psi_1$ case of Theorem~4 of \cite{adamczak2008tail} 
(fix the two auxiliary constants in that theorem and take
$t=C(\sigma\sqrt u+Bu)$).
\end{theorem}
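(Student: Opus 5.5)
The plan is to specialize Adamczak's general tail inequality by fixing its two free parameters and then picking the deviation level $t$ so that each of its two tail terms is at most $e^{-u}$. Recall the form of Theorem~4 of \cite{adamczak2008tail}. Take independent $U_1,\ldots,U_n$ and a countable class $\mathcal G$ of functions centered under every $U_i$, with $Z$, $\sigma^2$, and $B$ defined exactly as in the statement. Then for every $0<\eta'\le 1$ and $\delta'>0$ there is a constant $C=C(\eta',\delta')$ such that, for all $t\ge0$,
\[
\Pp\bigl(Z\ge(1+\eta')\E Z+t\bigr)\le \exp\!\left(-\frac{t^2}{2(1+\delta')\sigma^2}\right)+3\exp\!\left(-\frac{t}{CB}\right).
\]
The first step is to check the hypotheses. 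Independence and centering are assumed in \cref{ext:adamczak}. Countability of $\mathcal G$ avoids measurability issues. The $\psi_1$-Orlicz norm of the envelope maximum is precisely the quantity $B$ Adamczak uses in the $\alpha=1$ case. In particular, Adamczak's theorem does not require identically distributed $U_i$, since $\sigma^2$ is the supremum of sums of variances; this matters because we apply it in the i.n.i.d. setting of \cref{lem:appendix-vc}.

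The second step is to fix $\eta'=1$ and $\delta'=1/2$. This makes $C$ a universal constant and gives
\[
\Pp\bigl(Z\ge 2\E Z+t\bigr)\le e^{-t^2/(3\sigma^2)}+3e^{-t/(CB)}.
\]
The third step is the choice $t=\sqrt3\,\sigma\sqrt u+CBu$ for $u\ge1$. Since $t\ge\sqrt3\,\sigma\sqrt u$, we have $t^2/(3\sigma^2)\ge u$. Since $t\ge CBu$, we have $t/(CB)\ge u$. So the right-hand side is at most $e^{-u}+3e^{-u}=4e^{-u}$. Enlarging the constant to $C'=\max\{\sqrt3,C\}$ gives $Z\le 2\E Z+C'(\sigma\sqrt u+Bu)$ with probability at least $1-4e^{-u}$, which is \eqref{eq:external-adamczak}. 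The degenerate cases $\sigma=0$ or $B=0$ are handled by reading the corresponding exponential as $0$, or by a limiting argument.

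The main obstacle is not any calculation but getting the citation exactly right. I would verify against the source that Theorem~4 of \cite{adamczak2008tail} is stated for independent, not necessarily identically distributed, summands. I would also confirm that its Gaussian-regime term uses $\sigma^2=\sup_g\sum_i\E g(U_i)^2$ with the constant $2(1+\delta')$ in the denominator, and that the Orlicz term is $\|\max_i\sup_g|g(U_i)|\|_{\psi_\alpha}$ with $\alpha=1$. If the source states the Gaussian term with a slightly different absolute constant, only the coefficient of $\sigma\sqrt u$ changes, and it is absorbed into the universal $C$.
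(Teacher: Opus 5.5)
Your proposal is correct and takes the same route as the paper: the paper's justification is exactly to fix Adamczak's two auxiliary constants and choose $t=C(\sigma\sqrt u+Bu)$, so that the Gaussian term and the $\psi_1$ term are each at most $e^{-u}$, for a total failure probability of at most $4e^{-u}$. One small citation point: if the source states the theorem only for $0<\eta'<1$, fix $\eta'=1/2$ instead of $\eta'=1$ and use $(1+\eta')\E Z\le 2\E Z$, which holds because $Z\ge 0$.
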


\begin{proof}[Proof of \cref{lem:appendix-vc}]
Choose
\[
 \ell=\ell_\varepsilon,
 \qquad
 D=d+V,
 \qquad
 \ell_q=\max\{1,\log(en/q)\}
 \quad(1\le q\le n).
\]

\paragraph{1. Bounding $|N(A) - Q(A)|$.}
For a fixed $q$, let $\calA_q=\{A\in\calA:Q(A)\le q\}$ and apply
\cref{ext:cfu-maximal} to
$\mathcal F_q=\{\one_A:A\in\calA_q\}$.  By
\cref{ext:vc-cover}, its entropy exponent is $c_0V$; its envelope
is $1$; and
\[
 \sup_{A\in\calA_q}\norm{\one_A}_{L_2(\overline P)}^2
 =\frac1n\sup_{A\in\calA_q}Q(A)\le\frac qn.
\]
Thus \eqref{eq:external-cfu}, with $\sigma=\sqrt{q/n}$, gives
\begin{equation}\label{eq:count-mean-local}
 \E\sup_{A\in\calA_q}|N(A)-Q(A)|
 \le C\left(\sqrt{qV\ell_q}+V\ell_q\right).
\end{equation}
We will turn this into a high-probability guarantee by invoking \cref{ext:adamczak} on the centered indicator class; the variance proxy $\sigma^2 \le q$ (centering can only decrease the second moment) and $B \le 1$. 
Combining \eqref{eq:external-adamczak} and \eqref{eq:count-mean-local} therefore yields, for every $u\ge1$, with failure
probability at most $4e^{-u}$,
\begin{equation}\label{eq:count-tail-local}
 \sup_{A\in\calA_q}|N(A)-Q(A)|
 \le C\left[
 \sqrt{q(V\ell_q+u)}+V\ell_q+u
 \right].
\end{equation}

Apply \eqref{eq:count-tail-local} at dyadic levels
$q=1,2,4,\ldots$, with the last level capped at $n$, and take $u=c\ell$.
For a sufficiently large universal $c$, the union-bound gives a total probability of failure of at most $4(1+\log_2 n)e^{-c\ell}\le\varepsilon/2$. Since $\ell_q\le\ell$ and $V\ge1$, we obtain simultaneously at all dyadic levels
\begin{equation}\label{eq:count-dyadic}
 \sup_{A\in\calA_q}|N(A)-Q(A)|
 \le C\left(\sqrt{Vq\ell}+V\ell\right).
\end{equation}
If $Q(A)\ge1$, choose $q$ so that $Q(A)\le q<2Q(A)$; if $Q(A)<1$, use
$q=1$ and absorb $\sqrt{V\ell}$ into $V\ell$.  This proves
\eqref{eq:appendix-relative-count}.

\paragraph{2. Bounding $\norm{Z(A)}_2$.} Our plan is to bound $\norm{Z(A)}_2$ by invoking our VC-style bounds with the class of functions associated with 1-dimensional projections $r_v(x,y)=\ip{v}{x}$ for a fixed $v \in S^{d-1}$. 
Eventually, we will employ a net of such $v$ to yield our desired $\norm{Z(A)}_2$ bound, but first we focus on just a fixed $v$.
The sub-Gaussian assumption
and $\Sig\preceq\kappa I$ imply
\begin{equation}\label{eq:rv-subg}
 \norm{r_v(W_i)}_{\psi_2}\le C(K,\kappa),
 \qquad
 \norm{r_v}_{L_2(\overline P)}\le C(\kappa).
\end{equation}
As before, we will handle classes separately with $\calA_q$ for different $q$. 
Naturally, it is helpful to note that if an event $E$ has probability $p$, integration of the sub-Gaussian tail gives
\begin{equation}\label{eq:subg-event-moment}
 \E[r_v(W_i)^2\one_E]\le C p\log(e/p).
\end{equation}
To see this directly, choose $t_p=C\log(e/p)$ and use
$\Pp(r_v(W_i)^2>t)\le2e^{-ct}$, for the bound
\[
 \E[r_v(W_i)^2\one_E]
 \le \int_0^{t_p}p\,dt
     +\int_{t_p}^{\infty}2e^{-ct}\,dt
 \le Cp\log(e/p).
\]
Writing $p_i=P_i(A)$ and using concavity of $p\mapsto p\log(e/p)$, we obtain
for every $A\in\calA_q$
\begin{equation}\label{eq:local-multiplier-variance}
 \sum_{i=1}^n\E[r_v(W_i)^2\one_A(W_i)]
 \le C\sum_{i=1}^np_i\log(e/p_i)
 \le Cq\ell_q.
\end{equation}

We now apply \cref{ext:cfu-maximal} to
$\mathcal F_{q,v}=\{r_v\one_A:A\in\calA_q\}$.  We choose the strictly positive
envelope
\[
 F_v(x,y)=1+|r_v(x,y)|,
\]
satisfying $|r_v\one_A|\le F_v$.  The weighted covering bound
\eqref{eq:weighted-vc-cover} gives the required entropy estimate with
normalizing radius $\eta\norm{r_v}_{L_2(Q)}$.  Since
$\norm{r_v}_{L_2(Q)}\le\norm{F_v}_{L_2(Q)}$, the same net is also an
$\eta\norm{F_v}_{L_2(Q)}$-net.  Thus, the entropy condition in \cref{ext:cfu-maximal} holds with exponent $c_0V$ and envelope $F_v$.

Equation~\eqref{eq:local-multiplier-variance} controls the variance term for $\mathcal F_{q,v}$, since
\[
 \sup_{A\in\calA_q}\norm{r_v\one_A}_{L_2(\overline P)}^2
 =\frac1n\sup_{A\in\calA_q}
   \sum_{i=1}^n\E[r_v(W_i)^2\one_A(W_i)]
 \le \frac{Cq\ell_q}{n}
\]
lets us choose
\[
 \sigma_{q,v}
 =\min\left\{\norm{F_v}_{L_2(\overline P)},
               C\sqrt{\frac{q\ell_q}{n}}\right\}.
\]

We may also control the random variable $M$ in \cref{ext:cfu-maximal}:
\begin{equation}\label{eq:max-projection-L2}
 M_v:=\max_{1\le i\le n}F_v(W_i)
 =1+\max_{1\le i\le n}|r_v(W_i)|,
 \qquad
 \norm{M_v}_2\le C\sqrt{\log(en)}.
\end{equation}
The last inequality follows by integrating the union bound for the uniformly
sub-Gaussian variables $r_v(W_i)$.  We now invoke \cref{ext:cfu-maximal} and use that 
$1\le\norm{F_v}_{L_2(\overline P)}\le C$ and $\log\!\left(\frac{A\norm{F_v}_{L_2(\overline P)}}{\sigma_{q,v}}\right) \le C\ell_q$ to conclude
\begin{equation}\label{eq:vector-mean-local}
 \E\sup_{A\in\calA_q}
 \left|\sum_{i=1}^n
 \{r_v(W_i)\one_A(W_i)-P_i(r_v\one_A)\}\right|
 \le C\left[
 \ell_q\sqrt{qV}+V\ell_q\sqrt{\log(en)}
 \right].
\end{equation}

To apply \cref{ext:adamczak}, we define $U_i=(i,W_i)$ and $g_{A,v}(i,w)=r_v(w)\one_A(w)-P_i(r_v\one_A)$.
Note that $\E g_{A,v}(U_i)=0$.  The variance proxy $\sigma^2$ is at most $Cq\ell_q$ by
\eqref{eq:local-multiplier-variance}, because centering can only decrease the
second moment.  A convenient envelope for this centered class is
\[
 G_v(i,w)=|r_v(w)|+P_i|r_v|\le |r_v(w)|+C.
\]
Therefore the envelope term $B$ in \cref{ext:adamczak} satisfies
\begin{equation}\label{eq:max-projection-psi1}
 B\le
 \left\|\max_{1\le i\le n}G_v(i,W_i)\right\|_{\psi_1}
 \le C\sqrt{\log(en)}
\end{equation}
since the maximum of $n$ uniformly sub-Gaussian variables has $\psi_1$ norm
$O(\sqrt{\log(en)})$.  Using \cref{ext:adamczak} and
\eqref{eq:vector-mean-local} now proves,
that with failure probability at most $4e^{-u}$,
\begin{align}
 \sup_{A\in\calA_q}
 \left|\sum_{i=1}^n
 \{r_v(W_i)\one_A(W_i)-P_i(r_v\one_A)\}\right|
 \le C\bigl[
 &\ell_q\sqrt{qV}+V\ell_q\sqrt{\log(en)} \notag\\
 &+\sqrt{q\ell_q u}+\sqrt{\log(en)}\,u
 \bigr].
 \label{eq:vector-tail-local}
\end{align}

We now turn towards using a net argument to imply a bound for $\norm{Z(A)}_2$. 
Let $\calN$ be a $1/2$-net of $S^{d-1}$ with $|\calN|\le5^d$.  Apply
\eqref{eq:vector-tail-local} to every $v\in\calN$ and every dyadic $q$, taking
$u=c(d+\ell)$.  For sufficiently large universal $c$, a union-bound gives failure probability at most
\[
 4(1+\log_2 n)5^d e^{-c(d+\ell)}\le\varepsilon/2.
\]
Since $\ell_q,\log(en)\le\ell$ and $d+\ell\le D\ell$, the right side of
\eqref{eq:vector-tail-local} is at most
\begin{equation}\label{eq:vector-dyadic}
 C\left(\ell\sqrt{Dq}+D\ell^{3/2}\right).
\end{equation}
Using the dyadic $q$ and 
$\norm{z}_2\le2\max_{v\in\calN}|\ip{v}{z}|$ yields that, simultaneously for every
$A\in\calA$,
\begin{equation}\label{eq:appendix-vector-Q}
 \norm{Z(A)}_2
 \le C\left[
 \ell\sqrt{D(Q(A)+1)}+D\ell^{3/2}
 \right].
\end{equation}

\paragraph{3. Replace population mass by the observed count.}
Equation~\eqref{eq:appendix-relative-count} implies
\[
 Q(A)\le2N(A)+CV\ell.
\]
Substituting this into \eqref{eq:appendix-vector-Q}, using $V,d\le D$ and
$\ell\ge1$, yields \eqref{eq:appendix-vector-sum}:
\[
 \norm{Z(A)}_2
 \le C\ell^2\left(\sqrt{D(N(A)+D)}+D\right). \qedhere
\]
\end{proof}
\subsection{Proof of \cref{lem:sqrt-vc}}\label{app:sqrt-unif}
\begin{proof}
This follows nearly immediately from \cref{lem:appendix-vc} and typical algebraic manipulations.

Fix $A\in\calA$ and abbreviate $N=N(A)$, $Q=Q(A)$, and $B = V\ell_\delta$. 

If $Q\le B$, then~\eqref{eq:appendix-relative-count} implies
\[
N\le Q+C(\sqrt{QB}+B)\le C B.
\]
Consequently,
\[
|\sqrt N-\sqrt Q|
\le \sqrt N+\sqrt Q
\le (\sqrt{C}+1)\sqrt B.
\]

If $Q>B$, use the exact identity
\[
|\sqrt N-\sqrt Q|
=\frac{|N-Q|}{\sqrt N+\sqrt Q}
\le \frac{|N-Q|}{\sqrt Q}.
\]
Together with~\eqref{eq:appendix-relative-count}, this yields
\[
|\sqrt N-\sqrt Q|
\le C\left(\sqrt B+\frac{B}{\sqrt Q}\right)
\le 2C\sqrt B.
\]
Both cases prove~\eqref{eq:sqrt-vc} simultaneously for all $A\in\calA$ under the uniform convergence event of \cref{lem:appendix-vc}.
\end{proof}

%% file: misc-app.tex
\section{Additional deferred proofs}
\subsection{Proof of \cref{claim:move-constant}}
The following enables moving constant factors from outside the Hellinger modulus to inside the Hellinger modulus:
\begin{claim}\label{claim:move-constant}
For $t < \frac{1}{9}$ and any symmetric, unimodal $p$, it holds that
\[
\omega_p(t) \le \frac{1}{2} \omega_p(9t)
\]
\end{claim}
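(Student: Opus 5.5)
We will derive the claim from the growth bound of \cref{lem:H-translation}, i.e. \eqref{eq:H-scaling}. Write $s=\omega_p(t)$; the goal is $H_p(2s')\le 9t$ for every $s'<s$ with $H_p(s')\le t$, which gives $\omega_p(9t)\ge 2s'$. Letting $s'\uparrow s$ then yields $\omega_p(9t)\ge 2\omega_p(t)$.

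\textbf{Main step.} Fix $0<s'$ with $H_p(s')\le t$. Such $s'$ exist arbitrarily close to $s$ by the definition of the supremum together with monotonicity of $H_p$ on $[0,\infty)$ (also from \cref{lem:H-translation}). Apply \eqref{eq:H-scaling} with $a=s'$ and translation $2s'$:
\[
H_p(2s')\le (1+2)^2H_p(s')=9H_p(s')\le 9t .
\]
Therefore $2s'$ is admissible in the supremum defining $\omega_p(9t)$, so $\omega_p(9t)\ge 2s'$. Taking the supremum over such $s'$ gives $\omega_p(9t)\ge 2\omega_p(t)$, which is the claim. The triangle-inequality argument behind \eqref{eq:H-scaling} also gives the sharper $\rho(2s')\le 2\rho(s')$, hence $H_p(2s')\le 4H_p(s')$, but the factor $9$ suffices here.

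\textbf{Degenerate cases.} The hypothesis $t<1/9$ ensures $9t<1$. This matters because $H_p\le 1$ always, so $\omega_p(u)=\infty$ for $u\ge 1$, and comparing against that value would be uninformative. If $\omega_p(t)=\infty$, then $H_p$ stays at most $t$ for arbitrarily large translations. The argument above then shows $H_p$ stays at most $9t$ on arbitrarily large translations, so $\omega_p(9t)=\infty$ and the inequality holds in the extended sense. If $\omega_p(t)=0$, the claim is trivial.

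The only point needing care is that the supremum defining $\omega_p(t)$ may not be attained. This is why the proof works with $s'<s$ and passes to the limit rather than plugging in $s$ directly.
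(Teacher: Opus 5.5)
Your proof is correct and is the argument the paper intends: the paper's proof just says the claim follows immediately from \cref{lem:H-translation}, meaning one applies \eqref{eq:H-scaling} at translation $2a$ to get $H_p(2a)\le 9H_p(a)$ and then takes suprema, exactly as you do. As a minor remark, your argument never uses $t<1/9$; that hypothesis only excludes the case $9t\ge 1$, where $\omega_p(9t)=\infty$ and the inequality is trivial.
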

\begin{proof}
This follows immediately from \cref{lem:H-translation}.
\end{proof}

\subsection{Proof of \cref{prop:fixed-q}}\label{sec:fixed-q}
\begin{proof}
For $u\in\R^d$, let
\[
L_m(u)=\frac1m\sum_{i=1}^m
\abs{Y_i-X_i^\top(\beta+u)}^q
=\frac1m\sum_{i=1}^m
\abs{\varepsilon_i-X_i^\top u}^q.
\]
Our plan is to show that for all $u$ satisfying $\norm{u}_2 \ge R$ for a later chosen $R$, it holds that
\begin{equation}\label{eq:grad-goal}
\ip{\nabla L_m(u)}{u} > 0.
\end{equation}
By convexity of $L_m$, this would prove $\norm{\wh\beta_q - \beta}_2 \le R$. 

Fix $u\in\R^d$ and let $v_i=X_i^\top u$.  For each observation, define the one-dimensional function
\[
\phi_i(t)=\abs{\varepsilon_i-tv_i}^q,
\qquad 0\le t\le1.
\]
This will be a relevant function, since
\[
\ip{\nabla L_m(u)}{u} = \frac1m \sum_{i=1}^m \phi_i'(1).
\]
We compute the first and second derivatives of $\phi_i$ as
\[
\phi_i'(t)
=-q\,v_i\,\sgn(\varepsilon_i-tv_i)
\abs{\varepsilon_i-tv_i}^{q-1},
\qquad
\phi_i''(t)
=q(q-1)v_i^2\abs{\varepsilon_i-tv_i}^{q-2}.
\]
Consequently, we may bound
\begin{align*}
    \phi_i'(1) &= \phi_i'(0) + \left(\phi_i'(1) - \phi_i'(0) \right) = \phi_i'(0) + q(q-1)v_i^2\int_0^1
\abs{\varepsilon_i-tv_i}^{q-2}\,dt\\
& \ge \phi_i'(0) + q(q-1)\abs{\varepsilon_i}^{q-2}v_i^2
\one\{\varepsilon_i v_i\le0\} \rtag{via $\varepsilon_i v_i\le0 \implies \abs{\varepsilon_i-tv_i} \ge\abs{\eps_i}$}\\
& = -q v_i \sgn(\eps_i) \abs{\eps_i}^{q-1} + q(q-1)\abs{\varepsilon_i}^{q-2}v_i^2
\one\{\varepsilon_i v_i\le0\} \\
\implies &\ip{\nabla L_m(u)}{u} \ge -qG^\top u + q(q-1)C_n(u),
\end{align*}
where
\[
G=\frac1m\sum_{i=1}^m
X_i\sgn(\varepsilon_i)\abs{\varepsilon_i}^{q-1}
\]
relates to the gradient of the objective at the truth ($\nabla L_m(0)$), and 
\[
C_n(u)=\frac1m\sum_{i=1}^m
\abs{\varepsilon_i}^{q-2}(X_i^\top u)^2
\one\{\varepsilon_iX_i^\top u\le0\}
\]
is a curvature-related term. We will bound both terms separately.

\textit{Bounding the gradient term.}
Let
\[
\xi_i=X_i\sgn(\varepsilon_i)\abs{\varepsilon_i}^{q-1}.
\]
Since $p$ is symmetric and $\eps_i$ and $X_i$ are independent, it holds that $\E[\xi_i] = 0$. We may then bound the second moment by
\begin{align*}
\E\norm{G}_2^2
&=\frac1{m^2}\sum_{i=1}^m
\E\norm{\xi_i}_2^2\\
&=\frac1m\E\left[
\abs{\varepsilon}^{2q-2}X^\top X
\right]\\
&=\frac{M_{2q-2}}m
\operatorname{tr}(\Sigma)\\
&\le \frac{\kappa dM_{2q-2}}m.
\end{align*}
Applying Markov's inequality to
$\norm{G}_2^2$ yields
\begin{equation}\label{eq:grad-final}
\Pp\left\{
\norm{G}_2
>6 \sqrt{\frac{\kappa dM_{2q-2}}m}
\right\}
\le\frac1{36}.
\end{equation}

\textit{Bounding the curvature term.} For ease of reading, we will separately prove the required result for this term in \cref{lemma:curve}. We invoke this result with error probability $\delta=1/36$; its sample-size requirement is enforced by our condition \eqref{eq:prop-n-cond} with large enough $C_{K,\kappa}$. Hence, with probability at least $35/36$, it holds that
\begin{equation}\label{eq:curv-final}
C_n(u)
\ge c_{K,\kappa}M_{q-2}\norm u_\Sigma^2 \ge c_{K,\kappa} M_{q-2} \norm{u}_2^2
\qquad\text{for every }u\in\R^d.
\end{equation}
This also implies the rank condition, as otherwise a vector in the null space of $\mathbf{X}$ would violate \eqref{eq:curv-final}.

\textit{Concluding $\ip{\nabla L_m(u)}{u} > 0$ for $\norm{u}_2 \ge R$.} We finally prove the desired bound as in \eqref{eq:grad-goal} for $\norm{u}_2 \ge R$. With probability at least $17/18$, by using the events of \eqref{eq:grad-final} and \eqref{eq:curv-final}, we bound
\begin{align*}
    \ip{\nabla L_m(u)}{u} &\ge -qG^\top u + q(q-1)C_n(u) \\
    &\ge -q \norm{G}_2 \norm{u}_2 + q(q-1)C_n(u) \\
    &\ge -6q \sqrt{\frac{\kappa dM_{2q-2}}m} \norm{u}_2 + q(q-1)c_{K,\kappa} M_{q-2} \norm{u}_2^2\\
    &= \norm{u}_2 \cdot \left(q(q-1)c_{K,\kappa} M_{q-2} \norm{u}_2 - 6q \sqrt{\frac{\kappa dM_{2q-2}}m} \right). 
\end{align*}
Hence, we may conclude that $\ip{\nabla L_m(u)}{u} > 0$ whenever 
\[
\norm{u}_2 \ge C_{K,\kappa} \frac{6q \sqrt{\frac{\kappa dM_{2q-2}}m}}{q(q-1)c_{K,\kappa} M_{q-2}} \impliedby \norm{u}_2 \ge C_{K,\kappa} \sqrt{\frac{d V_p(q)}{m}},
\]
for sufficiently large $C_{K,\kappa}$, proving our desired result.
\end{proof}

We now prove the desired curvature term bound:
\begin{lemma} \label{lemma:curve}
There exists a $C_{K,\kappa} \ge 1$ such that, if
\begin{equation}\label{eq:curve-cond-n}
m\ge C_{K,\kappa}Q_p(q)  d\log(em/\delta),
\end{equation}
then for $\delta \in (0,1/2)$, with probability at least $1-\delta$,
\begin{equation}
\frac1m\sum_{i=1}^m
\abs{\varepsilon_i}^{q-2}(X_i^\top u)^2
\one\{\varepsilon_iX_i^\top u\le0\}
\ge c_{K,\kappa}M_{q-2}u^\top\Sigma u
\end{equation}
for every $u\in\R^d$.
\end{lemma}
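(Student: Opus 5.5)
We plan to prove \cref{lemma:curve} as a uniform lower bound for a nonnegative empirical process, via truncation plus a small-ball argument. We use the same Rademacher symmetrization as in the algorithm, so $X$ is centrally symmetric and independent of $\varepsilon$. Write $W_i=\abs{\varepsilon_i}^{q-2}$, so $\E W_i=M_{q-2}$ and $\E W_i^2=M_{2q-4}=Q_p(q)M_{q-2}^2$. The summand is nonnegative, so throwing terms away only decreases it. It is also homogeneous of degree $2$ in $u$, so it suffices to work on the unit $\Sigma$-sphere, $\norm{u}_\Sigma=1$.

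Concretely, we lower bound the summand by
\[
\abs{\varepsilon_i}^{q-2}(X_i^\top u)^2\one\{\varepsilon_iX_i^\top u\le0\}\ \ge\ c_1^2\,W_i\,\one\{W_i\le \lambda M_{q-2}\}\,\one\{\abs{X_i^\top u}\ge c_1\}\,\one\{\varepsilon_iX_i^\top u\le 0\}.
\]
Here $c_1,c_2$ come from the small-ball condition, which follows from \cref{lem:annulus}, and $\lambda$ is a truncation level to be chosen. The product of the last two indicators is the indicator that $(X_i,\varepsilon_i)$ lies in a union of two intersections of halfspaces in $(x,\operatorname{sgn}\varepsilon)$-space, parametrized by $u$. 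This class has VC dimension $O(d)$.

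We first compute the population version. By symmetry of $X$ and independence from $\varepsilon$, conditional on $\varepsilon$ the event $\{\abs{X^\top u}\ge c_1,\ \varepsilon X^\top u\le0\}$ has probability at least $c_2/2$. Hence the expectation of the truncated summand is at least $c_1^2(c_2/2)\,\E[W\one\{W\le\lambda M_{q-2}\}]$. The truncation loss is bounded by
\[
\E[W\one\{W>\lambda M_{q-2}\}]\le \E W^2/(\lambda M_{q-2})=Q_p(q)M_{q-2}/\lambda.
\]
Choosing $\lambda=2Q_p(q)$ keeps at least half of $M_{q-2}$, so the population mean is at least $c\,M_{q-2}$.

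The main step, and the expected obstacle, is uniform concentration over $u$ of the weighted indicator sum
\[
\frac1m\sum_i \tilde W_i\one_{A_u}(X_i,\varepsilon_i),\qquad \tilde W_i=W_i\one\{W_i\le\lambda M_{q-2}\}\in[0,2Q_pM_{q-2}].
\]
Each summand has variance at most $\E W^2\le Q_pM_{q-2}^2$ and sup norm at most $2Q_pM_{q-2}$. We would apply \cref{lem:appendix-vc}, or more directly \cref{ext:cfu-maximal} together with the weighted covering bound \eqref{eq:weighted-vc-cover} using envelope $\tilde W$, followed by \cref{ext:adamczak}. This gives a uniform deviation of order
\[
M_{q-2}\left(\sqrt{Q_p d\log(em/\delta)/m}+Q_pd\log(em/\delta)/m\right).
\]
Under \eqref{eq:curve-cond-n} with a large constant, this deviation is at most half the population lower bound $cM_{q-2}$. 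Combining this with homogeneity gives the claim for all $u$ with probability at least $1-\delta$.

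The delicate point is that the deviation must not pick up a factor $\sqrt{Q_p}$ relative to the mean more than once; the variance-sensitive, Bernstein-type maximal inequality is exactly what avoids this. If the VC argument over the joint $(x,\varepsilon)$ sets becomes awkward, we would instead condition on $\varepsilon$ and bound the class $\{x:\abs{x^\top u}\ge c_1,\ s\,x^\top u\le0\}$ for each sign $s$ separately.
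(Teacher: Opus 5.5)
Your proposal is correct. It shares the paper's skeleton: restrict to unit vectors, keep only the terms where $\abs{X_i^\top u}$ is bounded below with the favorable sign of $\varepsilon_i$, truncate $\abs{\varepsilon}^{q-2}$ at level $\asymp Q_p(q)M_{q-2}$, and use a variance-sensitive uniform bound so that $\sqrt{Q_p(q)}$ enters only once.

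The concentration step is where you differ. You treat $\{\tilde W\,\one_{A_u}\}$ directly as a bounded weighted VC-type class, with envelope $\tilde W\le 2Q_p(q)M_{q-2}$ and $\sigma=\norm{\tilde W}_{L_2}\le\sqrt{Q_p(q)}\,M_{q-2}$. You then re-run \cref{ext:cfu-maximal} via the weighted covering bound \eqref{eq:weighted-vc-cover}, followed by \cref{ext:adamczak}. This is sound, gives two-sided control, and does not even need the $\log(em)$ factor.

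The paper avoids any new empirical-process argument. It uses the layer-cake identity $\bar\xi=\int_0^U\one\{\xi\ge s\}\,ds$ to write the truncated weighted sum as $\frac1m\int_0^U N(E_{v,s})\,ds$. Here $E_{v,s}=\{\xi\ge s,\ \wt X^\top v\ge a\}$ ranges over a VC class of dimension $O(d)$ indexed jointly by $v$ and $s$. It then applies only the already-proven one-sided relative count bound \eqref{eq:appendix-relative-count} uniformly in $(v,s)$ and integrates over $s$. The single $\sqrt{Q_p(q)}$ factor comes from Cauchy--Schwarz, $\int_0^U\sqrt{\Pp(\xi\ge s)}\,ds\le\sqrt{U\,\E\bar\xi}$. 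So the paper's route reuses \cref{lem:appendix-vc} as a black box, while yours is the more direct weighted-process argument at the cost of re-invoking the maximal and deviation inequalities.

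A minor point: your population bound does not actually need $X$ to be symmetrized. Since $\sgn(\varepsilon)$ is a fair sign independent of $(\abs{\varepsilon},X)$, averaging over it already gives probability at least $c_2/2$. This is exactly what the paper exploits by passing to $\wt X=-\sgn(\varepsilon)X$.
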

\begin{proof}
Observe that the claim is scale-invariant, so it suffices to focus on the case where $\norm{u}_2 = 1$.
Let
\[
\xi = |\eps|^{q-2},
\qquad
\widetilde{X} = -\sgn(\eps)X.
\]
In this notation, recall that
\[
C_n(u) = \frac1m \sum_{i=1}^m \xi_i (\wt{X}_i^\top u)^2 \one\{ \wt{X}_i^\top u \ge 0\}
\]
By symmetry we may use
\[
\E\big[(\widetilde X^\top u)^2\one\{\widetilde X^\top u\ge0\}\big]
=\frac12\E(\widetilde X^\top u)^2 \ge c_{K,\kappa}.
\]
Using the same logic as \cref{lem:annulus} implies there exist constants $a_{K,\kappa},b_{K,\kappa} > 0$ where
\[
\Pp(\wt{X}^\top u \ge a_{K,\kappa}) \ge b_{K,\kappa}
\]
for all $\norm{u}_2=1$. Our proof will only require lower bounding the contributions from terms where $\wt{X}_i^\top u \ge a_{K,\kappa}$. In particular, we will prove
\begin{equation}\label{eq:curve-goal}
    \frac1m \sum_{i=1}^m \xi_i \one\{ \wt{X}_i^\top u \ge a_{K,\kappa} \} \ge c_{K,\kappa} M_{q-2},
\end{equation}
which directly implies our desired bound.
We will prove such a lower bound by invoking uniform convergence guarantees for the counts of certain sets, and showing how we may represent this quantity in a way that corresponds to an integral over certain counts. 

It will be helpful to work with a modified version of this quantity where the values are restricted below a maximum value. Consider
\[
U = 8M_{q-2}Q_p(q),
\qquad
\overline{\xi} = \min(\xi, U).
\]
By definition, $\E[\xi^2] = M_{q-2}^2 Q_p(q)$. We show that $\Ex[\overline{\xi}]$ is not much smaller than $\Ex[\xi]$,
\begin{align*}
\E[\overline\xi]
&=M_{q-2}-\E[(\xi-U)_+] \\
&\ge M_{q-2}-\E\big[\xi\one\{\xi>U\}\big] \\
&\ge M_{q-2}-\frac{\E\xi^2}{U}
=M_{q-2}-\frac{M_{q-2}^2Q_p(q)}{8M_{q-2}Q_p(q)}
=\frac78M_{q-2}.\numberthis\label{eq:overxi-lb}
\end{align*}
We now define the relevant sets on which we will leverage uniform convergence. For $s \in (0,U]$ and $\norm{v}_2=1$, let
\[
E_{v,s}
=
\{\xi\ge s,\ \widetilde X^\top v\ge a_{K,\kappa}\}.
\]
The probabilities of these events satisfy
\begin{equation}
b_{K,\kappa}\Pp(\xi\ge s)
\le
\Pp(E_{v,s})
\le
\Pp(\xi\ge s).
\end{equation}
In terms of the observations $(X,Y)$, the concept class has VC dimension $\lesssim d$. For $q > 2$, $E_{v,s}$ can be written as the union of two intersections of two halfspaces
\[
\{Y-X^\top\beta\ge s^{1/(q-2)},\ X^\top v\le-a_{K,\kappa}\},
\qquad
\{Y-X^\top\beta\le-s^{1/(q-2)},\ X^\top v \ge a_{K,\kappa}\}.
\]
For $q=2$, $E_{v,s}$ is empty when $s > 1$, and for $s \le 1$ is similarly
\[
\{Y-X^\top\beta > 0,\ X^\top v\le-a_{K,\kappa}\},
\qquad
\{Y-X^\top\beta < 0,\ X^\top v \ge a_{K,\kappa}\}.
\]

We now employ the uniform convergence guarantee \eqref{eq:appendix-relative-count} in \cref{lem:appendix-vc}, with failure probability $\delta$. Under this event, simultaneously for every $v$ and $s$,
\begin{equation}
N(E_{v,s}) = \sum_{i=1}^m \one\{ (X_i,Y_i) \in E_{v,s} \}
\ge
mb_{K,\kappa}\Pp(\xi\ge s)
-C_{K,\kappa}
\left(
\sqrt{dm\Pp(\xi\ge s)\ell_\delta}
+d\ell_\delta
\right),
\end{equation}
where
\[
\ell_\delta = \max\{ 1, \log(em/\delta)\}.
\]

We use this to bound
\begin{align*}
&\frac1m\sum_{i=1}^m
\overline\xi_i\one\{\widetilde X_i^\top v\ge a_{K,\kappa}\}
= \frac1m\sum_{i=1}^m
\left( \int_0^U \one\{ \overline{\xi}_i \ge s\} \, ds \right) \one\{\widetilde X_i^\top v\ge a_{K,\kappa}\}
=\frac1m\int_0^U N(E_{v,s})\,ds \\
&\ge \frac1m \int_0^U mb_{K,\kappa}\Pp(\xi\ge s)
-C_{K,\kappa}
\left(
\sqrt{dm\Pp(\xi\ge s)\ell_\delta}
+d\ell_\delta
\right) \, ds \\
&= b_{K,\kappa} \E[\overline\xi] - C_{K,\kappa} \sqrt{\frac{d \ell_\delta}{m}} \left(\int_0^U \sqrt{\Pp(\xi\ge s)} \, ds \right) - \frac{C_{K,\kappa} d\ell_\delta U}{m}
\rtag{via $\Ex[\overline\xi] = \int_0^U \Pp(\xi \ge s) \, ds$} \intertext{Using Cauchy-Schwarz gives $\int_0^U\sqrt{\Pp(\xi\ge s)}\,ds \le \left(U\int_0^U\Pp(\xi\ge s)\,ds\right)^{1/2} = \sqrt{U\,\E\overline\xi}
\le \sqrt{8Q_p(q)} M_{q-2}$, and hence we may continue}
& \ge b_{K,\kappa} \E[\overline\xi] - C_{K,\kappa} M_{q-2} \sqrt{\frac{d \ell_\delta Q_p(q)}{m}}  - \frac{C_{K,\kappa} d\ell_\delta M_{q-2}Q_p(q)}{m} \\
& \ge  \frac{7 b_{K,\kappa}}{8}M_{q-2} - C_{K,\kappa} M_{q-2} \sqrt{\frac{d \ell_\delta Q_p(q)}{m}}  - \frac{C_{K,\kappa} d\ell_\delta M_{q-2}Q_p(q)}{m} \rtag{via \eqref{eq:overxi-lb}}\\
& \ge \frac{ b_{K,\kappa}}{2} M_{q-2}, \rtag{via \eqref{eq:curve-cond-n}}
\end{align*}
when the constant in our condition \eqref{eq:curve-cond-n} is chosen sufficiently large. This proves \eqref{eq:curve-goal}, and hence completes our proof.
\end{proof}

\subsection{Proof of \cref{cor:poly-lq}}\label{sec:poly-lq}
\begin{proof}
The condition of \eqref{eq:X-singular} immediately holds with probability at least $17/18$ by invoking \cref{lemma:curve} with $q=2$ and $\delta = 1/18$, since
\[
\sigma_{\text{min}}^2(\mathbf{X}) = \min_{\norm{u}_2 = 1} \sum_{i=1}^m (X_i^\top u)^2 \ge \min_{\norm{u}_2 = 1} \sum_{i=1}^m \abs{\eps_i}^{q-2} (X_i^\top u)^2  \one\{\eps_i X_i^\top u \le 0\} \ge c_{K,\kappa} m.
\]

We now turn towards the implication for the algorithm given by Adil, Kyng, Peng, and Sachdeva \cite[Theorem 1.1]{adil2024fast}. Recall 
\[
\wh\beta_q = \argmin_{b \in \R^d} \norm{\mathbf{X}b - \mathbf{Y}}_q^q.
\]
We invoke their Theorem 1.1 to yield a $\wh A_q$ where
\[
\norm{\mathbf{X} \wh A_q - \mathbf{Y}}_q^q \le \norm{\mathbf{X} \wh\beta_q - \mathbf{Y}}_q^q + \eps,
\]
for a later-chosen $\eps > 0$. We now briefly set up our invocation in their notation. Consider the equivalent optimization problem
\[
\min_{r \in \R^m, \, \beta \in \R^d : r+ \mathbf{X}\beta = \mathbf{Y}} \norm{r}_q^q,
\]
where $r$ represents the residuals. In their notation, we represent $x = (r \quad \beta)$ and let
\[
\mathbf{A} = [I_m \quad \mathbf{X}],\qquad
b = \mathbf{Y},\qquad
\mathbf{d} = \mathbf{M} = 0,\qquad
\mathbf{N} = \operatorname{diag}(I_m, 0_d),\qquad
\mathbf{x}^{(0)} = (\mathbf{Y} \quad 0_d).
\]
Their algorithm uses $O\left(q m^{1/3} \max\left\{1,  \log\left(\frac{\norm{\mathbf{Y}}_q^q}{\eps}\right) \right\}\right)$ calls to a linear system solver.

We must relate the objective error to the coefficient error $\norm{\wh A_q - \wh\beta_q}_2$, so we may choose the appropriate value of $\eps$.
We use the following form of Clarkson's inequalities
\[
\norm{\frac{f+g}{2}}_q^q + \norm{\frac{f-g}{2}}_q^q \le \frac{1}{2} \left( \norm{f}_q^q + \norm{g}_q^q \right),
\]
with $f = \mathbf{Y}-\mathbf{X}\wh\beta_q$ and $g = \mathbf{Y} - \mathbf{X}\wh A_q$, which yields,
\begin{align*}
    & \norm{\mathbf{Y} - \mathbf{X} \left(\frac{\wh\beta_q + \wh A_q}{2}\right)}_q^q + \norm{\mathbf{X} \left(\frac{\wh\beta_q - \wh A_q}{2}\right)}_q^q \le \frac{1}{2} \left( \norm{\mathbf{Y} - \mathbf{X} \wh\beta_q}_q^q + \norm{\mathbf{Y} - \mathbf{X} \wh A_q}_q^q \right) \\
    \implies &\norm{\mathbf{Y} - \mathbf{X} \wh\beta_q }_q^q + \norm{\mathbf{X} \left(\frac{\wh\beta_q - \wh A_q}{2}\right)}_q^q \le \frac{1}{2} \left( \norm{\mathbf{Y} - \mathbf{X} \wh\beta_q}_q^q + \norm{\mathbf{Y} - \mathbf{X} \wh A_q}_q^q \right) \\
    \implies & \norm{\mathbf{X} \left(\frac{\wh\beta_q - \wh A_q}{2}\right)}_q^q \le \frac{1}{2} \left( \norm{\mathbf{Y} - \mathbf{X} \wh A_q}_q^q  - \norm{\mathbf{Y} - \mathbf{X} \wh\beta_q}_q^q\right) \\
    \implies & \norm{\mathbf{X} \left(\frac{\wh\beta_q - \wh A_q}{2}\right)}_q^q \le \eps/2 \\
    \implies & \norm{\mathbf{X} \left(\frac{\wh\beta_q - \wh A_q}{2}\right)}_2^q m^{1-q/2} \le \eps/2 \rtag{via $\norm{u}_q \ge m^{1/q - 1/2} \norm{u}_2$ for $u \in \R^m$}\\
    \implies & \left(c_{K,\kappa} m \norm{\wh\beta_q - \wh A_q}_2^2\right)^{q/2} m^{1-q/2} \le \eps/2 \rtag{via \eqref{eq:X-singular}}\\
    \implies & 2m \left(c_{K,\kappa} \norm{\wh\beta_q - \wh A_q}_2^2\right)^{q/2}  \le \eps \\
\end{align*}

Hence, $\norm{\wh\beta_q - \wh A_q}_2 \le \gamma$ if we choose
\[
\eps = m \left(c_{K,\kappa} \gamma^2 \right)^{q/2},
\]
for a small enough $c_{K,\kappa}$. Thus, the number of calls to a linear system solver is at most 
\[
O\left(q m^{1/3} \max\left\{1,  \log\left(\frac{\norm{\mathbf{Y}}_q^q}{\eps}\right) \right\}\right) \le O\left(q^2 m^{1/3} \max\left\{1, \log\left(\frac{C_{K,\kappa} m \norm{\mathbf{Y}}_\infty}{\gamma}\right)\right\}\right) \qedhere
\]

\end{proof}

\subsection{Smoothed uniform errors example}\label{sec:smoothed-uniform}
In this section, we discuss the example where the errors are distributed like $\operatorname{Unif}[-1,1] * N(0,n^{-2a})$. We first control the Hellinger modulus:

\begin{lemma}
There exist constants $c,C,\eps_0 > 0$ such that the following holds. Let $\eps \in(0,\eps_0]$, $a \in [0,1]$, and the distribution $p$ is the convolution $\operatorname{Unif}[-1,1] * N(0,n^{-2a})$. Then, the Hellinger modulus satisfies
\[
c \left(\sqrt{n^{-a} \eps} + \eps\right) \le \omega_p(\eps) \le C \left(\sqrt{n^{-a} \eps} + \eps\right).
\]
\end{lemma}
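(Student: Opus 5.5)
The plan is to first pin down the translation function $H_p(t)$ up to constants for small $t$, and then invert it. Write $\sigma=n^{-a}\in[n^{-1},1]$, so that $p(x)=\tfrac12\bigl(\Phi((x+1)/\sigma)-\Phi((x-1)/\sigma)\bigr)$, where $\Phi$ is the standard normal CDF. The target intermediate claim is that there are constants $c,C,t_0>0$ with
\[
c\min\{t^2/\sigma,\ t\}\le H_p(t)\le C\min\{t^2/\sigma,\ t\}\qquad\text{for }0\le t\le t_0 .
\]
Given this, the lemma follows from the monotonicity of $H_p$ on $[0,\infty)$ (\cref{lem:H-translation}). If $\eps\lesssim\sigma$, the relevant regime is $t\lesssim\sigma$, where $H_p(t)\asymp t^2/\sigma$, so $\omega_p(\eps)\asymp\sqrt{\sigma\eps}$, which dominates $\eps$. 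If $\eps\gtrsim\sigma$, the relevant regime is $t\gtrsim\sigma$, where $H_p(t)\asymp t$, so $\omega_p(\eps)\asymp\eps$, which dominates $\sqrt{\sigma\eps}$. Choosing $\eps_0$ small makes the relevant $t$ at most $t_0$, and the bounded multiplicative slack in the constants is absorbed using \eqref{eq:H-scaling}.

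\textbf{Upper bounds.} For $t\ge\sigma$, I would use $\hels\le\tv$ and data processing under convolution with $N(0,\sigma^2)$. This gives $H_p(t)\le\tv(\Unif[-1,1],\Unif[-1+t,1+t])=t/2$.

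For $t\le\sigma$, I would use the standard bound $H_p(t)\le\tfrac{t^2}{8}I(p)$. This follows from $\sqrt{p(x)}-\sqrt{p(x-t)}=\int_0^t(\sqrt p)'(x-u)\,du$, Cauchy--Schwarz, and $4\int((\sqrt p)')^2=I(p)$. It then remains to show $I(p)\lesssim 1/\sigma$. We have $p'(x)=\tfrac12(\varphi_\sigma(x+1)-\varphi_\sigma(x-1))$. By symmetry and the inequality $(u-v)^2\le 2u^2+2v^2$, it suffices to bound $\int\varphi_\sigma(x-1)^2/p(x)\,dx$. On $x\le 1$ we have $p(x)\gtrsim\min\{1,(1+x)/\sigma\}$-type lower bounds, and near the edge, with $z=(x-1)/\sigma$, we get $p(x)\ge\tfrac12(\Phi(-z)-\Phi(-2/\sigma-z))\gtrsim\Phi(-z)$. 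Substituting gives
\[
\int\frac{\varphi_\sigma(x-1)^2}{p(x)}\,dx\lesssim\frac1\sigma\int\frac{\varphi(z)^2}{\Phi(-z)}\,dz<\infty .
\]
The integral is finite because $\varphi(z)^2/\Phi(-z)\sim z\varphi(z)$ as $z\to\infty$. The contribution from the far edge near $x=-1$ is handled the same way.

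\textbf{Lower bounds.} For any set $A$, data processing gives $H_p(t)\ge\tfrac12\bigl(\sqrt{P_p(A)}-\sqrt{P_{p_t}(A)}\bigr)^2$ (the factor $\tfrac12$ comes from the paper's normalization of $\hels$).

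For $t\le\sigma$, take $A=[1,\infty)$. Then $P_p(A)\asymp\sigma$, and $P_{p_t}(A)-P_p(A)=\int_{1-t}^{1}p\ge t\,p(1)\gtrsim t$, since $p\gtrsim1$ on $[1-\sigma,1]$. Writing $a=P_p(A)$ and $b=P_{p_t}(A)$, we get $(\sqrt b-\sqrt a)^2=(b-a)^2/(\sqrt a+\sqrt b)^2\gtrsim t^2/\sigma$, using $b\lesssim\sigma$.

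For $t\ge C_1\sigma$ with $C_1$ a large constant, take $A=[1+t/2,\infty)$. Then $P_p(A)\le\Phi(-C_1/2)\cdot O(\sigma)$ is negligible, while $P_{p_t}(A)\ge\Pp(\eps\ge 1-t/2)\gtrsim t$. This gives $H_p(t)\gtrsim t$.

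For $\sigma\le t\le C_1\sigma$, monotonicity gives $H_p(t)\ge H_p(\sigma)\gtrsim\sigma\gtrsim t/C_1$.

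\textbf{Main obstacle.} I expect the main obstacle to be the edge bookkeeping in the Fisher-information bound and in the lower bound on $P_{p_t}(A)-P_p(A)$, uniformly over $\sigma\in[n^{-1},1]$. When $a=0$ (so $\sigma=1$), the two edges interact and "near the edge" is no longer a thin window. That case needs a direct check: there $I(p)=O(1)$ and $H_p(t)\asymp t^2$ for $t\le t_0$, which is consistent with the claim since $\sqrt{\sigma\eps}$ then dominates $\eps$.
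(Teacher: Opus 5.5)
Your proof is correct. It reaches the same two-sided estimate $H_p(t)\asymp\min\{t,\,t^2/\sigma\}$ that the paper inverts, but you obtain two of the ingredients differently.

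\textbf{Small-$t$ upper bound.} The paper never computes $I(p)$. It applies Cauchy--Schwarz to the convolution formula $p'(x)=-\int_{-1}^{1}\frac{x-y}{\sigma^2}\cdot\tfrac12\phi_\sigma(x-y)\,dy$ to get $|p'|\lesssim\sqrt{p}/\sigma$ pointwise, i.e.\ $\|(\sqrt p)'\|_\infty\lesssim 1/\sigma$. Unimodality then gives $\|(\sqrt p)'\|_1=2\sqrt{p(0)}$, so $\|\sqrt p-\sqrt{p}(\cdot-t)\|_2^2\le t\|(\sqrt p)'\|_\infty\cdot t\|(\sqrt p)'\|_1\lesssim t^2/\sigma$. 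This avoids all edge bookkeeping and works for any bounded symmetric unimodal density smoothed by $N(0,\sigma^2)$.

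Your route via $H_p(t)\le\tfrac{t^2}{8}I(p)$ and Mills-ratio integrals is also sound, and cleaner than you fear. The map $w\mapsto\Phi(w-2/\sigma)/\Phi(w)$ is increasing, and at $w=1/\sigma$ it is at most $\Phi(-1)/\Phi(1)<1/5$. Hence $p(x)\gtrsim\Phi(-z)$ holds on all of $x\ge0$, not just near the edge. For $x\le 0$, the integrand is dominated by its mirror image because $\varphi(v-2/\sigma)\le\varphi(v)$ whenever $v\le 1/\sigma$. So the case $a=0$ needs no separate check.

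\textbf{Lower bound.} The paper uses only your first test $A=(1,\infty)$. With $\alpha=P_p(A)\lesssim\sigma$ and $\beta-\alpha=\int_{1-t}^1 p\asymp t$, it gets $(\beta-\alpha)^2/(\alpha+\beta)\gtrsim t^2/(\sigma+t)$ for every $t\le 1$ at once. The point is that the denominator is $\lesssim\sigma+t$, not $\lesssim\sigma$. This makes your second halfline $[1+t/2,\infty)$ and the interpolation over $\sigma\le t\le C_1\sigma$ unnecessary, though both are correct.

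\textbf{Inversion.} The inversion needs only monotonicity of $H_p$, together with $H_p(t_0)\gtrsim t_0^2$ and a choice of $\eps_0$ below that value. The appeal to \eqref{eq:H-scaling} is not needed.
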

\begin{proof}
Consider $t \in [0,1]$. We will control $H_p(t)$, and then invert this to control the Hellinger modulus. Let $u(x)$ be the density of $\operatorname{Unif}[-1,1]$, $\sigma = n^{-a}$, $\phi_\sigma(x)$ is the density of $N(0,\sigma^2)$, and $\overline \Phi_\sigma(x) = \Pp_{z \sim N(0,\sigma^2)}[z \ge x]$.

\textit{Upper bounding $H_p(t)$.} First, by data processing inequality, we may bound
\[
\hels(p,p_t) \lesssim \tv(u,u_t) = \frac{t}{2}.
\]

For our second upper bound, let $f(x) = \sqrt{p(x)}$. Our bound will require control over $f'(x)$, and hence control over $p'(x)$. We may write $p'(x)$ as
\[
p'(x) = - \int_{-1}^1 \frac{x-y}{\sigma^2} \cdot \frac{1}{2}\phi_\sigma(x-y) \, dy.
\]
Using Cauchy-Schwarz, we may bound,
\begin{align*}
p'(x)^2 &\le \left(\int_{-1}^1 \frac{(x-y)^2}{\sigma^4}  \cdot \frac{1}{2}\phi_\sigma(x-y) \, dy \right) \left(\int_{-1}^1 \frac{1}{2}\phi_\sigma(x-y) \, dy \right)\\
& \le  \left(\int_{-\infty}^\infty \frac{z^2}{\sigma^4}  \cdot \frac{1}{2}\phi_\sigma(z) \, dz \right) p(x) = \frac{p(x)}{2 \sigma^2} \\
\implies & \abs{p'(x)} \lesssim \sqrt{p(x)}/\sigma.
\end{align*}
This further implies $\norm{f'}_\infty \lesssim 1/\sigma$. We may thus bound the squared Hellinger distance by
\begin{align*}
H_p(t) &\lesssim \norm{f - f(\cdot - t)}_2^2 \le \norm{f - f(\cdot - t)}_\infty \norm{f - f(\cdot - t)}_1 \le \left( t \norm{f'}_\infty \right) \cdot \left( t \norm{f'}_1 \right) \\
& \lesssim \frac{t^2}{\sigma} \norm{f'}_1 = \frac{t^2}{\sigma} \cdot 2f(0) \lesssim \frac{t^2}{\sigma}.
\end{align*}
Together, both upper bounds yield
\[
H_p(t) \lesssim \min\left\{ t, \frac{t^2}{\sigma} \right\}.
\]

\textit{Lower bounding $H_p(t)$.}
Consider the tail halfline $A=(1,\infty)$, and let $\alpha = \Pp_{x \sim p}(x \in A)$, $\beta = \Pp_{x \sim p_t}(x \in A)$. Using data processing inequality,
\[
H_p(t) \ge \hels(\operatorname{Bern}(\alpha),\operatorname{Bern}(\beta)) \ge \frac{1}{2} \left(\sqrt{\alpha} - \sqrt{\beta} \right)^2 = \frac{(\beta - \alpha)^2}{2 (\sqrt{\alpha} + \sqrt{\beta})^2} \gtrsim \frac{(\beta - \alpha)^2}{\alpha + \beta}.
\]
The relevant quantities are
\[
\alpha = \frac{1}{2} \int_0^2 \overline \Phi_\sigma(x) \, dx = \frac{1}{2} \int_0^2 \overline \Phi_1(x/\sigma) \, dx \lesssim \sigma + \int_\sigma^2 \phi_1(x/\sigma) \, dx \lesssim \sigma,
\qquad
\beta - \alpha = \int_{1-t}^1 p(x) \, dx \asymp t.
\]
This immediately yields the bound
\[
H_p(t) \gtrsim \frac{t^2}{\sigma + t} \asymp \min \left\{ t, \frac{t^2}{\sigma} \right\}.
\]

\textit{Concluding the Hellinger modulus control.} We have shown that for any $t \in [0,1]$ it holds that
\[
H_p(t) \asymp \frac{t^2}{\sigma + t}.
\]
Thus, for $\eps \le \eps_0$ for a sufficiently small constant $\eps_0$, the largest possible $t$ is
\[
t \asymp \sqrt{\sigma \eps} + \eps. \qedhere
\]
\end{proof}

Now that we have control over the Hellinger modulus, we may apply our results to this setting. 

Our lower bound (\cref{thm:main-hel-lb}) implies that any estimator must incur error
\[
\gtrsim \max\left\{ \sqrt{\frac{d}{n^{1+a}}}, \frac{d}{n} \right\},
\]
with probability at least $\frac{3}{8}$, for standard Gaussian covariates.

Under the conditions of our upper bound (\cref{thm:k1}) and $n \ge C_{K,\kappa} d \log^4(64n/\delta)$ for a sufficiently large $C_{K,\kappa} \ge 1$, then an estimator may incur error at most
\[
C_{K,\kappa} \polylog(n,1/\delta) \cdot \max\left\{ \sqrt{\frac{d}{n^{1+a}}}, \frac{d}{n} \right\},
\]
with probability at least $1-\delta$.